\documentclass[11pt, a4paper]{article}

\usepackage[utf8]{inputenc}
\usepackage[english]{babel}
\usepackage{dutchcal}
\usepackage[T1]{fontenc}

\usepackage{amsmath}
\usepackage{amssymb}
\usepackage{amsthm}
\usepackage{tikz}
\usepackage{tikz-cd}
\usepackage{hyperref}
\usepackage{cleveref}

\usepackage{footmisc}
\usepackage{afterpage}
\usepackage{mathrsfs}
\usepackage{sseq}
\usepackage{spectralsequences}
\usepackage{extarrows}
\usepackage{enumitem}
\usepackage{stmaryrd}
\usepackage{pdflscape}

\hypersetup{
  colorlinks   = true, 
  urlcolor     = blue, 
  linkcolor    = blue, 
  citecolor   = blue 
}
\newcommand{\llpar}{\llparenthesis}
\newcommand{\rrpar}{\rrparenthesis}

\newcommand{\ab}{\mathrm{ab}}

\newcommand{\aff}{\mathrm{aff}}

\newcommand{\cn}{\mathrm{cn}}

\newcommand{\Ell}{\mathrm{Ell}}

\newcommand{\dualisable}{\mathrm{dual}}

\newcommand{\fin}{\mathrm{fin}}
\newcommand{\fpqc}{\mathrm{fpqc}}

\newcommand{\gl}{\mathrm{gl}}

\newcommand{\inj}{\mathrm{inj}}

\newcommand{\km}{{\mathrm{KM}}}
\newcommand{\KM}{\km}
\newcommand{\nc}{{\mathrm{nc}}}

\newcommand{\ori}{\mathrm{or}}
\newcommand{\Ori}{\mathrm{Or}}

\newcommand{\perf}{\mathrm{perf}}

\newcommand{\sub}{\mathrm{sub}}

\newcommand{\st}{\mathrm{st}}

\newcommand{\Tate}{\mathrm{Tate}}

\newcommand{\ud}{\mathrm{ud}}

\DeclareMathOperator{\Spec}{Spec}
\newcommand{\relSpec}{{\underline{\mathrm{Spec}}}}

\DeclareMathOperator{\GL}{GL}
\DeclareMathOperator{\SL}{SL}

\newcommand{\1}{\mathbf{1}}

\newcommand{\BG}{\mathbf{B}\mathcal{G}}

\DeclareMathOperator{\KU}{KU}
\newcommand{\gKU}{{\mathbf{KU}}}
\newcommand{\gKO}{{\mathbf{KO}}}
\newcommand{\gTMF}{{\mathbf{TMF}}}

\DeclareMathOperator{\KO}{KO}

\newcommand{\Sph}{\mathbf{S}}

\newcommand{\TMF}{\mathrm{TMF}}

\DeclareMathOperator{\Ab}{Ab}

\DeclareMathOperator{\CAlg}{CAlg}
\DeclareMathOperator{\tworing}{2CAlg}
\DeclareMathOperator{\Cat}{Cat}
\newcommand{\PrL}{{\mathrm{Pr}^L}}
\newcommand{\PrLst}{{\mathrm{Pr}^L_\st}}
\newcommand{\PrLstomega}{{\mathrm{Pr}^L_{\st,\omega}}}

\DeclareMathOperator{\FG}{FG}
\DeclareMathOperator{\Hol}{Hol}

\DeclareMathOperator{\Fun}{Fun}

\DeclareMathOperator{\map}{map}
\DeclareMathOperator{\Glo}{Glo}

\DeclareMathOperator{\Inj}{Inj}
\DeclareMathOperator{\Sub}{Sub}

\DeclareMathOperator{\Ind}{Ind}
\DeclareMathOperator{\Mod}{Mod}

\newcommand{\Lat}{\mathrm{Lat}}

\DeclareMathOperator{\PreAb}{PreAb}
\DeclareMathOperator{\Perf}{Perf}

\DeclareMathOperator{\QCoh}{QCoh}

\DeclareMathOperator{\Span}{Span}

\DeclareMathOperator{\Stk}{Stk}
\DeclareMathOperator{\Sp}{Sp}
\DeclareMathOperator{\SpDM}{SpDM}
\newcommand{\Spc}{\mathcal{S}}

\DeclareMathOperator{\Top}{Top}
\newcommand{\Tori}{{\mathrm{Tori}}}

\DeclareMathOperator{\Aut}{Aut}
\newcommand{\colim}{\mathrm{colim}\,}

\DeclareMathOperator{\Map}{Map}

\DeclareMathOperator{\Hom}{Hom}

\newcommand{\op}{\mathrm{op}}

\newcommand{\ul}[1]{\underline{#1}}

\newcommand{\dual}[1]{{\widehat{#1}}}
\newcommand{\Cech}{{\check{C}}}

\newcommand{\BT}{\mathbf{BT}}

\newcommand{\calC}{\mathcal{C}}

\newcommand{\tildE}{\widetilde{E}}
\newcommand{\E}{\mathbf{E}}
\newcommand{\sfE}{\mathsf{E}}

\newcommand{\calE}{\mathcal{E}}
\newcommand{\F}{\mathbf{F}}
\newcommand{\calF}{\mathcal{F}}
\newcommand{\G}{\mathbf{G}}
\newcommand{\calG}{\mathcal{G}}

\newcommand{\M}{\mathsf{M}}

\renewcommand{\O}{\mathcal{O}}

\renewcommand{\P}{\mathbf{P}}
\newcommand{\calP}{\mathcal{P}}

\newcommand{\Q}{\mathbf{Q}}

\newcommand{\T}{\mathrm{T}}
\newcommand{\calT}{\mathcal{T}}

\newcommand{\W}{\mathsf{W}}
\newcommand{\sfW}{\mathsf{W}}

\newcommand{\X}{\mathsf{X}}

\newcommand{\Y}{\mathsf{Y}}
\newcommand{\sfY}{\mathsf{Y}}

\newcommand{\Z}{\mathbf{Z}}
\newcommand{\sfZ}{\mathsf{Z}}

\newcommand{\al}{\alpha}

\newcommand{\ga}{\gamma}
\newcommand{\Ga}{\Gamma}

\theoremstyle{plain}
\newtheorem{theorem}[equation]{Theorem}\numberwithin{equation}{section}
\Crefname{theorem}{{Thm}.\negthickspace}{{Ths}.\negthickspace}
\newtheorem{prop}[equation]{Proposition}
\Crefname{prop}{{Pr}.\negthickspace}{{Prs}.\negthickspace}
\newtheorem{theoremalph}{Theorem}

\Crefname{theoremalph}{{Thm}.\negthickspace}{{Thms}.\negthickspace}

\Crefname{coralph}{{Cor}.\negthickspace}{{Cors}.\negthickspace}

\newtheorem{lemma}[equation]{Lemma}
\Crefname{lemma}{{Lm}.\negthickspace}{{Lms}.\negthickspace}%\AddToHook{env/lemma/begin}{    \crefalias{equation}{Lemma}}
\newtheorem{cor}[equation]{Corollary}
\Crefname{cor}{{Cor}.\negthickspace}{{Cors}.\negthickspace}%\AddToHook{env/cor/begin}{     \crefalias{equation}{Corollary}}

\Crefname{conjecture}{{Conj}.\negthickspace}{{Conjs}.\negthickspace}%\AddToHook{env/conjecture/begin}{     \crefalias{equation}{Conjecture}}

\theoremstyle{definition}
\newtheorem{mydef}[equation]{Definition}
\Crefname{mydef}{{Def}.\negthickspace}{{Defs}.\negthickspace}

\Crefname{variation}{{Var}.\negthickspace}{{Vars}.\negthickspace}

\theoremstyle{remark}\numberwithin{equation}{section}
\newtheorem{example}[equation]{Example}
\Crefname{example}{{Ex}.\negthickspace}{{Exs}.\negthickspace}
\newtheorem{remark}[equation]{Remark}
\Crefname{remark}{{Rmk}.\negthickspace}{{Rmks}.\negthickspace}

\Crefname{nonexample}{{Nonex}.\negthickspace}{{Nonexs}.\negthickspace}

\Crefname{warning}{{Wrn}.\negthickspace}{{Wrns}.\negthickspace}
\newtheorem{question}[equation]{Question}
\Crefname{question}{{Q}.\negthickspace}{{Qs}.\negthickspace}

\Crefname{figure}{{Fig.}\negthickspace}{{Figs.}\negthickspace}
\Crefname{footnote}{{Fn.}\negthickspace}{{Fn.}\negthickspace}

\Crefname{part}{{\textsection}\negthickspace}{{\textsection}\negthickspace}
\Crefname{chapter}{{\textsection}\negthickspace}{{\textsection}\negthickspace}
\Crefname{section}{{\textsection}\negthickspace}{{\textsection}\negthickspace}
\Crefname{subsection}{{\textsection}\negthickspace}{{\textsection}\negthickspace}
\Crefname{appendix}{{\textsection}\negthickspace}{{\textsection}\negthickspace}

\usepackage{parskip}

\makeatletter
\def\thm@space@setup{%
  \thm@preskip=\parskip \thm@postskip=0pt
}
\makeatother
\begingroup
    \makeatletter
    \@for\theoremstyle:=mydef,remark,plain,theorem,defn\do{%
        \expandafter\g@addto@macro\csname th@\theoremstyle\endcsname{%
            \addtolength\thm@preskip\parskip
            }%
        }
\endgroup
\usetikzlibrary{spath3}
\tikzset{between/.style n args={2}{/tikz/spath/at end path construction={
    \tikzset{spath/split at keep middle={current}{#1}{#2}}
}}}

\begin{document}
\title{
On Galois extensions of geometric fixed point spectra
}
%On Galois extensions of geometric fixed point spectra
%Free actions on geometric fixed points
%On the global Weyl action on geometric fixed points
%

\author{\href{http://www.jmdavies.org}{Jack Morgan Davies}%\footnote{\url{davies@uni-wuppertal.de}
%\\ \emph{(dedicated to Erna Antonia)}
}
\date{\today}
\maketitle

\begin{abstract}
    In this article, the cyclotomic Galois action on topological $K$-theory adjoined with a primitive $n$th root of unity and the famous $GL_1(\Z/n)$- and $GL_2(\Z/n)$-Galois actions on topological modular forms with $\Ga_1(n)$- and $\Gamma(n)$-level structures are unified and generalised. This is done by defining a quotient stack in derived algebraic geometry parametrising constant étale subgroups of $\P$-divisible groups, and studying how these quotient stacks and various notions of torsors interact with Galois extensions formed by taking algebraic and categorical invariants. Taking global sections then yields the titular Galois extensions on $K$-geometric fixed points, recovering and refining the well-known examples above and providing new ones.
    
    As an application, the $\infty$-category of perfect modules over a variety of $H$-equivariant ring spectra $R$ are decomposed into simple pullbacks of nonequivariant categories, leading to Mayer--Vietoris sequences for localising invariants of $R$. For example, this occurs for equivariant topological $K$-theory for all $p$-groups as well as any finite nonabelian simple group of order less than 500.
\end{abstract}    

\setcounter{tocdepth}{2}
\tableofcontents

%%%%%%%%%%%%%%%%%%%%%%%%%%%%%%%%%%%%%%%%%%%%%%%%%%%%%%%%%%%%%%%
%%%%%%%%%%%%%%%%%%%%%%%%%%%%%%%%%%%%%%%%%%%%%%%%%%%%%%%%%%%%%%%
%%%%%%%%%%%%%%%%%%%%%%%%%%%%%%%%%%%%%%%%%%%%%%%%%%%%%%%%%%%%%%%
%\addcontentsline{toc}{section}{Introduction}
\section{Introduction}
Rognes' notion of a brave new Galois extension \cite{rognes_ICM_algebraic_K-theory} is one of the most pervasive and elegant concepts in higher (or derived) algebra: an action of a finite group $G$ on a commutative ring spectrum ($= \E_\infty$-ring) $B$ is called \emph{Galois} if, when viewed as an algebra over its fixed points $B^{hG} = A$, the natural map of algebras
\[B\otimes_A B \to \prod_G B, \qquad b_1\otimes b_2\mapsto (b_1 g(b_2))_{g\in G}\]
is an equivalence. The map $A \to B$ is called a \emph{$G$-Galois extension}. A key concept in this theory is \emph{Galois descent}, so studying invariants $E$ of ring spectra such that $E(A) \simeq E(B)^{hG}$ is an equivalence for each $G$-Galois extension $A \to B$.

Galois descent has many applications. For example, writing $A=\TMF$ for Hopkins' $\E_\infty$-ring of \emph{topological modular forms} \cite{hopkinsfirsttmficm,hopkinssecondtmficm}, the descent spectral sequence computing its homotopy groups can be computed from certain $\GL_2(\Z/n)$-Galois extensions on $\TMF[\tfrac{1}{n}]$ arising from $\Ga(n)$-level structures and group cohomology. Galois descent is also key to analysing spectral Grothendieck duality \cite{drewvesna,vesnaduality}, Picard groups \cite{lennartandhill,mathewstoja,tmfwls}, Brauer groups \cite{antieau_meier_stojanoska_brauergrouptmf,gepnerlawson}, and étale fundamental groups \cite{mathew_Galois}, just to name a few applications.

Even our understanding of universal invariants such as algebraic $K$-theory is strongly influenced by Galois descent. A famous conjecture of Ausoni--Rognes \cite[Conj.1.2]{chromatic_red_shift_ausoni_rognes} states that if $A \to B$ is a $G$-Galois extension of $K(n)$-local ring spectra, then the induced map on algebraic $K$-theory
\[K(A) \to K(B)^{hG}\]
should be a $K(n+1)$-local equivalence. This has been confirmed in many cases of interest in \cite{BMCSY_descent_cyclotomic_redshift,CMNN_descent_and_AR_conjecture,CMNN_descent_and_vanishing}. In fact, the failure of Ravenel's famous telescope conjecture \cite{ravenellocalisation}, disproven in \cite{telescope}, is a result of $K(n+1)$-local algebraic $K$-theory satisfying Galois hyperdescent and the failure of $T(n+1)$-local algebraic $K$-theory to satisfy Galois hyperdescent in certain instances, for $n\geq 1$.

The goal of this article is to explore new Galois actions on $\E_\infty$-rings from both an algebro-geometric and an equivariant homotopy-theoretic perspective. First, we show that for a finite abelian group $K$ and a $\P$-divisible group $\G$, the natural $\Aut(\dual{K})$-action on the \emph{stack of injections} $\Inj(\dual{K}, \G)$ defines a flat affine torsor of spectral stacks over its quotient (\Cref{main_geometric}). By applying global sections or $\infty$-categories of quasi-coherent sheaves to these torsors, we show that the associated extensions of $\E_\infty$-rings or presentably symmetric monoidal stable $\infty$-categories are Galois (\Cref{cor:galois_from_torsor}). These results are then translated into equivariant homotopy theory using Lurie's tempered cohomology theories of \cite{ec3} and their refinement to globally equivariant $\E_\infty$-rings of \cite{temperedglobal} (\Cref{main_algebraic,cor_main:globalisation}). As an application, we study when the $\infty$-categories of perfect modules of these equivariant tempered cohomology theories decompose into simple pullbacks of nonequivariant categories (\Cref{ssec:general_decompositions}). This gives a valuable inroad to computing invariants of genuinely equivariant $\E_\infty$-rings.

%%%%%%%%%%%%%%%%%%%%%%%%%%%%%%%%%%%%%%%%%%%%%%%%%%%%%%%%%%%%%%%
\subsection*{The geometric statement}
One classical way to characterise a well-behaved $G$-action on a scheme $\Y$ is to ask that the quotient stack $\Y/G$ be represented by a scheme or spectral algebraic space. In the $\infty$-category of spectral stacks, we define an \emph{affine flat $G$-torsor} over a fixed stack $\M$ as a spectral $\M$-stack $\Y$ with $G$-action such that the quotient stack $\X = \Y / G$ is relatively affine and flat over $\M$. These conditions on the structure morphism of $\X$ restrict the potential actions on $G$. For instance, we show in \Cref{pr:flatness_and_affine_torsors} that such $G$ actions are precisely those such that when pulled back to an affine over $\M$, the associated map between affine stacks induces a classical Galois extension of commutative rings on $\pi_0$.

The affine flat torsors at the heart of this article is constructed out of \emph{$\P$-divisible groups} $\G$, such as the torsion of an abelian variety, over a spectral stack $\M$. For each finite abelian group $K$, one defines a homomorphism stack $\Hom(\dual{K}, \G)$ from the constant group stack on the Pontryagin dual of $K$ into $\G$ together with its natural $\Aut(\dual{K})$-action. This does not define an affine torsor though: the zero homomorphism $\dual{K} \to \G$ has stabiliser $\Aut(\dual{K})$, and consequently, the quotient stack $\Hom(\dual{K}, \G)/\Aut(\dual{K})$ is not affine over the base $\M$.

In \cite{temperedglobal}, together with William Balderrama and Sil Linskens, we define an open substack of $\Hom(\dual{K}, \G)$ classifying injective homomorphisms $\Inj(\dual{K}, \G)$. The fundamental geometric observation of this article is that the $\Aut(\dual{K})$-action on the homomorphism stack lifts $\Inj(\dual{K}, \G)$ and defines an affine flat torsor over $\M$:

\begin{theoremalph}\label{main_geometric}
    Let $K$ be a finite abelian group and $\G$ be an oriented $\P$-divisible group over a stack $\M$. Then the quotient map
    \[\varphi\colon \Inj(\dual{K}, \G) \to \Inj(\dual{K}, \G) / \Aut(\dual{K}) = \Sub(\dual{K}, \G)\]
    is an affine flat $\Aut(\dual{K})$-torsor over $\M$ with target the \emph{stack of subgroups}.
\end{theoremalph}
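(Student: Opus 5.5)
The plan is to reduce everything to an affine base and then to a statement about classical Galois extensions on $\pi_0$, using the characterisation of affine flat torsors announced above (\Cref{pr:flatness_and_affine_torsors}). Both the formation of $\Inj(\dual{K},\G)$ and of the quotient stack commute with base change along maps $\Spec A \to \M$, and being relatively affine and flat is local on the base. So we may assume $\M = \Spec A$ and $\G$ is an oriented $\P$-divisible group over $A$. Here $\Hom(\dual{K},\G) \simeq \prod_i \G[n_i]$ after choosing a decomposition $\dual{K} \simeq \bigoplus \Z/n_i$. This is a finite flat affine over $\Spec A$, say $\Spec B$. By construction (from \cite{temperedglobal}), $\Inj(\dual{K},\G)$ is an open substack, and I expect it to be affine of the form $\Spec B'$ with $B'$ finite flat over $A$. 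For orientated $\G$, the injectivity locus should be cut out via the identity component/formal part (a Drinfeld-level-style condition), so it is a closed-and-open or at least affine piece. I would first verify this affineness and finite flatness of $\Inj(\dual{K},\G)$ over $A$, working with the connected–étale sequence of $\G$ over $\pi_0 A$.

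Next I would check that $\Aut(\dual{K})$ acts freely on $\Inj(\dual{K},\G)$, in the strong sense that the action map
\[\Aut(\dual{K}) \times \Inj(\dual{K},\G) \to \Inj(\dual{K},\G)\times_{\M}\Inj(\dual{K},\G)\]
is an equivalence onto the fibre product over the quotient. Equivalently, I need $B'\otimes_{B'^{h\Aut}} B' \simeq \prod_{\Aut(\dual{K})} B'$. By the cited proposition, this is the statement that $\pi_0 B'$ with its $\Aut(\dual{K})$-action is a classical Galois extension of $(\pi_0 B')^{\Aut(\dual{K})}$. Flatness then transports this to the spectral level, since $B'$ is flat over $A$ and homotopy fixed points of a finite group on flat modules are computed on $\pi_0$ once the action is Galois, because the Tate/higher cohomology vanishes.

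For the classical Galois claim, I would use the criterion that for every geometric point $x$ of $\Spec \pi_0 B'$ and every $1\neq g\in\Aut(\dual{K})$, the element $g$ does not fix $x$ modulo the corresponding maximal ideal (i.e. the inertia groups are trivial). A geometric point is an injection $f\colon \dual{K}\to \G_{\bar k}$ over an algebraically closed field, and $g$ fixing it means $f\circ g = f$, which forces $g=\mathrm{id}$ by injectivity. The subtlety, and what I expect to be the \emph{main obstacle}, is that "injective" for points of $\Inj$ in characteristic $p$ must be interpreted scheme-theoretically, i.e. the connected part makes $\G[p^k](\bar k)$ small. One must show that injectivity in the sense of \cite{temperedglobal} really is injectivity on $\bar k$-points, rather than some Drinfeld-type condition that allows non-injective maps on points. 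I would try to use the orientation: it identifies the connected part of $\G$ with the formal group of $A$. The definition of $\Inj$ in \cite{temperedglobal} is then a genuine open condition, namely the locus where the kernel, as a finite flat group scheme, is trivial. On such points the argument above goes through.

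Finally, with free action and finite flat affineness, the quotient $\Spec (B')^{h\Aut(\dual{K})}$ represents $\Inj/\Aut = \Sub(\dual{K},\G)$, and it is affine and flat over $A$ by faithfully flat descent along $B'$. This concludes the proof.
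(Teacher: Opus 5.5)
Your argument is the paper's own proof. You reduce to $\M=\Spec A$, use that $\Inj(\dual K,\G)=\Spec B$ is affine and flat over $A$, and show $\Aut(\dual K)$ acts freely on $\pi_0 B$ because a homomorphism fixed by some $\gamma\neq 1$ is not injective. You then conclude that $B^{h\Aut(\dual K)}\to B$ is $\pi_0$-Galois, so $\Spec B/\Aut(\dual K)\simeq\Spec B^{h\Aut(\dual K)}$ by \cite[Pr.B.7.5.5]{sag}, and get flatness over $A$ by descent along the faithfully flat map $B^{h\Aut(\dual K)}\to B$.

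A few side claims need correcting, though none of them carries weight in the argument.

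\emph{Finiteness.} $B$ is not finite over $A$ in general. $\Inj(\dual K,\G)$ is an open piece of the finite flat $\Hom(\dual K,\G)$, locally obtained by inverting an element. For example, $\Inj(\dual{C_n},\mu_{\P^\infty})$ over $\Spec\KU$ is $\Spec\KU[\tfrac{1}{n},\zeta_n]$, which is neither finite nor closed. You never actually use finiteness, so drop it. Affineness and flatness of $\Inj(\dual K,\G)\to\M$ should be imported from \cite[Cor.5.5.17]{temperedglobal}. The connected--\'etale sequence you propose to use for this does not exist over a general $\pi_0 A$; think of $\mu_{p^\infty}$ over $\Z$.

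\emph{Your ``main obstacle'' is not one.} $\Inj(\dual K,\G)$ is defined as the substack of maps whose Zariski image avoids $\bigcup_{L\lneqq K}|\G(BL)|$. Suppose an $R$-point $f$ with $R\neq 0$ satisfies $f\circ\gamma=f$ for some $\gamma\neq 1$. Then $f$ factors through the proper quotient $\dual K/\mathrm{im}(\gamma-1)\cong\dual L$, hence through $\G(BL)$, which is a contradiction. This gives freeness for all nonzero rings at once. It needs neither the orientation nor any analysis of kernels as group schemes.

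\emph{Role of flatness.} Flatness of $B$ over $A$ plays no part in passing from $\pi_0$ to $B\otimes_{B^{hG}}B\simeq\prod_G B$. Freeness on $\pi_0$ alone suffices, since it kills the higher group cohomology of every $\pi_n B$.
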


The existence of an affine flat torsor immediately gives Galois extensions in several other interesting $\infty$-categories. Writing $\varphi\colon \Y \to \X$ for a $G$-torsor (such as that from \Cref{main_geometric}, for instance) then the induced map of $\E_\infty$-quasi-coherent $\O_X$-algebras
\begin{equation}\label{eq:algebra_map_intro}\O_\X \to \varphi_\ast \O_\Y\end{equation}
is a faithful $G$-Galois extension in $\CAlg(\QCoh(\X))$ (\Cref{pr:torsor_to_Galois_inQCoh}). Applying $\QCoh(-)$ as a functor also yields a faithful $G$-Galois extension
\begin{equation}\label{eq:prlalgebra_map_intro}\varphi^\ast \colon \QCoh(\X) \to \QCoh(\Y)\end{equation}
now viewed as a functor of presentably symmetric monoidal stable $\infty$-categories (\Cref{pr:torsor_to_Galois_inPRL}). In fact, the existence of these two Galois extensions needs no affineness nor flatness hypotheses on our torsor $\varphi \colon \Y \to \X$.

The titular $G$-Galois extensions in this article arise from applying the global sections functor $\Ga\colon \CAlg(\QCoh(\X)) \to \CAlg_{\Ga(\X)}$ to the Galois extension (\ref{eq:algebra_map_intro}). Generally speaking, global sections does not preserve $G$-Galois extensions, but it does if $\X$ is \emph{0-affine}, ie, the functor $\Ga$ above is an equivalence. As the $G$-torsor of \Cref{main_geometric} is relatively affine, in this case, $\X$ is 0-affine if and only if $\M$ is 0-affine. In other words, in addition to (\ref{eq:algebra_map_intro}) and (\ref{eq:prlalgebra_map_intro}), if $\Y \to \X$ is a \emph{0-affine $G$-torsor} over $\M$, meaning that $\X \to \M$ is a 0-affine map,  we have faithful $G$-Galois extensions
\begin{equation}\label{eq:perfalgebra_map_intro}\Ga(\O_\X) \to \Ga(\O_\Y), \qquad \Perf(\Ga(\O_\X)) \to \Perf(\Ga(\O_\Y))\end{equation}
between their global sections and their $\infty$-categories of perfect modules (\Cref{cor:0semiaffine_gives_galoisextension} \& \Cref{pr:torsor_to_Galois_2ring}), respectively. The first Galois extension of (\ref{eq:perfalgebra_map_intro}) takes place in the $\infty$-category of $\E_\infty$-rings $\CAlg$ and the second in the $\infty$-category of \emph{2-rings} $\tworing$, in other words, $\E_\infty$-algebras in $\Cat_\infty^\perf$, the $\infty$-category of small idempotent complete stable $\infty$-categories and exact functors of \cite{blumgeptab}.

This is where our Galois extensions on the geometric fixed points of equivariant cohomology theories come from: by interpreting the global sections of stacks of injections from \Cref{main_geometric} as the geometric fixed points of Lurie's equivariant tempered cohomology theories.

%%%%%%%%%%%%%%%%%%%%%%%%%%%%%%%%%%%%%%%%%%%%%%%%%%%%%%%%%%%%%%%
\subsection*{Tempered cohomology theories}
In \cite{ec3}, Lurie describes how a $\P$-divisible group $\G$ over a stack $\M$ gives rise to an equivariant cohomology theory, simultaneously for all finite groups, called the associated \emph{tempered cohomology theory}. Roughly speaking, the $K$-fixed points of this theory are the global sections of the homomorphism stack $\Hom(\dual{K}, \G)$ for a finite abelian group $K$. In \cite{temperedglobal,gepner2024global2ringsgenuinerefinements}, these tempered cohomology theories are shown to be represented by \emph{global $\E_\infty$-rings} $\Ga(\ul{\O}_\G)$, so a compatible system of genuinely $H$-equivariant $\E_\infty$-rings for all finite groups $H$. The notation $\Ga(\ul{\O}_\G)$ emphasises this object as an enhancement of the usual global sections $\Ga(\O_\M)$ of $\M$ courtesy of $\G$. Indeed, the underlying $\E_\infty$-ring of $\Ga(\ul{\O}_\G)$ is precisely $\Ga(\O_\M)$.

Prominent examples of such $\Ga(\ul{\O}_\G)$ include Atiyah--Segal's \cite{equivktheory} equivariant topological $K$-theories $\gKU$ and $\gKO$ as well as equivariant topological modular forms $\gTMF$ \cite{davidandlennart}. For $\gKU$, take $\M=\Spec \KU$ and $\G = \mu_{\P^\infty} \subseteq \G_m$ to be the torsion of the multiplicative group, and for $\gKO$ works over the quotient $\Spec \KU / C_2$ by the complex conjugation action on $\KU$; see \Cref{pr:universalorientedtorus}. For $\gTMF$, set $\M = \M_\Ell^\ori$ to be the moduli stack of oriented elliptic curves of \cite{ec2} and $\G=\calE[\P^\infty] \subseteq \calE$ as the torsion of the universal oriented elliptic curve. The resulting $\gTMF$ is a globally equivariant refinement of topological modular forms $\TMF = \Ga(\M_\Ell^\ori)$.

The connection to \Cref{main_geometric} is simply the fact that, just as the $K$-fixed points of $\Ga(\ul{\O}_\G)$ are given by the global sections of $\Hom(\dual{K}, \G)$, its \emph{$K$-geometric fixed points} $\Phi^K \Ga(\ul{\O}_\G)$ are given by the global sections of the stack of injections $\Inj(\dual{K}, \G)$; see \cite[Th.F]{temperedglobal}.
\[\Ga \Inj(\dual{K}, \G) \simeq \Phi^K \Ga(\ul{\O}_\G)\]
The intuition for this equivalence has long existed in the literature, both sides being a localisation of the genuine fixed points away from data induced from proper subgroups.

Applying global sections (and associated 2-rings) to the affine flat $\Aut(\dual{K})$-torsor of \Cref{main_geometric} yields our main result in equivariant stable homotopy theory.

\begin{theoremalph}\label{main_algebraic}
    Let $K$ be a finite abelian group and $\G$ be an oriented $\P$-divisible group over a $0$-affine stack $\M$ with associated global $\E_\infty$-ring $A=\Ga(\ul{\O}_\G)$. Then the tautological $\Aut(\dual{K})$-action on $\Phi^K A$ yields the faithful $\Aut(\dual{K})$-Galois extensions
    \[\Phi^K A^{h \Aut(\dual{K})} \to \Phi^K A, \qquad \Perf(\Phi^K A)^{h \Aut(\dual{K})} \to \Perf(\Phi^K A)\]
    of $\E_\infty$-rings and 2-rings, respectively. In particular, if we view $K\leq H$ as a subgroup of a finite group $H$ and if the global Weyl homomorphism
    \begin{equation}\label{eq:weyl_homomorphism} W^\gl_H K = N_H K / C_H K \to \Aut(K) = \Aut(\dual{K}); \qquad hC_H K \mapsto (k\mapsto hkh^{-1})\end{equation}
    is injective, then the maps
    \[\Phi^K A^{h W_H^\gl K} \to \Phi^K A, \qquad \Perf(\Phi^K A)^{h W_H^\gl K} \to \Perf(\Phi^K A)\]
    are faithful $W_H^\gl K$-Galois extensions.
\end{theoremalph}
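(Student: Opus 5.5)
The plan is to deduce \Cref{main_algebraic} directly from \Cref{main_geometric} combined with the global-sections results quoted above (\Cref{cor:0semiaffine_gives_galoisextension} and \Cref{pr:torsor_to_Galois_2ring}), and then to use the identification $\Ga\Inj(\dual{K},\G)\simeq \Phi^K\Ga(\ul{\O}_\G)$ of \cite[Th.F]{temperedglobal}.

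\textbf{Step 1: a 0-affine torsor.} By \Cref{main_geometric}, $\varphi\colon \Inj(\dual{K},\G)\to \Sub(\dual{K},\G)$ is an affine flat $\Aut(\dual{K})$-torsor over $\M$. In particular $\Sub(\dual{K},\G)\to \M$ is relatively affine. Since $\M$ is $0$-affine and relatively affine maps are $0$-affine, $\Sub(\dual{K},\G)$ is $0$-affine, so $\varphi$ is a $0$-affine $\Aut(\dual{K})$-torsor over $\M$.

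\textbf{Step 2: Galois extensions of global sections.} Applying \Cref{cor:0semiaffine_gives_galoisextension} gives a faithful $\Aut(\dual{K})$-Galois extension
\[\Ga(\O_{\Sub})\to \Ga(\O_{\Inj}).\]
Applying \Cref{pr:torsor_to_Galois_2ring} gives the corresponding faithful Galois extension of 2-rings on $\Perf$. For a $G$-Galois extension the source is recovered as homotopy fixed points of the target. Concretely, $\Ga(\O_{\Sub})\simeq \Ga(\O_{\Inj})^{h\Aut(\dual{K})}$, because global sections commute with limits and $\Sub = \Inj/\Aut(\dual{K})$ is the colimit of the action groupoid. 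The categorical statement is analogous: $\Perf(\Ga\O_{\Sub})\simeq \Perf(\Ga\O_{\Inj})^{h\Aut(\dual{K})}$.

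\textbf{Step 3: identifying the action.} Under $\Ga\Inj(\dual{K},\G)\simeq\Phi^K A$, the geometric $\Aut(\dual{K})$-action must be matched with the tautological action on $\Phi^K A$. The tautological action is the one coming from the global structure: conjugation by automorphisms of $K$, transported via Pontryagin duality $\Aut(K)=\Aut(\dual{K})$. I expect this to be the main obstacle. It requires checking that the equivalence of \cite[Th.F]{temperedglobal} is natural in $K$ with respect to isomorphisms, so that it is equivariant, and not just that the two sides agree objectwise. I would argue this by noting that \cite{temperedglobal} constructs the equivalence functorially on the orbit/global category restricted to abelian groups, with $K\mapsto \Hom(\dual{K},\G)$ contravariantly functorial. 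Passing to the localisation defining $\Inj$ and $\Phi^K$ preserves this naturality, and restricting to the automorphism group $\Aut(K)$ gives the required equivariance, possibly up to inverting the action, which is harmless.

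\textbf{Step 4: the Weyl group statement.} Let $W=W^\gl_H K$ and suppose the homomorphism \eqref{eq:weyl_homomorphism} is injective. Then $W$ is a subgroup of $\Aut(\dual{K})$, and restricting a faithful $G$-Galois extension $A'\to B$ to a subgroup $W\le G$ gives a faithful $W$-Galois extension $B^{hW}\to B$. This is standard (Rognes) and holds in any of the symmetric monoidal settings used here. More simply, restricting $\varphi$ along $W$ gives the $W$-torsor $\Inj\to\Inj/W$, which is a finite étale cover of $\Sub$, hence still affine, flat and $0$-affine over $\M$; then repeat Step 2. Finally, the $W$-action on $\Phi^K A$ obtained by restriction agrees with the tautological $W^\gl_H K$-action on geometric fixed points, since the latter is defined through conjugation $k\mapsto hkh^{-1}$, that is, through \eqref{eq:weyl_homomorphism}.
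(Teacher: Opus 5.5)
Your proposal is correct and follows the paper's proof. The paper likewise combines \Cref{main_geometric} with \Cref{cor:galois_from_torsor} in the $0$-affine case, uses the identification $\Ga\Inj(\dual{K},\G)\simeq\Phi^K A$, and then applies Rognes' restriction to the subgroup $W_H^\gl K$ along the injective Weyl homomorphism. The only difference is your Step 3: instead of arguing via naturality in $K$, the paper gets equivariance by rigidity, since $\Phi^K\G$ and $\Inj(\dual{K},\G)$ are both $\Aut(BK)$-equivariant substacks of $\G(BK)$, so the unique identification between them is automatically equivariant (\Cref{cor:identification_with_inj}).
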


\begin{remark}
    Some comments regarding \Cref{main_algebraic} are in order:
        \begin{itemize}
            \item If $K$ is nonabelian, the geometric fixed points of tempered cohomology theories $\Phi^K \Ga(\ul{\O}_\G)$ vanish, see \cite[Pr.5.4.4]{temperedglobal}, hence our focus on abelian groups $K$.
            \item The $\Aut(\dual{K})$-action on the genuine fixed points of $\Ga(\ul{\O}_\G)$ is usually not Galois, see \Cref{rmk:geometric_fixed_points_better_than_genuine}, hence our focus on geometric fixed points.
            \item Any $\Aut(\dual{K})$-action on the geometric fixed points of the global sphere spectrum $\Phi^K \Sph \simeq S^0$ is necessarily trivial, hence our focus on tempered cohomology theories.
            \item The 0-affineness hypothesis holds for affine stacks as well as those associated with $\gKO$ and $\gTMF$ mentioned above; see \cite{akhilandlennart,reconstruction}. If one wants to remove this hypothesis, then the geometric statement \Cref{main_geometric} is the more natural one.
            \item The $\Aut(\dual{K})$-action on the $K$-geometric fixed points $\Phi^K \Ga(\ul{\O}_\G)$ above comes from \Cref{main_geometric}. In \Cref{ssec:action_comparison}, we show that this agrees with the usual $\Aut(\dual{K}) = \Aut(K)$-action one defines in equivariant stable homotopy theory in many cases of interest.
            \item The Galois extensions of \Cref{main_algebraic} are natural in maps of 0-affine stacks $\M' \to \M$. Base change for geometric fixed points \cite[Th.E]{temperedglobal} further shows that the Galois extensions of $\E_\infty$-rings for $\M'$ are tensored with those for $\M$ over $\Ga(\M) \to \Ga(\M')$.
        \end{itemize}
\end{remark}

The first examples of \Cref{main_algebraic} for $\gKU$ are well-known: the spectra $\Phi^K \gKU$ vanish for noncyclic $K$, and that otherwise we have
    \[\Phi^{C_n} \gKU \simeq \KU[\tfrac{1}{n}, \zeta_n],\]
where $\zeta_n$ is a primitive $n$\textsuperscript{th} root of unity in degree zero; see \cite[\textsection7.7]{tomdieck_transformation_groups}. The $\Aut(\dual{C_n}) = GL_1(\Z/n)$-action of \Cref{main_algebraic} recovers the familiar cyclotomic Galois extension of $\E_\infty$-rings
\[\KU[\tfrac{1}{n}] \to \KU[\tfrac{1}{n}, \zeta_n].\]
Applying this example to $A=\gKO$ yields the more curious Galois extension
\[\KO[\tfrac{1}{n}] \to \KU[\tfrac{1}{n}, \zeta_n + \bar{\zeta}_n],\]
where the $\Aut(C_n)$-action on the target is a twist of the residual $\KU$-linear $\Aut(C_n)/\{\pm 1\}$-action with the usual complex conjugation action on $\KU$; see \Cref{sssec:KO}.

Applying this theorem to $\gTMF$ also produces some new Galois extensions. For any $n\geq 2$, the geometric fixed points $\Phi^{C_n} \gTMF$ are an integral lift of topological modular forms with $\Ga_1(n)$-level structure $\TMF_1(n)$. Applying \Cref{main_algebraic} with $A=\gTMF$ and $K=C_n$ yields the faithful $\Aut(C_n)$-Galois extension
    \[\Phi^{C_n} \gTMF^{h\Aut(C_n)} \to \Phi^{C_n} \gTMF,\]
an integral lift of the usual faithful $\Aut(C_n)$-Galois extension
    \[\TMF_0(n) \to \TMF_1(n)\]
from \cite[\textsection7.2]{akhilandlennart} obtained by the usual $\Aut(C_n)$-action on $\Ga_1(n)$-level structures. In particular, $\Phi^{C_3}\gTMF^{hC_2}$ is the naïve delocalisation of of Mahowald--Rezk's $\TMF_0(3)$ of \cite{levelonethree} without inverting $3$.

Applying \Cref{main_algebraic} with $A=\gTMF$ and $K=C_n \times C_n$ recovers the $GL_2(\Z/n)$-Galois extension
\[\TMF[\tfrac{1}{n}] \to \TMF(n)\]
induced by $\Ga(n)$-level structures à la \cite[\textsection 7]{km}. These and related examples are explored in more detail in \Cref{sssec:TMF}.

%%%%%%%%%%%%%%%%%%%%%%%%%%%%%%%%%%%%%%%%%%%%%%%%%%%%%%%%%%%%%%%
\subsection*{Bonus: a further globally equivariant refinement}
The geometric nature of $\Phi^K \Ga(\ul{\O}_\G)$, the fact it arises as the global sections of a stack $\Inj(\dual{K}, \G)$ equipped with its own oriented $\P$-divisible group pulled back from $\M$, allows us to enhance \Cref{main_algebraic} further to a statement about the tempered cohomology theory internal to global spectra. In fact, the proof of the following theorem is just an application of \Cref{main_algebraic} together with the base change for geometric fixed points of \cite[Th.E]{temperedglobal}.

Write $\G_\sub$ for the base change of $\G$ from $\M$ to $\Sub(\dual{K}, \G)$ and $\G_\inj$ for the further base change to $\Inj(\dual{K}, \G)$.

\begin{theoremalph}\label{cor_main:globalisation}
    In the situation of \Cref{main_algebraic}, the map of ($\pi$-ambidextrous) global $\E_\infty$-rings
    \[\Ga(\ul{\O}_{\G_\sub}) \to \Ga(\ul{\O}_{\G_\inj})\]
    is a faithful $W_H^\gl K$-Galois extension recovering \Cref{main_algebraic} on underlying $\E_\infty$-rings.
\end{theoremalph}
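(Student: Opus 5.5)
The plan is to reduce \Cref{cor_main:globalisation} to \Cref{main_algebraic} by checking the Galois condition one finite group at a time, using base change for geometric fixed points \cite[Th.E]{temperedglobal}.

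\textbf{Setting up the map.} The morphism $\varphi\colon \Inj(\dual{K},\G)\to\Sub(\dual{K},\G)$ is an $\Aut(\dual{K})$-equivariant map of stacks, with trivial action on the target. Pulling back $\G$ along it gives an equivariant map of oriented $\P$-divisible groups $\G_\inj\to\G_\sub$. By functoriality of $\G\mapsto\Ga(\ul{\O}_\G)$, this yields a map of global $\E_\infty$-rings
\[\Ga(\ul{\O}_{\G_\sub})\to\Ga(\ul{\O}_{\G_\inj}),\]
with $\Aut(\dual{K})$, and hence $W^\gl_H K$ by restriction along the injective Weyl homomorphism (\ref{eq:weyl_homomorphism}), acting on the target.

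Two base facts are needed:
\begin{itemize}
    \item \Cref{main_geometric} says $\varphi$ is relatively affine. Since $\M$ is $0$-affine, both stacks are $0$-affine.
    \item Their global sections are $\Phi^K A^{h\Aut(\dual{K})}$ and $\Phi^K A$.
\end{itemize}
So on underlying $\E_\infty$-rings the map is the Galois extension of \Cref{main_algebraic}. The restriction from $\Aut(\dual{K})$ to a subgroup remains faithful Galois by standard Rognes theory.

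\textbf{Reducing to geometric fixed points.} A map of global $\E_\infty$-rings is Galois if two things hold. First, the fixed points agree, which can be checked levelwise, since homotopy fixed points of a finite group are a finite limit and so commute with genuine fixed points. Second, the map $B\otimes_A B\to\prod_G B$ is an equivalence. The tensor product and the finite product both commute with geometric fixed points, and the collection of all $\Phi^L$ is jointly conservative on global spectra. So it suffices to check, for every finite group $L$, that
\[\Phi^L\Ga(\ul{\O}_{\G_\sub})\to\Phi^L\Ga(\ul{\O}_{\G_\inj})\]
is Galois. When $L$ is nonabelian both sides vanish, which is a trivially valid degenerate case.

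\textbf{The abelian case.} For abelian $L$, base change \cite[Th.E]{temperedglobal} identifies this map with the global sections of
\[\Inj(\dual{L},\G)\times_\M \Inj(\dual{K},\G)\to \Inj(\dual{L},\G)\times_\M\Sub(\dual{K},\G).\]
This is the base change of the torsor $\varphi$ along a map from a $0$-affine stack, so it is again an affine flat torsor. By \Cref{cor:0semiaffine_gives_galoisextension} it then gives a faithful Galois extension. Equivalently, it is $\Phi^L\Ga(\ul{\O}_{\G_\sub})\otimes_{\Phi^K A^{h\Aut}}\Phi^K A$. Faithfulness globally follows from faithfulness at each $\Phi^L$, again by joint conservativity.

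\textbf{Main obstacle.} I expect the hardest part to be ensuring that the fixed points compare correctly at the genuine level, i.e.\ that $\Ga(\ul{\O}_{\G_\inj})^{hW}\simeq\Ga(\ul{\O}_{\G_\sub})$ holds as global rings and not only on underlying rings. My first approach would be to use affineness of $\varphi$ to write $\ul{\O}_{\G_\inj}$ as $\ul{\O}_{\G_\sub}\otimes\varphi_\ast\O$, then apply that Galois descent for the affine flat torsor commutes with the finite limit defining homotopy fixed points levelwise.
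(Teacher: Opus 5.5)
\paragraph{Where the proposal matches the paper.} Your handling of the tensor condition and of faithfulness is the paper's argument: \Cref{pr:general_torsors_to_global_Galois} applied to the torsor of \Cref{main_geometric}, then restriction to $W^\gl_HK$. It uses joint conservativity of the $\Phi^L$ (only finite abelian $L$ occur, since these global spectra are indexed on $\Glo_\ab$). It then uses base change \cite[Th.E]{temperedglobal} to identify $\Phi^L$ of the map with global sections of the pulled-back torsor $\Inj(\dual{L},\G)\times_\M\Inj(\dual{K},\G)\to\Inj(\dual{L},\G)\times_\M\Sub(\dual{K},\G)$. Finally it applies \Cref{cor:0semiaffine_gives_galoisextension}.

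\paragraph{The gap: the fixed-point condition.} The other Galois condition, $\Ga(\ul{\O}_{\G_\sub})\simeq\Ga(\ul{\O}_{\G_\inj})^{h\Aut(\dual{K})}$ in global $\E_\infty$-rings, is never established; you only flag it as the main obstacle. Your reason for reducing it to genuine fixed points is also wrong. $(-)^{hG}$ is a limit over $BG$, which is not a finite limit, and it need not commute with an arbitrary exact functor. The reduction is still valid, but because limits in $\CAlg(\Sp^\gl_\pi)$ are computed pointwise, so each $(-)^L$ preserves all limits.

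\paragraph{The missing step is immediate.} The levelwise check needs no Galois descent or Tate vanishing. Naturally in the $\M$-stack $\sfZ$ one has $\Ga(\ul{\O}_{\G_\sfZ})^L\simeq\Ga(\G(BL)\times_\M\sfZ)$. By universality of colimits in $\Stk$, $\G(BL)\times_\M\Sub(\dual{K},\G)\simeq(\G(BL)\times_\M\Inj(\dual{K},\G))/\Aut(\dual{K})$. Since $\Ga$ sends colimits of stacks to limits, this gives the homotopy fixed points. This is exactly what the paper packages in one line as ``$\sfZ\mapsto\Ga(\ul{\O}_{\G_\sfZ})$ is a sheaf''. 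Your proposed route through $\ul{\O}_{\G_\sub}\otimes\varphi_\ast\O$ is unnecessary, and it would need a base-change identification of global rings that you have not justified.

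\paragraph{The restriction to $W^\gl_HK$.} The paper's ``restricting along an injection'' glosses over this just as you do. Restricting the faithful $\Aut(\dual{K})$-Galois extension $A\to B$ along $W=W^\gl_HK\leq\Aut(\dual{K})$ gives the faithful $W$-Galois extension $B^{hW}\to B$ \cite[Thm.1.2(a)]{johnrognessdualisinggorups}, not $A\to B$. The map $B\otimes_AB\to\prod_W B$ factors as the equivalence $B\otimes_AB\simeq\prod_{\Aut(\dual{K})}B$ followed by a projection. So for proper $W$ it is an equivalence only if $B=0$.

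\paragraph{How to read the statement.} When $W\neq\Aut(\dual{K})$, take the source to be $\Ga(\ul{\O}_{\G_\inj})^{hW}$. By the same sheaf argument this is the theory attached to the base change of $\G$ to $\Inj(\dual{K},\G)/W$. Equivalently, run the whole argument for the $W$-torsor $\Inj(\dual{K},\G)\to\Inj(\dual{K},\G)/W$, which is again affine flat over $\M$ because $W$ still acts freely on points (\Cref{lm:discrete_action_factors}). Only with this source is the underlying map $\Phi^KA^{hW}\to\Phi^KA$ from \Cref{main_algebraic}.
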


This is a lot of additional structure compared to \Cref{main_algebraic}. For instance, that the $W_H^\gl K$-action of $\E_\infty$-rings on $\Phi^K \Ga(\G)$ lifts to a refinement in global $\E_\infty$-rings. By restricting from global spectra to genuine $G$-spectra for some finite group $G$, this induces a $W_H^\gl K$-Galois extension of $G$-equivariant $\E_\infty$-rings; we hope to come back to more examples of this in future work. There is yet another generalisation from global $\E_\infty$-rings to $\pi$-ambidextrous global $\E_\infty$-rings, see \Cref{cor:globalisations}, which also shows that all of the structure of the Galois extensions of \Cref{cor_main:globalisation} commute with all transfers induced by relatively $\pi$-finite morphisms of global spaces, see \cite[\textsection7.4]{ec3}, such as $\pi_1$-epimorphisms $BH \to BK$.

One can further iterate these ideas by applying \Cref{main_geometric} to $\G=\G_\sub$ or $\G_\inj$, producing a stack of injections over a stack of injections, and so on. It seems likely that the associated functor $\Cat(\Glo_\ab)^\otimes_{\pi-\st}$, see \cite[Df.1.3.1]{temperedglobal}, is also a $W_H^\gl K$-Galois extension. We leave these more eccentric avenues for another time and instead focus on other applications.

%%%%%%%%%%%%%%%%%%%%%%%%%%%%%%%%%%%%%%%%%%%%%%%%%%%%%%%%%%%%%%%
\subsection*{Decompositions of equivariant module categories}
Our main application of the torsors and Galois extensions above is to produce simple decompositions of $\infty$-categories of perfect $H$-equivariant modules over tempered cohomology theories. The goal here is to aid in the computation of invariants of $H$-equivariant ring spectra, such as Picard groups and algebraic $K$-theory, by decomposing these $\infty$-categories of perfect modules into more computable nonequivariant $\infty$-categories.

This story starts with \cite{krause_equivariantpicard,NPR2024}, wherein Krause and Naumann--Pol--Ramzi show that for a finite group $H$ and an $\E_\infty$-$H$-ring spectrum $A$, the category of perfect $A$-modules
\[\Perf_H(A) = \Mod_A(\Sp_H)^\omega\]
can be expressed as an iterated pullback of $\Perf(\Phi^K A)^{hW_H K}$ for subgroups $K\leq H$; here we do mean the \emph{classical Weyl group} $W_H K = N_H K / K$ and not its global variant $W_H^\gl K$. We call this a \emph{KNPR-decomposition} of $\Perf_H(A)$. The gluing data in these pullbacks is the Tate construction
\[\Perf(\Phi^K A)^{tW_H K} \in \tworing.\]
If many such Tate constructions vanish, then many pullbacks degenerate into products.

A simple instance of this is classical and occurs when the order of $H$ is invertible in $R$. In this case, the above Tate construction of 2-rings vanishes as Tate cohomology is $|H|$-torsion, giving
\[\Perf_H(R) {\simeq} \prod_{(K)\leq H} \Perf(\Phi^K R)^{hW_H K},\]
where the product ranges over conjugacy classes of subgroups of $H$; see \Cref{pr:rationality_decomposition}.

More generally, if $A \to B$ is a faithful $G$-Galois extension of $\E_\infty$-rings, then the Tate constructions $B^{tG}$ and $\Perf(B)^{tG}$ both vanish. By relating the actions of both $W_H K$ and $W_H^\gl K$ on $\Phi^K \Ga(\ul{\O}_\G)$, one can then use \Cref{main_algebraic} to drastically simplify various KNPR-decomposition associated with $\Perf_H(\Ga(\ul{\O}_\G))$.

\begin{example}\label{ex:intro_p_ell}
    Let $p,\ell$ be two primes such that $\ell | p-1$. Then for $A=\Ga(\ul{\O}_\G)$, the category $\Perf_{C_p\rtimes C_\ell}(A)$ sits in the pullback of 2-rings
\[\begin{tikzcd}
    {\Perf_{C_p\rtimes C_\ell}(A)}\ar[r]\ar[d]    &   {\Perf(\Phi^{C_\ell}A) \times \Perf(\Phi^{C_p}A^{hC_\ell})}\ar[d]  \\
    {\Perf(\Ga(\M))^{hC_p\rtimes C_\ell}}\ar[r] &   {\Perf(\Ga(\M))^{tC_p\rtimes C_\ell},}
\end{tikzcd}\]
    where the group actions on the lower row are trivial; see \Cref{pr:categorical_decomposition}. In this case, the Galois extensions of \Cref{main_algebraic} combined with Galois descent identify
    \[\Perf(\Phi^{C_p} A^{hC_\ell}) \simeq \Perf(\Phi^{C_p} A)^{hC_\ell}\]
    in the upper-right corner. For $\ell=2$ and $p$ odd, then $C_p\rtimes C_2=D_{2p}$ is the dihedral group of order $2p$ and for $p=3$ this is the symmetric group $S_3$. These groups have periodic cohomology, which could greatly aid in computing invariants of the lower row.
\end{example}

The above example works for a general $\G$, however, by specialising to equivariant topological $K$-theories $A=\gKU$ or $\gKO$, one obtains decompositions for many more finite groups $H$. These include all $p$-groups (\Cref{pr:KU_decomp_abelian}), with more details for $Q_8$ (\Cref{pr:KU_decomp_Q8}) and $D_8$ (\Cref{pr:KU_decomp_D8}), low order alternating and symmetric groups (\Cref{pr:KU_decomp_A4,pr:KU_decomp_S4,pr:KU_decomp_A5,pr:KU_decomp_A6}), and finally, the last remaining nonabelian simple group of order $\leq 500$ (\Cref{pr:KU_decomp_GL3F2}):

%\begin{example}
%    For $A=\gKU$ or $\gKO$, these pullbacks can be made even more explicit:
%\[\begin{tikzcd}
%    {\Perf_{D_{2p}}(A)}\ar[r]\ar[d]    &   {\substack{\Perf(A[\tfrac{1}{2}]) \\ \times \\ \Perf(A[\tfrac{1}{p}, \zeta_p + \bar{\zeta}_p])}}\ar[d]  \\
%    {\Perf(A)^{hD_{2p}}}\ar[r] &   {\Perf(A)^{tD_{2p}},}
%\end{tikzcd} \qquad \begin{tikzcd}
%    {\Perf_{C_7\rtimes C_3}(A)}\ar[r]\ar[d]    &   {\substack{\Perf(A[\tfrac{1}{3}, \omega]) \\ \times \\ \Perf(A[\tfrac{1}{7}, \sqrt{-7}])}}\ar[d]  \\
%    {\Perf(A)^{hC_7\rtimes C_3}}\ar[r] &   {\Perf(A)^{tC_7\rtimes C_3}.}
%\end{tikzcd}\]
%\end{example}

%\begin{example}
%    If $p$ is a prime and $H$ is any finite $p$-group and $A=\gKU$ or $\gKO$, then
%    \[\begin{tikzcd}
%        {\Perf_{H}(A)}\ar[r]\ar[d]    &   {\prod_{\substack{C\leq H \\ \mathrm{cyclic}}} \Perf(\Phi^{C}A)^{hW_H C}}\ar[d]    \\
%        {\Perf(A)^{hH}}\ar[r]         &   {\Perf(A)^{tH}}
%    \end{tikzcd}\]
%    is a pullback of 2-rings, where $C$ varies over conjugacy classes of cyclic subgroups of $H$.
%\end{example}

\begin{example}
    For $A=\gKU$ or $\gKO$, a pullback of 2-rings
    \[\begin{tikzcd}
        {\Perf_{\GL_3(\F_2)}(A)}\ar[r]\ar[d]    &   {\Perf(A[\tfrac{1}{2}])^{hV} \times \Perf(A[\tfrac{1}{2}]) \times \Perf(A[\tfrac{1}{3}]) \times \Perf(\Phi^{C_7} A^{hC_3})}\ar[d]    \\
        {\Perf(A)^{h\GL_3(\F_2)}}\ar[r]         &   {\Perf(A)^{t\GL_3(\F_2)},}
    \end{tikzcd}\]
    where $V=C_2\times C_2$ is Klein's Vierergruppe, and one can identify
    \begin{equation}\label{eq:c7geo}\Phi^{C_7} \gKU^{hC_3} \simeq \KU[\tfrac{1}{7}, \sqrt{-7}], \qquad \Phi^{C_7} \gKO^{hC_3} \simeq \KU[\tfrac{1}{7}].\end{equation}
\end{example}

As already highlighted, such decompositions have computational consequences. In forthcoming work with Luca Pol, we will use some of these decompositions to help compute Picard groups of various $\Perf_H(A)$. More broadly speaking, all of these simplified KNPR-decompositions immediately lead to fibre sequences for localising invariants $E$, such as nonconnective algebraic $K$-theory, topological Hochschild homology, and $p$-typical topological cyclic homology; see \Cref{rmk:localising_invariants}. For example, if $A$ is either $\gKU$ or $\gKO$, then for any localising invariant $E$, we have fibre sequences of spectra
    \[E_{Q_8}(A) \to E(\Perf(A)^{hQ_8}) \times E(\Perf(A[\tfrac{1}{2}])^{hV}) \times \prod_{x=i,j,k} E(A[\tfrac{1}{2}]) \to E(\Perf(A)^{tQ_8}),\]
where $E_{Q_8}(A)=E(\Perf_{Q_8}(A))$. We hope these Mayer--Vietoris sequences can facilitate more computations of such invariants in the future.

%%%%%%%%%%%%%%%%%%%%%%%%%%%%%%%%%%%%%%%%%%%%%%%%%%%%%%%%%%%%%%%
\subsection*{Outline}
This article is broken up into three sections: we begin with pure spectral algebraic geometry to prove \Cref{main_geometric} (\Cref{sec:SAG}), then translate this result into equivariant homotopy theory using Lurie's tempered cohomology theories and prove \Cref{main_algebraic,cor_main:globalisation} (\Cref{sec:ESHT_over_a_stack}), and finally, we review KNPR-decompositions and give a catalogue of simplifications of these categorical decompositions as an application of \Cref{main_algebraic} (\Cref{sec:categorical_decompositions}). In a short appendix (\Cref{appendix}), we summarise some permanence properties for morphisms in spectral algebraic geometry used in this article.

In more detail, after setting up the basic theory of torsors, the main theme of \Cref{ssec:torsor} is the interaction between torsors and various notions of Galois extensions. We begin by showing that $\pi_0$-Galois extensions yield affine torsors (\Cref{pr:pi0galois_to_torsor,pr:converse}) as do some Galois extensions of Rognes (\Cref{pr:chromatic_positive_answer}). Then we explore the Galois extensions in various $\infty$-categories induced by various flavours of torsor in \Cref{ssec:relationtoGalois}. In \Cref{ssec:construction_of_moduli}, we define the stack of subgroups and prove \Cref{main_geometric} that the stack of injections equipped with its tautological action defines a flat affine torsor. Putting these two subsections together, we obtain many examples of Galois extensions in various $\infty$-categories courtesy of \Cref{main_geometric} (\Cref{cor:galois_from_torsor}).

The passage from oriented $\P$-divisible groups in spectral algebraic geometry to equivariant homotopy theory via Lurie's tempered cohomology theories is reviewed in \Cref{ssec:from_tempered_to_global_spectra}; many more details can be found in \cite{temperedglobal}, however, the focus here is on the canonical residual actions on genuine and geometric fixed points and their relationship with tempered cohomology theories. This is mostly expository. The proof of \Cref{main_algebraic} then follows straight from \Cref{cor:galois_from_torsor}, and \Cref{cor_main:globalisation} is an exercise in global homotopy theory. In \Cref{ssec:action_comparison}, we detail when the above residual actions on fixed points agree with residual Weyl group actions classically studied in equivariant homotopy theory. Examples of the Galois extensions from \Cref{main_algebraic} are given in \Cref{ssec:examples}, focusing on those involving equivariant $K$-theory and equivariant elliptic cohomology.
    
Lastly, \Cref{sec:categorical_decompositions} is concerned with decompositions of $\infty$-categories of modules over equivariant ring spectra. We begin by reviewing KNPR-decompositions (\Cref{thm:npr_decompositions}), setting the scene for our decomposition (\Cref{cor:generic_decomposition}), and proving some simple algebraicity statements (\Cref{pr:rationality_decomposition}). In \Cref{ssec:general_decompositions}, we give two main families of examples of these decompositions. The first for split groups and any tempered cohomology theory (\Cref{pr:categorical_decomposition}), and the second for equivariant $K$-theory and a handful of explicit groups, ranging from all $p$-groups (\Cref{pr:KU_decomp_abelian,pr:KU_decomp_Q8,pr:KU_decomp_D8}) to all nonabelian simple groups of order less than $500$ (\Cref{pr:KU_decomp_A4,pr:KU_decomp_S4,pr:KU_decomp_A5,pr:KU_decomp_A6,pr:KU_decomp_GL3F2}).

%%%%%%%%%%%%%%%%%%%%%%%%%%%%%%%%%%%%%%%%%%%%%%%%%%%%%%%%%%%%%%%
\subsection*{Notation}
Throughout, we freely use the higher categorical language, so by a ``category'' we mean an ``$\infty$-category'' and suppress the mapping space notation by writing $\calC(-,-) = \Map_{\calC}(-,-)$. The category of presentable categories and left adjoint functors is written as $\PrL$, $\PrLst$ for the subcategory of stable presentable categories, and $\PrLstomega$ for the further subcategory of compactly generated stable presentable categories and left adjoint functors which preserve compact objects. The category of small stable idempotent complete categories and exact functors (featuring prominantly in \cite{blumgeptab}) is denoted by $\Cat^\perf$. Following \cite{mathew_Galois}, we write $\CAlg(\Cat^\perf) = \tworing$. We will use the symmetric monoidal identification $\Cat^\perf \simeq \PrLstomega$ by taking Ind-categories and compact objects.

We reference Rognes' $G$-Galois theory \cite{johnrognessdualisinggorups} for a finite group $G$ in the more general case of a semiadditive presentably symmetric monoidal $\infty$-category; the generalised proofs are the same. The elements $\zeta_n$ in a discrete ring will always be $n$th primitive roots of unity, with $\omega = \zeta_3$ and $i = \zeta_4$.

\begin{center}
    \emph{Fix finite groups $G,H$ and an abelian subgroup $K\leq H$ for this article.}
\end{center}

Write $\Stk=\Stk_\fpqc(\CAlg^\op)$ for the category of (fpqc-) stacks of \cite[Def.3.1.1.3]{reconstruction} and $\Stk^\heartsuit = \Stk_\fpqc(\CAlg^{\heartsuit,\op})$ for the category of classical stacks. A colimit over a functor $B\calG \to \Stk$ hitting a stack $\Y$ will be written as $\Y/ \calG$; the notation $\Y//\calG$ only appears in the context of \Cref{df:g-torsor} where $\calG$ is a group object in $\Stk$.

We use Lurie's notion of an oriented $\P$-divisible group of \cite{ec3} and its extension to stacks of \cite[\textsection4]{temperedglobal} (generalising that of \cite{elltempcomp}). Following \cite[Not.4.1.6]{temperedglobal}, given a preoriented $\P$-divisible group $\G$, we write $\G[-] \colon \Ab_\fin^\op \to \Stk$ for the underlying $\P$-divisible group and $\G(-)\colon \Glo_\ab \to \Stk$ for preorientation factoring $\G[-]$ through the classifying space and Pontryagin dual functor $B(\dual{-}) \colon \Ab_\fin^\op \to \Glo_\ab$; here $\Glo_\ab \subseteq \Spc$ is the subcategory of spaces spanned by those of the form $BG$ for a finite abelian group $G$. We abbreviate $\Ga(\O_\M) = \Ga(\M)$ and the notation $\Ga(\ul{\O}_\G)$ of \cite[Thm.A]{temperedglobal} as $\Ga(\G)$. 

\begin{center}
    \emph{Fix a base stack $\M$ and an oriented $\P$-divisible group $\G$ for this article.}
\end{center}

%%%%%%%%%%%%%%%%%%%%%%%%%%%%%%%%%%%%%%%%%%%%%%%%%%%%%%%%%%%%%%%
\subsection*{Acknowledgements}
Thank you to William Balderrama, Christian Carrick, Sil Linskens, and Luca Pol for our enjoyable and continuing collaborations that all influenced this work and my thinking in their own ways. Thank you as well to Jens Hornbostel, Magdalena K\c{e}dziorek, Shai Keidar, and Tommy Lundemo for some helpful conversations and exchanges, and doubly so to Christian, Sil, and Tommy for comments on a draft.

I would like to dedicate this article to Erna, with whom my mind was throughout the gestation period of this article.

Whilst writing this article, I was supported by the DFG-funded research training group GRK 2240: Algebro-Geometric Methods in Algebra, Arithmetic and Topology.

%%%%%%%%%%%%%%%%%%%%%%%%%%%%%%%%%%%%%%%%%%%%%%%%%%%%%%%%%%%%%%%
%%%%%%%%%%%%%%%%%%%%%%%%%%%%%%%%%%%%%%%%%%%%%%%%%%%%%%%%%%%%%%%
%%%%%%%%%%%%%%%%%%%%%%%%%%%%%%%%%%%%%%%%%%%%%%%%%%%%%%%%%%%%%%%
\section{Torsors and the moduli of subgroups}\label{sec:SAG}
One of the main focuses of this article is the systematic study of various notions of torsor in spectral algebraic geometry. We begin with the most general set-up to highlight which phenomenæ occur under which precise hypotheses.

%%%%%%%%%%%%%%%%%%%%%%%%%%%%%%%%%%%%%%%%%%%%%%%%%%%%%%%%%%%%%%%
\subsection{Basics of \texorpdfstring{$\calP$-$\calG$-}{P-G-}torsors}\label{ssec:torsor}
Recall the notions of a \emph{group object} \cite[Df.7.2.2.1]{htt}, \emph{groupoid object} \cite[Df.6.1.2.7]{htt}, and an \emph{action} of a group object \cite[Df.3.1]{nikolaus_schreiber_stevenson_general_theory}. This latter reference takes place in a topos, but it also makes perfect sense in the category $\Stk$ and its slices $\Stk_{/\M}$.

\begin{mydef}\label{df:g-torsor}
    Let $\calG$ be a group object in $\Stk_{/\M}$ and $\calP$ be a property of morphisms of stacks. A \emph{$\calP$-$\calG$-action} on an $\M$-stack $\Y$ is a $\calG$-action such that the structure map from the quotient $\Y//\calG \to \M$ satisfies $\calP$. A \emph{$\calP$-$\calG$-torsor} (over $\M$) is a map $\varphi \colon \Y \to \X$ in $\Stk_{/\M}$ together with a $\calP$-$\calG$-action on $\Y$ inducing an equivalence $\Y // \calG \simeq \X$. If $\calP$ is the vacuous property, ie, all morphisms of stacks are $\calP$, then we abbreviate these notions to \emph{$\calG$-action} and \emph{$\calG$-torsor}.
\end{mydef}

The variables $\calP$ and $\M$ can be used to force the quotient stack $\Y// \calG$ to be well-behaved, such as be representable by an affine or nonconnective spectral scheme. For instance, the universal $\calG$-torsor $\BG = \M // \calG$ is not an affine torsor if $\calG$ is not trivial.

\begin{remark}
    The discussion of torsors in \Cref{sssec:highprin,sssec:closureproperties} also applies in the context of $\Stk_\tau(\calC)$ of \cite[Def.2.1.1.5]{reconstruction} with only cosmetic changes; \Cref{ssec:relationtoGalois} should also broadly apply to generic quasi-coherent sheaf contexts (\cite[Df.2.1.3.2]{reconstruction}). For readability, we simply write and refer to $\Stk$.
\end{remark}

\subsubsection{Higher principality}\label{sssec:highprin}
As highlighted in \cite[\textsection 3]{nikolaus_schreiber_stevenson_general_theory}, this notion of a $\calG$-torsor automatically satisfies the principality condition that the map
\[\calG \times_\M \Y \xrightarrow{\simeq} \Y \times_\X \Y,\]
defined by the $\calG$-action on the left factor and the projection on the right factor, is an equivalence. Indeed, given a $\calG$-torsor $\Y \to \X$, consider the two commutative diagrams of $\M$-stacks
\[\begin{tikzcd}
    {\calG \times_\M \Y}\ar[r]\ar[d]    &   {\calG}\ar[r]\ar[d] &   {\M}\ar[d]  \\
    {\Y}\ar[r]                          &   {\M}\ar[r]          &   {\BG = \M / \calG}
\end{tikzcd}\qquad
\begin{tikzcd}
    {\Y \times_\X \Y}\ar[r]\ar[d]   &   {\Y}\ar[d]\ar[r]    &   {\M}\ar[d]  \\
    {\Y}\ar[r]                      &   {\X}\ar[r]          &   {\BG.}
\end{tikzcd}
\]
Starting with the left diagram, the left square is Cartesian by definition and the right square is Cartesian as $\Omega_\M \BG \simeq \calG$ from the universality of colimits and effectiveness of groupoids in $\Stk$, see \cite[Pr.2.1.2.9(2)]{reconstruction}; this reminds us of the classical fact that $\Omega BG \simeq G$ in spaces for a topological group $G$. On the other hand, in the right diagram, the left square is Cartesian by definition, and the right square is also Cartesian by the universality of colimits; this reminds us of the fibre sequence of spaces $X \to X_{hG} \to BG$. As the map $\Y \to \X \to \BG$ factors through $\M$, and as the two outer rectangles are Cartesian, we see that the upper-left objects are naturally equivalent.

The coherent version of this principality condition is the following:

\begin{prop}\label{pr:higher_principality}
    Given a group object $\calG$ in $\Stk_{/\M}$ and a $\calG$-torsor $\varphi\colon \Y \to \X$, then there is a natural identification of simplicial $\M$-stacks
\[(\Y//\calG)_\bullet = \left(\Y \Leftarrow \calG \times_\M \Y \Lleftarrow \calG^2 \times_\M \Y \cdots\right) \simeq \left(\Y \Leftarrow \Y \times_\X \Y \Lleftarrow \Y^{\times_\X 3} \cdots\right) = \Cech(\Y \to \X)\]
    between the groupoid stack defining the $\calG$-action on $\Y$ and the Čech nerve of $\varphi$. In particular, $\X \simeq \Y // G \simeq D_\varphi$, where $D_\varphi$ is the \emph{descent stack of $\varphi$} (\cite[Df.2.3.1.1]{reconstruction}) and $\varphi$ is an effective epimorphism.
\end{prop}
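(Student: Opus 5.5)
The plan is to compare both simplicial objects with the Čech nerve of the point $\M \to \BG$, pulled back along the classifying map $\X \to \BG$. Since $\X \simeq \Y//\calG$ is the colimit of the action groupoid $(\Y//\calG)_\bullet$, and this groupoid is the pullback of the groupoid $(\M//\calG)_\bullet = (\calG^{\bullet})$ along $\Y \to \M$, we obtain a natural map $\X \to \BG$. The second diagram above shows that $\Y \simeq \X\times_{\BG}\M$. We therefore have a map of simplicial objects $(\Y//\calG)_\bullet \to \Cech(\Y\to\X)$: on each level it is the comparison map from the $n$-th term of the action groupoid to $\Y^{\times_\X (n+1)}$, induced by the augmentation of the action groupoid to its colimit $\X$.

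The first key step is to identify $\Cech(\Y \to \X)$ with the base change along $\X \to \BG$ of $\Cech(\M \to \BG)$. This holds because Čech nerves are built from iterated fibre products, and $\Y \simeq \X \times_{\BG} \M$. It gives
\[\Y^{\times_\X(n+1)} \simeq \X \times_{\BG} \M^{\times_{\BG}(n+1)}.\]
The second step is to use effectiveness of groupoid objects in $\Stk$ (\cite[Pr.2.1.2.9]{reconstruction}, as already used above for $\Omega_\M\BG\simeq\calG$). Effectiveness says that the groupoid $(\calG^n)_\bullet$ is naturally equivalent to the Čech nerve of its colimit map $\M \to \BG$, so $\M^{\times_{\BG}(n+1)}\simeq \calG^n$ as simplicial $\M$-stacks. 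The third step is to apply universality of colimits (descent in the $\infty$-topos-like category $\Stk$). The action groupoid $(\Y//\calG)_\bullet$ is the pullback of $(\M//\calG)_\bullet$ along $\X \to \BG$; here we use that the action groupoid is cartesian over the universal one, which is part of the definition of an action in \cite{nikolaus_schreiber_stevenson_general_theory}. Combining the three steps, both simplicial stacks are identified with $\X\times_{\BG}\Cech(\M\to\BG)$, and the identifications are compatible with the comparison map. Naturality follows because every construction is functorial in the pair of $\calG$ and the torsor.

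For the consequences: $D_\varphi$ is by definition the geometric realisation of $\Cech(\varphi)$ in $\Stk$. By the identification it equals $|(\Y//\calG)_\bullet| = \Y//\calG \simeq \X$. A map whose Čech nerve has colimit the target is an effective epimorphism, which gives the final claim.

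The main obstacle I expect is the cartesianness in the third step. We need the simplicial map $(\Y//\calG)_\bullet \to (\M//\calG)_\bullet$ to be cartesian, meaning all naturality squares are pullbacks, so that descent applies and gives $(\Y//\calG)_n \simeq \X\times_{\BG}\calG^n$ levelwise. I would first try to deduce this from the definition of a $\calG$-action as a groupoid over $B_\bullet\calG$ with cartesian face maps. The squares for $d_0$ are then pullbacks by hypothesis. For the remaining face maps, I would argue that in a simplicial object over a groupoid, the map is cartesian as soon as the squares for a single face map are pullbacks. This is a standard lemma (cf.\ \cite[Lm.6.1.3.17]{htt}-type arguments), which I would check directly with the Segal conditions.
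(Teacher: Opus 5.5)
Your argument is correct, but it takes a different and longer route than the paper. The paper applies effectiveness of groupoids \cite[Pr.2.1.2.9(3)]{reconstruction} directly to the action groupoid. $(\Y//\calG)_\bullet$ is itself a groupoid object, and its colimit is $\X$ by the definition of a torsor. So it is the Čech nerve of its augmentation, which is $\varphi$. The identification of $D_\varphi$ with $|\Cech(\varphi)|$ is then quoted from \cite[Lm.2.3.1.2]{reconstruction}; in the source this is a lemma, not the definition as you assert.

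You instead use effectiveness only for the universal groupoid $(\M//\calG)_\bullet$ and transport it along the classifying map $\X\to\BG$. This needs two inputs the paper does not use:
\begin{itemize}
\item cartesianness of $(\Y//\calG)_\bullet\to(\M//\calG)_\bullet$, via your Segal-condition lemma. This does go through, since one family of face squares is a pullback by the definition of an action.
\item the descent property of $\Stk$ for cartesian transformations, e.g.\ via \cite[Pr.2.1.2.9(4)]{reconstruction}.
\end{itemize}
So your route costs more for the proposition as stated. What it gains is a coherent proof, in all simplicial degrees, that $\varphi$ is the pullback of the universal torsor $\M\to\BG$, i.e.\ $\Y^{\times_\X(n+1)}\simeq\X\times_{\BG}\calG^{n}$. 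That is the coherent form of the two diagrams the paper draws just before the proposition.

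One sentence needs correcting. $(\Y//\calG)_\bullet$ is \emph{not} the pullback of $(\M//\calG)_\bullet$ along $\Y\to\M$: the levelwise pullback $\calG^n\times_\M\Y$ with face maps $d_i\times\mathrm{id}_\Y$ is the action groupoid of the \emph{trivial} action. What you need, and later use correctly, is only the map of groupoids $(\Y//\calG)_\bullet\to(\M//\calG)_\bullet$ given by the action, whose realisation is $\X\to\BG$.
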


This is just a redressing of \cite[Pr.3.7]{nikolaus_schreiber_stevenson_general_theory} that is phrased with topoi (which $\Stk$ is not).

\begin{proof}
    The fact that groupoids are effective in $\Stk$, see \cite[Pr.2.1.2.9(3)]{reconstruction}, shows that the groupoid object $(\Y//\calG)_\bullet$, defined by the fact that $\Y$ has a $\calG$-action, is equivalent to the Čech nerve of $\varphi$ as simplicial $\M$-stacks, giving the desired idenfitication. The identification of $D_\varphi$ with the colimit of this Čech nerve is \cite[Lm.2.3.1.2]{reconstruction}.
\end{proof}

\subsubsection{General closure properties}\label{sssec:closureproperties}
For a reminder of some of the following notions, we refer the reader to \Cref{df:propertiy_of_morphisms}.

\begin{prop}\label{pr:G-torsors_and_basechange}
    Let $\calG$ be a group objects in $\Stk_{/\M}$, $\calP$ a property of morphisms of stacks, and $\varphi \colon \Y \to \X$ be a $\calP$-$\calG$-torsor.
    \begin{enumerate}
        \item If $\calP$ satisfies base change, then for any map of stacks $f\colon \M' \to \M$, the pullback of the $\calG$-action on $\Y$ along $f$ defines a $\calP$-$\calG$-torsor over $\M'$.
        \item If $g\colon \X' \to \X$ is a map such that the composite $\X' \to \X \to \M$ satisfies $\calP$, then the pullback of the $\calG$-action on $\Y$ along $g$ defines a $\calP$-$\calG$-torsor over $\M$.
        \item If $\calP$ satisfies descent, is closed under base change, and the structure map $\calG \to \M$ satisfies $\calP$, then $\varphi$ satisfies $\calP$.
    \end{enumerate}
\end{prop}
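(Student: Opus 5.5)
The plan is to prove the three parts separately, using only the universality of colimits in $\Stk$ (colimits are stable under pullback; \cite[Pr.2.1.2.9]{reconstruction}) and the identification of \Cref{pr:higher_principality}.

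For (1), let $f\colon \M'\to\M$ and set $\calG' = \calG\times_\M \M'$, which is a group object in $\Stk_{/\M'}$ because $f^*\colon \Stk_{/\M}\to\Stk_{/\M'}$ preserves finite limits. For the same reason, $\Y' = \Y\times_\M\M'$ inherits a $\calG'$-action, and its bar construction is the pullback of $(\Y//\calG)_\bullet$. Since $f^*$ preserves geometric realisations by universality of colimits, we get
\[\Y'//\calG' \simeq (\Y//\calG)\times_\M \M' \simeq \X\times_\M\M'.\]
The structure map $\X\times_\M\M'\to\M'$ is the base change of $\X\to\M$, so it satisfies $\calP$. Hence $\Y'\to\X'$ is a $\calP$-$\calG'$-torsor over $\M'$.

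For (2), set $\Y' = \Y\times_\X\X'$. The action of $\calG$ on $\Y$ is over $\X$, since $\X$ is the colimit. Therefore the simplicial object $(\Y//\calG)_\bullet\times_\X \X'$ is again a bar construction, namely $\calG^n\times_\M \Y'$, defining a $\calG$-action on $\Y'$. By universality of colimits its realisation is $\X\times_\X\X' = \X'$. The structure map is $\X'\to\X\to\M$, which satisfies $\calP$ by hypothesis.

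For (3), we use the principality square $\calG\times_\M\Y \simeq \Y\times_\X\Y$ from \Cref{pr:higher_principality}. Under this equivalence, the projection $\Y\times_\X\Y\to\Y$ onto the first factor is the base change of $\varphi$ along itself, and it corresponds to the projection $\calG\times_\M\Y\to\Y$. That projection is the base change of $\calG\to\M$ along $\Y\to\M$, so it satisfies $\calP$ because $\calP$ is closed under base change. Since $\varphi$ is an effective epimorphism (again by \Cref{pr:higher_principality}), descent for $\calP$ along $\varphi$ then shows that $\varphi$ satisfies $\calP$.

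The main subtlety is making sure that the notion of "descent" in \Cref{df:propertiy_of_morphisms} applies to a general effective epimorphism $\varphi$. It may instead require something like a flat cover, or be phrased as descent along the cover itself. If it only allows descent along fpqc covers of the target, I would first pull back to an affine $\Spec R\to\X$. There one checks that $\varphi$ base-changed along $\Y\times_\X\Spec R\to\Spec R$ satisfies $\calP$. This would require that map to be a cover, which I would try to extract from $\calG\to\M$ satisfying $\calP$ (e.g. flat and surjective) together with effectivity.
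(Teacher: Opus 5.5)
Your proposal is correct and follows the paper's proof. The paper dismisses parts 1 and 2 as holding by definition, and your universality-of-colimits argument is simply the unpacking of that; for part 3 it uses exactly your argument, identifying the base change of $\varphi$ along itself with the projection $\calG\times_\M\Y\to\Y$ via \Cref{pr:higher_principality} and then descending along the effective epimorphism $\varphi$. Your closing worry is moot: in \Cref{df:propertiy_of_morphisms}, descent is defined along an arbitrary effective epimorphism, so no flatness or fpqc-cover hypothesis on $\varphi$ is needed.
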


\begin{proof}
    Parts 1 and 2 follow by definition. For part 3, to show that $\varphi$ satisfies $\calP$, it suffices to show that its pullback against itself satisfies $\calP$, as $\varphi$ is an effective epimorphism by \Cref{pr:higher_principality}. Another application of \Cref{pr:higher_principality} identifies this pullback with the projection $\Y\times_\M \calG \to \Y$, which satisfies $\calP$ as the pullback of $\calG \to \M$ satisfying $\calP$ by assumption.
\end{proof}

Part 3 of \Cref{pr:G-torsors_and_basechange} applied to affine torsors (using part 1 of \Cref{pr:permanenceproperties_for_morphisms}) immediately yields:

\begin{cor}\label{pr:torsors_are_relatively_affine}
    Let $\calG$ be a group object in $\Stk_{/\M}$ and $\varphi \colon \Y \to \X$ be an affine $\calG$-torsor over $\M$. If the structure map $\calG \to \M$ is affine, then $\varphi$ is affine.
\end{cor}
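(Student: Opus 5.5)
The plan is to apply part 3 of \Cref{pr:G-torsors_and_basechange} with $\calP$ the property of being an affine morphism of stacks. That part has three hypotheses, and I would check them in turn.

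\emph{Base change.} Affine morphisms are stable under base change. This is part 1 of \Cref{pr:permanenceproperties_for_morphisms}; it follows because a pullback of a relatively affine map, restricted to any affine $\Spec R$ mapping to the target, is again of the form $\Spec$ of an $\E_\infty$-$R$-algebra.

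\emph{The structure map.} The hypothesis that $\calG \to \M$ is affine is given.

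\emph{Descent.} We need that affineness can be checked after pullback along an effective epimorphism, here $\varphi$ itself. Write $\Y$ for the source of $\varphi$. Since $\varphi$ is an effective epimorphism by \Cref{pr:higher_principality}, this is exactly the step that lets us test affineness of $\varphi$ on the pullback $\Y\times_\X\Y \to \Y$. Combined with \Cref{pr:higher_principality}, that pullback is identified with the projection $\calG\times_\M\Y \to \Y$, which is affine as a base change of $\calG\to\M$.

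The step I expect to be the main obstacle is descent. The cited permanence result presumably only asserts it for \emph{fpqc}-type covers or effective epimorphisms in $\Stk$. For a general effective epimorphism one needs that relatively affine stacks over a stack form a stack in the target, which amounts to descent for $\CAlg(\QCoh(-))$.

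Concretely, I would argue as follows. Let $\Spec R \to \X$ be a map from an affine. The pullback $\Y_R = \Y \times_\X \Spec R \to \Spec R$ is a $\calG_R$-torsor, where $\calG_R = \calG \times_\M \Spec R$. Its \v{C}ech nerve is identified with $\calG_R^{\bullet}\times\Y_R$, and it has colimit $\Spec R$. Since $\QCoh$ and $\CAlg(\QCoh)$ send colimits of stacks to limits, relative affineness of $\Y_R$ over $\Spec R$ follows by descending the quasi-coherent algebra $\varphi_\ast\O_\Y$. This is the content of the descent clause in \Cref{pr:permanenceproperties_for_morphisms}, which I would simply invoke.

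Once these three points are in place, part 3 of \Cref{pr:G-torsors_and_basechange} gives that $\varphi$ is affine. The affineness of $\Y//\calG \to \M$ assumed in the statement is not needed for this particular conclusion.
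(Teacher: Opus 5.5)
Your proposal is correct and is the paper's own argument: the corollary is obtained by applying part 3 of \Cref{pr:G-torsors_and_basechange} with $\calP$ the property of being affine, using base change and descent for affine morphisms from \Cref{pr:permanenceproperties_for_morphisms}. Your worry about the descent step is unnecessary, because the descent property in \Cref{df:propertiy_of_morphisms} is already stated for arbitrary effective epimorphisms in $\Stk$, so the invocation applies directly; you are also right that affineness of the quotient over $\M$ is never used (the paper later applies this corollary to $G$-torsors over $\X$ itself).
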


One way to produce group objects in $\Stk_{/\M}$ is by sending a group object $\calG$ in spaces along the unique colimit-preserving functor $(\underline{-}) \colon \Spc \to \Stk_{/\M}$ that preserves terminal objects; we often omit the underline from our notation. In this situation, we can further identify the $\Y//\calG$ with the colimit of the functor $B\calG \to \Stk_{/\X}$ defining the $\calG$-action on $\Y$:
\[\Y // \underline{\calG} = \colim (\Y//\underline{\calG})_\bullet \simeq \underset{B\calG}{\colim} \Y = \Y/G\]
This allows for further easy manipulation of $\calG$-torsors.

\begin{cor}\label{cor:qcoh_and_fixedpoints}
    Given a group object $\calG$ in spaces and a $\calG$-torsor $\varphi\colon \Y \to \X$ over $\M$, then pullback $\varphi^\ast \colon \QCoh(\X) \to \QCoh(\Y)$ induces an equivalence $\QCoh(\X) \simeq \QCoh(\Y)^{h\calG}$.
\end{cor}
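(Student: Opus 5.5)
The plan is to combine the colimit description $\X \simeq \Y/\calG = \colim_{B\calG}\Y$, established just before the statement, with the fact that $\QCoh(-)$ turns colimits of stacks into limits of categories. Concretely, $\QCoh\colon \Stk^\op \to \PrLst$ is by definition (see \cite{reconstruction}) the right Kan extension of $\Spec R \mapsto \Mod_R$ along the Yoneda embedding. It therefore sends all small colimits of stacks to limits in $\Cat_\infty$, with the transition maps given by pullback functors.

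The first step is to make the colimit identification precise. Since $\underline{(-)}\colon \Spc \to \Stk_{/\M}$ preserves colimits and terminal objects, the simplicial object $(\Y//\underline{\calG})_\bullet$ is the bar construction for the $B\calG$-diagram defining the action. Its geometric realisation is $\colim_{B\calG}\Y$. By \Cref{pr:higher_principality}, $\varphi$ exhibits $\X$ as the colimit of the Čech nerve, which is equivalent to $(\Y//\calG)_\bullet$. Hence the cocone $\Y \to \X$ over $B\calG$, namely $\varphi$ with its $\calG$-equivariance, is a colimit cocone in $\Stk$. Colimits in $\Stk_{/\M}$ are computed in $\Stk$, so we can work there.

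The second step applies $\QCoh$. We get
\[\QCoh(\X) \simeq \QCoh(\colim_{B\calG} \Y) \simeq \lim_{B\calG} \QCoh(\Y) = \QCoh(\Y)^{h\calG},\]
and the projection from the limit to the underlying vertex is exactly $\varphi^\ast$. Limits in $\PrL$ are computed in $\Cat_\infty$, so the homotopy fixed points may be taken in either.

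The main point needing care is that $\QCoh$ genuinely sends colimits in the fpqc stack category $\Stk$, and not only in presheaves, to limits. Since $\Stk$ is a localisation of presheaves and $\QCoh$ satisfies fpqc descent, $\QCoh$ factors through sheafification. So the colimit in $\Stk$, which is the sheafification of the presheaf colimit, has the same $\QCoh$ as the presheaf colimit, and for presheaves the claim holds by the right Kan extension formula. This is the content of \cite[Pr.6.2.3.1]{reconstruction}, or its analogue there, which I would cite. Alternatively, one can argue via the Čech nerve: $\QCoh(\X) \simeq \lim_\Delta \QCoh(\Y^{\times_\X \bullet+1})$ by descent along the effective epimorphism $\varphi$, and then \Cref{pr:higher_principality} identifies this with $\lim_\Delta \QCoh(\calG^\bullet\times_\M \Y)$, which computes $\lim_{B\calG}\QCoh(\Y)$.
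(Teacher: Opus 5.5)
Your proposal is correct and follows the paper's proof: identify $\X \simeq \Y//\calG \simeq \underset{B\calG}{\colim}\,\Y$ and use that $\QCoh(-)$ sends colimits of stacks to limits, giving $\QCoh(\X)\simeq \lim_{B\calG}\QCoh(\Y) = \QCoh(\Y)^{h\calG}$ with $\varphi^\ast$ as the projection. Your extra justification that $\QCoh$ turns colimits in $\Stk$ (not just in presheaves) into limits, and your alternative Čech-nerve route via \Cref{pr:higher_principality}, are sound elaborations of a step the paper takes for granted.
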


\begin{proof}
    As $\QCoh(-)\colon \Stk^\op \to \PrLst$ sends colimits of stacks to limits of categories we have
    \[\QCoh(\X) \simeq \QCoh(\Y//\calG) \simeq \QCoh(\underset{B\calG}{\colim} \Y) \simeq \lim_{B\calG} \QCoh(\Y) = \QCoh(\Y)^{h\calG}.\qedhere\]
\end{proof}

\subsubsection{Deligne--Mumford stacks and classical stacks}
As long as the group in spaces $\calG = G$ is a discrete finite group, the notion of a $G$-torsor plays well with Deligne--Mumford stacks.

\begin{prop}\label{pr:dm_and_torsors}
    The functor of points $h \colon \SpDM^\nc \to \Stk$ of \cite[Pr.3.2.2.2]{reconstruction} sends $G$-torsors to $G$-torsors. In particular, if $\X$ is a nonconnective spectral Deligne--Mumford stack with $G$-action, then the quotient stack $\X/G$ can equivalently be formed in $\SpDM^\nc$ or $\Stk$.
\end{prop}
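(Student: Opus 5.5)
The plan is to reduce everything to one claim: the functor of points $h\colon \SpDM^\nc \to \Stk$ preserves the colimit of the $BG$-indexed diagram defining the $G$-action, at least when that diagram is a $G$-torsor in $\SpDM^\nc$. The fact I will lean on is that $h$ is fully faithful. It also preserves finite limits, being a right-adjoint-type construction given by Yoneda followed by fpqc sheafification of a subcanonical functor. Finally, étale surjections in $\SpDM^\nc$ go to effective epimorphisms in $\Stk$.

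First I compare the quotients. Let $\varphi\colon \Y \to \X$ be a $G$-torsor in $\SpDM^\nc$, meaning $\X \simeq \Y/G$ is computed there. For finite discrete $G$, the map $\coprod_G \Y \to \Y\times_\X\Y$ should be an equivalence, so $\varphi$ is an étale cover. Indeed, DM stacks are by definition locally modelled on étale maps. The quotient by a finite group is formed étale-locally as the geometric realisation of the action groupoid $\coprod_{G^n}\Y$, and that groupoid is étale because $G$ is discrete.

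Next I apply $h$. It preserves finite coproducts of DM stacks, since disjoint unions of spectral DM stacks are disjoint unions of the associated sheaves. It also preserves fibre products. So $h$ carries the Čech nerve of $\varphi$ to the Čech nerve of $h(\varphi)$. It also carries the action groupoid $(\Y/G)_\bullet$ to $(h\Y//G)_\bullet$, because $h(\coprod_{G^n}\Y)\simeq \coprod_{G^n} h\Y = G^n\times h\Y$ with $G$ constant.

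Then I show $h(\varphi)$ is an effective epimorphism in $\Stk$. Étale surjections are fpqc covers, and $h$ of such a cover becomes an effective epimorphism of fpqc sheaves. By effectiveness of groupoids in $\Stk$ (as used in \Cref{pr:higher_principality}), we get $h\X \simeq |\Cech(h\varphi)| \simeq |(h\Y//G)_\bullet| = h\Y/G$. Hence $h(\varphi)$ is a $G$-torsor, and the quotient $\X/G$ agrees in both categories.

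For the "in particular" clause, suppose $\X$ is a DM stack with $G$-action. I would invoke that a quotient of a nonconnective spectral DM stack by a finite group exists in $\SpDM^\nc$, say via Lurie's étale-groupoid quotient. Once it exists, the previous paragraph identifies it with the stack quotient.

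The main obstacle is the first step: checking that the quotient in $\SpDM^\nc$ is genuinely a torsor, i.e. that $\varphi$ is étale surjective with the principality equivalence. This requires knowing how colimits in $\SpDM^\nc$ are constructed, and I would cite the relevant statement in \cite{reconstruction} or SAG on quotients by étale equivalence relations.
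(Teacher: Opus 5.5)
Your proposal has a genuine gap at the step you flag, and that step carries the whole proposition. The reduction from ``$\varphi$ is an étale cover whose Čech nerve is the action groupoid in $\SpDM^\nc$'' to the conclusion is fine. The gap is that knowing $\X\simeq\Y/G$ is a colimit in $\SpDM^\nc$ gives you no control over $\Y\times_\X\Y$ or over $\varphi$ being a cover. $\SpDM^\nc$ is not an $\infty$-topos, so neither universality of colimits nor effectivity of groupoids is available there.

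Your justification is also circular. Principality $\coprod_G\Y\simeq\Y\times_\X\Y$ only lets you descend étale surjectivity along $\varphi$ if you already know $\varphi$ is a cover. Compare how \Cref{cor:finiteetaleness} relies on the effective epimorphism supplied by \Cref{pr:higher_principality}. The citation you propose, on quotients by étale \emph{equivalence relations}, would not suffice either. $G$-actions need not be free: the trivial action on $\Spec A$ has quotient $\Spec A\times BG$, and its action groupoid is not an equivalence relation.

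The missing idea is the one the paper uses. By \cite[Pr.3.2.2.2(2)]{reconstruction}, $h$ preserves colimits of diagrams in $\SpDM^\nc$ all of whose morphisms are étale. Since $BG$ is a groupoid, every functor $BG\to\SpDM^\nc$ sends all morphisms to equivalences, which are étale. Hence $h(\Y/G)\simeq h\Y/G$ directly. Together with $h$ preserving finite limits, that is the entire proof.

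With this input, your Čech-nerve and effective-epimorphism detour becomes unnecessary. Étale surjectivity of $\varphi$ and principality need not be established in $\SpDM^\nc$ beforehand. They follow afterwards in $\Stk$ from \Cref{pr:higher_principality} and \Cref{cor:finiteetaleness}, and can be transported back along the fully faithful, finite-limit-preserving $h$.
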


This is intuitive, as the map $\X \to \X/G$ is a finite étale cover (\Cref{cor:finiteetaleness}), which can be used as an atlas for $\X/G$. As mentioned in \cite[Not.3.2.2.3]{sag}, quotient stacks $\X/G$ always exist in $\SpDM^\nc$.

\begin{proof}
    As $h$ preserves finite limits, it suffices to check that the quotients by $G$ taken in $\SpDM^\nc$ agree with those taken in $\Stk$. However, \cite[Pr.3.2.2.2(2)]{reconstruction} states that $h$ sends colimits of diagrams of nonconnective spectral Deligne--Mumford stacks consisting of étale morphisms to colimits in $\Stk$, and since $BG$ is a groupoid, a functor $BG \to \SpDM^\nc$ always factors through the wide subcategory of $\SpDM^\nc$ spanned by étale morphisms, so we are done.
\end{proof}

There is an analogous definition of torsors in $\Stk^\heartsuit$ following \Cref{df:g-torsor}. By \cite[Df.3.1.1.7]{reconstruction}, there is a colimit-preserving functor $(-)^\heartsuit \colon \Stk \to \Stk^{\heartsuit}$ characterised by its value on affines $\Spec A^\heartsuit = \Spec \pi_0 A$. As long as some flatness is assumed, then $(-)^\heartsuit$ preserves torsors.

\begin{prop}\label{pr:torsors_and_hearts}
    Let $\varphi\colon \Y \to \X$ be a $G$-torsor in $\Stk_{/\M}$ such that $\Y \to \M$ is a flat map between geometric stacks (\cite[Df.3.4.2.1]{reconstruction}). Then $\Y^\heartsuit \to \X^\heartsuit$ is a $\calG$-torsor in $\Stk_{/\M^\heartsuit}^\heartsuit$.
\end{prop}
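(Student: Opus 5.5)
Since $(-)^\heartsuit\colon \Stk \to \Stk^\heartsuit$ preserves colimits, it commutes with the colimit over $BG$ defining the quotient. This gives $\X^\heartsuit \simeq (\Y/G)^\heartsuit \simeq \Y^\heartsuit/G$, with the $G$-action on $\Y^\heartsuit$ induced by functoriality. Hence the quotient half of the definition is automatic, and the real work is to check that $\Y^\heartsuit \to \X^\heartsuit$ is a $G$-torsor over $\M^\heartsuit$. Two things need care: that $\Y^\heartsuit$ and $\X^\heartsuit$ live over $\M^\heartsuit$ compatibly, and that the relevant finite limits (the group object $\underline{G}$ over the base, and the products $\underline{G}\times_\M \Y$ appearing in the action groupoid) are carried to the corresponding classical ones.

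The structure maps are immediate, since $(-)^\heartsuit$ is a functor. The constant group $\underline{G}$ over $\M$ is the coproduct $\coprod_G \M$, so it goes to $\coprod_G \M^\heartsuit = \underline{G}$ over $\M^\heartsuit$. Likewise $\underline G^n\times_\M \Y \simeq \coprod_{G^n}\Y$ goes to $\coprod_{G^n}\Y^\heartsuit$. So the action simplicial object $(\Y//G)_\bullet$ is carried levelwise to $(\Y^\heartsuit//G)_\bullet$, and its colimit is $\X^\heartsuit$ by colimit preservation.

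I expect the flatness hypothesis to be needed to identify the classical stack $\Y^\heartsuit$ over $\M^\heartsuit$ with the truncation of the relative object, that is, to show that $(-)^\heartsuit$ preserves the fibre products over $\M$ at issue. The plan is to reduce to affines. Pick a flat cover $\Spec A\to\M$ of the geometric stack $\M$, and note that $\Y\times_\M\Spec A$ is geometric and flat over $\Spec A$, hence admits flat affine covers $\Spec B$ with $B$ flat over $A$. For such $B$ and any $A\to C$, Lurie's flatness criterion gives $\pi_0(B\otimes_A C)\simeq \pi_0 B\otimes_{\pi_0A}\pi_0 C$. Therefore $(\Y\times_\M \Z)^\heartsuit\simeq \Y^\heartsuit\times_{\M^\heartsuit}\Z^\heartsuit$ for all $\Z$ over $\M$, by descent and because $(-)^\heartsuit$ preserves colimits. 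In particular the principality equivalence $G\times_\M\Y\simeq\Y\times_\X\Y$ of \Cref{pr:higher_principality} truncates correctly, which is consistent with the torsor structure on the classical side.

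The main obstacle is this compatibility of $(-)^\heartsuit$ with fibre products. In general $(-)^\heartsuit$ preserves only colimits, not limits, and the flatness of $\Y\to\M$ has to be transported through the geometric-stack covers. I would handle it with the cited criterion for flat maps between geometric stacks from \cite{reconstruction}: check it on a flat affine atlas, where it becomes the algebraic Tor-vanishing statement above, and then glue by descent.
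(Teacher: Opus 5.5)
Your proof is correct, and your second paragraph already completes it. Since $\underline{G}=\coprod_G\M$, universality of colimits gives $\underline{G}^n\times_\M\Y\simeq\coprod_{G^n}\Y$. So the action groupoid is built from $\Y$ using only coproducts; equivalently $\X\simeq\mathrm{colim}_{BG}\,\Y$, as the paper observes just before \Cref{cor:qcoh_and_fixedpoints}. The colimit-preserving functor $(-)^\heartsuit$ therefore carries it to the action groupoid of $\Y^\heartsuit$ over $\M^\heartsuit$, whose colimit is $\X^\heartsuit$.

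The paper makes the same first reduction, comparing the two simplicial objects levelwise. It then gets $(\calG\times_\M\Y)^\heartsuit\simeq\calG\times_{\M^\heartsuit}\Y^\heartsuit$ by citing \cite[Pr.3.4.3.3]{reconstruction}, i.e.\ that $(-)^\heartsuit$ commutes with fibre products along flat maps between geometric stacks. That is the only place the flatness hypothesis enters. Your coproduct observation shows that for a constant finite group, which is the standing setting here, flatness is superfluous. The paper's route is the one that would still work for a genuinely non-constant group object $\calG$ over $\M$ (say a finite flat group scheme), where $\calG\times_\M\Y$ is an honest fibre product; this is presumably why the paper writes $\calG$.

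Your third and fourth paragraphs, which you present as the main obstacle, are therefore not needed. They are essentially a sketch of the cited result, and you should drop the expectation that flatness has to be used: your own second paragraph already disposed of the only fibre products at issue.
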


\begin{proof}
    As $(-)^\heartsuit$ preserves colimits, it suffices to show that the simplicial objects of $\Stk^\heartsuit$
    \[(\Y^\heartsuit//\calG)_\bullet = \left(\Y^\heartsuit \Leftarrow \calG \times_{\M^\heartsuit} \Y^\heartsuit \Lleftarrow \cdots\right), \qquad (\Y//\calG)_\bullet^\heartsuit = \left(\Y^\heartsuit \Leftarrow (\calG \times_{\M} \Y)^\heartsuit \Lleftarrow \cdots\right)\]
    agree. In other words, we have to show that $(-)^\heartsuit \colon \Stk_{/\M} \to \Stk^\heartsuit_{/\M^\heartsuit}$ commutes with the above products, which is a consequence of \cite[Pr.3.4.3.3]{reconstruction}.
\end{proof}

\subsubsection{Affine torsors and \texorpdfstring{$\pi_0$}{pi0}-Galois extensions}
Standard examples of affine torsors arise from certain {Galois extensions}. Recall that $G$ is a finite discrete group.

\begin{mydef}\label{df:lurie_galois}
    A \emph{$\pi_0$-$G$-Galois action} on an $\E_\infty$-ring $B$ is a $G$-action such that the induced $G$-action on $\pi_0 B$ is free\footnote{Recall that an action of a group $G$ on a discrete commutative ring $R$ is \emph{free} if for all nonzero rings $A$, the set of homomorphisms $\Hom(R,A)$ is acted on freely by $G$.}. In this case, we write $A=B^{hG}$ and call the map of $\E_\infty$-rings $A \to B$ a \emph{$\pi_0$-$G$-Galois extension}.
\end{mydef}

By \cite[Pr.B.7.5.5]{sag}, if $A \to B$ is a $\pi_0$-$G$-Galois extension, then it is also finite étale and faithfully flat, the map $\pi_n A \to \pi_n B$ is the inclusion of $G$-invariant elements, and the natural map
\begin{equation}\label{eq:pi0galoisisgalois}
B\otimes_A B \to \prod_G B
\end{equation}
is an equivalence.

\begin{prop}\label{pr:pi0galois_to_torsor}
    Given a $\pi_0$-$G$-Galois extension of $\E_\infty$-rings $A\to B$, the induced map of stacks $\Spec B \to \Spec A$ is an affine $G$-torsor over $\Spec \Sph$. Moreover, if $B$ is a flat $\E_\infty$-algebra over an $\E_\infty$-ring $A'$ and $G$-acts through $A'$-algebra maps, then $\Spec B \to \Spec A$ is an affine flat $G$-torsor over $\Spec A'$.
\end{prop}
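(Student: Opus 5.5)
The plan is to verify directly that the quotient $\Spec B / G$, formed in $\Stk$, is equivalent to $\Spec A$ via the natural map. Since $G$ acts on $\Spec B$ through the $G$-action on $B$, the map $\Spec B \to \Spec A$ coming from $A = B^{hG} \to B$ is $G$-equivariant for the trivial action on $\Spec A$, so it induces a map $\Spec B / G \to \Spec A$. The aim is to show this map is an equivalence of stacks. Once that is done, the torsor statement over $\Spec \Sph$ follows, since $\Spec A \to \Spec \Sph$ is affine.

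\textbf{The key step.} I would compare simplicial objects, in the same spirit as \Cref{pr:higher_principality}.
\begin{itemize}
\item By \cite[Pr.B.7.5.5]{sag}, the map $A \to B$ is faithfully flat and finite étale, and hence an fpqc covering. In particular $\Spec B \to \Spec A$ is an effective epimorphism in $\Stk$, so $\Spec A$ is the colimit of the Čech nerve of $\Spec B \to \Spec A$.
\item The $n$-th term of this Čech nerve is $\Spec(B^{\otimes_A (n+1)})$.
\item Iterating the equivalence (\ref{eq:pi0galoisisgalois}) identifies $B^{\otimes_A (n+1)} \simeq \prod_{G^n} B$. Equivalently, $\Spec$ of it is $G^n \times \Spec B$, which is the $n$-th term of the action groupoid $(\Spec B // G)_\bullet$.
\item These identifications are compatible with the face and degeneracy maps. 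This is because the map $B\otimes_A B \to \prod_G B$ is, by construction, the shear map $b_1\otimes b_2 \mapsto (b_1 g(b_2))_g$, and the Čech nerve of $\Spec B \to \Spec A$ and the action groupoid are both determined by this shear equivalence.
\item Taking colimits of the two equivalent simplicial objects gives $\Spec B / G \simeq \colim (\Spec B//G)_\bullet \simeq \Spec A$.
\end{itemize}

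\textbf{The main obstacle.} The step needing most care is making the compatibility of simplicial structures coherent, rather than merely levelwise. I would handle it as follows. Produce a map of simplicial objects from the action groupoid to the Čech nerve. Such a map exists because the Čech nerve is the right Kan extension (a coskeleton) of its restriction to $\Delta_{\le 0}$ in $\Stk_{/\Spec A}$. Since both simplicial objects are groupoid objects, a map between them is an equivalence as soon as it is an equivalence in degrees $0$ and $1$. Degree $0$ is the identity on $\Spec B$, and degree $1$ is exactly the shear equivalence above.

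\textbf{The flat statement.} Suppose $B$ is flat over $A'$ and $G$ acts by $A'$-algebra maps. Then the whole diagram lives in $\Stk_{/\Spec A'}$, and the quotient is $\Spec A \to \Spec A'$, which is affine. For flatness, recall that $A \to B$ is faithfully flat and $A' \to B$ is flat. Flatness can be checked on homotopy groups: $\pi_\ast B \cong \pi_0 B \otimes_{\pi_0 A} \pi_\ast A$ is flat over $\pi_0 A$, and $\pi_0 A \to \pi_0 B$ is faithfully flat. From this, faithfully flat descent of flatness for the modules $\pi_\ast$ gives that $\pi_0 A$ is flat over $\pi_0 A'$ and that $\pi_\ast A \simeq \pi_0 A\otimes_{\pi_0 A'} \pi_\ast A'$. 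So $A' \to A$ is flat, and $\Spec B \to \Spec A$ is an affine flat $G$-torsor over $\Spec A'$.
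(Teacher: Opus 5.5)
Your proposal is correct and follows essentially the same route as the paper. You present $\Spec A$ as the colimit of the Čech nerve of the faithfully flat map $A \to B$, map the action groupoid into it, and check the equivalence of groupoid objects only in degrees $0$ and $1$, where degree $1$ is the shear equivalence (\ref{eq:pi0galoisisgalois}); flatness of $A' \to A$ then follows by descent along $A \to B$. The only cosmetic difference is that you verify this last step by hand via faithfully flat descent on homotopy groups, whereas the paper cites flat-locality on the source \cite[Lm.B.1.4.2(2)]{sag}.
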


Using \Cref{pr:dm_and_torsors}, one can show that $\Spec B /G =\Spec A$ by referring to \cite[Pr.3.2.2.5]{sag}, however, one can also argue directly.

\begin{proof}
    The stack $\Spec A$ is clearly affine, so it suffices to identify $\Spec B / G$ with $\Spec A$. As $A \to B$ is faithfully flat, we can present $\Spec A$ as the colimit of the Čech nerve $\check{C}(A\to B)$ of $\Spec B \to \Spec A$. On the other hand, the quotient $\Spec B / G$ is given by the colimit of the action groupoid $(\Spec B // G)_\bullet = G^{\times \bullet} \times \Spec B$. It then suffices to show that the natural map of simplicial objects $(\Spec B // G)_\bullet \to \check{C}(A\to B)$ is an equivalence. As both are groupoids and groupoids are effective in $\Stk$ by \cite[Pr.2.1.2.9(3)]{reconstruction}, it suffices to check this identification for $\bullet=0,1$. For $\bullet=0$, the map is the identity on $\Spec B$, and for $\bullet=1$, this is the equivalence (\ref{eq:pi0galoisisgalois}).
    %By \Cref{pr:dm_and_torsors}, this quotient can be taken in $\Stk$ or in nonconnective spectral Deligne--Mumford stacks, which is identified with $\Spec B^{hG} = \Spec A$ courtesy of \cite[Pr.3.2.2.5]{sag}. 
    
    For the ``moreover'' statement, to check that $A' \to A$ is flat, it suffices to check after the faithfully flat extension $A \to B$, as flat morphisms are local on the source with respect to the flat topology \cite[Lm.B.1.4.2(2)]{sag}, and this composition is flat by assumption.
\end{proof}

This statement has a converse; thank you to Sil Linskens for asking about such a statement.

\begin{prop}\label{pr:converse}
    Let $A \to B$ be a flat morphism of $\E_\infty$-rings and let $G$ act on $B$ through $\E_\infty$-$A$-algebra maps. Then $\Spec B \to \Spec A$ is an affine $G$-torsor over $\Spec \Sph$ if and only if $A \to B$ is a $\pi_0$-$G$-Galois extension.
\end{prop}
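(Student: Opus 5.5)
One direction is \Cref{pr:pi0galois_to_torsor}: if $A\to B$ is a $\pi_0$-$G$-Galois extension, then $\Spec B\to\Spec A$ is an affine $G$-torsor over $\Spec\Sph$. For the converse, assume $A\to B$ is flat, $G$ acts by $A$-algebra maps, and $\Spec B \to \Spec A$ is a $G$-torsor, so $\Spec B/G\simeq \Spec A$. The plan is to extract the Galois condition (\ref{eq:pi0galoisisgalois}) from higher principality, and then pass to $\pi_0$ using flatness.

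First, by \Cref{pr:higher_principality}, the principality map $G\times \Spec B\to \Spec B\times_{\Spec A}\Spec B$ is an equivalence. All objects here are affine, since $\Spec$ is fully faithful and sends pushouts of $\E_\infty$-rings to fibre products of affine stacks. Hence the map $B\otimes_A B\to \prod_G B$, $b_1\otimes b_2\mapsto (b_1g(b_2))_g$, is an equivalence of $\E_\infty$-rings. I will also record $A\simeq B^{hG}$: by \Cref{pr:higher_principality}, $\varphi$ is an effective epimorphism, and by \Cref{cor:qcoh_and_fixedpoints}, $\Mod_A\simeq \QCoh(\Spec B)^{hG}=\Mod_B^{hG}$. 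Taking endomorphisms of the unit gives $A\simeq B^{hG}$.

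Next I pass to $\pi_0$. Since $B$ is flat over $A$, we have $\pi_0(B\otimes_AB)\cong \pi_0B\otimes_{\pi_0A}\pi_0B$. Therefore
\[\pi_0B\otimes_{\pi_0 A}\pi_0 B\to\prod_G\pi_0B\]
is an isomorphism, and $\pi_0A\to\pi_0B$ is flat.

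The remaining work is to show that $G$ acts freely on $\pi_0B$ in the sense of the footnote. Let $R$ be a nonzero ring and $f\colon \pi_0B\to R$ a homomorphism with $f\circ g=f$; I need $g=e$. Take the composite
\[\pi_0B\otimes_{\pi_0A}\pi_0B\cong\prod_G\pi_0B\to R,\qquad b_1\otimes b_2\mapsto f(b_1)f(b_2).\]
A ring map out of a finite product to a nonzero ring $R$ need not factor through a single factor unless $\Spec R$ is connected. So I first reduce to $R$ a field by composing with $R\to R/\mathfrak m\to$ residue field; freeness can be tested on fields, since a stabilised point over $R$ gives one over any residue field.

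Over a field $k$, the map factors through exactly one projection $\mathrm{pr}_h$. Restricted to $\pi_0 B\otimes 1$, it gives $f=f\circ\mathrm{pr}_h(-\otimes 1)=f$, so this places no constraint. Restricted to $1\otimes \pi_0B$, it gives $f = f\circ h$. The map $b_1\otimes b_2\mapsto f(b_1)f(b_2)$ equals both $(b_1\otimes b_2)\mapsto f(b_1)f(h b_2)$ and $f(b_1) f(g h b_2)$. So the point of $\Spec(\pi_0B\otimes\pi_0B)$ lies in the $h$-component and in the $gh$-component. The components are disjoint, since they are distinct idempotent factors of $\prod_G$, so $g h = h$, i.e.\ $g=e$.

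I expect this bookkeeping with idempotents to be the main obstacle: checking carefully that the two factorizations define the same $k$-point of the $\prod_G$ decomposition, hence that they land in the same component. Faithful flatness of $\pi_0A\to\pi_0B$ (to exclude $B=0$ issues) follows from the torsor map being an effective epimorphism, but it is not needed for freeness itself.
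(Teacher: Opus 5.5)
Your argument is correct, but it is organised differently from the paper's.

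\textbf{The paper's route.} It uses flatness at the level of stacks. \Cref{pr:torsors_and_hearts} pushes the torsor down to a classical $G$-torsor $\Spec\pi_0B\to\Spec\pi_0A$ of affine schemes. Principality in $\Stk^\heartsuit$ then gives $\pi_0B\otimes_{\pi_0A}\pi_0B\cong\prod_G\pi_0B$. The paper then obtains freeness, and hence the $\pi_0$-$G$-Galois conclusion, by citing \cite[Rmk.B.7.5.3]{sag}.

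\textbf{Your route.} You apply \Cref{pr:higher_principality} to the spectral torsor itself and get $B\otimes_AB\simeq\prod_GB$ directly. For this you should also record $\coprod_G\Spec B\simeq\Spec\prod_GB$, not only the formula for the fibre product. You then use flatness only to compute $\pi_0(B\otimes_AB)$, prove freeness by hand, and obtain $A\simeq B^{hG}$ from \Cref{cor:qcoh_and_fixedpoints}. (More quickly, $\Ga$ sends the colimit $\Spec B/G$ to the limit $B^{hG}$.)

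\textbf{Comparison.} Your route is self-contained: it avoids the heart functor and \cite[Pr.3.4.3.3]{reconstruction}. It also makes explicit the equivalence $A\simeq B^{hG}$, which the definition of a $\pi_0$-$G$-Galois extension requires and which the paper leaves implicit. The paper's route is shorter, since it reduces to a purely classical statement about torsors of affine schemes and then appeals to Lurie.

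\textbf{Freeness is simpler than you fear.} You need neither the reduction to fields nor the choice of a component. Write $\Theta\colon\pi_0B\otimes_{\pi_0A}\pi_0B\to\prod_G\pi_0B$ for the isomorphism. Suppose $f\colon\pi_0B\to R$ satisfies $f\circ g=f$. Then $f\circ\mathrm{pr}_e\circ\Theta$ and $f\circ\mathrm{pr}_g\circ\Theta$ both send $b_1\otimes b_2$ to $f(b_1)f(b_2)$. Since $\Theta$ is surjective, $f\circ\mathrm{pr}_e=f\circ\mathrm{pr}_g$. Evaluating on the idempotent supported at $e$ gives $1=0$ in $R$ unless $g=e$.

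\textbf{A small slip in your field version.} Restricting to $\pi_0B\otimes1$ is not vacuous. It is exactly what shows that the factorisation through $\mathrm{pr}_h$ is by $f$ itself, and you use that fact when you restrict to $1\otimes\pi_0B$.
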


\begin{proof}
    If $A \to B$ is a $\pi_0$-$G$-Galois extension, then $\Spec B \to \Spec A$ is a $G$-torsor by \Cref{pr:pi0galois_to_torsor}. Conversely, suppose that $\Spec B \to \Spec A$ is an affine $G$-torsor over $\Spec \Sph$. As this map is also flat as a map between affine (hence also geometric) stacks, \Cref{pr:torsors_and_hearts} shows that $\Spec \pi_0 B \to \Spec \pi_0 A$ is a $G$-torsor in $\Stk^\heartsuit_{/\Spec \pi_0 A}$ and is also clearly an affine $G$-torsor over $\Spec \Z$. As the quotient $\Spec \pi_0 A$ and the action groupoid $(\Spec \pi_0 B // G)_\bullet$ in $\Stk^\heartsuit$ are all classical affine schemes, this data defines a $G$-torsor in classical affine schemes. In particular, this immediately shows that $\pi_0 A \to \pi_0 B$ is a $G$-Galois extension in discrete commutative rings, as the equivalence of rings $\pi_0 B \otimes_{\pi_0 A} \pi_0 B \simeq \prod_G \pi_0 B$ follows from the equivalences
    \[\Spec \left(\prod_G \pi_0 B\right) \simeq \coprod_G \Spec \pi_0 B \simeq \Spec \pi_0 B \times_{\Spec \pi_0 A} \Spec \pi_0 B \simeq \Spec (\pi_0 B \otimes_{\pi_0 A} \pi_0 B),\]
    which hold for free in $\Stk^\heartsuit$ from the same argument as in \Cref{pr:higher_principality}. By \cite[Rmk.B.7.5.3]{sag}, this implies that $A \to B$ is also $\pi_0$-$G$-Galois.
\end{proof}

Another general feature controlling $G$-torsors is the following application of \Cref{pr:G-torsors_and_basechange}. Recall from the definition of a finite étale morphism of stacks from \Cref{df:definition_of_morphisms_of_stacks}.

\begin{cor}\label{cor:finiteetaleness}
    Let $\calG$ be a group object in $\Stk_{/\M}$ such that the structure map $\calG \to \M$ is finite étale. Then for any $\calG$-torsor $\varphi \colon \Y \to \X$, the map $\varphi$ is also finite étale. In particular, if $\calG$ corresponds to finite discrete group object in spaces $G$, then for any $G$-torsor $\varphi \colon \Y \to \X$, the map $\varphi$ is finite étale and faithfully flat.
\end{cor}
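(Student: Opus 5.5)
The plan is to apply part 3 of \Cref{pr:G-torsors_and_basechange} with $\calP$ the property of being finite étale. Three inputs are needed: finite étale morphisms of stacks are closed under base change; they satisfy descent along effective epimorphisms (at least along the $\calG$-torsor $\varphi$ itself); and the structure map $\calG \to \M$ is finite étale, which is our hypothesis. Base change is immediate from \Cref{df:definition_of_morphisms_of_stacks}, since finite étaleness of a map of stacks is defined by pulling back to affines over the target.

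Descent is where I expect the main work. The plan is to avoid descent along an arbitrary effective epimorphism and instead reproduce the argument of part 3 of \Cref{pr:G-torsors_and_basechange} directly.
\begin{enumerate}
    \item By \Cref{pr:higher_principality}, the pullback of $\varphi$ along itself is identified with the projection $\calG\times_\M \Y \to \Y$. This projection is finite étale, being a base change of $\calG\to\M$.
    \item For an affine $\Spec R \to \X$, I want to show that $\Y\times_\X \Spec R \to \Spec R$ is finite étale. Since $\varphi$ is an effective epimorphism in the fpqc topology, $\Spec R \to \X$ lifts to $\Y$ fpqc-locally on $\Spec R$. Over such a cover $\Spec R' \to \Spec R$, the pullback of $\varphi$ is a base change of $\calG\times_\M\Y\to\Y$, hence finite étale.
    \item Finite étale morphisms (affine, flat, finitely presented, étale on $\pi_0$) satisfy fpqc descent for affines. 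This should be recorded in the appendix, via \cite[Lm.B.1.4.2]{sag} for flatness and the analogous statements for affineness and étaleness. Applying it gives the claim for $\Spec R$, and hence for $\varphi$.
\end{enumerate}
The step I expect to be the main obstacle is step 3: making sure the appendix's permanence properties include fpqc-local-on-the-target descent for \emph{finite} étale maps, in particular for affineness. If that is not available in that form, I would use \Cref{pr:torsors_are_relatively_affine} to get affineness of $\varphi$ (since $\calG\to\M$ finite étale is in particular affine) and only descend the remaining flat and finitely presented properties.

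For the ``in particular'' statement, let $G$ be a finite discrete group. Then $\underline{G}\to\M$ is the map $\coprod_G \M \to \M$, a finite disjoint union of copies of the identity, so it is finite étale and the first part applies. For faithful flatness, flatness holds since $\varphi$ is finite étale. Surjectivity holds because $\varphi$ is an effective epimorphism by \Cref{pr:higher_principality}: fpqc-locally on any affine over $\X$ the pullback of $\varphi$ admits a section, so the pullback is surjective on $\Spec\pi_0$. Flat and surjective then gives faithfully flat.
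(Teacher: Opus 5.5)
Your proposal is correct and is essentially the paper's proof: apply part 3 of \Cref{pr:G-torsors_and_basechange} with $\calP$ being finite étale. Your step 3 worry is unnecessary, since \Cref{pr:permanenceproperties_for_morphisms} already records base change and descent along arbitrary effective epimorphisms for finite étale maps (affineness included), and for the ``in particular'' the paper simply notes that the fold map $\coprod_G \M \to \M$ is finite étale and faithfully flat. One small slip: your list ``affine, flat, finitely presented, étale on $\pi_0$'' describes affine étale maps, so if you descend by hand you must also descend finiteness of $\pi_0 R \to \pi_0 B$, which is classical for faithfully flat base change.
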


\begin{proof}
    This follows from part 3 of \Cref{pr:G-torsors_and_basechange} as finite étale morphisms satisfy descent and base change; see \Cref{pr:permanenceproperties_for_morphisms}. The ``in particular'' statement follows as $G \simeq \coprod_G \M$ in $\Stk_{/\M}$ and the fold map out of a finite coproduct is finite étale and faithfully flat. 
    %Alternative proof: Affineness is precisely \Cref{pr:torsors_are_relatively_affine}, so we are left to show that $\varphi$ is finite \'{e}tale. Consider an arbitrary $\Spec A \to \X$, and $\Spec B \to \Spec A$ for the pullback of $\varphi$ against $\Spec A$. This map between affine stacks is an effective epimorphism by base change as $\varphi$ is an effective epimorphism (\Cref{pr:higher_principality}), so \cite[Lm.A.13]{tokic_family_completion} states that there is an $\E_\infty$-$B$-algebra $C$ such that the composition $A \to B \to C$ is faithfully flat. To check that $A\to B$ is flat, it suffices to show that $C \to B\otimes_A C$ is flat, but this is the base change is identified with the diagonal map
    %\[C \to B\otimes_A C \simeq (B\otimes_A B)\otimes_B C \simeq (\prod_G B) \otimes C \simeq \prod_G C.\]
    %This shows that $\Spec B \to \Spec A$ is flat, and as it also defines an affine $G$-torsor by base change (\Cref{pr:G-torsors_and_basechange}), then \Cref{pr:converse} states that $A \to B$ is a $\pi_0$-$G$-Galois extension. By \cite[Pr.B.7.5.5]{sag}, the map $A\to B$ is finite \'{e}tale on $\pi_0$.
\end{proof}

The next result shows how the flat and affineness conditions in tandem place huge restrictions on $G$-torsors.

\begin{prop}\label{pr:flatness_and_affine_torsors}
    Let $\varphi \colon \Y \to \X$ be an affine $G$-torsor over $\M$. Then the following are equivalent:
    \begin{enumerate}
        \item The map $\Y \to \M$ is affine flat.
        \item The $G$-action on $\Y$ defines an affine flat $G$-torsor $\Y \to \X$ over $\M$.
        \item For each map of stacks $\Spec A \to \M$, the pulled back affine $G$-torsor
        \[\Spec C = \Y \times_\M \Spec A \to \X \times_\M \Spec A = \Spec B\]
        over $\Spec A$, induces a $\pi_0$-$G$-Galois extension $B \to C$ and $A \to B$ is flat.
    \end{enumerate}
\end{prop}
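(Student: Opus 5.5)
We prove the cycle of implications (1)$\Rightarrow$(3)$\Rightarrow$(2)$\Rightarrow$(1). Throughout, the standing hypothesis is that $\X \to \M$ is affine. By \Cref{pr:torsors_are_relatively_affine}, applied to $\calG = G \simeq \coprod_G \M$ (which is affine over $\M$), the map $\varphi$ is affine. By \Cref{cor:finiteetaleness}, $\varphi$ is moreover finite étale and faithfully flat. Since $\Y \to \M$ is the composite $\Y \to \X \to \M$, it is therefore always affine, so the whole content of the proposition lies in flatness.

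\textbf{(1)$\Rightarrow$(3).} Fix a map $\Spec A \to \M$. Affineness of $\X \to \M$ and $\Y \to \M$ allows us to write the pullbacks as $\Spec B$ and $\Spec C$. Part 1 of \Cref{pr:G-torsors_and_basechange} shows that $\Spec C \to \Spec B$ is an affine $G$-torsor over $\Spec A$. By hypothesis $A \to C$ is flat. Since $\varphi$ is flat, its base change $B \to C$ is flat, and it is faithfully flat because $\varphi$ is faithfully flat. We may then apply \Cref{pr:converse} to the flat map $B \to C$, on which $G$ acts by $B$-algebra maps; this yields that $B \to C$ is $\pi_0$-$G$-Galois. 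Flatness of $A \to B$ follows as in the ``moreover'' part of \Cref{pr:pi0galois_to_torsor}: flatness is local on the source for the flat topology \cite[Lm.B.1.4.2(2)]{sag}, and $B \to C$ is faithfully flat with $A \to C$ flat.

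\textbf{(3)$\Rightarrow$(2).} Affineness is already given, so it remains to show that $\X \to \M$ is flat. Flatness of a relatively affine map of stacks is checked after pullback to every affine $\Spec A \to \M$, which is exactly the hypothesis that $A \to B$ is flat.

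\textbf{(2)$\Rightarrow$(1).} The map $\Y \to \M$ is the composite of $\varphi$, which is finite étale and hence flat and affine by \Cref{cor:finiteetaleness}, with the affine flat map $\X \to \M$. The composite of affine flat maps is affine flat (\Cref{pr:permanenceproperties_for_morphisms}).

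The main obstacle is the careful application of \Cref{pr:converse} in the step (1)$\Rightarrow$(3). That result requires the map $B \to C$ itself to be flat and the action to be by $B$-algebra maps. The first point is supplied by finite étaleness of $\varphi$ via \Cref{cor:finiteetaleness}, rather than by flatness over $A$. The second holds because $G$ acts over $\X$, so the pulled-back action on $\Spec C$ is over $\Spec B$.

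If one prefers to avoid \Cref{pr:converse}, there is an alternative route. Use the principality equivalence $C \otimes_B C \simeq \prod_G C$ from \Cref{pr:higher_principality}, applied after base change. Then invoke \cite[Rmk.B.7.5.3]{sag} on $\pi_0$, using the flatness of $B \to C$ so that $\pi_0(C \otimes_B C) \simeq \pi_0 C \otimes_{\pi_0 B} \pi_0 C$.
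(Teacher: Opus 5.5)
Your proof is correct and uses the same ingredients as the paper's argument. These are: affineness and finite étaleness of $\varphi$ from \Cref{pr:torsors_are_relatively_affine,cor:finiteetaleness}; flatness of $A \to B$ via locality on the source along the faithfully flat map $B \to C$; base change plus \Cref{pr:converse} for the $\pi_0$-Galois property; and composition of affine flat maps. You only arrange them as the cycle $(1)\Rightarrow(3)\Rightarrow(2)\Rightarrow(1)$ rather than the paper's $(1)\Leftrightarrow(2)\Leftrightarrow(3)$, and your remark that \Cref{pr:converse} needs $B \to C$ itself to be flat (supplied by \Cref{cor:finiteetaleness}) makes explicit a point the paper leaves implicit.
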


Note that as $\varphi \colon \Y \to \X$ is an affine $G$-torsor over $\M$, then by \Cref{pr:torsors_are_relatively_affine} shows $\varphi$ is affine, hence $\Y \to \M$ is affine by composition. For other permanence properties of morphisms of stacks, we refer the reader to \Cref{pr:permanenceproperties_for_morphisms}.

\begin{proof}
    As $\varphi$ is finite étale by \Cref{cor:finiteetaleness} and affine flat morphisms compose, clearly part 2 implies part 1. Assuming part 1, then given $\Spec A \to \M$ and using the notation as in part 3, we need to show that $A \to B$ is flat. Flat morphisms of $\E_\infty$-rings are local on source with respect to the flat topology, see \cite[Lm.B.1.4.2(2)]{sag}, so as $B \to C$ is faithfully flat courtesy of \Cref{cor:finiteetaleness}, it suffices to show that $A \to C$ is flat, which is precisely part 1. Part 2 implies part 3 by base change for torsors (part 1 of \Cref{pr:G-torsors_and_basechange}) and flat maps, together with the fact that affine flat $G$-torsors between affines are equivalently $\pi_0$-$G$-Galois extensions by \Cref{pr:converse}. Assuming part 3, then to show that $\X \to \M$ is flat, we have to show that $A \to B$ is flat, which is true by assumption.
\end{proof}

\subsubsection{Torsors and Rognes' Galois extensions}
The relationship between torsors and Rognes' general notion of Galois extension is subtle.

\begin{mydef}\label{def:galois_rognes}
    Let $\calC$ be a semiadditive symmetric monoidal category with all small limits and $G$ be a finite group. A \emph{$G$-Galois extension} in $\CAlg(\calC)$ is a morphism $A \to B$ and a $G$-action on $B$ through $\E_\infty$-$A$-algebra maps such that the two natural maps
    \[A \to B^{hG}, \qquad B\otimes_A B \to \prod_G B\]
    are equivalences in $\CAlg(\calC)$, where the second map is defined on the left factor of the domain as the diagonal map and on the right factor as $g\colon B\to B$ into the factor associated with $g\in G$. A $G$-Galois extension is \emph{faithful} if the functor $\Mod_A(\calC) \to \Mod_B(\calC)$ sending $M$ to $B\otimes_A M$ is conservative.
\end{mydef}

By (\ref{eq:pi0galoisisgalois}), we can immediately relate our two notions of Galois extension.

\begin{cor}
    A $\pi_0$-$G$-Galois extension of $\E_\infty$-rings is a faithful $G$-Galois extension.
\end{cor}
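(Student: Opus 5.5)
To prove the statement, fix a $\pi_0$-$G$-Galois extension $A \to B$ with $A = B^{hG}$. I would check the two conditions of \Cref{def:galois_rognes} in $\calC = \Sp$, and then check faithfulness. Two of these facts are already available from \cite[Pr.B.7.5.5]{sag}, which was recalled just before \Cref{pr:pi0galois_to_torsor}.

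First, the map $A \to B^{hG}$ is an equivalence by definition, since a $\pi_0$-$G$-Galois extension is defined with $A = B^{hG}$. Second, the map $B\otimes_A B \to \prod_G B$ is precisely the map (\ref{eq:pi0galoisisgalois}), and it is an equivalence by the cited result. I would take a moment to confirm that the map in (\ref{eq:pi0galoisisgalois}) matches the one in \Cref{def:galois_rognes}: the diagonal on the left factor and $g$ on the right factor into the $g$th coordinate. This is the standard convention, and in any case the two possible conventions differ only by an automorphism of $\prod_G B$ that permutes and twists the factors, so equivalence is unaffected. Hence $A \to B$ is a $G$-Galois extension.

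For faithfulness, I would use that $A \to B$ is faithfully flat, again by \cite[Pr.B.7.5.5]{sag}. Let $M$ be an $A$-module with $B\otimes_A M \simeq 0$. Flatness gives $\pi_\ast(B\otimes_A M) \cong \pi_0 B\otimes_{\pi_0 A}\pi_\ast M$. Faithful flatness of $\pi_0 A \to \pi_0 B$ then forces $\pi_\ast M = 0$, so $M \simeq 0$. Since the functor $B\otimes_A -$ is exact, a functor of stable categories that detects zero objects is conservative: apply the argument to the cofibre of any map whose base change is an equivalence.

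The only step needing care is this final faithfulness argument: identifying the homotopy of a flat base change and concluding conservativity from the detection of zero objects. Both are standard, so I do not expect a genuine obstacle.
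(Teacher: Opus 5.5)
Your proposal is correct and follows the paper's route: the paper deduces the corollary directly from the facts recalled from \cite[Pr.B.7.5.5]{sag}, namely $A = B^{hG}$ by definition, the equivalence (\ref{eq:pi0galoisisgalois}), and faithful flatness of $A \to B$. Your argument that faithful flatness gives faithfulness, via $\pi_\ast(B\otimes_A M)\cong \pi_0 B\otimes_{\pi_0 A}\pi_\ast M$ and the fact that an exact functor detecting zero objects is conservative, just spells out the step the paper leaves implicit.
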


The converse is clearly false: $\KO \to \KU$ is a faithful $C_2$-Galois extension, see \cite[Pr.5.3.1]{johnrognessdualisinggorups} or \Cref{pr:chromatic_positive_answer} below, but it induces the identity on $\pi_0$. The relationship between $\pi_0$-$G$-Galois extensions and torsors of \Cref{pr:pi0galois_to_torsor} similarly fails to generalise to faithful $G$-Galois extensions. Indeed, the natural map of stacks
\[\Spec \KU / C_2 \to \Spec \KO\]
is not an equivalence; on underlying classical stacks this is the map $BC_2 \to \Spec \Z$. A more natural question replaces $\Spec A$ with $\Spec B / G$, as $\Ga(\Spec B/G) \simeq A$:

\begin{question}\label{q:Ggalois}
    If $A \to B$ is a faithful $G$-Galois extension of $\E_\infty$-rings, is $\Spec B \to \Spec B / G$ necessarily an affine $G$-torsor over some base $\M\neq \Spec B / G$?
\end{question}

If the base $\M$ is chosen to be affine, then $\Spec B/G$ itself must be affine, so if $A\to B$ is also flat, it must have been a $\pi_0$-$G$-Galois extension by \Cref{pr:converse}.

We can find certain base stacks $\M$ when $B$ has a chromatic flavour. Let us write $\M_{\FG}^\ori$ for the moduli stack of oriented formal groups of Lurie (\cite[Df.4.1.1.2]{reconstruction}) and $\M_{\FG\leq h}^\ori$ for the substack of oriented formal groups of finite height $\leq h$ (\cite[Df.4.3.1.2]{reconstruction}).

\begin{prop}\label{pr:chromatic_positive_answer}
    Let $B$ be a Landweber exact complex-periodic $\E_\infty$-ring with a $G$-action such that for the pullback along any closed point $x\colon \Spec \kappa \to \Spec \pi_0 B$, the stabiliser $G_x \leq G$ acts faithfully on the pullback of the classical Quillen formal group to $\kappa$. Then $\Spec B \to \Spec B/G$ is an affine flat $G$-torsor over $\M_{\FG}^\ori$. If $B$ is also of bounded chromatic height, then $A=B^{hG} \to B$ defines a faithful $G$-Galois extension of $\E_\infty$-rings and the associated $G$-torsor $\varphi\colon \Spec B \to \Spec B/G$ is affine over $\M_{\FG\leq h}^\ori$ for some finite height $h$.
\end{prop}

The last sentence above comes from \cite[\textsection6.2]{akhilandlennart}. The above also identifies 
\[\Spec B / G \simeq \M_A^\ori = \M_{\FG}^\ori \times \Spec A\]
as relatively affine stacks over $\M_{\FG\leq h}^\ori$, courtesy of \cite[Th.D]{reconstruction}.

\begin{proof}
    The faithfulness condition on the $G$-action is equivalent to asking that the map of classical stacks $\Spec \pi_0 B / G \to \M_{\FG}^\heartsuit$ is affine (\cite[\textsection6.2]{akhilandlennart}). The map $\al \colon \Spec B / G \to \M_{\FG}^\ori$ is by definition a flat map between geometric stacks, because on the cover $\Spec B \to \Spec B/G$ of the source, the composite $\Spec B \to \M_{\FG}^\ori$ is affine, courtesy of $\M_{\FG}^\ori$ having affine diagonal \cite[Cor.4.1.4.1]{reconstruction}, and flat, courtesy of $\Spec B$ being Landweber exact. Moreover, $\al$ is a relative nonconnective Deligne--Mumford stack by \cite[Cor.4.1.4.1]{reconstruction} again. We can now apply \cite[Cor.3.4.4.6]{reconstruction}, which states that as $\Spec B / G \to \M_{\FG}^\ori$ is affine on underlying classical stacks, the associated map of spectral stacks is affine. This gives the desired affine flat $G$-torsor over $\M_{\FG}^\ori$. If $B$ is of height $\leq h$ for all primes $p$, then the unique map affine $\Spec B / G \to \M_{\FG}^\ori$ factors as an affine map through $\M_{\FG\leq h}^\ori$. As $\M_{\FG\leq h}^\ori$ is 0-affine (\cite[Th.4.4.0.2]{reconstruction}), this implies that $\Spec B / G$ is also 0-affine as affine maps are 0-affine \cite[Pr.2.2.2.5(2)]{reconstruction} and 0-affine maps compose \cite[Pr.2.2.1.6]{reconstruction}. The fact that $A \to B$ is a faithful $G$-Galois extension now follows from \Cref{cor:0semiaffine_gives_galoisextension} below, as again, $\M_{\FG\leq h}^\ori$ is 0-affine.
\end{proof}

This result implies that faithful $C_2$-Galois extension $\KO \to \KU$ induces an affine $C_2$-torsor $\Spec \KU \to \Spec \KU / C_2$ over $\M_{\FG}^\ori$. More generally, it implies that for any finite subgroup $G$ of the Morava stabiliser group of height $n$, the quotient $\Spec E_n \to \Spec E_n / G$ defines an affine $G$-torsor over $\M_{\FG\leq n}^\ori$; a geometric model for \emph{height $n$ real $K$-theories}.

Playing with the Grothendieck topology on $\Stk$ leads to variants of \Cref{q:Ggalois} and suggests that the fpqc topology is not the best topology suited for studying $G$-Galois extensions in general.

\begin{remark}
    The previous proposition generalises to Landweber exact \emph{complex-orientable} $\E_\infty$-rings $B$ of finite height with this faithfulness condition, as long as we replace $\Stk$ with the appropriate category of stacks defined using $\pi_\ast$-fpqc topology; see \cite[Rmk.4.1.2.6]{reconstruction}.
\end{remark}

\begin{remark}\label{rmk:universal_descent_topology}
    A map of $\E_\infty$-rings $A \to B$ is called a \emph{universal descent morphism} (\cite[Df.D.3.1.1]{sag}) if the smallest stable subcategory of $\Mod_A$ generated by $A$-modules of the form $B\otimes_A M$ and closed under retracts is precisely $\Mod_A$. Let us generate a finitary topology on $\CAlg$ with basis the universal descent morphisms and write $\Stk_\ud$ for the associated category of stacks following \cite[Def.2.1.1.5]{reconstruction}. Then for a $G$-Galois extension of $\E_\infty$-rings, the induced map between affine objects $\Spec B \to \Spec A$ in $\Stk_\ud$ is a cover with respect to this topology, see \cite[Lm.9.1.2]{johnrognessdualisinggorups}, and $\Spec B / G \simeq \Spec A$ as in the proof of \Cref{pr:pi0galois_to_torsor}. In other words, the answer to the variant of \Cref{q:Ggalois} with $\Stk_\ud$ playing the role of $\Stk$ is ``yes, over the terminal base $\Spec \Sph$''.
\end{remark}

%%%%%%%%%%%%%%%%%%%%%%%%%%%%%%%%%%%%%%%%%%%%%%%%%%%%%%%%%%%%%%%
\subsubsection{Galois extensions associated with a torsor}\label{ssec:relationtoGalois}
We now describe how various torsors induce Galois extensions in several different scenarios. The first are of a relative algebraic flavour and the second more absolute and categorical.

\begin{prop}\label{pr:torsor_to_Galois_inQCoh}
    Let $\varphi\colon \Y \to \X$ be a $G$-torsor over $\X$. Then the induced map
    \[\O_\X \to \varphi_\ast \O_\Y\]
    is a faithful $G$-Galois extension in $\CAlg(\QCoh(\X))$.% In particular, for any affine morphism $f\colon \X \to \M$, the induced map
    %\[f_\ast \O_\X \to g_\ast \O_\Y\]
    %is a faithful $G$-Galois extension in $\CAlg(\QCoh(\M))$, where $g=f\varphi$.
\end{prop}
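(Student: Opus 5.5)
The plan is to reduce everything to statements about $\QCoh$ of the Čech nerve of $\varphi$, using \Cref{pr:higher_principality} and \Cref{cor:qcoh_and_fixedpoints}. Set $B = \varphi_\ast \O_\Y \in \CAlg(\QCoh(\X))$. The $G$-action on $\Y$ over $\X$ induces a $G$-action on $B$ through $\O_\X$-algebra maps, by functoriality of $\varphi_\ast$. I will verify the three conditions of \Cref{def:galois_rognes}: $\O_\X \simeq B^{hG}$, $B\otimes_{\O_\X} B\simeq \prod_G B$, and faithfulness.

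\textbf{Fixed points.} By \Cref{cor:qcoh_and_fixedpoints}, $\varphi^\ast$ identifies $\QCoh(\X)\simeq \QCoh(\Y)^{hG}$. Its right adjoint is therefore computed as $\varphi_\ast(\calF) \simeq (\text{forget}\,\calF)$ with the $G$-action, followed by homotopy fixed points. More concretely, $\varphi_\ast\varphi^\ast \calE$ is the limit over $BG$ of the descent data, and $\calE \simeq (\varphi_\ast\varphi^\ast\calE)^{hG}$ for every $\calE\in\QCoh(\X)$. Applying this to $\calE=\O_\X$ gives $\O_\X\simeq B^{hG}$.

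\textbf{The Galois map.} I use base change along the Cartesian square $\Y\times_\X\Y \rightrightarrows \Y \to \X$, which by \Cref{pr:higher_principality} is identified with the action and projection maps $G\times \Y = \coprod_G \Y \to \Y$. Here I have to be careful: base change $\varphi^\ast\varphi_\ast \simeq \mathrm{pr}_{1\ast}\mathrm{pr}_2^\ast$ need not hold for arbitrary maps of stacks. However, $\varphi$ is a pullback of $\M\to BG$ (the right square in the diagram preceding \Cref{pr:higher_principality}), and $\M\to BG$ is a $G$-torsor with fibre $\coprod_G$. I would therefore verify the relevant projection and base-change formulae by descent, checking after pulling back along the effective epimorphism $\varphi$ itself. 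After that pullback everything becomes the trivial torsor $\coprod_G \Y\to\Y$, for which $\varphi_\ast$ is a finite product and all formulae are evident.

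Concretely, I expect to show that $\varphi^\ast$ of the map $B\otimes_{\O_\X}B\to\prod_G B$ is the canonical equivalence of algebras on $\coprod_{G\times G}\Y$ versus $\coprod_G\coprod_G\Y$ given by $(g,h)\mapsto(g,g h)$. Since $\varphi^\ast$ is conservative, being the forgetful functor from $G$-equivariant objects, this proves the claim.

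\textbf{Faithfulness.} The functor $\Mod_{\O_\X}(\QCoh(\X)) = \QCoh(\X) \to \Mod_B(\QCoh(\X))$, $M\mapsto B\otimes M$, followed by the forgetful functor, sends $M$ to $B\otimes M \simeq \varphi_\ast\varphi^\ast M$ (projection formula, again checked via descent as above). If $B\otimes M\simeq 0$, then $\varphi^\ast\varphi_\ast\varphi^\ast M\simeq\prod_G\varphi^\ast M \simeq 0$, so $\varphi^\ast M\simeq0$, and hence $M\simeq 0$ by \Cref{cor:qcoh_and_fixedpoints}. Conservativity on maps then follows by stability, taking cofibres.

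The main obstacle is justifying the base-change and projection formulae for $\varphi_\ast$ without affineness hypotheses, since $\QCoh$ of general stacks lacks these in general. The route I would try first is to express everything through the limit over $BG$ in $\QCoh(\X)\simeq\QCoh(\Y)^{hG}$. Under this identification, $\varphi_\ast$ is the coinduction (right Kan extension) functor $\QCoh(\Y)\to\QCoh(\Y)^{hG}$, so $\varphi^\ast\varphi_\ast\calF\simeq\prod_{g\in G} g^\ast\calF$. Because $G$ is finite and $\QCoh(\Y)$ is stable, this product is also a finite direct sum, commuting with $\otimes$. This formula yields both the base change and the projection formula formally.
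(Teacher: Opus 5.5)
Your proof is correct, but it takes a different route from the paper's. The paper first shows that $\varphi$ is affine (\Cref{pr:torsors_are_relatively_affine}, since $\coprod_G \X \to \X$ is affine). It then transports the torsor across the anti-equivalence $\Stk^\aff_{/\X}\simeq \CAlg(\QCoh(\X))^\op$. There, fibre products of relatively affine stacks become relative tensor products, so $\Y\times_\X\Y\simeq\coprod_G\Y$ translates directly into $\varphi_\ast\O_\Y\otimes_{\O_\X}\varphi_\ast\O_\Y\simeq\prod_G\varphi_\ast\O_\Y$. For faithfulness, the paper uses that affine maps are $0$-affine, so $\Mod_{\varphi_\ast\O_\Y}(\QCoh(\X))\simeq\QCoh(\Y)$ and base change becomes $\varphi^\ast$, which is conservative because $\varphi$ is an effective epimorphism.

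You never use affineness of $\varphi$. Everything happens inside $\QCoh(\X)\simeq\QCoh(\Y)^{hG}$, with $\varphi_\ast$ identified as coinduction, and all conditions are checked after the conservative symmetric monoidal functor $\varphi^\ast$.

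What your route buys:
\begin{itemize}
\item It is purely formal and self-contained. It works for any descent-satisfying sheaf of presentably symmetric monoidal stable categories.
\item It makes the condition $\O_\X\simeq(\varphi_\ast\O_\Y)^{hG}$ explicit, where the paper only says ``unwinding the definitions''.
\end{itemize}
What the paper's route buys: it is shorter given the relative-Spec machinery of \cite{reconstruction}, and it additionally identifies $\QCoh(\Y)$ as modules over the Galois extension.

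Two remarks on your write-up. First, the ``verify base change by descent along $\varphi$'' sketch in your middle paragraph is circular as stated. Identifying what $\varphi_\ast$ becomes after pulling back along $\varphi$ is itself the base-change formula. So the coinduction formula $\varphi^\ast\varphi_\ast\calF\simeq\prod_{g\in G}g^\ast\calF$ is what must carry the argument, and it does so correctly.

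Second, once you know $\varphi^\ast\varphi_\ast\O_\Y\simeq\prod_G\O_\Y$ as algebras with the permutation action, symmetric monoidality of $\varphi^\ast$ alone gives $\varphi^\ast(B\otimes M)\simeq\prod_G\varphi^\ast M$ and $\varphi^\ast(B\otimes B)\simeq\prod_{G\times G}\O_\Y$. So no projection formula is needed at all.
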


\begin{proof}
    First, we note that $\varphi$ is affine courtesy of \Cref{pr:torsors_are_relatively_affine} as $G \simeq \coprod_G \X$ is affine over $\X$. The equivalence of categories $\Stk^\aff_{/\X} \simeq \CAlg(\QCoh(\X))^\op$ of \cite[Th.2.2.3.1]{reconstruction} and unwinding the definitions, we see that $G$-torsors over $\X$ are sent to $G$-Galois extensions under this equivalence; products over $\X$ and tensor products over $\O_\X$ are identified via \cite[Pr.2.2.1.11(3)]{reconstruction}. For faithfulness, note the equivalences
    \[\Mod_{\O_\X}(\QCoh(\X)) \simeq \QCoh(\X), \qquad \Mod_{\varphi_\ast \O_\Y}(\QCoh(\X)) \simeq \QCoh(\Y),\]
    the first as $\O_\X$ is the unit of $\QCoh(\X)$ and the second is the definition of $\Y \to \X$ being $0$-affine, a consequence of affine morphisms being $0$-affine by \cite[Pr.2.2.2.5]{reconstruction}. Faithfulness of $\O_\X \to \varphi_\ast \O_\Y$ now follows by \Cref{pr:higher_principality} as $\varphi$ is always an effective epimorphism so $\varphi^\ast \colon \QCoh(\X) \to \QCoh(\Y)$ is conservative.
\end{proof}

For global sections to preserve Galois extensions, we need more assumptions on $\X$. Recall that a map $\varphi \colon \Y \to \X$ is \emph{0-affine} if $\QCoh(\Y) \simeq \Mod_{\varphi_\ast \O_\Y} (\QCoh(\X))$. Cancellation for 0-affine morphisms (\cite[Pr.2.2.1.6]{reconstruction}) implies that a 0-affine $G$-torsor over an 0-affine base is simply a $G$-torsor where the quotient $\X$ is 0-affine.

\begin{prop}\label{cor:0semiaffine_gives_galoisextension}
    The functor $\Ga \colon \Stk^\op \to \CAlg$ sends $0$-affine $G$-torsors over $\Spec \Sph$ to faithful $G$-Galois extensions of $\E_\infty$-rings.
\end{prop}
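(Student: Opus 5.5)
Given a $0$-affine $G$-torsor $\varphi\colon \Y \to \X$ over $\Spec \Sph$, the plan is to transport the Galois extension of \Cref{pr:torsor_to_Galois_inQCoh} along the global sections functor. By the discussion above, being a $0$-affine torsor over $\Spec\Sph$ means that $\X$ is $0$-affine, so the global sections functor
\[\Ga\colon \QCoh(\X) \to \Mod_{\Ga(\X)}\]
is a symmetric monoidal equivalence. It therefore induces an equivalence $\CAlg(\QCoh(\X)) \simeq \CAlg_{\Ga(\X)}$. By \Cref{pr:torsor_to_Galois_inQCoh}, $\O_\X \to \varphi_\ast\O_\Y$ is a faithful $G$-Galois extension in $\CAlg(\QCoh(\X))$.

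The definition of a faithful $G$-Galois extension (\Cref{def:galois_rognes}) is phrased entirely in terms of three things:
\begin{enumerate}
    \item the symmetric monoidal structure (relative tensor products and finite products);
    \item homotopy fixed points, which are limits over $BG$;
    \item conservativity of base change on module categories.
\end{enumerate}
A symmetric monoidal equivalence preserves all of these. Hence $\Ga(\O_\X) \to \Ga(\varphi_\ast\O_\Y)$ is a faithful $G$-Galois extension in $\CAlg_{\Ga(\X)}$. For faithfulness, note that $\Mod_{\O_\X}(\QCoh(\X)) \simeq \Mod_{\Ga(\X)}$ and $\Mod_{\varphi_\ast\O_\Y}(\QCoh(\X)) \simeq \Mod_{\Ga(\varphi_\ast \O_\Y)}$, compatibly with base change.

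Next, identify $\Ga(\varphi_\ast\O_\Y) \simeq \Ga(\Y)$; this holds because global sections of $\X$ composed with $\varphi_\ast$ is global sections of $\Y$. Also identify $\Ga(\O_\X) = \Ga(\X)$, which is the unit of $\CAlg_{\Ga(\X)}$.

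Finally, a $G$-Galois extension in $\CAlg_{\Ga(\X)}$ whose source is the unit is the same as a $G$-Galois extension of $\E_\infty$-rings $\Ga(\X)\to\Ga(\Y)$. Two observations justify this:
\begin{itemize}
    \item The forgetful functor $\CAlg_{\Ga(\X)} \to \CAlg$ preserves limits, so it preserves $hG$-fixed points and finite products.
    \item Tensor products over $\Ga(\X)$ are computed the same way in both categories.
\end{itemize}
Faithfulness refers only to $\Mod_{\Ga(\X)} \to \Mod_{\Ga(\Y)}$, so it is unchanged. By \Cref{cor:qcoh_and_fixedpoints}, the resulting $G$-action on $\Ga(\Y)$ is the one functorially induced by the $G$-action on $\Y$, since $\Ga(\X) \simeq \Ga(\Y)^{hG}$.

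The main obstacle is making sure that the equivalence $\Ga$ really is symmetric monoidal and carries $\varphi_\ast\O_\Y$, with its $G$-action, to $\Ga(\Y)$ with the induced action. In particular, the $G$-action on $\varphi_\ast\O_\Y$ by $\O_\X$-algebra maps must correspond to the action on $\Ga(\Y)$ by $\Ga(\X)$-algebra maps. I would handle this by noting that $\Ga$ is lax symmetric monoidal in general, and that for $0$-affine $\X$ it is inverse to the symmetric monoidal functor $\Ga(\X)$-$\Mod \to \QCoh(\X)$, $M\mapsto \O_\X\otimes M$. Naturality of the $G$-action then follows from functoriality of $\Ga\circ\varphi_\ast$ in $\Y$.
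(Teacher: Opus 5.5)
Your proposal is correct and follows the paper's proof, which is exactly the combination of \Cref{pr:torsor_to_Galois_inQCoh} with the symmetric monoidal equivalence $\CAlg(\QCoh(\X)) \simeq \CAlg_{\Ga(\X)}$ coming from $0$-affineness of $\X$. Your extra steps (identifying $\Ga(\varphi_\ast\O_\Y)\simeq\Ga(\Y)$, transporting faithfulness through the module categories, and passing from $\CAlg_{\Ga(\X)}$ to $\CAlg$) spell out details that the paper leaves implicit.
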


\begin{proof}
    Combine \Cref{pr:torsor_to_Galois_inQCoh} with the equivalence $\CAlg(\QCoh(\X)) \simeq \CAlg_{\Ga(\X)}$.
\end{proof}

Now onto our categorical examples.

\begin{prop}\label{pr:torsor_to_Galois_inPRL}
    The functor $\QCoh(-) \colon \Stk^\op \to \CAlg(\PrLst)$ sends $G$-torsors to faithful $G$-Galois extensions in $\CAlg(\PrLst)$.
\end{prop}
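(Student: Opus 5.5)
We verify the three conditions of \Cref{def:galois_rognes} in $\calC = \CAlg(\PrLst)$. The relevant structures are the commutative algebra $\QCoh(\Y)$ over $\QCoh(\X)$ via $\varphi^\ast$, with $G$-action induced from the action on $\Y$ by functoriality of $\QCoh(-)$. The category $\PrLst$ is semiadditive and symmetric monoidal with all limits, and limits in $\CAlg(\PrLst)$ are computed underlying, so the definition applies.

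First, the fixed point condition $\QCoh(\X) \simeq \QCoh(\Y)^{hG}$ is exactly \Cref{cor:qcoh_and_fixedpoints}, since $G$ is a finite group object in spaces and $\X \simeq \Y/G$.

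Second, we need the map $\QCoh(\Y) \otimes_{\QCoh(\X)} \QCoh(\Y) \to \prod_G \QCoh(\Y)$ to be an equivalence. By higher principality (\Cref{pr:higher_principality}) there is an equivalence $\Y \times_\X \Y \simeq G \times \Y \simeq \coprod_G \Y$ over $\Y$. Since $\QCoh$ sends colimits to limits, $\QCoh(\coprod_G \Y) \simeq \prod_G \QCoh(\Y)$, and one checks that the composite map is the one in \Cref{def:galois_rognes}. It then remains to show that the natural map
\[\QCoh(\Y)\otimes_{\QCoh(\X)}\QCoh(\Y) \to \QCoh(\Y\times_\X \Y)\]
is an equivalence. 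This Künneth-type statement is the main obstacle, because $\QCoh$ need not be symmetric monoidal on arbitrary fibre products of stacks. We avoid it by exploiting that $\varphi$ is affine (\Cref{pr:torsors_are_relatively_affine}, as $G\simeq\coprod_G\X$ is affine over $\X$), hence $0$-affine, so $\QCoh(\Y) \simeq \Mod_{\varphi_\ast\O_\Y}(\QCoh(\X))$. For a commutative algebra $R$ in a presentably symmetric monoidal $\calD$, one has $\Mod_R(\calD)\otimes_\calD \Mod_S(\calD) \simeq \Mod_{R\otimes S}(\calD)$ (Lurie, HA). Combining this with \Cref{pr:torsor_to_Galois_inQCoh}, which gives $\varphi_\ast\O_\Y\otimes\varphi_\ast\O_\Y \simeq \prod_G \varphi_\ast\O_\Y$, we get
\[\QCoh(\Y)\otimes_{\QCoh(\X)}\QCoh(\Y) \simeq \Mod_{\prod_G \varphi_\ast \O_\Y}(\QCoh(\X)) \simeq \prod_G \Mod_{\varphi_\ast\O_\Y}(\QCoh(\X)) \simeq \prod_G\QCoh(\Y).\]
The middle equivalence holds because modules over a finite product of algebras split as a product, using the idempotents. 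Equivalently, one can apply the HA result directly to the Galois extension of \Cref{pr:torsor_to_Galois_inQCoh} and observe that $\Mod_{(-)}(\QCoh(\X))$ carries Rognes-Galois extensions in $\CAlg(\QCoh(\X))$ to Galois extensions in $\CAlg(\PrLst)$. In either formulation we still need to check that the resulting equivalence is the canonical map; this follows by naturality of the identifications in the $G$-action.

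Third, faithfulness asks that the base change $\Mod_{\QCoh(\X)}(\PrLst) \to \Mod_{\QCoh(\Y)}(\PrLst)$, $\calM \mapsto \QCoh(\Y)\otimes_{\QCoh(\X)}\calM$, be conservative. Since $\QCoh(\Y)$ is $\Mod_B(\QCoh(\X))$ with $B=\varphi_\ast\O_\Y$ a faithful Galois extension, we argue as follows. A dualisable faithful algebra is descendable in Mathew's sense, or at least $\Perf$-generates via $B^{hG}\simeq \O_\X$. Then the unit $\QCoh(\X)$ is a retract of, or is built from, $\QCoh(\Y)$ via the finite limit $\QCoh(\X)\simeq\QCoh(\Y)^{hG}$. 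Concretely, for any $\QCoh(\X)$-module $\calM$ we get $\calM \simeq \QCoh(\X)\otimes\calM \simeq (\QCoh(\Y)\otimes\calM)^{hG}$. Here we use that $\QCoh(\Y)$ is dualisable over $\QCoh(\X)$ (modules over a dualisable algebra $B$) and that $G$ is finite, so tensoring commutes with the $hG$-limit by semiadditivity ($hG\simeq hG$-colimit up to the norm being an equivalence in $\PrL$). Hence if $\QCoh(\Y)\otimes\calM\simeq0$ then $\calM\simeq 0$, and applying the same argument to cofibres, or to morphisms, gives conservativity. The commuting of $\otimes\calM$ with $(-)^{hG}$ is the delicate point here; we would justify it using ambidexterity of finite groupoid limits in $\PrL$.
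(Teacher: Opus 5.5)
Your proof is correct. For the fixed-point condition and for faithfulness it is the paper's argument. The paper also cites \Cref{cor:qcoh_and_fixedpoints}. It gets faithfulness from the comparison map $\QCoh(\Y)_{hG}\to\QCoh(\Y)^{hG}$ being an equivalence by ambidexterity in $\PrLst$, combined with the generalisation of Rognes' criterion \cite[Pr.6.3.3]{johnrognessdualisinggorups}. Your chain $\mathcal{M}\simeq(\QCoh(\Y)\otimes_{\QCoh(\X)}\mathcal{M})^{hG}$ is that criterion unpacked.

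Where you genuinely differ is the second Galois condition. The paper stays geometric. Because $\varphi$ is affine, the Cartesian square with corners $\coprod_G\Y$, $\Y$, $\Y$, $\X$ is adjointable, so the Künneth statement \cite[Pr.2.2.1.11]{reconstruction} gives $\QCoh(\Y)\otimes_{\QCoh(\X)}\QCoh(\Y)\simeq\QCoh(\Y\times_\X\Y)$, and higher principality finishes.

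You pass to algebra instead. $0$-affineness identifies $\QCoh(\Y)\simeq\Mod_B(\QCoh(\X))$ with $B=\varphi_\ast\O_\Y$. Lurie's tensor product formula for module categories, plus the already-proved Galois extension $\O_\X\to B$ of \Cref{pr:torsor_to_Galois_inQCoh}, then gives $\Mod_{B\otimes B}\simeq\Mod_{\prod_G B}\simeq\prod_G\Mod_B$. This is exactly how the paper later proves \Cref{pr:mod_and_perf_preserve_Galois}, run internally to $\QCoh(\X)$.

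The paper's route is shorter given the cited machinery. It also makes the identification with the canonical Galois map visible at the level of stacks, as the action-and-projection map. Your route avoids the adjointability input and shows more generally that $\Mod_{(-)}(\mathcal{D})$ carries Galois extensions in $\CAlg(\mathcal{D})$ to Galois extensions in $\CAlg(\PrLst)$ for any stable presentably symmetric monoidal $\mathcal{D}$. The cost is checking by hand that your composite equivalence is the canonical map.

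Two presentation points in your faithfulness paragraph:
\begin{itemize}
\item The appeal to dualisability of $\QCoh(\Y)$ and to descendability is a red herring. What makes $-\otimes_{\QCoh(\X)}\mathcal{M}$ commute with $(-)^{hG}$ is that $(-)^{hG}\simeq(-)_{hG}$ in $\Mod_{\QCoh(\X)}(\PrL)$, while $-\otimes\mathcal{M}$ preserves colimits. Your last sentence says this, and that is the justification to give.
\item $\Mod_{\QCoh(\X)}(\PrLst)$ is not stable, so ``applying the argument to cofibres'' does not make sense. Conservativity instead follows from the equivalence $\mathcal{M}\simeq(\QCoh(\Y)\otimes\mathcal{M})^{hG}$ being natural in $\mathcal{M}$. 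A map that becomes an equivalence after tensoring with $\QCoh(\Y)$ is then a $BG$-limit of equivalences, hence an equivalence.
\end{itemize}
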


A generalised version of this statement for affine stacks appears below as \Cref{pr:mod_and_perf_preserve_Galois}.

\begin{proof}
    Let $\Y \to \X$ be a $G$-torsor. By \Cref{cor:qcoh_and_fixedpoints}, we have $\QCoh(\X) \simeq \QCoh(\Y)^{hG}$. Next, note that
    \[\begin{tikzcd}
        {\coprod_G \Y}\ar[r, "{\al}"]\ar[d, "\nabla"]   &   {\Y}\ar[d]  \\
        {\Y}\ar[r]                                      &   {\X}
    \end{tikzcd}\]
    is Cartesian by \Cref{pr:higher_principality}, and that $\Y \to \X$ is affine by \Cref{pr:torsors_are_relatively_affine}, leading to the equivalences
    \[\QCoh(\Y)\otimes_{\QCoh(\X)} \QCoh(\Y) \simeq \QCoh(\Y \times_\X \Y) \simeq  \QCoh(\coprod_G \Y) \simeq \prod_G \QCoh(\Y),\]
    the first from \cite[Pr.2.2.1.11]{reconstruction}, as the Cartesian square of stacks above is \emph{adjointable} (\cite[Def.2.2.1.8]{reconstruction}) by \cite[Pr.2.2.2.5]{reconstruction}, the second from \Cref{pr:higher_principality}, and the third as $\QCoh(-)$ sends colimits to limits. In particular, $\QCoh(-)$ sends $G$-torsors to $G$-Galois extensions. All $G$-Galois extensions in $\PrLst$ are faithful, as the comparison map $\QCoh(\Y)_{hG} \to \QCoh(\Y)^{hG}$ is an equivalence, see \cite[Pr.2.26]{categorical_ambidexterity_shay}, which is equivalent to faithfulness by the natural generalisation of \cite[Pr.6.3.3]{johnrognessdualisinggorups}.
\end{proof}

To obtain a ``small'' version of this statement, we need to assume that $\QCoh(\X)$ is compactly generated as a stable category. We borrow the following from \cite[Pr.3.9]{perfectstacks}.

\begin{mydef}
    A stack $\X$ is \emph{perfect} if $\QCoh(\X)$ is compactly generated and its compact and dualisable objects coincide. A morphism of stacks $\Y \to \X$ is \emph{perfect} if for each map from an affine $\Spec A \to \X$, the pullback $\Y \times_\X \Spec A$ is a perfect stack.
\end{mydef}

Affine stacks are clearly perfect, as are $0$-affine stacks by definition, and \cite[Pr.3.19]{perfectstacks} shows that nonconnective quasi-compact spectral schemes are perfect, expanding on ideas of Neeman \cite{neeman_grothendieck_duality} and Thomason--Trobaugh \cite{thomasontrobaugh}.

\begin{prop}\label{pr:torsor_to_Galois_2ring}
    The functor $\QCoh(-)^\omega \colon \Stk^\op \to \tworing$ sends perfect $G$-torsors $\Y \to \X$ over $\Spec \Sph$ to faithful $G$-Galois extensions in $\tworing$.
\end{prop}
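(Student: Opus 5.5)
The plan is to deduce the statement from \Cref{pr:torsor_to_Galois_inPRL}, using the symmetric monoidal equivalence $\Cat^\perf \simeq \PrLstomega$, $\calC \mapsto \Ind(\calC)$, so that $\tworing \simeq \CAlg(\PrLstomega)$. Let $\varphi\colon \Y \to \X$ be a perfect $G$-torsor over $\Spec \Sph$. This means $\X$ and $\Y$ are perfect stacks: $\X \simeq \Y//G \to \Spec\Sph$ is perfect, and $\Y \to \X$ is affine by \Cref{pr:torsors_are_relatively_affine}, hence perfect. Thus $\QCoh(\X) \simeq \Ind(\QCoh(\X)^\omega)$ and $\QCoh(\Y)\simeq \Ind(\QCoh(\Y)^\omega)$, with compact objects equal to dualisable objects. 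In particular $\varphi^\ast$ preserves compacts, since pullback preserves dualisable objects. So $\varphi^\ast$ is a map in $\CAlg(\PrLstomega)$, and under the equivalence it corresponds to $\QCoh(\X)^\omega \to \QCoh(\Y)^\omega$.

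Next I will check the two Galois conditions in $\tworing$ by transporting the equivalences from \Cref{pr:torsor_to_Galois_inPRL}.

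\emph{Tensor condition.} The inclusion $\PrLstomega \to \PrLst$ is symmetric monoidal. Hence $\QCoh(\Y)^\omega \otimes_{\QCoh(\X)^\omega} \QCoh(\Y)^\omega$ in $\tworing$ corresponds to $\QCoh(\Y)\otimes_{\QCoh(\X)}\QCoh(\Y) \simeq \prod_G \QCoh(\Y)$. Finite products in $\PrLstomega$ agree with those in $\PrLst$ and correspond to products in $\Cat^\perf$. So this is $\prod_G \QCoh(\Y)^\omega$, via the map of \Cref{def:galois_rognes}.

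\emph{Fixed points.} I need $\QCoh(\X)^\omega \simeq (\QCoh(\Y)^\omega)^{hG}$ in $\Cat^\perf$. Limits in $\Cat^\perf$ are computed underlying, so the right side is the full subcategory of $\QCoh(\Y)^{hG} \simeq \QCoh(\X)$ (by \Cref{cor:qcoh_and_fixedpoints}) on objects whose pullback to $\Y$ is compact, i.e.\ dualisable. Dualisability is detected after the faithfully flat cover $\varphi$, because the dualisable objects of a limit of symmetric monoidal categories are the limit of the dualisable objects. Since compacts equal dualisables on both $\X$ and $\Y$, this subcategory is exactly $\QCoh(\X)^\omega$.

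\emph{Faithfulness.} Faithfulness should follow as in \Cref{pr:torsor_to_Galois_inPRL}. The norm map $(\QCoh(\Y)^\omega)_{hG} \to (\QCoh(\Y)^\omega)^{hG}$ is an equivalence in $\Cat^\perf$: take $\Ind$, which preserves colimits, and use \cite[Pr.2.26]{categorical_ambidexterity_shay} together with the identification of the limit above. Alternatively, base change along $\varphi^\ast$ is conservative on modules because $\Ind$ of it is conservative on $\QCoh(\X)$-modules in $\PrLst$.

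The main obstacle is the fixed-point step. I must make sure that the limit $(\QCoh(\Y)^\omega)^{hG}$ computed in $\Cat^\perf$ (the full subcategory of the big limit) agrees with $\QCoh(\X)^\omega$, and not merely after $\Ind$-completion, where idempotent completeness or generation issues could appear. This is exactly where the perfectness hypothesis (compact $=$ dualisable) enters. I would verify it by the dualisability-descent argument above.
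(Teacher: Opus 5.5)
Your proposal is correct and is essentially the paper's proof. The paper likewise gets perfectness of $\Y$ from the affineness of $\varphi$, notes that $\varphi^\ast$ preserves compacts, and applies \Cref{lm:compactobjects_pres_galois} to the $\PrLst$-Galois extension of \Cref{pr:torsor_to_Galois_inPRL}; that lemma's proof is exactly your three steps: the symmetric monoidal, colimit-preserving inclusion $\PrLstomega \subseteq \PrLst$ for the tensor condition, $(-)^{\dualisable}$ commuting with limits plus compact $=$ dualisable for the fixed points, and faithfulness transported along the inclusion. The only difference is that you inline the lemma, and your second faithfulness argument is the paper's and is the one to keep, since norm maps in $\Cat^\perf$ are not equivalences in general and your first route would need extra care comparing them with those in $\PrLst$.
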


A perfect $G$-torsor $\Y \to \X$ over $\Spec \Sph$ is simply one where $\X$ is a perfect stack.

To prove this statement, we use the following lemma.

\begin{lemma}\label{lm:compactobjects_pres_galois}
    Let $G$ be a finite group and let $f\colon A \to B$ be a faithful $G$-Galois extension in $\CAlg(\PrLst)$, where both $A$ and $B$ are rigidly compactly generated, so compact and dualisable objects agree, and $f$ preserves compact objects. Then $A^\omega \to B^\omega$ is a faithful $G$-Galois extension in $\tworing$.
\end{lemma}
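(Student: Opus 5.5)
We work throughout under the symmetric monoidal equivalence $\Cat^\perf \simeq \PrLstomega$ given by $\Ind$ and taking compact objects. The strategy is to check the three defining conditions of \Cref{def:galois_rognes} for $A^\omega \to B^\omega$ by transporting them from $A \to B$. The $G$-action on $B$ is through $A$-linear functors in $\CAlg(\PrLst)$. Each $g$ is an equivalence, so it preserves compact objects, and hence the action restricts to an action on $B^\omega$ through exact symmetric monoidal functors under $A^\omega$.

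\emph{Fixed points.} We must show $A^\omega \simeq (B^\omega)^{hG}$ in $\tworing$. Limits in $\tworing$ and in $\Cat^\perf$ are computed as limits of underlying categories. Moreover, $(B^\omega)^{hG}$ is the full subcategory of $B^{hG}$ on those objects whose underlying object is compact in $B$. Under $A \simeq B^{hG}$, this subcategory consists of the $a\in A$ with $f(a)$ compact, equivalently $f(a)$ dualisable. We then argue in two steps.
\begin{itemize}
    \item If $f(a)$ is dualisable, we want $a$ dualisable. Since $f$ is faithful Galois, the functor $\Mod_A(\PrLst)\ni M\mapsto B\otimes_A M$ is conservative and $f$ is descendable-like. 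Dualisability of $a$ is tested by the map $a^\vee\otimes(-)\to \underline{\Hom}(a,-)$ being an equivalence. The functor $f$ preserves internal homs out of $a$, because $B \simeq A$-module with $f$ having an $A$-linear right adjoint that commutes with colimits; indeed $B\simeq \prod_G$ after base change, so $B$ is dualisable over $A$. Conservativity of $f$ on $A$ itself then gives the claim. This uses that faithfulness in $\PrLst$ implies $f^\ast\colon A\to B$ is conservative: apply faithfulness to the $A$-module $A$, or use $A\simeq B^{hG}$ directly, since a map in $B^{hG}$ is an equivalence iff it is so underlying.
    \item Dualisable objects coincide with compact ones by rigidity.
\end{itemize}
Together these give the fixed point identification.

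\emph{Galois condition and faithfulness.} We want $B^\omega\otimes_{A^\omega}B^\omega \simeq \prod_G B^\omega$. Passing to $\Ind$, the relative tensor product in $\Cat^\perf$ corresponds to that in $\PrLstomega$, and the inclusion $\PrLstomega\to\PrLst$ preserves colimits and tensor products, since relative tensor products are geometric realisations of bar constructions, which are colimits preserved by the inclusion. Hence $\Ind(B^\omega\otimes_{A^\omega}B^\omega)\simeq B\otimes_A B\simeq \prod_G B=\Ind(\prod_G B^\omega)$, the last step using that finite products are biproducts. The comparison functor is compact-preserving, and taking compacts gives the equivalence. For faithfulness we use the natural generalisation of \cite[Pr.6.3.3]{johnrognessdualisinggorups}: it suffices to show that the norm map $(B^\omega)_{hG}\to(B^\omega)^{hG}$ is an equivalence in $\Cat^\perf$, or alternatively to argue directly. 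The norm equivalence in $\PrLst$ used in \Cref{pr:torsor_to_Galois_inPRL} restricts to compacts, because colimits in $\Cat^\perf$ correspond to colimits in $\PrLstomega$. This reduces the claim to compactness bookkeeping.

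\emph{Main obstacle.} The delicate step is the fixed point identification: showing that an object of $A$ whose image in $B$ is compact is already compact in $A$. This is exactly where rigidity is needed, and if the dualisability descent above is shaky, we would first try to show directly that $f$ is conservative and that $f$ commutes with internal $\Hom$ into $\mathbf 1$.
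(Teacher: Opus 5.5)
Your overall structure matches the paper's: you transport each condition through $\PrLstomega\simeq\Cat^\perf$, and you get the Galois condition from the colimit- and tensor-preserving inclusion $\PrLstomega\to\PrLst$. You also correctly isolate the crux, namely that an $a\in A\simeq B^{hG}$ with $f(a)$ compact must already be compact in $A$. But your argument for that step has a genuine gap. You claim $f$ commutes with $\underline{\Hom}(a,-)$ because $f$ has an $A$-linear colimit-preserving right adjoint and $B$ is dualisable over $A$. This does not follow from those properties. Base change $-\otimes H\Q\colon\Sp\to\Mod_{H\Q}$ has both: its right adjoint is the forgetful functor, and $\Mod_{H\Q}$ is dualisable in $\PrLst$. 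Yet $H\Q\otimes\underline{\Hom}(\bigoplus_{\mathbf N}\Sph,\Sph)=H\Q\otimes\prod_{\mathbf N}\Sph$ is not $\prod_{\mathbf N}H\Q$, since on $\pi_0$ one gets $\Q\otimes\prod_{\mathbf N}\Z\neq\prod_{\mathbf N}\Q$. Similarly, ``apply faithfulness to the $A$-module $A$'' says nothing about conservativity of the functor $f$ on objects. Faithfulness concerns $B\otimes_A-$ on $A$-linear categories, so only your alternative route via $A\simeq B^{hG}$ works there.

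The missing input is that $A$ is the homotopy fixed points of $G$ acting through symmetric monoidal equivalences. The paper uses this in one line. Limits in $\CAlg(\PrLst)$ are computed on underlying symmetric monoidal categories, and $(-)^{\dualisable}$ preserves limits \cite[Pr.4.6.1.11]{ha}. Hence $A^{\dualisable}\simeq(B^{hG})^{\dualisable}\simeq(B^{\dualisable})^{hG}$, i.e.\ $a$ is dualisable if and only if $f(a)$ is, and rigidity then identifies $A^\omega$ with $(B^\omega)^{hG}$. Equivalently, each $g$ is a symmetric monoidal equivalence and so preserves internal homs, which means internal homs in $B^{hG}$ are computed in $B$. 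That, not dualisability of $B$ over $A$, is why your internal-hom claim is true, and with it your conservativity argument goes through.

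Once this is fixed, your faithfulness step via the norm map also works. Note, however, that the norm map in $\Cat^\perf$ is an equivalence here only because the fixed-point step shows $(B^\omega)^{hG}$ computed in $\Cat^\perf$ agrees with $B^{hG}$ in $\PrLst$; Tate constructions in $\Cat^\perf$ do not vanish in general. The paper avoids this by arguing directly: an $A$-module in $\PrLstomega$ killed by $B\otimes_A-$ is also killed in $\PrLst$, hence is zero.
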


\begin{proof}
    The data of the $G$-Galois extension $A \to B$ in $\calC=\CAlg(\PrLst)$ already lives in $\calC_\omega = \CAlg(\PrLstomega)$ by assumption. The first condition to be a $G$-Galois extension follows as the limit $B^{hG}$ in $\calC$ is also a limit in $\calC_{\omega}$. Indeed, this requires that $B^{hG} \simeq A$ lies in $\calC_{\omega}$, which is true by assumption, and that the projection $B^{hG} \to B$ preserves compact objects, which follows as $(-)^\dualisable$ preserves limits (\cite[Pr.4.6.1.11]{ha}) and in both $A$ and $B$, the collections of compact and dualisable objects agree:
    \[A^\omega = A^\dualisable \simeq (B^{hG})^\dualisable \simeq (B^\dualisable)^{hG} \to B^\dualisable = B^\omega.\]
    For the second condition to be a Galois extension in $\calC_\omega$, we observe that $\PrLstomega \to \PrLst$ preserves colimits, is symmetric monoidal, and both categories are semiadditive. It then immediately follows that
    \[B\otimes_A B \to \prod_G B\]
    is also an equivalence in $\calC_\omega$ as this holds in $\calC$ by assumption. For faithfulness, suppose that $M \in \Mod_A(\PrLstomega)$ satisfies $B\otimes_A M = 0$. As $\PrLstomega \to \PrLst$ is symmetric monoidal and a subcategory inclusion, $M$ is also an object of $\Mod_A(\PrLst)$ with $B\otimes_A M=0$ in $\PrLst$. From the faithfulness of the original extension in $\calC$, we see that $M=0$ in $\PrLst$, meaning it also vanishes in $\PrLstomega$, as desired. Finally, the symmetric monoidal equivalence $(-)^\omega\colon \PrLstomega\simeq \Cat^\perf$ finishes the proof.
\end{proof}

\begin{proof}[Proof of \Cref{pr:torsor_to_Galois_2ring}]
    As $\X$ is perfect and $\varphi\colon \Y \to \X$ is affine by \Cref{pr:torsors_are_relatively_affine}, then $\Y$ is easily seen to also be perfect. In particular, both $\QCoh(\Y)$ and $\QCoh(\X)$ are rigidly compactly generated and the pullback functor $\varphi^\ast \colon \QCoh(\X) \to \QCoh(\Y)$ preserves compact objects as it is symmetric monoidal. We can apply \Cref{lm:compactobjects_pres_galois} to the faithful $G$-Galois extension $\QCoh(\X) \to \QCoh(\Y)$ of \Cref{pr:torsor_to_Galois_inPRL}, and we are done.
\end{proof}

It seems reasonable now to include a variant of these results in the affine case; it will not play an explicit role elsewhere in this article.

\begin{prop}\label{pr:mod_and_perf_preserve_Galois}
    Let $G$ be a finite group and $A \to B$ be a faithful $G$-Galois extension of $\E_\infty$-rings. Then the induced maps
    \[\Mod_A \to \Mod_B, \qquad \Perf(A) \to \Perf(B)\]
    are both faithful $G$-Galois extensions in $\CAlg(\PrLst)$ and $\tworing$, respectively.
\end{prop}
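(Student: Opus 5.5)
We first treat the large version $\Mod_A \to \Mod_B$ in $\CAlg(\PrLst)$, and then deduce the small version from \Cref{lm:compactobjects_pres_galois}. The $G$-action on $B$ through $\E_\infty$-$A$-algebra maps induces a $G$-action on $\Mod_B$ through symmetric monoidal $\Mod_A$-linear left adjoints, with $g$ acting by $g_! = B\otimes_{B,g}(-)$.

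For the condition $\Mod_B\otimes_{\Mod_A}\Mod_B \simeq \prod_G \Mod_B$, we use the standard identification $\Mod_B\otimes_{\Mod_A}\Mod_C \simeq \Mod_{B\otimes_A C}$ in $\PrLst$ (Lurie, HA 4.8.4.6). Applying $\Mod_{(-)}$, which preserves finite products of rings, to the Galois equivalence $B\otimes_A B\simeq \prod_G B$ gives the required equivalence. One still has to check that the composite map is the one prescribed by \Cref{def:galois_rognes}; this follows from naturality of the identification.

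The fixed-point condition $\Mod_A \simeq (\Mod_B)^{hG}$ is the main step. We use faithfulness: by \cite[Pr.6.3.3]{johnrognessdualisinggorups}, a faithful $G$-Galois extension is a universal descent morphism, and in particular descendable in Mathew's sense \cite{mathew_Galois}. Hence $\Mod_A \simeq \lim_{\Delta}\Mod_{B^{\otimes_A \bullet+1}}$.

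Using $B^{\otimes_A n+1}\simeq \prod_{G^n} B$, this cosimplicial diagram becomes $\prod_{G^\bullet}\Mod_B$. This is the cobar construction computing $(\Mod_B)^{hG}$, because it is the Čech nerve of $\Spec B\to\Spec B/G$ computed in categories of modules. So, just as in the proof of \Cref{pr:pi0galois_to_torsor}, one compares two groupoid-type cosimplicial objects. It is enough to match them in degrees $0$ and $1$ together with the action maps, and then argue via right Kan extension and the Segal conditions that both are determined by this truncated data.

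Faithfulness of $\Mod_A\to\Mod_B$ in $\CAlg(\PrLst)$ is then automatic, because every $G$-Galois extension in $\PrLst$ is faithful by ambidexterity, as noted in the proof of \Cref{pr:torsor_to_Galois_inPRL}.

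For $\Perf(A)\to\Perf(B)$, we apply \Cref{lm:compactobjects_pres_galois}. Both $\Mod_A$ and $\Mod_B$ are rigidly compactly generated, with compact objects equal to dualisable objects equal to perfect modules. Base change $B\otimes_A(-)$ preserves compact objects. The lemma therefore gives a faithful $G$-Galois extension in $\tworing$.

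The main obstacle is the descent identification $\Mod_A\simeq(\Mod_B)^{hG}$: it requires descendability, supplied by faithfulness, together with a careful comparison of the Amitsur cosimplicial object with the homotopy fixed point diagram. If descendability turns out to be harder to extract directly, we would instead try the following. Use that $G$-Galois extensions in $\PrLst$ are detected by the unit $\Mod_A\to(\Mod_B)^{hG}$ being fully faithful and essentially surjective. Fully faithfulness would come from $A\simeq B^{hG}$ and the dualisability of $B$ as an $A$-module. Essential surjectivity would come from faithfulness via a Barr–Beck argument.
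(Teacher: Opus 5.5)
Your proposal is correct and follows the paper's proof step for step: $\Mod_B\otimes_{\Mod_A}\Mod_B\simeq\Mod_{B\otimes_A B}\simeq\prod_G\Mod_B$, faithfulness from ambidexterity in $\PrLst$, and \Cref{lm:compactobjects_pres_galois} for $\Perf$. The one difference is that you rederive $\Mod_A\simeq\Mod_B^{hG}$ from descendability of faithful Galois extensions together with the comparison of the Amitsur complex and the action groupoid (as in \Cref{pr:pi0galois_to_torsor}), whereas the paper just cites Mathew's Galois descent theorem \cite[Thm.9.4]{mathew_Galois} for that step.

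A caveat on your unused fallback: full faithfulness of $\Mod_A\to\Mod_B^{hG}$ cannot follow from $A\simeq B^{hG}$ and dualisability alone. A fully faithful exact functor reflects zero objects, so it would already force $A\to B$ to be faithful; faithfulness (via Tate vanishing) is therefore needed there as well, not only for essential surjectivity.
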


An alternative proof would be to work in the category of universal descent stacks $\Stk_\ud$ of \Cref{rmk:universal_descent_topology}, proving a version of \Cref{pr:torsor_to_Galois_2ring} in this context.

\begin{proof}
    Suppose $A \to B$ is a faithful $G$-Galois extension. Galois descent \cite[Thm.9.4]{mathew_Galois} identifies
    \[\Mod_A \simeq \Mod_B^{hG},\]
    and the second condition follows from the equivalences
    \[\Mod_B \otimes_{\Mod_A} \Mod_B \simeq \Mod_{B\otimes_A B} \simeq \Mod_{\prod_G B} \simeq \prod_G \Mod_B.\]
    As all Tate constructions in $\PrLst$ die by ambidexterity, see \cite[Pr.2.26]{categorical_ambidexterity_shay}, by \cite[Pr.6.3.3]{johnrognessdualisinggorups}, we also see that $\Mod_A \to \Mod_B$ is faithful. This yields the faithful Galois extension in $\CAlg(\PrLst)$. The Galois extension in $\tworing$ follows from \Cref{lm:compactobjects_pres_galois}.
\end{proof}

%%%%%%%%%%%%%%%%%%%%%%%%%%%%%%%%%%%%%%%%%%%%%%%%%%%%%%%%%%%%%%%
\subsection{The moduli stack of subgroups}\label{ssec:construction_of_moduli}
We now define and study our main example of an affine $G$-torsor and the star of \Cref{main_geometric}. Recall that we have fixed an oriented $\P$-divisible group over a stack $\M$ and a finite abelian group $K$.

\subsubsection{Construction of moduli stack}
Recall the \emph{underlying Zariski space functor}
\[|-|\colon \Stk \to \Top, \qquad \X \mapsto |\X|,\]
the unique colimit-preserving extension of the assignment sending $\Spec A$ to its underlying Zariski space $|\Spec \pi_0 A|$; see \cite[Con.5.5.1]{temperedglobal}. Define $\Inj(\dual{K}, \G)$ as the substack of $\G[\dual{K}] = \G(BK)$ with the universal property that for any $\M$-stack $\sfZ$, the diagram of mapping spaces
\[\begin{tikzcd}
    {{\Stk_{/\M}}(\sfZ,\Inj(\dual{K}, \G))}\ar[r]\ar[d]    &   {{\Top_{/|\M|}}(|\sfZ|, |\G(BK)| \setminus \bigcup_{L\lneqq K} |\G(BL)|)}\ar[d]    \\
    {{\Stk_{/\M}}(\sfZ,\G(BK))}\ar[r]    &   {{\Top_{/|\M|}}(|\sfZ|, |\G(BK)|)}
\end{tikzcd}\]
is Cartesian; see \cite[Cor.5.5.7 \& Def.5.5.10]{temperedglobal}. In particular, the left vertical map is a monomorphism of spaces, hence $\Inj(\dual{K}, \G) \to \G(BK)$ is a monomorphism in $\Stk_{/\M}$.

\begin{prop}
    The natural map $\Inj(\dual{K}, \G) \to \M$ is affine and flat as is the underlying map of classical stacks $\Inj(\dual{K}, \G^\heartsuit) \to \M^\heartsuit$.
\end{prop}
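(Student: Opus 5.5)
The plan is to split the map as $\Inj(\dual{K}, \G) \hookrightarrow \G(BK) = \G[\dual{K}] \to \M$ and prove each property for each factor. Affineness and flatness of $\G[\dual{K}] \to \M$ are part of the definition of a $\P$-divisible group: each $\G[\dual{K}]$ is a finite flat commutative group stack over $\M$, in particular affine and flat. Since affine and flat morphisms are stable under composition (\Cref{pr:permanenceproperties_for_morphisms}), it is enough to show that $\Inj(\dual{K}, \G) \to \G(BK)$ is affine and flat.

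For this second factor I would use the universal property defining $\Inj(\dual{K}, \G)$. It exhibits $\Inj(\dual{K}, \G)$ as the open substack of $\G(BK)$ lying over the subset
\[U = |\G(BK)| \setminus \bigcup_{L\lneqq K} |\G(BL)|.\]
Here I must first check that $U$ is open. Each $\G(BL) \to \G(BK)$ is a closed immersion: for a $\P$-divisible group, the map induced by a surjection $\dual{K} \to \dual{L}$ is a monomorphism of finite flat groups, hence a closed immersion. Only finitely many $L$ occur, so the union is closed.

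Properties of morphisms are local on the target, so I base change along an arbitrary $\Spec A \to \M$. Then $\G(BK) \times_\M \Spec A = \Spec R$ with $R$ flat and finite over $A$, and the union of the closed subsets $|\G(BL)|$ is cut out by finitely many ideals $I_L \subseteq \pi_0 R$. The pullback of $\Inj$ is the open complement of $V(\prod_L I_L)$ in $\Spec R$. At this point the main obstacle appears: an open immersion is flat, but it is affine only when the complement is cut out by a single element locally on $\Spec A$.

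My first attempt at this obstacle is to use the orientation. I expect that for an oriented (or even preoriented) $\G$, the locus $|\G(BL)|$ is the vanishing locus of a single section, namely an equivariant Euler class, so that $\Inj(\dual{K}, \G) \times_\M \Spec A \simeq \Spec R[e^{-1}]$. Concretely, it suffices to treat a maximal proper $L$, since every proper subgroup lies in one. Each maximal proper $L \lneqq K$ is the kernel of a character $\chi\colon K \to C_p$. Then $\G(BL) \subseteq \G(BK)$ is the pullback of the zero section along the induced map $\G(BK) \to \G(BC_p) = \G[p]$. The zero section of $\G[p]$ is, after orientation, cut out by the coordinate $t$ of the formal group restricted to $p$-torsion, i.e. the Euler class of the corresponding line. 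I take $e$ to be the product of these pulled back elements over the finitely many maximal $L$. This identifies the base change as $\Spec R[e^{-1}]$, a localisation, which is affine and flat. This is consistent with \cite[Th.F]{temperedglobal}, where $\Phi^K$ is a localisation by Euler classes. If the single-equation description only holds Zariski-locally on $\Spec A$, that suffices, since affineness descends along Zariski covers.

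The classical statement follows the same way. The construction of $\Inj$ is compatible with $(-)^\heartsuit$ because it depends only on underlying Zariski spaces, which see only $\pi_0$. Moreover $\G^\heartsuit(BK) \to \M^\heartsuit$ is again finite flat, and $\pi_0(R[e^{-1}]) = (\pi_0 R)[e^{-1}]$. Hence $\Inj(\dual{K}, \G^\heartsuit) \to \M^\heartsuit$ is the localisation of a finite flat algebra, which is affine and flat.
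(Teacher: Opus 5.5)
Your outline matches the paper's. Everything hinges on the local description of $\Inj(\dual{K},\G)\subseteq \G(BK)$ as the localisation at a single element, after which descent for affine flat maps finishes; the paper takes this description from the proof of \cite[Cor.5.5.17]{temperedglobal}. Your formal steps are fine: for $L=\ker(\chi\colon K\to C_p)$ one has $\G(BL)=\G(BK)\times_{\G[p]}\M$ by the fibre-sequence axiom, only maximal $L$ matter, and a product of finitely many local equations gives the localisation.

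\textbf{The gap.} The one non-formal input is the claim that the zero section of $\G[p]$ is cut out by one function. You justify it as ``the coordinate $t$ of the formal group restricted to $p$-torsion'', but for a general oriented $\P$-divisible group over $A$ this does not define a function on $\G[p]$:
\begin{itemize}
\item the identity component $\G^\circ$ only exists over $A^\wedge_p$, and away from $p$ the group $\G[p]$ is étale;
\item even over $A^\wedge_p$, $\G[p]$ is not contained in $\G^\circ$ once $\G$ has an étale part (e.g.\ $\mu_{p^\infty}\oplus\Q_p/\Z_p$ over $\KU^\wedge_p$), so there is nothing to restrict $t$ along.
\end{itemize}
Your mechanism works only when $\G[p]$ sits inside a $1$-dimensional group with locally principal zero section (such as $\G_m$ or an elliptic curve), or when $\G$ is connected. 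The aside ``or even preoriented'' is also off: a preorientation can be zero, and what is actually needed is the dimension-one condition that the orientation provides. As written, the key step is asserted rather than proved.

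\textbf{A repair.} Let $S=\pi_0\O(\G[p]_A)$, which is finite flat over $R=\pi_0A$, and let $I$ be its augmentation ideal. The $R$-module $I/I^2\simeq\omega_{\G[p]}$ has the following properties:
\begin{itemize}
\item it is finitely generated, and it vanishes where $p$ is invertible;
\item at a point of residue characteristic $p$, its fibre is the cotangent space of the formal group $\G^\circ_\kappa$, since $[p]$ has no linear term there; this space is one-dimensional by the orientation.
\end{itemize}
So Zariski locally on $\Spec R$ some $t\in I$ generates $I/I^2$. Nakayama then gives $a\in I$ with $(1-a)I\subseteq tS$. Hence $V(I)$ is cut out by $t$ on $D(1-a)$ and is empty on $D(a)$.

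This gives local principality Zariski locally on $\G[p]$, not on $\Spec A$ as you anticipated. That still suffices, because affineness of $\Inj(\dual{K},\G)\to\G(BK)$ is local on the target. Pulling back along the finitely many characters and intersecting, the rest of your argument, including the classical statement, then goes through. Alternatively, simply cite \cite[Cor.5.5.17]{temperedglobal} as the paper does.
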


\begin{proof}
    This first statement is contained within the proof of \cite[Cor.5.5.17]{temperedglobal}, which shows that after base change to a sufficiently nice cover of $\M$, the inclusion $\Inj(\dual{K}, \G) \to \G(BK)$ is an open immersion given by inverting an element. As both affine flat morphisms satisfy descent by \Cref{pr:permanenceproperties_for_morphisms}, this shows the spectral statement. For the statement on underlying classical stacks, we again appeal to descent in this setting, so it suffices to show that this holds locally on $\M^\heartsuit$, so after base change along $\Spec \pi_0 A \to \M^\heartsuit$ associated with any map $\Spec A \to \M$. We can identify
\[\Inj(\dual{K}, \G_{\Spec A})^\heartsuit = \Inj(\dual{K}, (\G_{\Spec A})^\heartsuit) \simeq \Inj(\dual{K}, \G^\heartsuit) \times_{\M^\heartsuit} \Spec \pi_0 A\]
via a slight generalisation of \cite[Pr.3.4.3.3]{reconstruction}. In \emph{ibid}, we work with \emph{flat morphisms between geometric stacks}, and such a statement formally upgrades to \emph{relative flat geometric stacks}, of which our affine and flat map $\Inj(\dual{K}, \G) \to \M$ is clearly a candidate. Details for this generalisation will appear shortly in \cite{geometricnorms}, so we refrain from repeating ourselves. Flatness is now also clear, as this can be checked locally on $\M^\heartsuit$ and flat maps between affine spectral stacks induce flat maps on underlying classical affine stacks by definition.
\end{proof}

The stack $\G[\dual{K}] = \G(BK)$ comes equipped with a natural $\Aut(BK)$-action as it comes from a functor $\G(-) \colon \Glo_\ab \to \Stk$. Our goal is now to show that the $\Aut(BK)$-action on $\G(BK)$ factors through $\Inj(\dual{K}, \G)$.

First, suppose that $\M = \Spec A$ is affine. Then for each commutative $\pi_0 A$-algebra $R$, the $R$-points of this stack of injections are by definition
\[%\Inj(\dual{K}, \G^\heartsuit)(R) = 
{\Stk^\heartsuit_{/\pi_0A}}(\Spec R, \Inj(\dual{K}, \G^\heartsuit)) \subseteq {\Stk^\heartsuit_{/\pi_0A}}(\Spec R, \G^\heartsuit(BK))) = \Ab(\dual{K},\G^\heartsuit(R)),\]
interpreting $\G^\heartsuit(BK)$ as $\Hom(\dual{K}, \G^\heartsuit)$, are precisely those homomorphisms $\varphi \colon \dual{K} \to \G^\heartsuit(R)$ which do not factor through $\dual{K} \to \dual{L}$ for any proper subgroups $L\leq K$, ie, the injections. This immediately yields the following two crucial classical observations.

\begin{lemma}\label{lm:discrete_action_factors}
    Over $\M = \Spec A$, the $\Aut(\dual{K})$-action on $\Hom(\dual{K}, \G^\heartsuit) = \G^\heartsuit(K)$ induces an $\Aut(\dual{K})$-action on $\Inj(\dual{K}, \G^\heartsuit)$. Moreover, this action on the set of $R$-valued points is free, where $R$ is a commutative $\pi_0 A$-algebra.
\end{lemma}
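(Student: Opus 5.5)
The plan is to work entirely on $R$-valued points, using the description just before the statement. For a commutative $\pi_0 A$-algebra $R$, we identify $\Hom(\dual{K}, \G^\heartsuit)(R) = \Ab(\dual{K}, \G^\heartsuit(R))$. The $\Aut(\dual{K})$-action is precomposition: $\sigma\cdot f = f\circ\sigma^{-1}$. The subfunctor $\Inj(\dual{K}, \G^\heartsuit)(R)$ consists of those $f$ that do not factor through a quotient $\dual{K} \to \dual{L}$ for a proper subgroup $L \lneqq K$.

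First we show the action preserves the subfunctor. Suppose $f\circ\sigma^{-1}$ factors as $\dual{K} \xrightarrow{q} \dual{L} \to \G^\heartsuit(R)$. Then $f$ factors through $q\circ\sigma$. The map $q\circ\sigma$ is again a surjection onto a proper quotient of $\dual{K}$, namely $\dual{L'}$ with $L' = \dual{\sigma}(L)$ a proper subgroup of $K$ under Pontryagin duality. So $f$ itself fails to be injective, which is a contradiction. Hence the action restricts to $\Inj(\dual{K}, \G^\heartsuit)(R)$, naturally in $R$. Since $\Inj(\dual{K}, \G^\heartsuit) \to \Hom(\dual{K}, \G^\heartsuit)$ is a monomorphism (a subfunctor), the action on the ambient stack restricts to a genuine action on the substack. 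No coherence data is needed beyond the action on the ambient stack, as $\Stk^\heartsuit$ is $1$-truncated enough on these set-valued functors.

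Next, freeness. Note first that "does not factor through any proper quotient" is precisely the statement that $f$ is injective as a group homomorphism $\dual{K} \to \G^\heartsuit(R)$. If $\ker f \neq 0$, then $f$ factors through $\dual{K}/\ker f$, which is a proper quotient $\dual{L}$. Conversely, a factorisation through a proper quotient forces a nonzero kernel. Now suppose $f\circ\sigma^{-1} = f$ with $f$ injective. Then $f(\sigma^{-1}(x)) = f(x)$ gives $\sigma^{-1}(x) = x$ for all $x$, so $\sigma = \mathrm{id}$. Thus stabilisers are trivial on every nonempty set of $R$-points, and the action on $R$-points is free.

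The only subtle step is the identification of "injective" on points with the Zariski-open condition $|\G(BK)| \setminus \bigcup_{L\lneqq K} |\G(BL)|$ that defines the substack. This description is taken as given in the paragraph preceding the lemma, so the main work is the duality bookkeeping showing that $\sigma$ permutes the proper subgroups $L$. That follows because automorphisms of $\dual{K}$ correspond to automorphisms of $K$, which permute its proper subgroups.
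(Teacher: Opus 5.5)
Your proposal is correct and is essentially the paper's argument. The paper gives no separate proof: it says the lemma is immediate once $R$-points of $\Inj(\dual{K},\G^\heartsuit)$ are identified with injective homomorphisms $\dual{K}\to\G^\heartsuit(R)$, and you have written out the two checks this requires (precomposing with an automorphism preserves the condition, and $f\circ\sigma^{-1}=f$ with $f$ injective forces $\sigma=\mathrm{id}$).

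One caveat, inherited from the paper's wording. The Zariski-open condition actually asks that $\dual{K}\to\G^\heartsuit(R)\to\G^\heartsuit(k)$ be injective for every residue field $k$ of $R$, which is stronger than abstract injectivity. For example, with $\G^\heartsuit=\mu_{p^\infty}$ over $\Z$, the map $1\mapsto\zeta_p$ in $\mu_p(\Z_p[\zeta_p])$ is injective, but $\zeta_p-1$ is not a unit, so it is not a point of $\Inj$. Both of your checks go through verbatim with this fibrewise reading. Freeness then holds for all $R\neq 0$, which is all the paper's notion of a free action uses.
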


Turning this lemma into a spectral statement is mostly formal.

\begin{prop}\label{pr:action_factors_to_inj}
    The $\Aut(BK)$-action on $\G(BK)$ induces an $\Aut(BK)$-action on $\Inj(\dual{K},\G)$ such that the natural inclusion of stacks
    \[\Inj(\dual{K},\G) \to \G(BK)\]
    is naturally $\Aut(BK)$-, and hence by restriction also $\Aut(\dual{K})$-, equivariant.
\end{prop}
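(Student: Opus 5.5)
The strategy is to exploit the fact that $\Inj(\dual{K},\G) \to \G(BK)$ is a monomorphism of stacks, defined by the Cartesian square of mapping spaces above. Because it is a monomorphism, lifting the action requires no coherences: an $\Aut(BK)$-action on $\G(BK)$ restricts to the substack $\Inj(\dual{K},\G)$ as soon as each automorphism $\sigma$ of $\G(BK)$ carries $\Inj(\dual{K},\G)$ into itself. Concretely, I will view the action as a functor $B\Aut(BK) \to \Stk_{/\M}$. I then form the full subfunctor on the subobject, using that the subobjects of a fixed object form a poset. This gives a functor $B\Aut(BK) \to \Stk_{/\M}$ with value $\Inj(\dual{K},\G)$ and a natural transformation to the original one. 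The equivariance of the inclusion is then automatic, and restricting along $\Aut(\dual{K}) \to \Aut(BK)$ gives the second clause.

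To verify that each $\sigma$ preserves the substack, I will use the universal property. A map $\sfZ \to \G(BK)$ factors through $\Inj(\dual{K},\G)$ if and only if the induced map $|\sfZ| \to |\G(BK)|$ avoids $\bigcup_{L\lneqq K}|\G(BL)|$. It therefore suffices to show that the homeomorphism $|\sigma|$ of $|\G(BK)|$ over $|\M|$ preserves this union, that is, that it permutes the closed subsets $|\G(BL)|$ for proper $L \lneqq K$. Since $\G(-)$ is a functor on $\Glo_\ab$, $\sigma$ is induced by an automorphism of $BK$. Up to conjugation, which acts trivially on $BK$ in the relevant sense, such an automorphism comes from some $\alpha\in\Aut(K)$. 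Naturality of $\G(-)$ then gives commutative squares
\[\G(BL) \to \G(BK) \xrightarrow{\sigma} \G(BK), \qquad \G(BL)\xrightarrow{\simeq}\G(B\alpha(L)) \to \G(BK).\]
It follows that $|\sigma|$ sends $|\G(BL)|$ onto $|\G(B\alpha L)|$, and $\alpha(L)$ is again proper. Since $|-|$ is a functor and $\sigma$ is invertible, $|\sigma|$ preserves both the union and its complement.

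Alternatively, and as a sanity check, the factorisation can be verified locally on $\M$. Images in Zariski spaces are determined on classical points, so over $\M = \Spec A$ the claim reduces to \Cref{lm:discrete_action_factors}: automorphisms of $\dual{K}$ send injections to injections. Descent then globalises this along a cover, since $|-|$ preserves colimits.

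I expect the main obstacle to be the homotopy-coherence bookkeeping in the first step. One must make precise that restricting a $B\Aut(BK)$-diagram to a subobject stable under every arrow yields a genuine diagram equipped with an equivariant map. I would handle this by working in the full subcategory of $\Fun(\Delta^1,\Stk_{/\M})$ spanned by monomorphisms. Its fibre over $\G(BK)$ is a poset, so the lift of the classifying map $B\Aut(BK) \to \Stk_{/\M}$ is unique as soon as it exists on objects and morphisms, and both of these were checked in the second step.
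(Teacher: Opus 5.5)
Your proof is correct. Where it departs from the paper is in how you check that each automorphism preserves $\Inj(\dual{K},\G)$.

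The coherence step is the same idea as the paper's \Cref{lm:monmorphisms_and_actions}. There, $\calC^{\Delta^1}(f,f)\to\calC(Y,Y)$ is shown to be a monomorphism of $\E_1$-spaces, as a base change of $\calC(X,X)\to\calC(X,Y)$. Hence an action on $\G(BK)$ lifts uniquely to the inclusion once each element of $\pi_0\Aut(BK)$ lifts. Your ``the fibre over $\G(BK)$ is a poset'' is this fact in subobject language. That lemma is the cleanest way to make your final paragraph rigorous, whose phrase ``unique as soon as it exists on objects and morphisms'' is a little loose as stated.

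For the preservation step, the paper argues locally. It reduces to $\M=\Spec A$ via $\Stk_{/\M}\simeq\lim\Stk_{/\Spec A}$ and passes to classical stacks, using that $|-|$ factors through $(-)^\heartsuit$. It then invokes the description of $R$-points as injective homomorphisms in \Cref{lm:discrete_action_factors}. This is exactly your ``sanity check''.

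Your main argument instead stays global. Since $\pi_0\Aut(BK)\cong\Aut(K)$ and $|-|$ only sees homotopy classes, you apply $\G(-)$ to $B\alpha\circ\iota_L\simeq\iota_{\alpha L}\circ B(\alpha|_L)$ in $\Glo_\ab$. This shows that $|\sigma|$ permutes the subsets $|\G(BL)|$ for $L\lneqq K$, and therefore maps their complement onto itself.

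What each route buys:
\begin{itemize}
\item Your route needs no descent, no compatibility of $\Inj$ with base change or with $(-)^\heartsuit$, and no description of points. It is shorter and works verbatim over any base $\M$.
\item The paper's route goes through a classical computation it needs anyway. The same description of $R$-points also gives freeness of the $\Aut(\dual{K})$-action, which is the key input for \Cref{thm:geometric_intext}.
\end{itemize}
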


To prove this, we need a simple lemma that we cannot find in the literature.

\begin{lemma}\label{lm:monmorphisms_and_actions}
    Let $\calC$ be a category, $X,Y$ be objects of $\calC$, and $f\colon X\to Y$ be a monomorphism. If $\calG$ is a group object in spaces acting on $Y$, and for all $g\in \pi_0 \calG$, the map $gf\colon X \to Y$ lifts to a map $\bar{g} \colon X \to X$ along the monomorphism
    \[{\calC}(X,X) \to {\calC}(X,Y),\]
    then $X$ inherits a unique $\calG$-action such that $f$ is $\calG$-equivariant.
\end{lemma}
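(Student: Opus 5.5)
The plan is to reduce the statement to the observation that monomorphisms into $Y$ form a full subcategory of the slice that is closed under the $\calG$-action. An action of $\calG$ on $Y$ is a functor $\rho\colon B\calG \to \calC$ sending the basepoint to $Y$. Since $f$ is a monomorphism, the functor $\calC_{/X}\to\calC_{/Y}$ given by postcomposition with $f$ is fully faithful. Its essential image consists of the objects $Z \to Y$ that factor through $f$; such a factorisation is unique up to contractible choice because $\calC(Z,X)\to\calC(Z,Y)$ is a monomorphism, i.e. an inclusion of path components.

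The key steps, in order, are as follows.

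\begin{enumerate}
\item Translate $\calG$-equivariant maps into $Y$ into slice language. The data of a $\calG$-object $X'$ together with an equivariant map to $Y$ is a lift of $\rho$ along the forgetful functor. Equivalently, it is a functor $B\calG \to \Fun(\Delta^1,\calC)$ whose target component is $\rho$, which we repackage as a section of the left fibration over $B\calG$ classifying $\rho$ composed with the slice construction. Concretely, we use the fibration $\calC_{/\rho} \to B\calG$ obtained by unstraightening $b\mapsto \calC_{/\rho(b)}$, with functoriality given by postcomposition.
\item Pass to the subfunctor of monomorphisms. We take the sub-diagram $b\mapsto \calC^{\mathrm{mono}}_{/\rho(b)}$. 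Each $\calC^{\mathrm{mono}}_{/Y}$ is a poset, in the sense that its mapping spaces are empty or contractible. Since postcomposition with an equivalence $g\colon Y\to Y$ preserves monomorphisms, this is a functor $B\calG \to \Cat_\infty$ landing in posets. A fixed point of it, i.e. an object of $\lim_{B\calG}$, is exactly the desired data of an action on $X$ making $f$ equivariant.
\item Compute that limit. Because the values are posets, the functor factors through the $1$-truncation. The limit of a diagram of posets over $B\calG$ is the subposet of elements fixed up to isomorphism by every $g\in\pi_0\calG$; no coherence data is required, since all higher data lives in contractible spaces. The object $[f\colon X\to Y]$ is fixed by $g$ exactly when $gf$ factors through $f$, which is the hypothesis $gf \simeq f\bar g$.
\item Conclude existence and uniqueness together. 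Fixed objects of a poset-valued diagram lift uniquely to the limit, so the space of $\calG$-actions on $X$ with $f$ equivariant is contractible, in particular nonempty.
\end{enumerate}

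The main obstacle is step 3: making rigorous that a limit over $B\calG$ of a diagram of posets, i.e. $(-1)$-truncated mapping spaces, reduces to the set of $\pi_0\calG$-fixed points. To handle it, I would use that posets form a reflective subcategory of $\Cat_\infty$ that is closed under limits. The inclusion of the full subposet of fixed objects into $\lim$ can then be checked on objects: an object of the limit is an object together with coherent isomorphisms. In a poset these isomorphisms are identities up to contractible choice, and the relevant space of coherences is a limit of contractible spaces.

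One further check is needed. The lifts $\bar g$ must be equivalences, so that the result is an action rather than merely a monoid action. This follows formally: $\overline{g^{-1}}\,\bar g$ lifts $f$ along the monomorphism, so it equals $\mathrm{id}_X$, and symmetrically for the other composite.
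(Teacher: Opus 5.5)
Your argument is correct, but it takes a different route from the paper's. The paper works at the level of endomorphism monoids. It views the action as a map of $\E_1$-spaces $\calG\to\calC(Y,Y)$. From the pullback of mapping spaces $\calC^{\Delta^1}(f,f)\simeq\calC(X,X)\times_{\calC(X,Y)}\calC(Y,Y)$, it deduces that the target projection $\calC^{\Delta^1}(f,f)\to\calC(Y,Y)$ is a monomorphism of $\E_1$-spaces, being a base change of postcomposition with $f$. Its image consists of exactly those $h$ with $hf$ factoring through $f$. The hypothesis says $\calG\to\calC(Y,Y)$ lands in this union of components on $\pi_0$, so it factors uniquely as a map of $\E_1$-spaces. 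This gives an action on $f$, and hence on $X$ by projecting to the source.

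You instead identify the desired data with the $\calG$-homotopy fixed points of $\calC_{/Y}$ under postcomposition. You then restrict to the full subcategory of monomorphisms and use that it is a poset, so the limit over $B\calG$ collapses to strict $\pi_0\calG$-fixed points. Both proofs rest on the same input: $\calC(-,X)\to\calC(-,Y)$ is $(-1)$-truncated, which kills all higher coherence. The paper applies this to mapping spaces, you apply it to $B\calG$-diagrams in $\Cat_\infty$.

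The paper's route is shorter and gets invertibility of the lifts for free, since an $\E_1$-map out of the grouplike $\calG$ lands in $\Aut(f)$; you need your separate check with $\overline{g^{-1}}\bar g$. Your route costs unstraightening and a computation of limits of poset-valued diagrams. In exchange, it exhibits the space of equivariant structures as contractible directly. It also phrases the lemma as ``a $\pi_0\calG$-invariant subobject of $Y$ is $\calG$-invariant'', which is exactly how it is applied to $\Inj(\dual{K},\G)\subseteq\G(BK)$.

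Two points to tighten. First, the fibration classifying $b\mapsto\calC_{/\rho(b)}$ is cocartesian, not left. Second, you should say why its sections compute the limit rather than a lax limit: over the groupoid $B\calG$ every section sends morphisms to equivalences, and equivalences are cocartesian edges.
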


\begin{proof}
    The projection functor $\calC^{\Delta^1} \to \calC$ to the second factor defines a map of $\E_1$-spaces
    \[\calC^{\Delta^1}(f,f) \to \calC(Y,Y).\]
    We claim that this is a monomorphism, whose image consists of precisely those maps $Y \to Y$ in $\calC$ which lift (necessarily uniquely) to endomorphisms of $f$ in $\calC^{\Delta^1}$. Indeed, it suffices to see that the induced map of spaces is a monomorphism. This then follows from the pullback of mapping spaces
    \[\begin{tikzcd}
        {\calC^{\Delta^1}(f,f)}\ar[r]\ar[d] &   {\calC(Y,Y)}\ar[d]  \\
        {\calC(X,X)}\ar[r]                  &   {\calC(X,Y)}
    \end{tikzcd}\]
    from \cite[Pr.B.1]{arakawa_arrow_cat_appendix}, and the fact that the lower-horizontal map is a monomorphism. Our hypotheses on $\calG$ imply that the map of $\E_1$-spaces $\calG \to \calC(Y,Y)$ factors through the upper-horizontal map, giving us our desired action on $f$, and hence also on $X$ via projection onto the first factor.
\end{proof}

\begin{proof}[Proof of \Cref{pr:action_factors_to_inj}]
    First, we note that as $K$ is finite abelian, we can compute
    \[\pi_0 \Aut(BK) \simeq \Aut(K) \simeq \Aut(\dual{K}),\]
    the second equivalence using Pontryagin duality. From the universal property of $\Inj(\dual{K}, \G)$ and \Cref{lm:monmorphisms_and_actions}, it suffices to show that for each $\ga \in \Aut(\dual{K})$, that the composite
    \[\Inj(\dual{K}, \G) \to \G(BK) \xrightarrow{\ga} \G(BK)\]
    factors through $\Inj(\dual{K}, \G)$ upon taking underlying Zariski spaces. In other words, if $\ga$ is an automorphism of $K$, we need to show that the subspace $|\G(BK)| \setminus \bigcup_{L\lneqq K} |\G(BL)|$ is preserved by the action $\ga$. As
    \begin{equation}\label{eq:presentation_of_stack_category}
    \Stk_{/\M} \simeq \lim \Stk_{/\Spec A}
    \end{equation}
    for any presentation $\M = \colim \Spec A$ by \cite[Pr.2.1.2.9(4)]{reconstruction}, it suffices to prove this in the affine case of $\M=\Spec A$. By definition, the underlying Zariski subspace functor factors through the underlying classical stack functor $(-)^\heartsuit \colon \Stk \to \Stk^\heartsuit$, see \cite[Rmk.5.5.2]{temperedglobal}, so we are reduced to showing this statement after applying $(-)^\heartsuit$. The result now follows from \Cref{lm:discrete_action_factors}.
\end{proof}

\begin{mydef}\label{df:moduli_stack_of_subgroups}
    The \emph{stack of subgroups of $\G$ with type $\dual{K}$} is the quotient
    \[\Sub(\dual{K}, \G) = \Inj(\dual{K}, \G) / \Aut(\dual{K})\]
\end{mydef}

Let us see some explicit examples of these stacks of subgroups.

\begin{example}\label{ex:subforKO}
    Let $\M$ be a stack and $\G$ an oriented $\P$-divisible group such that on an affine flat cover $\M' \to \M$, the base change of $\G$ is isomorphic to $\mu_{\P^\infty}$, the torsion of the multiplicative group scheme $\G_m$ over $\M'$. For example, $\G$ could be the torsor of a \emph{torus}; see \Cref{pr:universalorientedtorus}. We claim that the structure map $\X = \Sub(\dual{K}, \G) \to \M$ in this case factors as an equivalence
    \[\Sub(\dual{K}, \G) \simeq \M \times \Spec \Sph[\tfrac{1}{n}].\]
    First, to see that $n$ is invertible on $\X$, it suffices to show that $n$ is invertible in $A$ for all maps $f \colon \Spec A \to \X$ from an affine stack such that the pullback of $\G$ to $A$ is isomorphic to $\G_m$ over $A$. Write the pullback of $\G$ to $\Spec A$ as $\G_A$. The map $f$ equips the $n$-torsion $\G_A[n] = \mu_n$ with a subgroup of the form $\dual{C_n}$. However, an injection $\dual{C_n} \to \mu_n$ over $A$ induces an injection of the same form over $\pi_0 A$, and it is well-known that classically such an injection only exists if $n$ is invertible in $A$, in which case this injection is an isomorphism. This shows that $n$ is invertible on both $\Y = \Inj(\dual{C_n}, \G)$ and $\X$, which implies that they are both étale $\M$. By Lurie's étale rigidity,\footnote{The geometric version of Lurie's étale rigidity we are referring to here is the statement that $(-)^\heartsuit\colon \Stk_{/\M} \to \Stk^\heartsuit_{/\M^\heartsuit}$ induces an equivalence between relatively affine and étale stacks on both sides. This easily follows from Lurie's affine étale rigidity \cite[Th.7.5.4.2]{ha} and the fact that the functor sending a stack $\X$ to $\Stk_{/\X}$ sends colimits to limits, which is precisely \cite[Pr.2.1.2.9(4)]{reconstruction}.} both $\Y$ and $\X$ are the unique étale and relatively affine spectral realisations of their underlying classical stacks over $\M^\heartsuit$. It therefore suffices to make the desired identification in the classical situation, where it is clear that $\Sub(\dual{C_n}, \G)^\heartsuit = \M^\heartsuit \times \Spec \Z[\tfrac{1}{n}]$, giving us the claimed equivalence above.
\end{example}

\begin{example}\label{ex:subforTMF}
    If $\sfE$ is an oriented elliptic curve over a stack $\M$, then its torsion $\sfE[\P^\infty]$ defines an oriented $\P$-divisible group over $\M$. We state the following in the universal case, where $\M=\M_\Ell^\ori$ is Lurie's moduli stack of oriented elliptic curves \cite[\textsection7]{ec2} and $\calE$ is the universal oriented elliptic curve. In this case, we claim that there are identifications
    \[\Inj(\dual{C_n\times C_n}, \calE[\P^\infty]) \simeq \M^\ori(n), \qquad \Sub(\dual{C_n\times C_n}, \calE[\P^\infty]) \simeq \M_\Ell^\ori \times \Spec \Sph[\tfrac{1}{n}],\]
    where $\M^\ori(n)$ is the unique \'{e}tale lift of the classical moduli stack of elliptic curves with $\Ga(n)$-level structure over $\M_\Ell^\ori$. Indeed, the same arguments of the previous example \Cref{ex:subforKO} work here, using the fact that if the $n$-torsion of an elliptic curve is given by $C_n \times C_n$, then $n$ is invertible in the base, courtesy of \cite[Cor.2.3.2]{km}. More generally, if $\sfE$ is an oriented elliptic curve over a general stack $\Y$, then by base change we have the natural identification
    \[\Sub(\dual{C_n \times C_n}, \sfE[\P^\infty]) \simeq \Y \times \Spec \Sph[\tfrac{1}{n}].\]
\end{example}

Generally, the stacks $\Sub(\dual{K}, \calE[\P^\infty])\times \Spec \Sph[\tfrac{1}{|K|}]$ are the unique complex-periodic refinements of those denoted as $\mathcal{M}_K$ from \cite[Def.2.26]{heckeontmf}. Integrally, they can be much more complicated and interesting; we will come back to more examples in \cite{geometricnorms} and future work.

\subsubsection{As an affine torsor}
\begin{theorem}[{\Cref{main_geometric}}]\label{thm:geometric_intext}
    The canonical map $\varphi \colon \Inj(\dual{K}, \G) \to \Sub(\dual{K}, \G)$ defines an affine flat $\Aut(\dual{K})$-torsor over $\M$.
\end{theorem}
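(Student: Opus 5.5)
By \Cref{df:moduli_stack_of_subgroups} and \Cref{pr:action_factors_to_inj}, $\varphi$ is by construction an $\Aut(\dual{K})$-torsor in the sense of \Cref{df:g-torsor}. What remains is to show that the structure map $\Sub(\dual{K}, \G) \to \M$ is affine and flat. The plan is to reduce to the affine case and then invoke \Cref{pr:flatness_and_affine_torsors}, whose condition 1 is already known: $\Inj(\dual{K}, \G) \to \M$ is affine flat by the proposition preceding \Cref{lm:discrete_action_factors}.

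\emph{Step 1: reduce to an affine base.} Affine and flat morphisms satisfy descent (\Cref{pr:permanenceproperties_for_morphisms}), and quotients by $\Aut(\dual{K})$ commute with base change by universality of colimits. It therefore suffices to prove the claim after pulling back along every $\Spec A \to \M$. This uses \Cref{pr:G-torsors_and_basechange}(1) together with the compatibility $\Inj(\dual{K}, \G)\times_\M \Spec A \simeq \Inj(\dual{K}, \G_A)$, which holds because $\Inj$ is defined by a pullback of mapping spaces that is stable under base change. So we may take $\M = \Spec A$.

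\emph{Step 2: the affine case.} Now $\Inj(\dual{K}, \G) = \Spec C$ is affine and flat over $A$, and $\Aut(\dual{K})$ acts on $C$ through $A$-algebra maps. Set $B = C^{h\Aut(\dual{K})}$. By \Cref{pr:pi0galois_to_torsor}, it suffices to show that $B \to C$ is a $\pi_0$-$\Aut(\dual{K})$-Galois extension, i.e.\ that $\Aut(\dual{K})$ acts freely on $\pi_0 C$. Given this, \Cref{pr:pi0galois_to_torsor} yields $\Spec C/\Aut(\dual{K}) \simeq \Spec B$ with $B$ flat over $A$, and hence an affine flat torsor over $\Spec A$.

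For freeness, recall that $\pi_0 C$ corepresents $\Inj(\dual{K}, \G^\heartsuit)$ on commutative $\pi_0 A$-algebras. This follows from the same classical identification used in the proof of affineness and flatness of $\Inj$, namely $\Inj(\dual{K},\G)^\heartsuit \simeq \Inj(\dual{K},\G^\heartsuit)$ over an affine base, since flat affine maps commute with $(-)^\heartsuit$. Then, for any nonzero ring $R$, we need $\Hom(\pi_0 C, R)$ to be acted on freely. Any ring map $\pi_0 C \to R$ makes $R$ into a $\pi_0 A$-algebra, and these maps are exactly the injections $\dual{K} \to \G^\heartsuit(R)$, on which $\Aut(\dual{K})$ acts freely by \Cref{lm:discrete_action_factors}. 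We must also check that the induced action on $\pi_0 C$ agrees with the classical precomposition action. This is expected, because the spectral action was constructed in \Cref{pr:action_factors_to_inj} by lifting the action on $\G(BK)$, whose underlying classical action is precomposition.

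\emph{Main obstacle.} The delicate point is Step 1's interplay with Step 2: to glue the affine flat quotients we need that forming $\Spec C^{h\Aut(\dual{K})}$ is compatible with base change in $A$. This holds because $\pi_0$-Galois extensions are stable under base change and the resulting squares are Cartesian. Equivalently, the local quotients agree with $\Sub(\dual{K},\G)\times_\M\Spec A$ by universality of colimits, and the local descriptions then descend using \Cref{pr:permanenceproperties_for_morphisms}.
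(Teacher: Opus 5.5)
Your proposal is correct and follows essentially the same route as the paper: reduce to an affine base, use \Cref{lm:discrete_action_factors} to see that the $\Aut(\dual{K})$-action on $\pi_0$ of the ring of functions on $\Inj(\dual{K},\G)$ is free, and apply \Cref{pr:pi0galois_to_torsor} to identify the quotient with $\Spec$ of the homotopy fixed points. The only cosmetic difference is that you get flatness from the ``moreover'' clause of \Cref{pr:pi0galois_to_torsor}, whereas the paper cites \Cref{pr:flatness_and_affine_torsors}; both rest on the same flat-locality-on-the-source argument.
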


By \Cref{cor:finiteetaleness}, the map $\varphi$ above is finite étale.

\begin{proof}
    To see that $\Sub(\dual{K}, \G)$ is affine over $\M$, we base change along a map of stacks $\Spec A \to \M$, reducing us to $\M=\Spec A$ and showing that $\Sub(\dual{K}, \G)$ is affine. We know that $\Inj(\dual{K},\G) = \Spec B$ is affine over $\Spec A$ and by \Cref{lm:discrete_action_factors}, the induced $\Aut(\dual{K})$-action on $\pi_0 B$ is free. In particular, $B^{h\Aut(\dual{K})} \to B$ is a $\pi_0$-$\Aut(\dual{K})$-Galois extension by definition, hence
    \[\Inj(\dual{K}, \G) = \Spec B \to \Spec B^{h\Aut(\dual{K})} \simeq \Spec B / \Aut(\dual{K}) \simeq \Sub(\dual{K}, \G)\]
    is an affine $\Aut(\dual{K})$-torsor by \Cref{pr:pi0galois_to_torsor}. In particular, $\Sub(\dual{K}, \G)$ is affine. Flatness then follows from \Cref{pr:flatness_and_affine_torsors} as $\Inj(\dual{K}, \G)$ is flat over $\M$.
\end{proof}

Combining \Cref{thm:geometric_intext} together with \Cref{pr:torsor_to_Galois_inQCoh,cor:0semiaffine_gives_galoisextension,pr:torsor_to_Galois_inPRL,pr:torsor_to_Galois_2ring} yields:

\begin{cor}\label{cor:galois_from_torsor}
    Writing $f\colon \X = \Sub(\dual{K},\G) \to \M$ and $g\colon \Y = \Inj(\dual{K}, \G) \to \M$ for the structure maps, the maps induced by the $\Aut(\dual{K})$-torsor of \Cref{thm:geometric_intext}
    \[\O_\X \to \varphi_\ast \O_\Y, \qquad \QCoh(\Sub(\dual{K},\G)) \to \QCoh(\Inj(\dual{K}, \G))\]
    are faithful $\Aut(\dual{K})$-Galois extension in $\CAlg(\QCoh(\X))$ and $\CAlg(\PrLst)$, respectively. If $\M$ is perfect, then
    \[\QCoh(\Sub(\dual{K},\G))^\omega \to \QCoh(\Inj(\dual{K}, \G))^\omega\]
    is a faithful $\Aut(\dual{K})$-Galois extension in $\tworing$, and if moreover $\M$ is $0$-affine, then
    \[\Ga(\Sub(\dual{K},\G)) \to \Ga(\Inj(\dual{K}, \G)), \qquad\Perf(\Ga(\Sub(\dual{K},\G))) \to \Perf(\Ga(\Inj(\dual{K}, \G)))\]
    are faithful $\Aut(\dual{K})$-Galois extensions in $\CAlg$ and $\tworing$, respectively.
\end{cor}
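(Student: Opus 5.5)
The plan is to apply the four earlier results listed just before the statement to the torsor of \Cref{thm:geometric_intext}, checking their hypotheses one by one. Write $G=\Aut(\dual{K})$ and $\varphi\colon \Y\to\X$ for the affine flat $G$-torsor over $\M$.

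The first two claims need no hypotheses on $\M$. Regarding $\varphi$ as a $G$-torsor over $\X$ itself (the property $\calP$ being vacuous), \Cref{pr:torsor_to_Galois_inQCoh} gives that $\O_\X\to\varphi_\ast\O_\Y$ is a faithful $G$-Galois extension in $\CAlg(\QCoh(\X))$. Likewise, \Cref{pr:torsor_to_Galois_inPRL} gives the statement for $\varphi^\ast\colon\QCoh(\X)\to\QCoh(\Y)$ in $\CAlg(\PrLst)$.

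For the $\tworing$ statement, I would apply \Cref{pr:torsor_to_Galois_2ring}. This requires $\X$ to be a perfect stack, i.e.\ $\varphi$ to be a perfect $G$-torsor over $\Spec\Sph$. The input is that $f\colon\X\to\M$ is affine by \Cref{thm:geometric_intext}. If $\M$ is perfect, then $\X$ is perfect, because an affine morphism into a perfect stack has perfect source. To justify this, I would use $\QCoh(\X)\simeq\Mod_{f_\ast\O_\X}(\QCoh(\M))$, which holds since affine maps are $0$-affine by \cite[Pr.2.2.2.5]{reconstruction}. Modules over an algebra in a rigidly compactly generated category are again compactly generated by the free modules on compact generators, and there compacts and dualisables agree. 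This is the same fact used implicitly in the proof of \Cref{pr:torsor_to_Galois_2ring}, where perfectness passes along the affine map $\varphi$. This is the only step needing any verification, and I expect it to be the main (though minor) obstacle; if needed I would cite \cite{perfectstacks} for stability of perfectness under affine morphisms.

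Finally, assume $\M$ is $0$-affine. Then $\X$ is $0$-affine, because affine maps are $0$-affine and $0$-affine maps compose \cite[Pr.2.2.1.6]{reconstruction}. So $\varphi$ is a $0$-affine $G$-torsor over $\Spec\Sph$, and \Cref{cor:0semiaffine_gives_galoisextension} gives that $\Ga(\X)\to\Ga(\Y)$ is a faithful $G$-Galois extension of $\E_\infty$-rings. For the last claim there are two routes. One is to apply \Cref{pr:mod_and_perf_preserve_Galois} to this extension of $\E_\infty$-rings. The other is to note that $0$-affine stacks are perfect and that $\Y$ is $0$-affine as well, so $\QCoh(\X)^\omega\simeq\Perf(\Ga(\X))$ and $\QCoh(\Y)^\omega\simeq\Perf(\Ga(\Y))$; the $\tworing$ statement just proved then gives $\Perf(\Ga(\X))\to\Perf(\Ga(\Y))$ as a faithful $G$-Galois extension in $\tworing$.
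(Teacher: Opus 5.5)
Your proposal is correct and is essentially the paper's argument: the paper obtains the corollary by feeding the torsor of \Cref{thm:geometric_intext} into \Cref{pr:torsor_to_Galois_inQCoh,pr:torsor_to_Galois_inPRL,pr:torsor_to_Galois_2ring,cor:0semiaffine_gives_galoisextension}, and your checks that $\X$ inherits perfectness and $0$-affineness from $\M$ along the affine map $f$ are exactly the hypotheses the paper leaves implicit. For the final $\Perf$ statement, your second route (identifying $\QCoh(-)^\omega$ with $\Perf(\Ga(-))$ on $0$-affine stacks) is the one the paper cites, though appealing to \Cref{pr:mod_and_perf_preserve_Galois} works equally well.
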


We close this section with an unresolved issue:

\begin{question}
    Does the quotient $\Inj(\dual{K}, \G) \to \Inj(\dual{K}, \G) / \Aut(BK)$ define an {affine} $\Aut(BK)$-torsor over $\M$?
\end{question}

This seems more natural than the restricted $\Aut(\dual{K})$-action, but we do not know how to show that the quotient stack $\Inj(\dual{K}, \G) / \Aut(BK)$ is affine in general. Skipping ahead, \Cref{pr:no_action_conflicts_part2_GM,pr:no_action_conflicts_part2_ellipticcurves} seem to suggest that this is not the case, as if the $\Aut(BK)$-action factors through the truncation to $\Aut(\dual{K})$, it seems unlikely that the $\Aut(BK)$-action also defines an affine torsor on the quotient. We have not yet found any explicit counterexamples, as in \Cref{pr:no_action_conflicts_part2_GM,pr:no_action_conflicts_part2_ellipticcurves}, we are forced to work with $n$ inverted, where $BC_n$-Galois extensions are uninteresting.

%%%%%%%%%%%%%%%%%%%%%%%%%%%%%%%%%%%%%%%%%%%%%%%%%%%%%%%%%%%%%%%
%%%%%%%%%%%%%%%%%%%%%%%%%%%%%%%%%%%%%%%%%%%%%%%%%%%%%%%%%%%%%%%
%%%%%%%%%%%%%%%%%%%%%%%%%%%%%%%%%%%%%%%%%%%%%%%%%%%%%%%%%%%%%%%
\section{Galois extensions from tempered cohomology theories}\label{sec:ESHT_over_a_stack}

%%%%%%%%%%%%%%%%%%%%%%%%%%%%%%%%%%%%%%%%%%%%%%%%%%%%%%%%%%%%%%%
\subsection{The canonical residual actions on geometric fixed points}\label{ssec:from_tempered_to_global_spectra}
Let $\Glo_\ab$ be the subcategory of spaces $\Spc$ spanned by groupoids of finite abelian groups $BH$. A \emph{global space} is an object of the presheaf category $\Spc_\ab^\gl = \Fun(\Glo_\ab, \Spc)$. Let $\Span$ be the span category on $\Spc_\ab^\gl$ with arbitrary backwards maps, and forwards maps those $f\colon X \to Y$ such that for all maps $T \to Y$ with $T\in \Glo_\ab$, the base change map $f_T$ is a faithful map of $1$-truncated groupoids; see \cite[\textsection2 \& Ex.2.1.12]{temperedglobal} for details. The category of functors $\Span \to \Sp$ whose restriction along $\Spc_\ab^\gl$, written as $\Sp^\gl_\ab$, preserves limits, is known to model Schwede's category of \emph{global spectra} with respect to the family of finite abelian groups of \cite{s}.

Let $X$ be a global spectrum and $K$ an abelian\footnote{The definition of $K$-fixed points and the canonical $\Aut(BK)$-action do not require $K$ to be abelian.} finite group. The (genuine) $K$-fixed point spectrum of $X$, written as $X^K$, are defined as the value of $X$ on $\ast / K = BK$.

\begin{mydef}\label{df:canonicalautBKaction}
Viewing a global spectrum $X$ as a functor $\Span \to \Sp$, the \emph{canonical $\Aut(BK)$-action} on $X^K$ is simply that induced by functoriality. If $X$ itself were an $\E_\infty$-object of $\Sp_\pi^\gl$, then this $\Aut(BK)$-action on $X^K$ is one of $\E_\infty$-rings. The induced $\Aut(BK)$-action on the $K$-geometric fixed points $\Phi^K X$ is also called the canonical action.
\end{mydef}

\begin{remark}
    This action can also be seen without the spectral Mackey functor model for global spectra. Indeed, as the $K$-fixed points functor $(-)^K \colon \Sp^\gl_\ab \to \Sp$ is (spectrally) corepresented by $\Sigma_+^\infty B_\gl K$, the suspension spectrum of the global classifying space for $K$, it is clear that the $K$-fixed points $X^{K} \simeq \map_{\Sp^\gl_\ab}(\Sigma_+^\infty B_\gl K, X)$ in $\Sp$ carry a natural action of $\Aut(B_\gl K) \simeq \Aut(BK)$.
\end{remark}

\subsubsection{Residual actions on tempered cohomology theories}
Our main source of global spectra in this article comes from \emph{oriented $\P$-divisible groups}, where these actions are quite explicit. Recall that our fixed $\P$-divisible group is a functor
\begin{equation}\label{eq:pdivisiblegroup}\G[-] \colon \Ab_\fin^\op \to \Stk\end{equation}
and its preorientation is a factorisation of $\G[-]$ over the Pontryagin dual and classifying space functor $B \dual{(-)} \colon \Ab_\fin^\op \to \Glo_\ab$, which we denote by
\begin{equation}\label{eq:preorientation}\G(-) \colon \Glo_\ab \to \Stk.\end{equation}
The tempered cohomology theory associated with $\G$ can be defined as the left Kan extension of $\G$ along the Yoneda embedding $\Glo_\ab \to \Spc_\ab^\gl$, followed by global sections, written as the limit-preserving functor
\begin{equation}\label{eq:tempered_cohomology_theory}
(\Spc_\ab^\gl)^\op \to \Stk^\op \xrightarrow{\Ga} \CAlg.
\end{equation}

According to \cite[Th.1.1.1]{temperedglobal}, or alternatively \cite[Th.E]{gepner2024global2ringsgenuinerefinements}, associated with our fixed oriented $\P$-divisible group $\G$, there is a natural $\E_\infty$-object $\Ga(\G)$ of $\Sp^\gl_\ab$ whose restriction to $\Spc_\ab^\gl$ along the backwards maps is precisely (\ref{eq:tempered_cohomology_theory}).\footnote{The more refined statement found in \cite{temperedglobal} is that $\Ga(\G)$ refines to a \emph{$\pi$-ambidextrous} global spectrum. This extra structure is not necessary yet; we will come back to this in \Cref{df:ofpiambi}.} By construction, the $K$-fixed points of $\Ga(\G)$ are given as
\[\Ga(\G)^K = \Ga (\G[\dual{K}]) = \Ga(\G(BK)). \]
In particular, the natural $\Aut(BK)$-action on $\Ga(\G)^K$ is already seen by the preoriented $\P$-divisible group $\G(-)$ of (\ref{eq:preorientation}) after taking global sections. Notice that this $\Aut(BK)$-action is one of stacks over the base stack $\G(\ast) = \M$.

By \Cref{pr:action_factors_to_inj}, the stack of injections $\Inj(\dual{K}, \G)$ inherits the canonical $\Aut(BK)$-action from $\G(BK)$ such that the natural inclusion map
\[\Inj(\dual{K}, \G) \to \G(BK)\]
is $\Aut(BK)$-equivariant. Moreover, there is a \emph{$K$-geometric fixed point stack} $\Phi^K \G$, an affine stack over $\G(BK)$, equipped with a unique equivalence
\begin{equation}\label{eq:unique_equivalence}\Inj(\dual{K}, \G) \simeq \Phi^K \G\end{equation}
of substacks of $\G(BK)$; see \cite[Th.F]{temperedglobal}. In particular, $\Phi^K \G$ inherits a unique $\Aut(BK)$-action such that the natural map $\Phi^K \G \to \G(BK)$ is $\Aut(BK)$-equivariant.

\begin{remark}
The global sections of the stack $\Phi^K \G$ are not guaranteed to recover the $K$-geometric fixed points of $\Ga(\G)$, as global sections (a kind of limit) need not commute with $K$-geometric fixed points (a kind of colimit). If our base stack $\M$ is \emph{0-semiaffine}, meaning the global sections functor $\Ga \colon \QCoh(\M) \to \Sp$ preserves colimits, then we also have a natural equivalence of $\E_\infty$-rings
\[\Ga(\Phi^K \G) \simeq \Phi^K \Ga(\G);\]
see \cite[Rmk.5.3.8]{temperedglobal}.
\end{remark}

The discussion above shows that this equivalence (\ref{eq:unique_equivalence}) is $\Aut(B{K})$-equivariant, the left side coming from the natural action of $\G(-)$ and the right side as a localisation of $\Ga(\G)^K$.

The canonical $\Aut(BK)$-action considered above is natural to consider, and will also be useful in \Cref{ssec:action_comparison}. The more prominent action in this article, however, is the restricted $\Aut(\dual{K})$-action.

\begin{mydef}
    The \emph{canonical $\Aut(\dual{K})$-action} on the $K$- (geometric) fixed points of a global spectrum $X$ by restricting the canonical $\Aut(BK)$-action along the map of group objects in spaces
    \[\Aut(\dual{K}) \to \Aut(BK)\]
    induced by applying Pontryagin duality and the classifying space functor.
\end{mydef}

By definition, the canonical $\Aut(\dual{K})$-action on $\Ga(\G)^K = \Ga \G[\dual{K}]$ is induced by its underlying $\P$-divisible group $\G[-]$ (\ref{eq:pdivisiblegroup}). The following summarises these equivariant identifications.

\begin{cor}\label{cor:identification_with_inj}
    For a finite abelian group $K$, the unique equivalence $\Phi^K \G \simeq \Inj(\dual{K},\G)$ of (\ref{eq:unique_equivalence}) between stacks over $\G(K)$ is naturally $\Aut(B{K})$-equivariant. In particular, it induces a unique equivalence of stacks over $\M$
    \[\Phi^K \G / \Aut(\dual{K}) \simeq \Sub(\dual{K}, \G).\]
    If $\M$ is 0-semiaffine, then the induced equivalence of $\E_\infty$-$\Ga(\G)^K$-algebras
    \[\Ga\Inj(\dual{K}, \G) \simeq\Phi^K \Ga(\G)\]
    is $\Aut(B{K})$-, hence also $\Aut(\dual{K})$-, equivariant.
\end{cor}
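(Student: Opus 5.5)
The plan is to deduce everything from the uniqueness built into the two constructions. Both $\Phi^K \G \to \G(BK)$ and $\Inj(\dual{K},\G) \to \G(BK)$ are monomorphisms in $\Stk_{/\M}$: the first because it is affine over $\G(BK)$ and, by \cite[Th.F]{temperedglobal}, identified with a substack, and the second by the Cartesian square defining $\Inj(\dual{K},\G)$. Equivariance of a map between two substacks is a property, not extra structure, so I would reduce it to a statement about monomorphisms and apply \Cref{lm:monmorphisms_and_actions}.

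\textbf{Step 1.} Both stacks carry $\Aut(BK)$-actions making their inclusions into $\G(BK)$ equivariant, and these actions are the \emph{unique} such actions, by \Cref{lm:monmorphisms_and_actions} and \Cref{pr:action_factors_to_inj}. The action on $\Phi^K \G$ was in fact defined by transport along (\ref{eq:unique_equivalence}), so that case is immediate. More intrinsically, I would consider the arrow category of $\Stk_{/\M}$. There the equivalence (\ref{eq:unique_equivalence}) is an equivalence of objects of $\Stk_{/\G(BK)}$ lying in the full subcategory of monomorphisms into $\G(BK)$, which is a poset. The $\Aut(BK)$-action on $\G(BK)$ acts on this poset, and both substacks are fixed points of that action. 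Since fixed points of a group action on a poset carry a unique compatible equivariant structure, the equivalence is automatically $\Aut(BK)$-equivariant. Restricting along $\Aut(\dual{K}) \to \Aut(BK)$ gives $\Aut(\dual{K})$-equivariance.

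\textbf{Step 2.} To get the quotient statement, I would take colimits over $B\Aut(\dual{K})$ of the equivariant equivalence. This gives $\Phi^K\G/\Aut(\dual{K}) \simeq \Inj(\dual{K},\G)/\Aut(\dual{K})$, and the right-hand side is $\Sub(\dual{K},\G)$ by \Cref{df:moduli_stack_of_subgroups}. The result is over $\M$ because all maps live in $\Stk_{/\M}$. Uniqueness follows because the space of equivalences of these substacks is contractible.

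\textbf{Step 3.} For the global sections statement, I would apply $\Ga$, which is functorial and therefore preserves equivariance. By the preceding remark (\cite[Rmk.5.3.8]{temperedglobal}), $\Ga(\Phi^K\G)\simeq \Phi^K\Ga(\G)$ when $\M$ is 0-semiaffine. That identification must also be checked to be $\Aut(BK)$-equivariant, where the right side carries the action of \Cref{df:canonicalautBKaction}. I expect this to be the main obstacle. My approach would be to use that $\Phi^K\Ga(\G)$ is the localisation of $\Ga(\G)^K=\Ga\G(BK)$ killing the transfers from proper subgroups. Because that ideal is $\Aut(BK)$-stable, the localisation is a property of $\E_\infty$-$\Ga(\G)^K$-algebras, namely being idempotent under $\Ga(\G)^K$. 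The space of maps between such idempotent algebras is contractible or empty, so the comparison is equivariant for free, just as in Step 1.
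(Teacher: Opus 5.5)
Your Steps 1 and 2 are the paper's proof. The paper notes that both inclusions $\Phi^K\G\subseteq\G(BK)\supseteq\Inj(\dual{K},\G)$ are $\Aut(BK)$-equivariant, so the unique equivalence of substacks is equivariant, and the quotient statement follows by passing to colimits over $B\Aut(\dual{K})$. Your poset/homotopy-fixed-point formulation of this rigidity is fine. The paper does not argue the global-sections clause separately, beyond calling $\Phi^K\Ga(\G)$ ``a localisation of $\Ga(\G)^K$''. Your attempt to make that precise in Step 3 is where the gap is.

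\textbf{The idempotency inference does not hold.} You argue that $\Phi^K\Ga(\G)$ kills transfers from proper subgroups, that this ideal is $\Aut(BK)$-stable, and hence that $\Phi^K\Ga(\G)$ is an idempotent $\Ga(\G)^K$-algebra. But killing transfers is a cofibre construction, not a localisation, and stability of the ideal under the action says nothing about idempotency. It fails in general. For the global sphere, $\pi_0(\Sph^{C_p}\to\Phi^{C_p}\Sph)$ is $A(C_p)=\Z[t]/(t^2-pt)\to\Z$ with $t\mapsto 0$. For $p$ odd, the class $I_0/I_0^2\cong\Z/p$ (with $I_0=(t)\subseteq A(C_p)$) survives the K\"unneth spectral sequence to a quotient of $\pi_1(\Phi^{C_p}\Sph\otimes_{\Sph^{C_p}}\Phi^{C_p}\Sph)$, whereas $\pi_1\Phi^{C_p}\Sph=\Z/2$.

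\textbf{The geometric reason, in the $0$-affine case.} For tempered theories idempotency does hold, but for a reason you should use instead. The map $\iota\colon\Inj(\dual{K},\G)\to\G(BK)$ is an affine monomorphism, so $\iota_\ast\O$ is an idempotent algebra in $\QCoh(\G(BK))$. When $\M$, and hence $\G(BK)$, is $0$-affine, $\Ga$ is a symmetric monoidal equivalence. So $\Ga(\G)^K\to\Ga\Inj(\dual{K},\G)\simeq\Phi^K\Ga(\G)$ is an epimorphism of $\E_\infty$-rings. Your rigidity argument then goes through, since $\Ga(\G)^K\to\Phi^K\Ga(\G)$ is equivariant by definition of the canonical action. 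This covers the case used in \Cref{main_algebraic}.

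\textbf{The $0$-semiaffine case is not covered.} For merely $0$-semiaffine $\M$, $\Ga$ is only lax monoidal, and global sections of an open substack need not be idempotent over global sections. For example, $D(x)\subseteq[\mathbb{A}^2/\G_m]$ with both weights $1$ gives $k\to k[y/x]$. In that generality you instead need the comparison $\Ga(\Phi^K\G)\simeq\Phi^K\Ga(\G)$ of \cite[Rmk.5.3.8]{temperedglobal} to be natural in $BK\in\Glo_\ab$, so that $\Aut(BK)$ acts compatibly on both sides through functoriality.
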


In particular, in the 0-semiaffine case, we can also identify
\begin{equation}\label{eq:geometric_model_for_fixedpoints}
  (\Phi^K \Ga(\G))^{h\Aut(\dual{K})} \simeq (\Ga \Inj(\dual{K}, \G))^{h\Aut(\dual{K})} \simeq \Ga(\Sub(\dual{K}, \G)),
\end{equation}
as $\Ga \colon \Stk^\op \to \CAlg$ sends colimits to limits.

\begin{proof}
    The inclusions of the substacks $\Phi^K \G \subseteq \G(BK) \supseteq \Inj(\dual{K}, \G)$ both commute with the natural $\Aut(BK)$-actions by the discussion above, hence the unique equivalence between $\Phi^K \G$ and $\Inj(\dual{K}, \G)$ as substacks of $\G(BK)$ is also $\Aut(BK)$-equivariant.
\end{proof}

\subsubsection{Proofs of main algebraic theorems}
Now we can prove \Cref{main_algebraic,cor_main:globalisation}. Let $H$ be a finite group equipped with an inclusion $K\leq H$ from our fixed abelian group $K$. All of the above $\Aut(\dual{K})$-actions induce actions of the \emph{global Weyl group} $W_H^\gl K = N_H K / C_H K$ through the homomorphism (\ref{eq:weyl_homomorphism})
\[W_H^\gl K \to \Aut(K) = \Aut(\dual{K}); \qquad hC_H K \mapsto (k\mapsto hkh^{-1});\]
the equality above is induced by Pontryagin duality.

\begin{mydef}\label{df:global_weyl_action}
    Precomposing the canonical $\Aut(\dual{K})$-action on the $K$- (geometric) fixed points of a global spectrum $X$ with the homomorphism (\ref{eq:weyl_homomorphism}) yields its \emph{global Weyl group action}. The pair $K\leq H$ is \emph{Weyl faithful} if the homomorphism (\ref{eq:weyl_homomorphism}) is injective.
\end{mydef}

\begin{theorem}[{\Cref{main_algebraic}}]\label{thm:algebraic_intext}
    Let $H$ be a finite group, $K$ be a Weyl faithful subgroup of $H$, and $\G$ be an oriented $\P$-divisible group over a $0$-affine stack $\M$ with associated $H$-ring spectrum $\Ga(\G)$. Then the natural maps
    \[\Phi^K \Ga(\G)^{hW_H^\gl K} \to \Phi^K \Ga(\G), \qquad \Perf(\Phi^K \Ga(\G))^{hW_H^\gl K} \to \Perf(\Phi^K \Ga(\G))\]
    are faithful $W_H^\gl K$-Galois extensions in $\CAlg$ and $\tworing$, respectively.
\end{theorem}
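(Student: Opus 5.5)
The plan is to reduce to \Cref{cor:galois_from_torsor} by identifying $\Phi^K\Ga(\G)$ with $\Ga\Inj(\dual{K},\G)$ equivariantly, and then to restrict from $\Aut(\dual{K})$ to the subgroup $W_H^\gl K$ along the injective homomorphism (\ref{eq:weyl_homomorphism}).

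First, since $\M$ is $0$-affine it is in particular $0$-semiaffine and perfect. Hence \Cref{cor:identification_with_inj} gives an $\Aut(\dual{K})$-equivariant equivalence of $\E_\infty$-rings $\Ga\Inj(\dual{K},\G)\simeq \Phi^K\Ga(\G)$, where the right side carries the canonical $\Aut(\dual{K})$-action. By (\ref{eq:geometric_model_for_fixedpoints}), the homotopy fixed points are $\Ga(\Sub(\dual{K},\G))$. Next, \Cref{thm:geometric_intext} shows that $\Inj(\dual{K},\G)\to\Sub(\dual{K},\G)$ is an affine flat $\Aut(\dual{K})$-torsor over $\M$. Since $\Sub(\dual{K},\G)\to\M$ is affine, hence $0$-affine, and $\M$ is $0$-affine, the quotient $\Sub(\dual{K},\G)$ is $0$-affine. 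Therefore the last part of \Cref{cor:galois_from_torsor} gives faithful $\Aut(\dual{K})$-Galois extensions
\[\Phi^K\Ga(\G)^{h\Aut(\dual{K})}\to\Phi^K\Ga(\G),\qquad \Perf(\Phi^K\Ga(\G)^{h\Aut(\dual{K})})\to \Perf(\Phi^K\Ga(\G))\]
in $\CAlg$ and $\tworing$. This is the first half of \Cref{main_algebraic}. For the $2$-ring statement, I identify $\Perf(B^{hG})$ with $\Perf(B)^{hG}$ via Galois descent, using \Cref{pr:mod_and_perf_preserve_Galois} or Mathew's descent result.

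Second, I pass to the subgroup $W=W_H^\gl K\leq \Aut(\dual{K})$. This can be done geometrically. Write $G=\Aut(\dual{K})$. The restricted $W$-action on $\Inj(\dual{K},\G)$ makes $\Inj(\dual{K},\G)\to\Inj(\dual{K},\G)/W$ a $W$-torsor. Its quotient is affine and flat over $\M$: locally over $\Spec A\to\M$, with $\Inj=\Spec B$, the $G$-action on $\pi_0B$ is free by \Cref{lm:discrete_action_factors}. Freeness is inherited by subgroups, so $B^{hW}\to B$ is $\pi_0$-$W$-Galois. Then \Cref{pr:pi0galois_to_torsor} gives affineness, and \Cref{pr:flatness_and_affine_torsors} gives flatness, exactly as in the proof of \Cref{thm:geometric_intext}. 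Applying \Cref{cor:galois_from_torsor}'s ingredients (\Cref{cor:0semiaffine_gives_galoisextension} and \Cref{pr:torsor_to_Galois_2ring}) to this $W$-torsor then yields the two stated faithful $W$-Galois extensions. Here the global Weyl action is by definition the restriction of the canonical $\Aut(\dual{K})$-action along (\ref{eq:weyl_homomorphism}), so the equivariant identification of \Cref{cor:identification_with_inj} restricts accordingly.

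The main obstacle is bookkeeping rather than mathematics. I have to make sure that the action appearing in \Cref{cor:identification_with_inj} is the canonical one of \Cref{df:global_weyl_action}, and that the $2$-ring fixed points $\Perf(\Phi^K\Ga(\G))^{hW}$ agree with $\Perf$ of the ring-level fixed points. For the latter I rely on faithful Galois descent. The restriction-to-subgroup step itself is routine once freeness on $\pi_0$ is noted.
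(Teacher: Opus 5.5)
Your argument is correct, but it handles the passage from $\Aut(\dual{K})$ to $W=W_H^\gl K$ differently from the paper. The paper only uses the $\Aut(\dual{K})$-torsor. It obtains the faithful $\Aut(\dual{K})$-Galois extension $\Ga(\Sub(\dual{K},\G))\to\Ga\Inj(\dual{K},\G)\simeq\Phi^K\Ga(\G)$ from \Cref{cor:galois_from_torsor} and \Cref{cor:identification_with_inj}. It then invokes Rognes' Galois correspondence \cite[Thm.1.2(a)]{johnrognessdualisinggorups}: restricting a faithful $G$-Galois extension to a subgroup $W\leq G$ gives a faithful $W$-Galois extension $B^{hW}\to B$. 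The paper simply asserts that the same argument works in $\tworing$.

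You instead push the subgroup down into the geometry. Freeness of the $\Aut(\dual{K})$-action on $\pi_0 B$ is inherited by $W$, so $\Inj(\dual{K},\G)\to\Inj(\dual{K},\G)/W$ is itself an affine flat $W$-torsor over $\M$. This uses \Cref{pr:pi0galois_to_torsor} after base change to affines (quotients commute with base change by universality of colimits), together with \Cref{pr:flatness_and_affine_torsors}. Then \Cref{cor:0semiaffine_gives_galoisextension} and \Cref{pr:torsor_to_Galois_2ring} apply verbatim, with $\Ga(\Inj(\dual{K},\G)/W)\simeq\Phi^K\Ga(\G)^{hW}$.

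The paper's route is shorter and purely formal once the $\Aut(\dual{K})$ case is known. Yours costs one extra affineness/flatness check but buys more:
\begin{itemize}
\item a geometric model $\Inj(\dual{K},\G)/W$ for the $W$-fixed points;
\item the $W$-versions of the $\QCoh$- and $\CAlg(\QCoh)$-level statements;
\item a proof of the $\tworing$ statement that does not rely on transporting Rognes' subgroup-restriction theorem to $2$-rings.
\end{itemize}
In your approach, the identification $\Perf(\Phi^K\Ga(\G))^{hW}\simeq\Perf(\Phi^K\Ga(\G)^{hW})$ is part of the Galois condition delivered by \Cref{pr:torsor_to_Galois_2ring}, so no separate descent argument is needed. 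Likewise, the $\Aut(\dual{K})$-Galois part of your first step is redundant for this statement. Only the equivariant identification of \Cref{cor:identification_with_inj}, restricted along the injective homomorphism (\ref{eq:weyl_homomorphism}), is actually used.
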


\begin{remark}\label{rmk:global_weyl_better_than_classical}
    The global Weyl group is the more natural candidate than the usual Weyl group for \Cref{main_algebraic}. Indeed, for $C_2 \leq C_4$, one has $W_{C_4} C_2 = C_2$ while $W_{C_4}^\gl C_2$ vanishes, and the only $\KU$-linear action on $\Phi^{C_2} \gKU = \KU[\tfrac{1}{2}]$ is trivial, hence the $W_{C_4} C_2$-action cannot be Galois. For an example of a pair $K\leq H$ which is also Weyl faithful, consider $H=\mathrm{Dic}_3$ to be the third dicyclic group of order $12$ and $K=C_3$ the unique subgroup of order $3$.
\end{remark}

\begin{remark}\label{rmk:geometric_fixed_points_better_than_genuine}
    The restriction to geometric fixed points rather than genuine fixed points is clear through examples; the $C_2$-action on $\gKU^{C_3}$ is not Galois. %For example, $\pi_0^{C_3} \gKU \simeq \Z[X]/X^3-1$ and corepresents elements of order $3$ with $C_2$-action sending an element of order $3$ to its inverse. This action is clearly not free on elements such as $1$.
    A moduli interpretation of genuine fixed points as the global sections of a homomorphism stack also makes it clear that this residual action is rarely free, and the null homomorphism will have a nontrivial stabiliser.
\end{remark}

\begin{proof}[Proof of \Cref{thm:algebraic_intext}]
    Combining \Cref{cor:galois_from_torsor,cor:identification_with_inj}, we see that
    \[\Phi^K \Ga(\G)^{h\Aut(\dual{K})} \simeq \Ga(f_\ast \O_\X) \to \Ga(g_\ast \O_\Y) \simeq \Phi^K \Ga(\G)\]
    is a faithful $\Aut(\dual{K})$-Galois extension of $\E_\infty$-rings. We then use the fact that the $W_H^\gl K$-action on $\Phi^K \Ga(\G)$ in question is defined by restricting this faithful Galois extension along an injective homomorphism. By \cite[Thm.1.2(a)]{johnrognessdualisinggorups}, which states that restriction of a faithful Galois extension along the inclusion of a subgroup produces a faithful Galois extension, we are done. The same argument holds in $\tworing$ as well.
\end{proof}

Onto \Cref{cor_main:globalisation}. To that end, let us recall a definition.

\begin{mydef}[{\cite[Def.1.2.2]{temperedglobal}}]\label{df:ofpiambi}
    Let $\Span_\pi$ be the span category on $\Spc_\ab^\ab$ with all backwards maps and whose forwards maps are those morphisms of global spaces $X\to Y$ such that for each $T\to Y$ with $T\in \Glo_\ab$, the pullback $X\times_Y T$ is a $\pi$-finite space; a space with finitely many path components, each with finitely many nonzero homotopy groups, all of which are finite. The category of \emph{$\pi$-ambidextrous global spectra} $\Sp_\pi^\gl$ is the category of functors $\Span_\pi \to \Sp$ whose restriction to $(\Spc_\ab^\gl)^\op$ preserves limits. An \emph{$\pi$-ambidextrous global $\E_\infty$-ring} is an $\E_\infty$-object of $\Sp_\pi^\gl$ with the Day convolution symmetric monoidal structure of \cite[Cor.2.2.3]{temperedglobal}.
\end{mydef}

The point of this enhancement of $\Sp^\gl_\ab$ is that objects of $\Sp^\gl_\pi$ have transfer maps against all relatively $\pi$-finite maps of global spaces; a lot more structure than $\Sp^\gl_\ab$. This is also the natural maximal such structure on tempered cohomology theories as suggested by tempered ambidexterity \cite[Thm.7.2.10]{ec3} and constructed in \cite[Thm.1.1.1]{temperedglobal}.

\begin{prop}\label{pr:general_torsors_to_global_Galois}
    The sheaf
    \[\Ga(\G_{(-)}) \colon (\Stk^{0-\aff}_{/\M})^\op \to \CAlg(\Sp^\gl_\pi), \qquad (\sfZ \to \M)\mapsto \Ga(\G_\sfZ)\]
    of \cite[Thm.1.1.1]{temperedglobal}, right Kan extended to all stacks and then restricted to stacks $\X$ over $\M$ which are $0$-affine, sends $H$-torsors $\Y \to \X$ to faithful $H$-Galois extensions for any finite group $H$.
\end{prop}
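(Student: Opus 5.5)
Here is the plan. Let $\varphi\colon \Y \to \X$ be an $H$-torsor over $\M$, with $\X$ $0$-affine, and write $\G_\X$ and $\G_\Y$ for the base changes of $\G$. The strategy is to check Rognes' two Galois conditions, and faithfulness, levelwise in global spectra: evaluate at each $BT \in \Glo_\ab$, or at each finite abelian group $T$, and reduce to \Cref{pr:torsor_to_Galois_inQCoh} and \Cref{cor:0semiaffine_gives_galoisextension} applied to base-changed torsors. Limits in $\Sp^\gl_\pi$ (in particular $(-)^{hH}$ and the finite product $\prod_H$) are computed pointwise, since the defining condition is closed under limits. The value of $\Ga(\G_\sfZ)$ at $BT$ is $\Ga(\G_\sfZ(BT))$, and we have $\G_\sfZ(BT) \simeq \G(BT)\times_\M \sfZ$.

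\textbf{Step 1 (fixed points).} By base change (part 2 of \Cref{pr:G-torsors_and_basechange}, or simply pullback along $\G_\X(BT) \to \X$), the map $\G_\Y(BT) \to \G_\X(BT)$ is again an $H$-torsor. Moreover $\G_\X(BT) \to \X$ is affine, because $\G(BT)\to\M$ is affine: it is a finite flat group stack. Hence $\G_\X(BT)$ is $0$-affine, since affine maps are $0$-affine and $0$-affine maps compose. Then \Cref{cor:0semiaffine_gives_galoisextension} gives that $\Ga(\G_\X(BT)) \to \Ga(\G_\Y(BT))$ is a faithful $H$-Galois extension. In particular $\Ga(\G_\X)(BT) \simeq \Ga(\G_\Y)(BT)^{hH}$ for every $T$, and so $\Ga(\G_\X)\simeq \Ga(\G_\Y)^{hH}$. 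Here I will need the $H$-action on $\Ga(\G_\Y)$ to come from functoriality of the sheaf $\Ga(\G_{(-)})$ in $\sfZ$, which is immediate.

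\textbf{Step 2 (the $B\otimes_A B$ condition).} This is the step I expect to be the main obstacle, because the relative tensor product in $\CAlg(\Sp^\gl_\pi)$ is a Day convolution construction and is not evaluated pointwise. My plan is to use the base change property of the sheaf from \cite[Thm.1.1.1]{temperedglobal}. For $0$-affine $\X$, I expect an equivalence of symmetric monoidal categories $\Mod_{\Ga(\G_\X)}(\Sp^\gl_\pi) \simeq$ a category of "$\G_\X$-tempered quasi-coherent sheaves", together with the identification $\Ga(\G_\Y) \simeq \varphi_\ast$ of the structure sheaf, as in the proof of \Cref{pr:torsor_to_Galois_inQCoh}. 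Granting this, the tensor product is computed geometrically:
\[\Ga(\G_\Y)\otimes_{\Ga(\G_\X)}\Ga(\G_\Y)\simeq \Ga(\G_{\Y\times_\X\Y}).\]
This holds because $\Y\to\X$ is affine by \Cref{pr:torsors_are_relatively_affine}, so the relevant square is adjointable. Then \Cref{pr:higher_principality} gives $\Y\times_\X\Y\simeq\coprod_H\Y$, and since the sheaf sends finite coproducts to products we get $\prod_H \Ga(\G_\Y)$. If that module identification is not available, my fallback is to reduce to a levelwise check. The comparison map can be detected on each $T$-fixed point, and the projection formula for the affine, hence $0$-affine, map $\G_\Y(BT)\to\G_\X(BT)$ identifies the relevant pushforwards, yielding the pointwise Galois equivalence from Step 1.

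\textbf{Step 3 (faithfulness).} For faithfulness I will appeal to the natural generalisation of \cite[Pr.6.3.3]{johnrognessdualisinggorups}, used earlier in the paper. It suffices that the Tate construction $\Ga(\G_\Y)^{tH}$ vanishes in $\Sp^\gl_\pi$. The Tate construction is computed pointwise, and at each $T$ it equals $\Ga(\G_\Y(BT))^{tH}$, which vanishes because that pointwise extension is faithful Galois by Step 1.
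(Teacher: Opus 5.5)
Your Steps 1 and 3 are fine. Step 3 holds only once Step 2 is in place, because the equivalence of faithfulness with Tate vanishing (the generalisation of \cite[Pr.6.3.3]{johnrognessdualisinggorups}) is a statement about Galois extensions.

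Step 2, however, has a genuine gap. Your main route rests on identifying $\Mod_{\Ga(\G_\X)}(\Sp^\gl_\pi)$ with a category of ``tempered quasi-coherent sheaves'' compatible with tensor products and pushforwards. That identification is not established, and it does not follow from the results you cite. Your fallback fails for exactly the reason you flagged. To detect the comparison map on genuine $T$-fixed points you would need to know $\bigl(\Ga(\G_\Y)\otimes_{\Ga(\G_\X)}\Ga(\G_\Y)\bigr)^T$. Since $(-)^T$ is only lax monoidal for Day convolution, this is not $\Ga(\G_\Y(BT))\otimes_{\Ga(\G_\X(BT))}\Ga(\G_\Y(BT))$. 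The projection formula for $\G_\Y(BT)\to\G_\X(BT)$ concerns $\QCoh$ of those stacks and says nothing about the global relative tensor product.

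The missing idea is to detect with geometric fixed points instead of genuine ones.
\begin{itemize}
\item The functors $\Phi^L\colon \CAlg(\Sp^\gl_\pi)\to\CAlg$, for $L$ ranging over finite abelian groups, are exact, symmetric monoidal, and jointly conservative, because the forgetful functor $\Sp^\gl_\pi\to\Sp^\gl_\ab$ is conservative.
\item So it suffices to show that $\Phi^L\Ga(\G_\Y)\otimes_{\Phi^L\Ga(\G_\X)}\Phi^L\Ga(\G_\Y)\to\prod_H\Phi^L\Ga(\G_\Y)$ is an equivalence for each $L$.
\item Base change for geometric fixed points \cite[Thm.E]{temperedglobal}, together with (\ref{eq:unique_equivalence}), identifies $\Phi^L\Ga(\G_\sfZ)\simeq\Ga\Inj(\dual{L},\G_\sfZ)$ for $\sfZ=\X,\Y$.
\item The map $\Inj(\dual{L},\G_\Y)\to\Inj(\dual{L},\G_\X)$ is the pullback of $\Y\to\X$ along the affine map $\Inj(\dual{L},\G)\to\M$. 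It is therefore an $H$-torsor with $0$-affine target, and \Cref{cor:0semiaffine_gives_galoisextension} gives the equivalence.
\end{itemize}
This is the paper's argument. The same reduction also gives faithfulness directly: if $\Ga(\G_\Y)\otimes_{\Ga(\G_\X)}M=0$, then $\Phi^L M=0$ for every $L$, so $M=0$. Your pointwise Tate argument would also work once the Galois condition is established.
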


\begin{proof}
    Since this functor is a sheaf, we immediately have the identification
    \[\Ga(\G_\X) \simeq \Ga(\G_{\Y/H}) \simeq \Ga(\G_\Y)^{hH} \in \CAlg(\Sp^\gl_\pi).\]
    For the second Galois condition, that the map
    \[\Ga(\G_\Y)\otimes_{\Ga(\G_\X)} \Ga(\G_\Y) \to \prod_H \Ga(\G_\Y) \in \CAlg(\Sp^\gl_\pi)\]
    is an equivalence, we use the fact that the collection of geometric fixed point functors $\Phi^L \colon \CAlg(\Sp^\gl_\pi) \to \CAlg(\Sp)$ are jointly conservative, as $L$ ranges over all finite abelian groups; this follows as the forgetful functor $\Sp^\gl_\pi \to \Sp^\gl_\ab$ is conservative and that this fact holds for $\Sp_\ab^\gl$. Using the symmetric monoidality and exactness of geometric fixed points, it suffices to check that the maps
    \begin{equation}\label{eq:geometric_FP_checkingthing}\Phi^L \Ga(\G_\Y)\otimes_{\Phi^L \Ga(\G_\X)} \Phi^L \Ga(\G_\Y) \simeq \Phi^L\left(\Ga(\G_\Y)\otimes_{\Ga(\G_\X)} \Ga(\G_\Y)\right) \to \prod_H \Phi^L \Ga(\G_\Y)\end{equation}
    are equivalences for all finite abelian groups $L$. Using the base change property of geometric fixed points of \cite[Thm.E]{temperedglobal} as well as (\ref{eq:unique_equivalence}) as all stacks in sight are $0$-affine, we make the identification
    \[\Phi^L \Ga(\G_\sfZ) \simeq \Ga(\G_{\Inj(\dual{L}, \G_\sfZ)})\]
    for $\sfZ=\X,\Y$. The map of stacks
    \[\Inj(\dual{L}, \G_\Y) \to \Inj(\dual{L}, \G_\X)\]
    is an $H$-torsor as the pullback of the $H$-torsor $\Y \to \X$ over $\M$ along the map $\Inj(\dual{L}, \G) \to \M$, hence (\ref{eq:geometric_FP_checkingthing}) is an equivalence via \Cref{cor:0semiaffine_gives_galoisextension}. This shows that $\Ga(\G_\X) \to \Ga(\G_\Y)$ is an $H$-Galois extension in $\CAlg(\Sp^\gl_\pi)$, so only faithfulness is left to check. Suppose a module $M$ over $\Ga(\G_\X)$ has $M \otimes_{\Ga(\G_\Y)}\Ga(\G_\X) = 0$. To check if $M$ vanishes, we can again take geometric fixed points $\Phi^L M$ for all finite abelian $L$. Using the same arguments as above for $\Phi^L$, combined with the fact that \Cref{cor:0semiaffine_gives_galoisextension} gives us \emph{faithful} $H$-Galois extensions upon applying $\Phi^L$, we see that $\Phi^L M=0$ for each finite abelian $L$, so $M=0$, and our extension is faithful.
\end{proof}

\begin{cor}[{\Cref{cor_main:globalisation}}]\label{cor:globalisations}
    In the situation of \Cref{main_algebraic}, the map of $\pi$-ambidextrous $\E_\infty$-rings
    \[\Ga({\G_\sub}) \to \Ga({\G_\inj})\]
    is a faithful $W_H^\gl K$-Galois extension whose map of underlying $\E_\infty$-rings recovers \Cref{main_algebraic}.
\end{cor}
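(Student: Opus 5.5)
The plan is to deduce the statement from \Cref{pr:general_torsors_to_global_Galois} applied to the torsor of \Cref{thm:geometric_intext}, and then to restrict the group from $\Aut(\dual{K})$ to $W_H^\gl K$ as in the proof of \Cref{thm:algebraic_intext}. Write $\X=\Sub(\dual{K},\G)$ and $\Y=\Inj(\dual{K},\G)$, so that $\G_\sub=\G_\X$ and $\G_\inj=\G_\Y$.

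\textbf{Step 1: both stacks are $0$-affine.} By \Cref{thm:geometric_intext}, $\varphi\colon \Y\to\X$ is an affine flat $\Aut(\dual{K})$-torsor over $\M$. Hence $\X\to\M$ is affine, and $\Y\to\M$ is affine by \Cref{pr:torsors_are_relatively_affine} and composition. Affine maps are $0$-affine, $0$-affine maps compose, and $\M$ is $0$-affine by the hypotheses of \Cref{main_algebraic}. So $\X$ and $\Y$ lie in $\Stk^{0-\aff}_{/\M}$, where the sheaf $\Ga(\G_{(-)})$ of \Cref{pr:general_torsors_to_global_Galois} is defined.

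\textbf{Step 2: the $\Aut(\dual{K})$-Galois extension.} \Cref{pr:general_torsors_to_global_Galois} applied to the $\Aut(\dual{K})$-torsor $\varphi$ shows that $\Ga(\G_\sub)\to\Ga(\G_\inj)$ is a faithful $\Aut(\dual{K})$-Galois extension in $\CAlg(\Sp^\gl_\pi)$. In particular $\Ga(\G_\sub)\simeq \Ga(\G_\inj)^{h\Aut(\dual{K})}$.

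\textbf{Step 3: restriction to $W_H^\gl K$.} Since $K\leq H$ is Weyl faithful, $W_H^\gl K\to\Aut(\dual{K})$ is injective, and we restrict the action along it. By \cite[Thm.1.2(a)]{johnrognessdualisinggorups}, in its generalisation to semiadditive presentably symmetric monoidal categories (applicable since $\Sp^\gl_\pi$ is stable), the map
\[\Ga(\G_\inj)^{hW_H^\gl K}\to \Ga(\G_\inj)\]
is a faithful $W_H^\gl K$-Galois extension. I read the statement as this restricted extension, with $\Ga(\G_\sub)$ as its source precisely when $W_H^\gl K\to\Aut(\dual{K})$ is an isomorphism. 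In general, $\Ga(\G_\sub)\to\Ga(\G_\inj)$ factors through it as the inclusion of $\Aut(\dual{K})$-fixed points into $W_H^\gl K$-fixed points, and $W_H^\gl K$ acts through the $\Aut(\dual{K})$-action. This interpretive point is the only place I expect subtlety.

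\textbf{Step 4: underlying $\E_\infty$-rings.} The underlying $\E_\infty$-ring of $\Ga(\G_\sfZ)$ is $\Ga(\sfZ)$, so the underlying map is $\Ga(\Sub(\dual{K},\G))\to\Ga(\Inj(\dual{K},\G))$. By \Cref{cor:identification_with_inj} and (\ref{eq:geometric_model_for_fixedpoints}), which apply because $0$-affine implies $0$-semiaffine, this map is equivariantly identified with
\[\Phi^K\Ga(\G)^{h\Aut(\dual{K})}\to\Phi^K\Ga(\G).\]
After restriction to $W_H^\gl K$, this is exactly the extension of \Cref{thm:algebraic_intext}, i.e.\ \Cref{main_algebraic}.
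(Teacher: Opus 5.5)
Your proposal is correct and follows the paper's own proof. The paper likewise applies \Cref{pr:general_torsors_to_global_Galois} to the torsor of \Cref{main_geometric} to get a faithful $\Aut(\dual{K})$-Galois extension in $\CAlg(\Sp^\gl_\pi)$, and then restricts along the injective global Weyl homomorphism. Your Step 3 caveat is well founded: restriction only yields $\Ga(\G_\inj)^{hW_H^\gl K}\to\Ga(\G_\inj)$, whose source is $\Ga(\G_\sub)$ in general only when $W_H^\gl K\to\Aut(\dual{K})$ is an isomorphism, and your Step 1 makes explicit the $0$-affineness of $\Sub(\dual{K},\G)$ and $\Inj(\dual{K},\G)$, which the paper leaves implicit.
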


\begin{proof}
    By \Cref{main_geometric} and \Cref{pr:general_torsors_to_global_Galois}, the map of $\pi$-ambidextrous $\E_\infty$-rings in question is a faithful $\Aut(\dual{K})$-Galois extension, and restricting along an injection induces a faithful $W_H^\gl K$-Galois extension.
\end{proof}

%%%%%%%%%%%%%%%%%%%%%%%%%%%%%%%%%%%%%%%%%%%%%%%%%%%%%%%%%%%%%%%
\subsubsection{Comparing Weyl group actions}\label{ssec:action_comparison}
Recall that the \emph{Weyl group} of an inclusion of groups $K\leq H$ is the quotient $W_H K = N_H K / K$ and that $W_H K$ naturally acts on $BK$. One can see this either by viewing $W_H K$ as the automorphism group of $H$-equivariant automorphisms of $H/K$ and using that the functor sending a $G$-space to its orbifold quotient
\[\Spc_G \to \Spc^\gl_\ab; \qquad X \mapsto X//G\]
sends $H/K$ to $BK$, or equivalently, just by writing $BK = EH \times_H H/K$.

\begin{mydef}\label{df:canonical_weylaction}
    The \emph{canonical $W_H K$-action} on the $K$- (geometric) fixed points of a global spectrum $X$ is defined by composing the above $W_H K$-action on $BK$ with the canonical $\Aut(BK)$-action of \Cref{df:canonicalautBKaction}.
\end{mydef}

This canonical $W_H K$-action is the residual Weyl group action that an equivariant homotopy theorist is used to.

This combines with the natural quotient $W_H K \to W_H^\gl K$ and the discussion of \Cref{ssec:from_tempered_to_global_spectra} to give the following solid commutative diagram of group objects in spaces
\[\begin{tikzcd}
    {W_H K}\ar[d]\ar[r] &   {\Aut(BK)}\ar[d, "\tau", swap, shift right = 2]\ar[r]  &   {\Aut(X^K)}    \\
    {W_H^\gl K}\ar[r] &   {\Aut(K),}\ar[ru, dashed]\ar[u, shift right = 2, "B", dashed, swap] &   
\end{tikzcd}\]
where $\tau$ is the truncation map, the lower-horizontal map is (\ref{eq:weyl_homomorphism}).

\begin{remark}
    In this small subsection, we mostly ignore the isomorphism $\Aut(K) \simeq \Aut(\dual{K})$ induced by Pontryagin duality, and instead treat these groups as the same.
\end{remark}

Now for the standard warning, as much for ourselves as for the reader:

\begin{center}
    \emph{The canonical $\Aut(BK)$-action on $X^K$ need \textbf{not} factor through $\Aut(K)$. In particular, the $W_H K$-action need \textbf{not} factor through $W_H^\gl K$.}
\end{center}

In other words, the solid diagram above commutes, but the upper-left horizontal arrow does not necessarily factor through $B$ and hence also $W_H^\gl K$.

As an explicit example, we have the global sphere spectrum $\Sph$ with $\Sph^{C_2} = \Sph \oplus \Sigma^\infty_+ BC_2$ by the tom Dieck splitting. The $\Aut(BC_2)$-action on $\Sph^{C_2}$ acts nontrivially and tautologically on the $\Sigma^\infty_+ BC_2$-summand, however, $\Aut(C_2)$ vanishes.

Nevertheless, there are two situations of interest to us where the $\Aut(BK)$-action does factor through $\Aut(K)$, and where the $W_H K$-action factors through $W_H^\gl K$. The first depends on the group $H$, and the second on the global spectrum $X$.\footnote{The arguments of Balerrama from \cite[\textsection A.3]{balderrama_totalpoweroperations} can also be co-opted to show that these factorisations occur if the order $H$ acts as an isomorphism on the global spectrum $X$. We will not need such a statement here.} In the situations covered below, we can interpret the Galois extensions of \Cref{main_algebraic} as related to the residual Weyl group actions from equivariant homotopy theory.

\begin{prop}\label{pr:no_action_conflicts_part1}
    Let $H = N\rtimes Q$ be split, so the data of a normal subgroup $N \leq H$ and a homomorphism $N \to \Aut(Q)$ with $Q=H/N = W_H K$ the quotient. Then the canonical $Q$-action (\Cref{df:canonical_weylaction}) on the $N$- (geometric) fixed points of a global spectrum $X$ factors as a map of group objects in spaces
    \[W_H K \to \Aut(N) \xrightarrow{B} \Aut(BN).\]
\end{prop}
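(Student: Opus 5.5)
The plan is to make the $W_H N = Q$-action on $BN$ explicit and show it already factors through $B\colon \Aut(N)\to\Aut(BN)$ as a map of group objects in spaces. Composing with the canonical $\Aut(BN)$-action on $X^N$ (\Cref{df:canonicalautBKaction}) then gives the statement for genuine fixed points. The statement for geometric fixed points follows because the canonical action on $\Phi^N X$ is induced from the one on $X^N$, being a localisation of it. Here $K=N$ and, as in the statement, $Q=H/N=W_H N$. The splitting is read as a section $Q\to H$ acting on $N$ by conjugation, giving a homomorphism $c\colon Q\to\Aut(N)$.

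\textbf{Step 1: identify $BN$ with its $Q$-action.} Use the description $BN = EH\times_H H/N$ from the paragraph before \Cref{df:canonical_weylaction}. Since $N$ is normal, $H/N=Q$, and the $W_H N=Q$-action is through $H$-equivariant automorphisms of $H/N$, namely right multiplication by $Q$. Equivalently, $BN\simeq EH/N$ is the fibre of $BH\to BQ$. The action of $Q$ on $BN$ is the monodromy of the fibration $BN\to BH\to BQ$, that is, a functor $BQ\to\Spc$ with value $BN$ whose colimit is $BH$.

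\textbf{Step 2: use the splitting to rigidify.} Since $H=N\rtimes Q$, there is an equivalence of groupoids $BH\simeq (BN)_{hQ}$, where $Q$ acts on the $1$-groupoid $BN$ strictly through the group homomorphism $c\colon Q\to\Aut(N)$. Concretely, $Q$ acts on the one-object groupoid with morphisms $N$, fixing the object. A functor $BQ\to\Spc$ with colimit over $BQ$ equal to $BH$ is determined by $BH\to BQ$ via straightening/unstraightening. Hence the monodromy action of Step 1 is equivalent to the action induced by the strict action $Q\to\Aut(N)\xrightarrow{B}\Aut(BN)$. The key point is that a pointed action, one fixing the basepoint, lands in the image of $B$: on automorphisms of $BN$ that preserve the basepoint, $\Aut_\ast(BN)\simeq\Aut(N)$. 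The section $Q\to H$ provides exactly such a fixed basepoint, namely a $Q$-fixed point of the fibre, i.e.\ a section of $BH\to BQ$.

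\textbf{Main obstacle.} The hard step is doing Step 2 coherently, as a map of group objects in spaces $Q\to\Aut(BN)$ rather than merely on $\pi_0$. I would handle this as follows. Actions of $Q$ on spaces are functors $BQ\to\Spc$, and pointed actions are functors $BQ\to\Spc_\ast$. The restriction of $\Spc_\ast$ to pointed connected $1$-types is equivalent to the $1$-category of groups via $B$ and $\Omega$. So the pointed action given by the section corresponds to a $Q$-action on the group $\Omega BN=N$, which is a homomorphism $Q\to\Aut(N)$; by construction this homomorphism is conjugation $c$. Forgetting the basepoint recovers the unpointed monodromy action, and forgetting the basepoint corresponds exactly to postcomposition with $B\colon\Aut(N)\to\Aut(BN)$.
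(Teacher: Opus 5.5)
Your proposal is correct and is essentially the paper's argument. The paper notes that the square of group objects formed by $H\to\Aut(N)$, $H\to Q$, $Q\to\Aut(BN)$ and $B$ is Cartesian, so factorisations of $Q\to\Aut(BN)$ through $B$ are exactly splittings of $H\to Q$. That is the same fact as your identification of lifts to pointed actions (using $\Aut_\ast(BN)\simeq\Aut(N)$) with sections of $BH\to BQ$, with the geometric fixed point case treated as a change of notation in both.
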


Thank you to William Balderrama for suggesting such a statement.

\begin{proof}
    To be concrete, let us work with the $N$-fixed points $X^N$; the argument for geometric fixed points is just a change of notation. Consider the commutative diagram of group objects in spaces
    \[\begin{tikzcd}
        {H}\ar[r]\ar[d] &   {\Aut(N)}\ar[d, "B"]    &   \\
        {Q}\ar[r]       &   {\Aut(BN)}\ar[r]    &   {\Aut(X^N)}
    \end{tikzcd}\]
    The lower-horizontal composite defines the canonical $Q = W_H N$-action on $X^N$. As the square above is Cartesian, a factorisation of the lower-right horizontal map through $B$ is precisely a splitting of the quotient map $H \to Q$, which comes from the expression of $H$ as the semi-direct product $N\rtimes Q$. This gives the desired factorisation.
\end{proof}

The next two examples depend on the global spectrum $X$ being a tempered cohomology theory associated with a geometric object such as a torus, see \Cref{ex:subforKO}, or an elliptic curve.

\begin{prop}\label{pr:no_action_conflicts_part2_GM}
    Let $A=\Ga(\G)$ where $\G$ is fpqc locally on $\M$ isomorphic to $\mu_{\P^\infty}$. For example, the torsion of a torus; see \Cref{pr:universalorientedtorus}. Then the $\Aut(BK)$-action on $\Phi^K A$ factors through $\Aut(K)$ and the $W_H K$-action factors through $W_H^\gl K$. Moreover, these actions factor in $\CAlg_{\Ga(\M)}$.
\end{prop}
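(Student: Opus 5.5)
The plan is to show that, for $\G$ fpqc locally isomorphic to $\mu_{\P^\infty}$, the stack $\Phi^K \G \simeq \Inj(\dual{K}, \G)$ (\Cref{cor:identification_with_inj}) is étale over $\M$. Étale rigidity then makes the space of $\M$-automorphisms of it discrete, so any $\Aut(BK)$-action through $\M$-maps factors through $\pi_0\Aut(BK) = \Aut(K)$.

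First, as in \Cref{ex:subforKO}, $\Inj(\dual{K}, \G)$ is empty unless $K$ is cyclic. For $K = C_n$, we have $n$ invertible on $\Inj(\dual{C_n}, \G)$: a point of it gives an injection $\dual{C_n} \to \mu_n$ over $\pi_0$, which forces $n$ invertible. Hence $\Y = \Inj(\dual{K}, \G)$ is relatively affine and étale over $\M$. Both properties are checked fpqc locally on $\M$ using \Cref{pr:permanenceproperties_for_morphisms}, and locally $\Y$ is $\Spec$ of the cyclotomic algebra $\O[\tfrac{1}{n}, \zeta_n]$. The empty case is trivial, since then $\Phi^K A = 0$.

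Second, by the geometric form of étale rigidity from the footnote in \Cref{ex:subforKO}, $(-)^\heartsuit$ gives an equivalence between relatively affine étale $\M$-stacks and relatively affine étale $\M^\heartsuit$-stacks. The mapping space $\Stk_{/\M}(\Y,\Y)$ is therefore equivalent to the set $\Stk^\heartsuit_{/\M^\heartsuit}(\Y^\heartsuit,\Y^\heartsuit)$; in particular it is discrete, because classical relatively affine stacks over a classical stack have discrete mapping spaces relative to the base. Consequently $\Aut_{\M}(\Y)$ is a discrete group object in spaces.

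The canonical $\Aut(BK)$-action on $\Y$ is an action of stacks over $\M = \G(\ast)$, as noted in \Cref{ssec:from_tempered_to_global_spectra}. It is therefore a map of group objects $\Aut(BK) \to \Aut_\M(\Y)$ into a discrete target, so it factors uniquely through the $0$-truncation $\tau\colon \Aut(BK) \to \Aut(K)$. The main point to verify is that this factorisation is the one through the section $B$ of the diagram above, that is, that the $\Aut(BK)$-action agrees with the restricted action pulled back along $\tau$. This holds because $\tau \circ B = \mathrm{id}$ and maps into a discrete group are determined on $\pi_0$.

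Applying $\Ga$ and using \Cref{cor:identification_with_inj} (0-affine implies 0-semiaffine, which is the hypothesis there) gives the factorisation of the action on $\Phi^K A \simeq \Ga(\Y)$ through $\Aut(K)$. Because the action is by $\M$-stack maps, this factorisation takes place in $\CAlg_{\Ga(\M)}$. Composing with $W_H K \to \Aut(BK)$ then yields the factorisation through $W_H K \to W_H^\gl K \to \Aut(K)$, since the square in the diagram commutes.

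The hardest step is the discreteness claim: one must be careful that étale rigidity is applied relative to a possibly non-affine $\M$. I would first try to reduce to affine $\M$ via the presentation \eqref{eq:presentation_of_stack_category}, noting that a limit of discrete spaces is discrete, and then apply \cite[Th.7.5.4.2]{ha} directly.
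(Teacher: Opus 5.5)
Your argument is correct, but you reach the key discreteness statement by a different route from the paper.

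The paper first uses \Cref{ex:subforKO}, i.e.\ $\Sub(\dual{K},\G)\simeq \M\times\Spec\Sph[\tfrac{1}{n}]$, to see that the $\Aut(BK)$-action on $\Y=\Inj(\dual{K},\G)$ lives over $\X=\Sub(\dual{K},\G)$ and not merely over $\M$. It then shows that $\Aut_{\CAlg(\QCoh(\X))}(\varphi_\ast\O_\Y)$ is discrete. The reason is that $\O_\X\to\varphi_\ast\O_\Y$ is a faithful Galois extension by \Cref{cor:galois_from_torsor}, hence separable by Rognes' lemma, and Ramzi's theory of separable algebras then gives discrete endomorphism spaces.

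You instead use that $\Y\to\M$ is étale, which the paper also records in \Cref{ex:subforKO}. Étale rigidity, checked affine-locally through the presentation $\Stk_{/\M}\simeq\lim\Stk_{/\Spec A}$, then shows that all of $\Stk_{/\M}(\Y,\Y)$ is discrete.

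Your route is more direct. It avoids separability theory and the step of descending the action to $\X$, and it gives the stronger statement that all $\M$-automorphisms form a discrete space. The paper's route isolates a mechanism (Galois $\Rightarrow$ separable $\Rightarrow$ discrete automorphisms) that works for any action living over the quotient $\X$, without appealing to étaleness over $\M$. In the present setting, though, both routes ultimately rest on $n$ being invertible on $\Y$.

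Your explicit check that the induced $\Aut(K)$-action agrees with the restriction along $B$, via $\tau\circ B=\mathrm{id}$, is a point the paper leaves implicit.

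One caveat is shared with the paper: you need the equivariant identification $\Phi^K A\simeq\Ga(\Y)$ of \Cref{cor:identification_with_inj}. That requires $\M$ to be $0$-semiaffine, a hypothesis absent from the statement but used implicitly in the paper's proof as well.
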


\begin{proof}
    The group $K$ must be cyclic, else $\Phi^K A = 0$, so suppose $K=C_n$. In this case, we use \Cref{ex:subforKO} to identify
    \[\Sub(\dual{K}, \G) \simeq \M \times \Spec \Sph[\tfrac{1}{n}],\]
    so in particular, $\Inj(\dual{K}, \G)$ also naturally lives over $\M \times \Spec \Sph[\tfrac{1}{n}]$. The $\Aut(BK)$-action of $\Inj(\dual{K}, \G)$ from \Cref{pr:action_factors_to_inj} is not just one of stacks over $\M$, but as stacks over $\Sub(\dual{K}, \G)$; \emph{a priori} only the $\Aut(\dual{K})$-action lives over this subgroup stack. Write the quotient map as
    \[\varphi \colon \Y = \Inj(\dual{K}, \G) \to \Sub(\dual{K}, \G) = \X.\]
    The canonical $\Aut(BK)$-action on $\Phi^K A$ is then the composite
    \[\Aut(BK) \to \Aut_{\Stk_{/\X}}(\Y) \to \Aut_{\CAlg(\QCoh(\X))}(\varphi_\ast \O_\Y) \to \Aut_{\CAlg}(\Phi^K A);\]
    the subscripts indicate the category in which these automorphism groups are taking place. We claim that $\Aut_{\CAlg(\QCoh(\X))}(\varphi_\ast \O_\Y)$ is discrete as a group object in spaces. Indeed, by \Cref{cor:galois_from_torsor}, $\O_\X \to \varphi_\ast \O_\Y$ is a faithful $\Aut(\dual{K})$-Galois extension. By \cite[Lm.9.1.2]{johnrognessdualisinggorups}, this map witnesses $\varphi_\ast \O_\Y$ as a \emph{separable} $\E_\infty$-$\O_\X$-algebra, meaning that the multiplication map has a section as an $\O_\X$-$\O_\X$-bimodule. Ramzi's theory of separable algebras, in particular \cite[Cor.3.42]{ramzi_separability}, implies that the endomorphism space of $\varphi_\ast \O_\X$ as an $\E_\infty$-$\O_\X$-algebra is then discrete, hence the same is true for the automorphism space. 

    The map $\Aut(BK) \to \Aut_{\CAlg(\QCoh(\X))}(\varphi_\ast \O_\Y)$ then factors through the truncation of $\Aut(BK)$ as the target is discrete, which yields the desired factorisation. This then also forces the $W_H K$-action to factor through $\Aut(K)$, meaning it factors through the $W_H^\gl K$-action of \Cref{df:global_weyl_action}.
\end{proof}

The same proof yields the analogous result for an oriented $\P$-divisible group arising as the torsion of an elliptic curve using \Cref{ex:subforTMF}; this example will play no further role here.

\begin{prop}\label{pr:no_action_conflicts_part2_ellipticcurves}
    Let $A =\Ga(\G)$ where $\G$ is the torsion of an elliptic curve. Then for any $n\geq 1$, the $\Aut(B(C_n \times C_n))$-action on $\Phi^K A$ factors through $\Aut(C_n \times C_n)$.
\end{prop}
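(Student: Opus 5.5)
We follow the proof of \Cref{pr:no_action_conflicts_part2_GM} essentially verbatim, replacing \Cref{ex:subforKO} by \Cref{ex:subforTMF}. Set $K = C_n\times C_n$ and write
\[\varphi\colon \Y = \Inj(\dual{K}, \G) \to \Sub(\dual{K}, \G) = \X.\]
The first step is to use \Cref{ex:subforTMF}, in its base-changed form for an oriented elliptic curve over a general stack. It gives an identification $\X \simeq \M \times \Spec \Sph[\tfrac{1}{n}]$, so $\X \to \M$ is just the open locus where $n$ is invertible.

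The key observation is that the canonical $\Aut(BK)$-action on $\Y$ from \Cref{pr:action_factors_to_inj} is a priori only an action over $\M$, but it is in fact an action over $\X$. The reason is that $\X \to \M$ is a monomorphism, being the localisation inverting $n$. Hence any $\M$-automorphism of $\Y$ automatically commutes with the map to $\X$, since mapping spaces into $\X$ over $\M$ are either empty or contractible. We therefore obtain a factorisation of the action on $\Phi^K A$:
\[\Aut(BK) \to \Aut_{\Stk_{/\X}}(\Y) \to \Aut_{\CAlg(\QCoh(\X))}(\varphi_\ast\O_\Y) \to \Aut_{\CAlg}(\Phi^K A).\]
The middle arrow uses that $\varphi$ is affine, via the equivalence $\Stk^{\aff}_{/\X}\simeq \CAlg(\QCoh(\X))^\op$. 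The last arrow is taking global sections, combined with \Cref{cor:identification_with_inj}; here we need $\M$ to be 0-semiaffine, which holds in the situation of interest and is in any case implicit in the statement.

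Next we show that $\Aut_{\CAlg(\QCoh(\X))}(\varphi_\ast\O_\Y)$ is discrete. By \Cref{cor:galois_from_torsor}, the map $\O_\X \to \varphi_\ast\O_\Y$ is a faithful $\Aut(\dual{K})$-Galois extension. By \cite[Lm.9.1.2]{johnrognessdualisinggorups} it is therefore separable, and \cite[Cor.3.42]{ramzi_separability} then shows that the endomorphism space of $\varphi_\ast\O_\Y$ as an $\E_\infty$-$\O_\X$-algebra is discrete. Since $\Aut(BK)\to\Aut_{\CAlg(\QCoh(\X))}(\varphi_\ast\O_\Y)$ is a map of group objects with discrete target, it factors through the truncation $\tau\colon \Aut(BK)\to \pi_0\Aut(BK)=\Aut(K)$, which is exactly the claimed factorisation.

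The main point requiring care is the assertion that the action lives over $\X$. In the proof of \Cref{pr:no_action_conflicts_part2_GM} this came for free from the identification $\X \simeq \M\times\Spec\Sph[\tfrac1n]$, and the argument here is the same monomorphism argument. The remaining check is that the identification in \Cref{ex:subforTMF} applies to an arbitrary elliptic curve over $\M$, not just the universal one. This follows from base change for torsors (part 1 of \Cref{pr:G-torsors_and_basechange}) together with the compatibility of $\Inj$ with base change, both of which are already used in that example.
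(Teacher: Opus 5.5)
Your proposal is correct and follows the paper's own argument: the paper proves this statement by saying the proof of \Cref{pr:no_action_conflicts_part2_GM} goes through verbatim with \Cref{ex:subforTMF} in place of \Cref{ex:subforKO}. That is what you carry out: identify $\X\simeq\M\times\Spec\Sph[\tfrac1n]$, factor the action through $\Aut_{\CAlg(\QCoh(\X))}(\varphi_\ast\O_\Y)$, and use separability of the faithful Galois extension to see that this automorphism space is discrete. Two of your additions make explicit points the paper passes over silently: the monomorphism argument for why the $\Aut(BK)$-action lives over $\X$, and the remark that $0$-semiaffineness of $\M$ is implicitly needed to identify $\Ga\Inj(\dual{K},\G)$ with $\Phi^K A$.
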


Naturally, we then ask the following:

\begin{question}
    Does the $\Aut(BC_n)$-action on the $C_n$-geometric fixed points of equivariant elliptic cohomology theories also factor through $\Aut(C_n)$? What about the $\Aut(BK)$-action on the $K$-geometric fixed points of an arbitrary tempered cohomology theory?
\end{question}

Similar arguments also show that the $\Aut(BK)$-action is a purely higher categorical phenomenon. For example, the following shows that the induced $\Aut(BK)$-action on $K$-homotopy groups of a global spectrum always factors through $\Aut(K)$. This is why Schwede \cite{s} only considers the action of $\Aut(K)$ (or $\mathrm{Out}(K)$ if $K$ is not abelian) acting on homotopy groups.

\begin{prop}\label{pr:1categorical_argurment}
    Let $X$ be a global spectrum, $Y$ an object in a category $\calC$ such that $\Aut(Y)$ is $0$-truncated, and $\Aut(X^K) \to \Aut(Y)$ a map of $\E_1$-monoids. Then the induced $\Aut(BK)$-action on $Y$ factors through $\Aut(K)$.
\end{prop}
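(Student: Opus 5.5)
The argument is the same truncation trick used in the proof of \Cref{pr:no_action_conflicts_part2_GM}, with the discreteness input now supplied by hypothesis rather than by separability. The action of $\Aut(BK)$ on $Y$ is the composite of maps of $\E_1$-monoids (group objects in spaces)
\[\Aut(BK) \to \Aut(X^K) \to \Aut(Y),\]
where the first map is the canonical action of \Cref{df:canonicalautBKaction} and the second is the given map. The goal is to show that this composite factors through the truncation map $\tau\colon \Aut(BK) \to \pi_0\Aut(BK)$. One then identifies $\pi_0 \Aut(BK) \simeq \Aut(K)$, as in the proof of \Cref{pr:action_factors_to_inj}. Since $K$ is abelian, $\Aut(BK)\simeq BK\rtimes \Aut(K)$ as a group object in spaces.

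First I would use that $\Aut(Y)$ is $0$-truncated. This means $\Aut(Y)$ is a discrete group, and it is in particular a group-like $\E_1$-monoid in $0$-truncated spaces. Truncation $\tau_{\leq 0}\colon \Spc \to \Spc_{\leq 0}$ is left adjoint to the inclusion and preserves finite products. It therefore induces a left adjoint to the inclusion of $\E_1$-monoids in sets into $\E_1$-monoids in spaces, sending $\Aut(BK)$ to the discrete group $\pi_0\Aut(BK)$. By adjunction, any $\E_1$-map $\Aut(BK) \to \Aut(Y)$ factors uniquely as
\[\Aut(BK) \xrightarrow{\tau} \pi_0\Aut(BK) \simeq \Aut(K) \to \Aut(Y).\]

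It remains to check that this is compatible with the map $B\colon \Aut(K) \to \Aut(BK)$ appearing in the diagram of \Cref{ssec:action_comparison}. The composite $\tau\circ B$ is the identity on $\Aut(K)$. So the factored action, restricted along $B$, recovers the original action restricted along $B$, and the two agree as $\Aut(BK)$-actions after precomposing with $\tau$.

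The only step that needs any care is the adjunction for $\E_1$-monoids. I would justify it by viewing $\E_1$-monoids as monoid objects in a cartesian monoidal category and applying the general fact that a finite-product-preserving left adjoint induces an adjunction on monoid objects, as in \cite[\textsection2.2.1]{ha}. Everything else is formal.
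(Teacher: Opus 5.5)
Your proposal is correct and is the paper's argument: the composite $\Aut(BK)\to\Aut(X^K)\to\Aut(Y)$ factors through the truncation $\Aut(BK)\to\pi_0\Aut(BK)\simeq\Aut(K)$ because $\Aut(Y)$ is $0$-truncated. The only difference is that you spell out the truncation--inclusion adjunction on $\E_1$-monoids, which the paper leaves implicit; your remarks on $\Aut(BK)\simeq BK\rtimes\Aut(K)$ and on compatibility with $B$ are not needed but do no harm.
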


\begin{proof}
    As $\Aut(Y)$ is $0$-truncated, the composite $\Aut(BK) \to \Aut(X^K) \to \Aut(Y)$ factors through the truncation map, as desired.
\end{proof}

%%%%%%%%%%%%%%%%%%%%%%%%%%%%%%%%%%%%%%%%%%%%%%%%%%%%%%%%%%%%%%%
\subsection{Examples}\label{ssec:examples}
In this subsection, we apply \Cref{main_algebraic} to particular oriented $\P$-divisible groups $\G$.

\subsubsection{Equivariant complex topological \texorpdfstring{$K$}{K}-theory}\label{sssec:KU}
Recall from the introduction that we write $\gKU$ for the global spectrum associated with the $\G = \mu_{\P^\infty}$ over $\Spec \KU$, see \cite[\textsection2.8]{ec3} and \cite[Ex.5.1.4]{temperedglobal}, and that $\Phi^{K} \gKU = 0$ if $K$ is not a cyclic group and otherwise
\begin{equation}\label{eq:geoFP_KU}\Phi^{C_n} \gKU \simeq \KU[\tfrac{1}{n}, \zeta_n], \qquad n\geq 2.\end{equation}

The case of the holomorph $H=\Hol(C_n)$ and $K=C_n$ was mentioned in the introduction: it yields a faithful $\Aut(C_n)$-Galois action on $\KU[\tfrac{1}{n}, \zeta_n]$. The linearity of this faithful Galois action over $\KU$---it arises from a torsor over $\Spec \KU$---combined with Lurie's étale rigidity (\cite[Th.7.5.4.2]{ha}) forces it to be the usual cyclotomic action. In particular, its fixed points are $\KU[\tfrac{1}{n}]$. One can also use \Cref{ex:subforKO} to compute these fixed points.

\begin{example}\label{ex:universalKUexample}
    If $n$ is odd, then restricting to the subgroup $C_2 = \{\pm 1\}\leq \Aut(C_n)$ decomposes the $\Aut(C_n)$-Galois extension as the composite of the two Galois extensions
    \[\KU[\tfrac{1}{n}] \xrightarrow{\Aut(C_n)/C_2} \KU[\tfrac{1}{n}, \zeta_n + \bar{\zeta}_n] \xrightarrow{C_2} \KU[\tfrac{1}{n}, \zeta_n].\]
\end{example}

\begin{example}\label{ex:intro_KU_two}
    Given $H=Q_8$ or $D_8$ either the quaternion group or dihedral group of order $8$, and $K=C_4$ is an index $2$ subgroup. In any of these cases, we have $W_H^\gl K = C_2$, and one can check by hand that $K\leq H$ is Weyl faithful. This yields the familiar $C_2$-Galois extension
    \[\KU[\tfrac{1}{2}] \simeq \Phi^{C_4} \gKU^{hC_2} \to \Phi^{C_4} \gKU \simeq \KU[\tfrac{1}{2}, i],\]
    where $i$ is a square root of $-1$. For $H=Q_8$, this is also an example where the quotient map $H \to H/K$ does not split.
\end{example}

These Galois extensions also recover a classical computation in chromatic homotopy theory.

\begin{example}
By \cite[\textsection4.6]{ec2}, each $K(n)$-local complex-periodic $\E_\infty$-ring $A$ comes equipped with an oriented \emph{Quillen $p$-divisible group} $\G^{\mathcal{Q}}_A$ at the ambient prime $p$ of height $n$. The associated tempered cohomology theory $\ul{A}$ is simply the Borel theory associated with $A$, meaning that for all finite groups $H$, the Atiyah--Segal comparison map
\[\ul{A}^{BH} \to A^{BH}\]
is an equivalence of $\E_\infty$-rings. In particular, $\Phi^K \ul{A} = A^{\tau K}$ is the \emph{proper Tate construction}, more-or-less by definition of the latter; see \cite[Ex.5.1.3]{temperedglobal}. In particular, $\Phi^{C_\ell} A = A^{tC_\ell}$ is the classical Tate construction for a prime $\ell$, and is only nonzero for $\ell=p$. It is then classical that 
\[\pi_\ast A^{tC_p} \simeq \pi_\ast A\llpar x\rrpar/[p]_f(x),\qquad |x|=-2\]
where $[p]_f(x)$ is the $p$-series of the Quillen formal group associated with $A$; see \cite[Lm.2.1]{ando_morava_sadofsky}, for example. For $A=\KU_p$, we see that the $\F_p^\times = \Aut(C_p)$-action on
    \[\Phi^{C_p} \ul{A} \simeq \KU_p^{tC_p} \simeq \Q_p[\zeta_p, u^\pm], \qquad |u|=2\]
is the cyclotomic action by \Cref{main_algebraic}. This action is well-known; see \cite{nikolausscholze}, for example.
\end{example}

\subsubsection{Equivariant real topological \texorpdfstring{$K$}{K}-theory}\label{sssec:KO}
Recall that one can define $\KO$ as the global sections of $\Spec \KU /C_2$, where the $C_2$-action is the classical complex conjugation action. As hinted in \cite[Rmk.3.8]{lurieecsurvey}, this quotient stack has a moduli interpretation as the \emph{moduli stack of oriented tori} $\M_\Tori^\ori$, a torus over a stack $\M$ being a flat affine abelian stack over $\M$ that is fpqc locally isomorphic to the multiplicative group scheme $\G_m$. In lieu of this characterisation, let us construct an oriented torus over $\Spec \KU / C_2$ directly. By \cite{akhilandlennart} or \cite[Ex.4.2.3.2 \& Th.4.4.0.2]{reconstruction}, the stack $\Spec \KU / C_2$ is 0-affine.

\begin{prop}\label{pr:universalorientedtorus}
    There is an oriented torus $\calT$ over $\Spec \KU / C_2$ whose base change to $\Spec \KU$ is isomorphic to $\G_m$. In particular, the underlying torsion object of $\calT$ is an oriented $\P$-divisible group $\calT[\P^\infty]$.
\end{prop}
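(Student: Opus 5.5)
The plan is to construct $\calT$ by descent along the $C_2$-torsor $\Spec \KU \to \Spec \KU/C_2$. Write $q$ for this quotient map. By \Cref{cor:qcoh_and_fixedpoints} and its nonlinear analogue (\cite[Pr.2.1.2.9(4)]{reconstruction}), giving a stack over $\Spec \KU/C_2$ is the same as giving a stack over $\Spec \KU$ together with a $C_2$-equivariance datum covering the complex conjugation action. Concretely,
\[\Stk_{/(\Spec\KU/C_2)} \simeq \left(\Stk_{/\Spec \KU}\right)^{hC_2}.\]
The same equivalence holds for abelian group objects, since limits commute with limits. So I would take $\G_m$ over $\Spec \KU$, meaning the strict multiplicative group $\Spec \KU[\Z]$ or Lurie's $\G_m$ as in \cite[\textsection2.8]{ec3}, and equip it with a $C_2$-action lifting conjugation on $\KU$.

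The natural choice is the \emph{inversion-twisted} action: $\sigma \in C_2$ acts on $\G_m = \Spec \KU \times \Spec \Sph[\Z]$ by conjugation on the first factor and by $z \mapsto z^{-1}$ on the second. The twist is forced. Conjugation acts on the Bott class by $\beta \mapsto -\beta$, and the canonical orientation of $\G_m$ over $\KU$ identifies $\omega$ with $\Sigma^{-2}\KU$ via $\beta$, so the only way to make the orientation equivariant is to let $C_2$ act by $[-1]$ on the group as well. This action is by group automorphisms because $[-1]$ is a homomorphism on an abelian group object. It therefore defines a $C_2$-equivariant abelian group object, and by the displayed equivalence it descends to an abelian group stack $\calT$ over $\Spec\KU/C_2$ with $q^\ast\calT \simeq \G_m$.

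Next I check the torus properties. Flatness and relative affineness satisfy descent (\Cref{pr:permanenceproperties_for_morphisms}) and hold for $\G_m$ over $\Spec\KU$. Being fpqc locally isomorphic to $\G_m$ holds by construction, since $q$ is faithfully flat by \Cref{cor:finiteetaleness}. For the torsion, $\calT[\P^\infty]$ is the descent of $\mu_{\P^\infty}$ with the same twisted action; each $\calT[n]$ is finite flat because these properties descend. Thus $\calT[\P^\infty]$ is a $\P$-divisible group, and it inherits the preorientation from $\G_m$ provided the preorientation is $C_2$-equivariant.

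The main obstacle is equivariance of the orientation. The preorientation is a map $BU(1) \simeq \mathbf{CP}^\infty \to \G_m(\KU)$, and for descent it must be a $C_2$-fixed point of the space of preorientations under the combined action: conjugation on coefficients and inversion on $\G_m$. I would handle this through the moduli interpretation. Preorientations of $\G_m$ over $R$ are $\E_\infty$-maps $\Sph[\mathbf{CP}^\infty] \to R$, and the orientation of $\KU$ is Snaith's map $\Sph[\mathbf{CP}^\infty] \to \KU$. Complex conjugation on $\KU$ is induced on $\Sph[\mathbf{CP}^\infty][\beta^{-1}]$ by the inversion of the group $\mathbf{CP}^\infty \simeq BU(1)$, which is conjugation of line bundles. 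Hence Snaith's map is $C_2$-equivariant when $C_2$ acts on the source by inversion, and this is exactly the twisted action. The step that needs most care is making this coherent rather than merely homotopy-commutative. I would do it by taking the $C_2$-equivariant structure on $\KU$ to be defined via Snaith's theorem in the first place, as in \cite{akhilandlennart}, so that equivariance holds by construction.

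Orientedness, meaning that the induced map $\omega \to \Sigma^{-2}\O$ is an equivalence, can then be checked after the faithfully flat pullback along $q$, where it is the known orientation of $\G_m$ over $\KU$ (\cite[\textsection2.8]{ec3}). This gives the oriented $\P$-divisible group $\calT[\P^\infty]$.
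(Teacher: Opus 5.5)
Your proposal is correct and follows essentially the paper's argument. In both, the coherent $C_2$-action on the preoriented pair $(\G_m/\KU, e)$ comes from transporting the inversion $[-1]$ of $\G_m$ through the universal property of $\KU$: you phrase this via Snaith's presentation $\KU\simeq\Sph[\mathbf{CP}^\infty][\beta^{-1}]$, the paper via $\KU$ as the orientation classifier of $\G_m$, which is the same theorem. Both then pass to $\Spec\KU/C_2$ (you by descent, the paper by taking $C_2$-coinvariants in preoriented abelian stacks) and check orientedness and finite flatness of $[n]$ fpqc-locally. The remaining differences are cosmetic: the paper treats the group and its preorientation in one step, and it gets the orientation of $\calT[\P^\infty]$ from that of $\calT$ via \cite[Pr.6.1]{elltempcomp}, whereas you descend the orientation of $\mu_{\P^\infty}$ directly.
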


In future work, we hope to prove that $\calT$ defines an equivalence
\[\calT \colon \Spec \KU / C_2 \xrightarrow{\simeq} \M_\Tori^\ori,\]
ie, to show that $\calT$ is the \emph{universal oriented torus}. Recall that an abelian group object in a category $\calC$ with finite products is a product-preserving functor $\Lat^\op \to \calC$ where $\Lat$ is the category of finitely generated free abelian groups.

\begin{proof}
    Let $\PreAb(-) \colon \Stk^\op \to \Cat$ be the functor sending a stack $\M$ to the category of \emph{preoriented abelian $\M$-stacks}
    \[\PreAb(\M) = \Ab(\Stk_{/\M})_{\BT/},\]
    where $\BT$ is the constant abelian $\M$-stack on the abelian group object $\BT = \Sigma^2 \Z$ in $\Ab(\Spc) = \Mod_\Z^\cn$. Let $\PreAb = \int_{\Stk} \PreAb(-)$ be the Cartesian unstraightening of this functor. The pair $\G_m$ over $\Spec \KU$ defines an object of $\PreAb$. Recall that the Adams operation $\psi^{-1} \colon \KU \to \KU$ can be defined using the universal property of $\KU$ as the orientation classifier of (the formal group of) $\G_m$
    \[\CAlg(\KU, \KU) \simeq \Ori(\widehat{\G}_m/\KU),\]
    as the endomorphism of $\KU$ associated with the orientation $[-1](e)$ of $\widehat{\G}_m$, where we twist the canonical orientation $e$ by the involution $[-1] \colon \G_m \to \G_m$; this same argument was given for $\KU\llpar q \rrpar$ in \cite[\textsection6]{globaltate}. Through the functor $\Tori \to \PreAb$ sending a torus $\T$ to its base change over its orientation classifier, the $C_2$-action on $\G_m$ over $\Spec \Sph$ induces a $C_2$-action on the pair $\G_m/\Spec \KU$ in $\PreAb$ explicitly seen via the pair of maps
    \[\psi^{-1} \colon \Spec \KU \to \Spec \KU, \quad [-1] \colon (\G_m,e) \to (\G_m, [-1]e) = (\psi^{-1})^\ast(\G_m, e),\]
    the first of stacks and the second of preoriented abelian stacks over $\Spec \KU$. We then define the pair $\calT$ over $\Spec \KU / C_2$ by taking the $C_2$-coinvariants of the above $C_2$-action in $\PreAb$; keep in mind that $\PreAb$ has all small colimits and finite limits as this is true for each $\PreAb(\M)$ and the functors $f^\ast \colon \PreAb(\M') \to \PreAb(\M)$ induced by maps of stacks $f\colon \M \to \M'$ preserve these. The canonical preorientation on $\calT$ defines an orientation as this can be checked on an fpqc cover of $\Spec \KU /C_2$, and its true by construction for $\G_m$ over $\Spec \KU$. Moreover, for each $n\geq2$ the $n$-fold multiplication map $[n] \colon \calT \to \calT$ is an affine and finite flat map over $\Spec \KU / C_2$, as it suffices to check this on an fpqc cover, and again it holds for $\G_m$ over $\Spec \KU$. In particular, the torsion of $\calT$ defines a $\P$-divisible group $\calT[\P^\infty]$. Moreover, the orientation of $\calT$ induces a orientation on its torsion $\calT[\P^\infty]$ by \cite[Pr.6.1]{elltempcomp}.
\end{proof}

Write $\gKO$ for the tempered cohomology theory associated with $\calT[\P^\infty]$ over $\Spec \KU / C_2$. One can show that $\gKO \to \gKU$ is a $C_2$-Galois extension in global $\E_\infty$-rings, which suggests that $\gKO$ is a good name for this theory; see \cite[Th.9.17]{nilpotenceanddescentinequivariant}.

Similar Galois extensions from the geometric fixed points of $\gKU$ also hold for $\gKO$, with some systematic changes. Let $\M = \Spec \KU /C_2$ and $\G=\calT[\P^\infty]$.

\begin{example}
By \cite[Thm.5.3.10]{temperedglobal}, there is a natural equivalence of $\E_\infty$-rings
\begin{equation}\label{eq:base_change_for_geoFPKO}\KU\otimes_{\KO} \Phi^{K} \gKO \xrightarrow{\simeq} \Phi^{K} \gKU.\end{equation}
As $\KO \to \KU$ is a faithful $C_2$-Galois extension, $\Phi^K \gKO$ also vanishes if and only if $K$ is noncyclic. For $K=C_n$ we have
\[\Phi^{C_n} \gKO = \Phi^{C_n} \Ga(\O_{\calT[\P^\infty]}) \simeq \Ga(\Phi^{C_n} \calT[\P^\infty]).\]
We can pull back the $C_2$-torsor $\Spec \KU \to \Spec \KU / C_2$ to a $C_2$-torsor $\Spec \Phi^{C_n} \gKU \to \Phi^{C_n}\calT[\P^\infty]$, see \Cref{pr:G-torsors_and_basechange,pr:chromatic_positive_answer}, and use this cover to compute the global sections above in terms of homotopy fixed points
\[\Ga(\Phi^{C_n} \calT[\P^\infty]) \simeq (\Phi^{C_n} \gKU)^{hC_2}.\]
For $n\geq 3$, this $C_2$-action on $\Phi^{C_n}\gKU$ is easy to describe: it is induced by the complex conjugation action on $\KU^{C_n}$. In particular, on $\pi_0 \Phi^{C_n} \gKU \simeq \Z[\tfrac{1}{n}, \zeta_n]$ this $C_2$-action sends $\zeta_n$ to $\zeta_n^{-1} = \bar{\zeta}_n$, the restriction of the $\Aut(C_n)$-action of \Cref{ex:universalKUexample}. In particular, this $C_2$-action on $\pi_0$ is free, so by \Cref{df:lurie_galois} and the discussion following it, we can identify
\[\Phi^{C_n} \gKO \simeq (\Phi^{C_n} \gKU)^{hC_2} \simeq \KU[\tfrac{1}{n}, \zeta_n + \bar{\zeta}_n]\]
as $\E_\infty$-$\KO$-algebras for $n\geq 3$. For instance, we have
\[\Phi^{C_3} \gKO \simeq \KU[\tfrac{1}{3}], \qquad \Phi^{C_4} \gKO \simeq \KU[\tfrac{1}{2}], \qquad \Phi^{C_5} \gKO \simeq \KU[\tfrac{1}{5}, \sqrt{5}].\]
For $n=2$, we similarly have
\[\Phi^{C_2} \gKO \simeq (\Phi^{C_2}\gKU)^{hC_2} \simeq \KU[\tfrac{1}{2}]^{hC_2} \simeq \KO[\tfrac{1}{2}] \simeq \KU[\tfrac{1}{2}] \oplus \Sigma^2 \KU[\tfrac{1}{2}]\]
by direct computation, all equivalences being multiplicative except the last.
\end{example}

\begin{remark}
    One can also argue with level structure for tori as in \cite{km}. Indeed, for $n\geq 3$, the classical stack $\Inj(\dual{C_n}, \mu_{\P^{\infty}})^\heartsuit$ over $\Spec \Z/C_2$ is affine. This follows immediately from the fact that an automorphism of a torus which is the identity on the torsion subgroup $\mu_n$ must be the identity, so the automorphism groups associated with the above stack are all trivial.
\end{remark}

We can now run variants of the examples for $\gKU$ from \Cref{sssec:KU} now with $\gKO$. For instance, \Cref{main_algebraic} shows that the map of $\E_\infty$-$\KO$-algebras
\[\KO[\tfrac{1}{n}] \simeq \Phi^{C_n} \gKO^{h\Aut(C_n)} \to \Phi^{C_n} \gKO \simeq \KU[\tfrac{1}{n}, \zeta_n + \bar{\zeta}_n],\]
the first equivalence follows from \Cref{ex:subforKO}, is a faithful $\Aut(C_n)$-Galois extension corresponding to an interesting $\Aut(C_n)$-action on the right side that twists of the residual $\KU$-linear $\Aut(C_n)/\{\pm 1\}$-action together with the usual $\KO$-linear complex conjugation action. This also highlights that this example is not a $\pi_0$-Galois extension in the sense of \Cref{df:lurie_galois}. In fact, we expect most nonaffine examples to not be $\pi_0$-Galois extensions, such as in the $\TMF$-examples to follow.

\subsubsection{Katz--Mazur and Tate \texorpdfstring{$K$}{K}-theory}
Recall the oriented $\P$-divisible group $\T_\KM$ over $\KU[q^\pm]$ constructed in \cite[Thm.4.1]{globaltate} whose associated tempered cohomology theory $\gKU_\KM$ we call \emph{global Katz--Mazur $K$-theory} \cite[Ex.5.1.6]{temperedglobal}; also known as \emph{quasi-elliptic cohomology} in \cite{globaltate,quasielliptic}. For a positive integer $n$, we have
\[\pi_\ast^{C_n} \gKU_\KM \simeq \prod_{i=0}^{n-1} \frac{\Z[q^\pm, x_i]}{x_i^n-q^m}.\]
We fall short of computing the geometric fixed points in this case, but we do want to show that $\Phi^{C_n} \gKU_\KM$ is integral, meaning no integer $n$ acts invertibly on this spectrum. This follows from the existence of a map of $\E_\infty$-rings $\Phi^{C_n} \gKU_\KM \to A$ with $A$ integral. Such an $A$ is given by Lurie's \emph{splitting algebra} $\mathfrak{S}_n$ of \cite[\textsection2.7]{ec3}, which is the universal $\E_\infty$-$\KU[q^\pm]$-algebra such that the base change of $\T_\KM[n]$ splits as $\mu_{n} \times \dual{C_n}$. Indeed, in \cite[Lm.7.27]{globaltate}, we compute the homotopy groups of Lurie's splitting algebra $\mathfrak{S}_n$ to be
\[\pi_\ast \mathfrak{S}_n \simeq \Z[q^{\pm1/n}, u^\pm], \qquad |u|=2\]
using the classical splitting algebra of Katz--Mazur. In particular, this splitting algebra comes with an injection $\dual{C_n} \to \T_\KM$, giving a map of $\E_\infty$-rings $\Phi^{C_n} \gKU_\KM \to \mathfrak{S}_n$.

Over $\KU\llpar q\rrpar$ this oriented $\P$-divisible group $\T_\KM$ is the torsion of the \emph{oriented Tate curve} $\T$. We write $\gKU_\Tate$ for the associated global $\E_\infty$-ring known as \emph{orbifold (or global) smooth Tate $K$-theory}. In \cite[Thm.D]{globaltate}, we identified
\[\gKU_\KM \otimes_{\KU[q^\pm]} \KU\llpar q \rrpar {\simeq} \gKU_\Tate\]
as global $\E_\infty$-rings. In particular, by base change for geometric fixed points \cite[Thm.5.3.10]{temperedglobal}, we obtain maps of $\E_\infty$-rings
\begin{equation}\label{eq:basechange_to_geoFP_of_Tate_ktheory}\Phi^{C_n} \gKU_\Tate \xleftarrow{\simeq} \Phi^{C_n} \gKU_\KM \otimes_{\KU[q^\pm]} \KU\llpar q \rrpar \to \mathfrak{S}_n \otimes_{\KU[q^\pm]} \KU\llpar q \rrpar,\end{equation}
again showing that $\Phi^{C_n} \gKU_\Tate$ is also integral.

One can also amalgamate these arguments with those for $\gKO$ when discussing $\gKO_\KM$ and $\gKO_\T$, defined from $\Spec \KU[q^\pm] / C_2$ and $\Spec \KU\llpar q \rrpar / C_2$; see \cite[(6.3)]{globaltate} for more. We hope to come back to more explicit computations in future work.

\subsubsection{Topological modular forms}\label{sssec:TMF}
Let $\M = \M_\Ell^\ori$ be the moduli stack of oriented elliptic curves from \cite[\textsection7]{ec2}. This stack is $0$-affine by the main theorem of \cite{akhilandlennart} or \cite[Thm.A \& Ex.4.2.3.4]{reconstruction}. We also write $\G = \calE[\P^\infty]$ for the torsion of the universal oriented elliptic curve over $\M$. The associated global spectrum is written as $\gTMF$. By \cite[Ex.5.5.20]{temperedglobal}, we have $\Phi^K \gTMF = 0$ for those finite abelian groups $K$ that require more than $2$ generators. Many examples of $\Phi^K \gTMF$ were already discussed in \cite[Ex.5.5.20]{temperedglobal} and more explicit formulas appear in \cite{geometricnorms}.

\begin{example}\label{ex:subforTMFgamma1(n)}
The $C_n$-case appeared in the introduction. This gave us the faithful $\Aut(C_n)$-Galois extensions
\begin{equation}\label{eq:refined_level_structures}
    \Phi^{C_n} \gTMF^{h\Aut(C_n)} \to \Phi^{C_n} \gTMF
\end{equation}
associated with the holomorph $H=\Hol(C_n)$; they are integral extensions of the more classical Galois extensions
\begin{equation}\label{eq:level_structures_example}
    \gTMF_0(n) \to \gTMF_1(n)
\end{equation} 
of \cite[\textsection7]{akhilandlennart}. By \cite[Thm.C]{globaltate}, there is a map of global $\E_\infty$-rings $\gTMF \to \gKU_\Tate$, so in particular, from (\ref{eq:basechange_to_geoFP_of_Tate_ktheory}) we see that $\Phi^{C_n} \gTMF$ is integral. Blue shift for geometric fixed points \cite[Cor.G]{temperedglobal} does show that a power of $v_1$ does act as an isomorphism on $\Phi^{C_n} \gTMF$ at each prime $p$ dividing $n$; also see \cite[Ex.5.5.20]{temperedglobal}. To see that (\ref{eq:refined_level_structures}) recovers (\ref{eq:level_structures_example}) after inverting $n$, note that the stack $\Inj(\dual{C_n},\calE[\P^\infty])$ is always affine over $\M_\Ell^\ori$, and this map is also étale after after inverting $n$. This reduces us to computing this stack together with its $\Aut(\dual{C_n})$-action classically, which is the definition of the étale stack $\M_1(n)$ over the classical moduli stack of elliptic curves. 
\end{example}

Computations of $\pi_\ast \Phi^{C_n} \gTMF$ can be made explicit too. A deeper discussion, including more general examples and actual proofs, appears in \cite{geometricnorms}. For instance, and thank you to William Balderrama for showing this example to us, there is an isomorphism of graded rings
    \[\pi_\ast \Phi^{C_3} \gTMF \simeq \Z[a_1, a_3, \Delta^{-1}], \qquad  |a_i|=2i, |u|=2, \Delta = a_3^3(a_1^3-27a_3).\]
In particular, the ring $\pi_\ast \TMF_1(3)$ is simply $\pi_\ast \Phi^{C_3} \gTMF$ with $3$ inverted. One can also compute the homotopy groups of the $\Aut(C_3) \simeq C_2$-homotopy fixed points of these $C_3$-geometric fixed points, and they lie in a pullback of classical rings
    \[\begin{tikzcd}
        {\pi_\ast \Phi^{C_3} \gTMF^{h\Aut(C_3)}}\ar[r]\ar[d] &   {\pi_\ast \TMF_0(3)}\ar[d] \\
        {\pi_\ast \Phi^{C_3} \gTMF}\ar[r] &   {\pi_\ast \TMF_1(3),}
    \end{tikzcd}\]
expressing that $\pi_\ast \Phi^{C_3} \gTMF^{h\Aut(C_3)}$ is also just the the naïve delocalisations of $\pi_\ast\TMF_0(3)$ of Mahowald--Rezk \cite{levelonethree}.

If $K$ requires at least two generators, a similar argument allows us to compute $\Inj(\dual{K}, \calE[\P^\infty])$ and its associated stack of subgroups; recall \Cref{ex:subforTMF}

\begin{example}
    Setting $K=C_n\times C_n$ and $H=\Hol(K)$, the global Weyl group in this case is $\GL_2(\Z/n)$, which then yields
    \[\TMF[\tfrac{1}{n}] \simeq \Phi^{C_n\times C_n} \gTMF^{h\GL_2(\Z/n)}  \to \Phi^{C_n\times C_n} \gTMF \simeq \TMF(n),\]
    recovering, on the nose, the classical faithful $\GL_2(\Z/n)$-Galois extension induced by $\Ga(n)$-level structures on elliptic curves; the first and last equivalences follow from \Cref{ex:subforTMF}.
\end{example}

\begin{remark}
    Combining \Cref{ex:subforTMFgamma1(n),ex:subforTMF} together, one also obtains spectral refinements of the various affine flat torsors appearing in \cite[(7.4.3)]{km}
    \[\begin{tikzcd}
        {\Inj(\dual{C_n \times C_n}, \calE[\P^\infty])}\ar[r, "{A_n}"]\ar[rrr, "{B_n}", bend left = 15]\ar[rrrr, bend right = 15, "{GL_2(\Z/n)}"]   &   {\Inj(\dual{C_n}, \calE[\P^\infty])}\ar[rr, "{GL_1(\Z/n)}"]  &&   {\Sub(\dual{C_n}, \calE[\P^\infty])}\ar[r]  &   {\M_\Ell^\ori}
    \end{tikzcd} \]
    after inverting $n$, where the names of the morphisms indicate what kind of torsor they are and subgroups of $GL_2(\Z/n)$ are abbreviated as
    \[A_n = \begin{pmatrix}
    1    &  \ast  \\
    0    &  \ast
    \end{pmatrix}, \qquad 
    B_n = \begin{pmatrix}
    \ast    &  \ast  \\
    0    &  \ast
    \end{pmatrix}.\]
\end{remark}

We round out this section with more examples where $K$ requires at least two generators of different orders. Let us write $K=C_m \times C_n$ for a pair of integers $m,n\geq 2$ with $m|n$.

\begin{example}
    If $n=mr$ and $(m,r)=1$, then one has
    \[C_m \times C_n \simeq (C_m\times C_m)\times C_{n/m}\]
    inducing identifications of stacks
    \[\Inj(\dual{C_m \times C_n}, \calE[\P^\infty]) \simeq \Inj(\dual{C_{n/m}}, \G_{\M^\ori(m)})\]
    In this case, we have identifications
    \[ \Phi^{C_m \times C_n} \gTMF \simeq \Phi^{C_{n/m}} \gTMF(m)\]
    where $\gTMF(m)$ is the tempered cohomology theory associated with the pullback of the universal elliptic curve to the moduli stack of oriented elliptic curves with $\Ga(m)$-level structure $\M_\Ell^\ori(m)$. Applying \Cref{main_algebraic} yields a faithful $W=\Aut(C_m \times C_n)$-Galois extension
    \[\Phi^{C_{r}} \gTMF(m)^{hW} \simeq \Phi^{C_m \times C_n} \gTMF^{hW} \to \Phi^{C_m \times C_n} \gTMF \simeq \Phi^{C_{r}} \gTMF(m).\]
    This is an almost integral (at least $r$ is not inverted) refinement of the faithful Galois extension
    \[\TMF_0(r) \simeq (\TMF(m) \otimes_{\TMF} \TMF_1(r))^{hW} \to \TMF(m) \otimes_{\TMF} \TMF_1(r)\]
    by the group
    \[W = \Aut(C_m \times C_n) \simeq \Aut(C_m \times C_m) \times \Aut(C_{r}).\]
\end{example}

On the other extreme, if $p|m$ if and only if $p|n$, then $\Inj(\dual{K}, \calE[\P^\infty])$ is étale and affine over $\M$. Using the standard geometric form of Lurie's étale rigidity, we are reduced to identifying their underlying classical stacks, which have already appeared in the literature; for example, $\Sub(\dual{K}, \calE[\P^\infty])$ is the same as the classical stack $\mathcal{M}_K$ of \cite[Def.2.26]{heckeontmf} equipped with the structure sheaf pulled back from $\M_\Ell^\ori$. 

We hope to come back to more explicit computations of these examples, as well as other examples, using the synthetic methods of \cite{smfcomputation,osyn,chua_c2TMF}. It could also be interesting to come back to higher height examples from geometry, such as the tempered cohomology theories associated with the \emph{topological automorphic forms} of \cite{taf}; also see \cite[\textsection 6.3]{luriestheorem}.

%%%%%%%%%%%%%%%%%%%%%%%%%%%%%%%%%%%%%%%%%%%%%%%%%%%%%%%%%%%%%%%
%%%%%%%%%%%%%%%%%%%%%%%%%%%%%%%%%%%%%%%%%%%%%%%%%%%%%%%%%%%%%%%
%%%%%%%%%%%%%%%%%%%%%%%%%%%%%%%%%%%%%%%%%%%%%%%%%%%%%%%%%%%%%%%
\section{Decompositions of equivariant perfect module categories}\label{sec:categorical_decompositions}

\begin{center}
    \emph{Fix an $H$-equivariant $\E_\infty$-ring $R \in \CAlg(\Sp_H)$ for this section.}
\end{center}

%%%%%%%%%%%%%%%%%%%%%%%%%%%%%%%%%%%%%%%%%%%%%%%%%%%%%%%%%%%%%%%
%\subsection{General KNPR-decompositions}\label{ssec:knpr_decompositions}
In \cite{krause_equivariantpicard}, Krause shows how the category of perfect $H$-spectra $\Sp^\omega_H$ can be decomposed using its geometric fixed points. This was generalised to a similar decomposition of $\Perf_H(R) = \Mod_H(R)^\omega$ in \cite{NPR2024} by Naumann--Pol--Ramzi. 

To be more precise, fix a family $\calF$ of subgroups of $K\leq H$ closed under subgroups and conjugacy. We write
\[R /\calF = R \otimes \tildE \calF \in \CAlg(\Sp_H),\]
where $\tildE \calF$ is the universal based $H$-space whose $K$-fixed points are $S^0$ for all $K\notin \calF$ and a single point for $K\in \calF$; see \cite[\textsection6.1]{nilpotenceanddescentinequivariant}, where this space is shown to have a natural $\E_\infty$-structure. We read $R/\calF$ as having ``killed the $\calF$-data in $R$''. Let us also abbreviate $\calF \cup \{(H)\} = \calF \cup H$ when adding a single conjugacy class $(H)$ not already in $\calF$. In particular, $R=R/\varnothing$.

The precise phrasing of the following statement appears as \cite[Thm.8.2]{equivariant_separable_algebras_NPR2}.

\begin{theorem}[{\cite[Thm.7.13]{NPR2024}}]\label{thm:npr_decompositions}
    Given a subgroup $K\leq H$ such that $K\notin \calF$ yet all proper subgroups of $K$ lie in $\calF$, then we have a Cartesian diagram of 2-rings
    \[\begin{tikzcd}
        {\Perf_H(R / \calF)}\ar[r]\ar[d, "{\Phi^K}"]   &   {\Perf_H(R / \calF \cup K)}\ar[d]   \\
        {\Perf(\Phi^K R)^{hW_H K}}\ar[r]    &   {\Perf(\Phi^K R)^{tW_H K},}
    \end{tikzcd}\]
    where the Tate construction takes place in $\Cat^\perf$.
\end{theorem}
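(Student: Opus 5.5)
The plan is to reduce to the standard isotropy-separation (recollement) argument for the single conjugacy class $(K)$, and then identify the closed piece with Weyl-group-equivariant modules over geometric fixed points. First I will consider the cofibre sequence of $\E_\infty$-$H$-spaces $E(\calF\cup K)/E\calF_+ \to \tildE\calF \to \tildE(\calF\cup K)$. Tensoring with $R$ then gives a recollement of $\Mod_{R/\calF}(\Sp_H)$. The open part is $\Mod_{R/\calF\cup K}(\Sp_H)$, the smashing localisation given by $-\otimes \tildE(\calF\cup K)$. The closed part consists of modules supported on the single conjugacy class $(K)$. Any stable recollement $\calC$ with idempotent-algebra localisation is a lax pullback, and hence a genuine pullback
\[\calC \simeq \calC_{\mathrm{closed}} \times_{\calC_{\mathrm{open}}^{\Delta^1}\text{-gluing}} \calC_{\mathrm{open}}.\]
On compact objects this becomes a pullback of 2-rings once the gluing functor $j^\ast i_\ast$ is expressed as a symmetric monoidal functor into a Tate-type category.

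Second, I will identify the closed part. Modules over $R/\calF$ supported exactly at $(K)$ are equivalent, via $\Phi^K$ together with the residual action, to $W_HK$-objects of $\Mod_{\Phi^KR}$. The reason is that they are induced from $N_HK$-modules concentrated at $K$, and on those $\Phi^K$ is an equivalence onto Borel-equivariant $\Phi^KR$-modules for $W_HK=N_HK/K$. This is the Borel-completion step: since all proper subgroups of $K$ lie in $\calF$, the $K$-isotropy piece is "free" for $W_HK$. On compacts this yields $\Perf(\Phi^KR)^{hW_HK}$, and I will check that compact objects on both sides match using that $\Phi^K$ preserves compacts and that compacts of a homotopy fixed point category in $\PrLstomega$ are computed pointwise.

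Third, I will identify the gluing. The composite from the closed part to the open part and back to the closed part is $X\mapsto$ the Tate-type completion. Categorically, the image of $\Perf(\Phi^KR)^{hW_HK}$ in the localisation is its Verdier quotient by the thick subcategory generated by induced objects $\mathrm{Ind}_e^{W_HK}$. This quotient is by definition the Tate construction $\Perf(\Phi^KR)^{tW_HK}$ in $\Cat^\perf$ (the cofibre of the norm $(-)_{hW}\to(-)^{hW}$ in $\Cat^\perf$). The right vertical map is then $\Perf_H(R/\calF\cup K)\to \Perf(\Phi^KR)^{tW_HK}$, obtained from $\Phi^K$ modulo induced objects.

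The main obstacle is showing that the square is Cartesian on the nose at the level of small idempotent-complete categories. Idempotent completion can fail to commute with Verdier quotients, so the pullback of compact parts might only agree up to idempotent completion. I would handle this by proving that $\Perf_H(R/\calF)\to\Perf_H(R/\calF\cup K)$ is a Verdier localisation whose kernel is exactly the thick subcategory generated by $R/\calF\otimes H/K_+$. I would then check that $\Phi^K$ identifies this kernel with the kernel of $\Perf(\Phi^KR)^{hW_HK}\to\Perf(\Phi^KR)^{tW_HK}$, namely the thick closure of induced objects. A square of Verdier projections inducing an equivalence on kernels is Cartesian in $\Cat^\perf$, provided the horizontal functors are Karoubi projections. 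The remaining technical point is therefore to check essential surjectivity up to retracts, and to use the idempotent-complete Verdier quotient as the definition of the Tate construction.
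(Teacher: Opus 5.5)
Your overall architecture is the right one: isotropy separation along $\tildE\calF\to\tildE(\calF\cup K)$, identifying the $(K)$-torsion part with Borel $W_HK$-equivariant $\Phi^KR$-modules, and realising $\Perf(\Phi^KR)^{tW_HK}$ as the idempotent-completed quotient of $\Perf(\Phi^KR)^{hW_HK}$ by induced objects. But the lemma you use to conclude Cartesianness is false.

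A commutative square in $\Cat^\perf$ whose horizontal functors are Karoubi projections, and whose left vertical functor induces an equivalence on horizontal kernels, is always a \emph{pushout}. It need not be a pullback. Here is a counterexample:
\begin{itemize}
\item top: the projection $\Perf(\Sph)\times\Perf(\Sph)\to\Perf(\Sph)$ onto the second factor;
\item bottom: $\mathrm{ev}_1\colon\Fun(\Delta^1,\Perf(\Sph))\to\Perf(\Sph)$, a localisation whose kernel is the arrows $X\to 0$;
\item left: the functor $(X,Y)\mapsto(X\xrightarrow{0}Y)$;
\item right: the identity.
\end{itemize}
The kernels are identified. Yet the pullback is $\Fun(\Delta^1,\Perf(\Sph))$, and the left functor misses $X\xrightarrow{\mathrm{id}}X$ for $X\neq 0$, even up to retracts. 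Kernel data cannot see the gluing, and the gluing is exactly the content of the theorem. Your third paragraph names the gluing but never feeds it into the Cartesianness argument. Checking essential surjectivity up to retracts cannot repair a failure of full faithfulness.

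What is missing is a compatibility of $\Phi^K$ with the two localisations. Compute mapping spectra in the pullback. Beyond the kernel equivalence, you need $\Phi^K$ to induce equivalences $\map(T,M)\simeq\map(\Phi^KT,\Phi^KM)$ for $T$ in the kernel and $M$ \emph{arbitrary}, and also with the roles of $T$ and $M$ reversed. This condition also forces the Ind-extension of $\Phi^K$ to send $R/(\calF\cup K)$-modules to objects right-orthogonal to the induced ones. That is the Beck--Chevalley condition which makes your right vertical functor the correct one.

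In the case at hand the condition holds. For every $R/\calF$-module $M$ one has
\[
\map(R/\calF\otimes(H/K)_+,M)\simeq M^K\simeq\Phi^KM\simeq\map(\Phi^KR\otimes(W_HK)_+,\Phi^KM),
\]
where the middle equivalence uses that all proper subgroups of $K$ lie in $\calF$. Every category in sight is rigid and $\Phi^K$ is symmetric monoidal, so duality gives the reversed direction. With this input added, your Verdier-quotient argument can be completed.

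The paper does not reprove the statement; it cites Naumann--Pol--Ramzi. There the pullback is first established for the $\Ind$-categories in $\PrLst$ (their Thm.~7.11, with the lower functor a localisation). One then applies $(-)^\omega$, which agrees with dualisable objects in all four corners and therefore preserves the limit. That route sidesteps your idempotent-completion worry entirely.
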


We call such an iterated expression for $\Perf_H(R)$ a \emph{KNPR-decomposition}.

If $R$ is the underlying $H$-spectrum of a global spectrum $A$, then the above Weyl action on $\Perf(\Phi^K R)$ is the coherent action of \Cref{df:canonical_weylaction}. By \Cref{ssec:action_comparison}, this factors through the discrete action of $W_H^\gl K$ on $\Phi^K A=\Phi^K \Ga(\G)$ in some cases of interest.

An immediate consequence of \Cref{thm:npr_decompositions} is the following.

\begin{prop}\label{pr:generic_decomposition}
    Let $H$ be a finite group such that for each conjugacy class of nontrivial subgroups $K\leq H$ in the derived defect base of $R$, the 2-ring $\Perf(\Phi^K R)^{tW_H K}$ vanishes. Then we have a pullback of 2-rings
    \[\begin{tikzcd}
        {\Perf_H(R)}\ar[r]\ar[d]    &   {\prod_{(K)\leq H} \Perf(\Phi^K R)^{hW_H C}}\ar[d]  \\
        {\Perf(R)^{hH}}\ar[r]   &   {\Perf(R)^{tH}.}
    \end{tikzcd}\]
\end{prop}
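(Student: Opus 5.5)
We iterate \Cref{thm:npr_decompositions}, adding conjugacy classes of subgroups to the family one at a time. First fix an ordering $(K_1), (K_2), \ldots, (K_r)$ of the conjugacy classes of subgroups of $H$, compatible with inclusion: if $K_i$ is subconjugate to $K_j$, then $i\leq j$. We take $K_1 = e$. Set $\calF_0=\varnothing$ and $\calF_i = \calF_{i-1}\cup K_i$. Each $\calF_i$ is a family, and the proper subgroups of $K_i$ lie in $\calF_{i-1}$ while $K_i$ does not. Hence \Cref{thm:npr_decompositions} applies at each stage and gives Cartesian squares
\[\Perf_H(R/\calF_{i-1}) \simeq \Perf_H(R/\calF_i)\times_{\Perf(\Phi^{K_i}R)^{tW_HK_i}} \Perf(\Phi^{K_i}R)^{hW_HK_i}.\]
Two observations are needed about these squares. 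When $K_i$ does not lie in the derived defect base of $R$, I expect $\Phi^{K_i}R=0$, so $R/\calF_{i-1}\simeq R/\calF_i$ and the square is trivial. When $K_i\neq e$ lies in the derived defect base, the Tate corner vanishes by hypothesis, so the square degenerates to a product decomposition $\Perf_H(R/\calF_{i-1})\simeq \Perf_H(R/\calF_i)\times \Perf(\Phi^{K_i}R)^{hW_HK_i}$.

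Next, consider the stage $i\geq 2$ with everything past the trivial subgroup. Inducting downward from $i=r$, using $R/\calF_r = 0$ since $\calF_r$ contains all subgroups, we obtain
\[\Perf_H(R/\{e\}) \simeq \prod_{(K)\neq e}\Perf(\Phi^K R)^{hW_HK},\]
where the product runs over the nontrivial classes. Classes outside the derived defect base contribute zero factors, so they may be included harmlessly provided $\Phi^KR=0$ for them. Finally, the first square, with $K_1=e$, $W_H e = H$ and $\Phi^e R = R$, reads
\[\Perf_H(R)\simeq \Perf_H(R/\{e\})\times_{\Perf(R)^{tH}}\Perf(R)^{hH}.\]
Substituting the product decomposition gives the claimed pullback. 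The right vertical map is the composite of the map from the product to $\Perf_H(R/\{e\})$ with the gluing map of the $e$-square. Equivalently, the pullback can be written with the product over all classes including $(e)$, since the $e$ factor is already accounted for by the bottom row.

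The main obstacle is the bookkeeping around the derived defect base. We must justify that for $K$ outside it, $\Phi^K R$, or at least $\Perf(\Phi^K R)$ with its Weyl action, vanishes, so that those squares are identities. I would cite the standard characterisation from \cite{nilpotenceanddescentinequivariant}: $R$ is $\calF$-nilpotent for the derived defect base $\calF$, and this forces $\Phi^K R=0$ for $K\notin\calF$. A small point also needs checking: the order of adding classes can be chosen so that each nontrivial class of the defect base has all its proper subgroups already added. This holds because the defect base is a family, closed under subgroups. Beyond this, the argument is formal pasting of Cartesian squares in $\tworing$, using that limits there are computed underlying in $\Cat^\perf$.
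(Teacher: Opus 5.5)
Your argument is correct and is essentially the paper's proof: the KNPR square for $\calF=\varnothing$, $K=e$ gives the pullback with $\Perf_H(R/e)$ in the top-right corner, and every later square degenerates into a product because its Tate corner vanishes. For nontrivial classes in the derived defect base this is the hypothesis, and outside it $\Phi^K R=0$, a point you handle explicitly where the paper leaves it implicit.

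One caveat: delete your aside that the product may equivalently include $(e)$. Adding a factor $\Perf(R)^{hH}$ to the top-right corner changes the pullback; with the map through the projection it gives $\Perf_H(R)\times\Perf(R)^{hH}$. The statement should be read with the product over nontrivial classes, which is exactly what your induction produces and what the paper's later applications use.
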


\begin{proof}
    The first KNPR-decomposition is nontrivial, giving us the pullback of \Cref{thm:npr_decompositions} for $\calF = \varnothing$. For any nonempty family $\calF$, we have
    \[\Perf_H(R/\calF) \xrightarrow{\simeq} \Perf(\Phi^K R)^{hW_H K} \times \Perf_H(R/\calF \cup K),\]
    as by assumption the Tate term $\Perf(\Phi^K R)^{tW_H K}$ vanishes. This inductively gives the desired pullback of 2-rings.
\end{proof}

The following gives a nonexhaustive list of conditions when $\Perf(\Phi^K R)^{tW_H K}$ vanishes.

\begin{prop}\label{pr:generic_tate_vanishing}
    Let $K\leq H$ be a subgroup. Suppose that at least one of the following conditions holds:
    \begin{enumerate}
        \item The Weyl group $W_H K$ is trivial.
        \item The order of the Weyl group $W_H K$ is a unit in $\pi_0 \Phi^{K} R$.
        \item The Weyl action of $W_H K$ on $\Phi^K R$ witnesses $\Phi^K R^{hW_H K} \to \Phi^K R$ as a faithful $W_H K$-Galois extension.
    \end{enumerate}
    Then $\Perf(\Phi^K R)^{tW_H K}$ vanishes.
\end{prop}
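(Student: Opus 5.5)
We treat the three conditions separately; in each, the aim is to show that the Tate construction $\Perf(\Phi^K R)^{tW_H K}$, formed in $\Cat^\perf$, is zero. Write $W = W_H K$ and $C = \Perf(\Phi^K R)$.

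\emph{Condition 1.} If $W$ is trivial, the norm map $C_{hW} \to C^{hW}$ is the identity of $C$. Its cofibre, the Tate construction, therefore vanishes.

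\emph{Condition 2.} The standard argument applies. Tate constructions in the stable semiadditive category $\Cat^\perf$ are modules over $C^{hW}$, so it suffices to show that $|W|$ acts both invertibly and as zero on $C^{tW}$. The Tate construction $C^{tW}$ is a 2-ring, the cofibre of the norm, and receives a map of 2-rings from $C^{hW}$. Multiplication by $|W|$ on the identity functor of $C^{tW}$ factors through the composite of restriction and transfer. The norm-cofibre makes the transfer $\mathrm{id}\to\mathrm{id}$ nullhomotopic on Tate, so $|W|$ acts by zero on $C^{tW}$, in the sense that $|W|\cdot\mathrm{id}_{\1} = 0$ in $\pi_0\mathrm{End}(\1_{C^{tW}})$.

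On the other hand, $|W|$ is a unit in $\pi_0\Phi^K R = \pi_0\mathrm{End}_C(\1)$. Because $W$ acts through ring automorphisms, this invertibility is $W$-invariant, so $|W|$ is a unit in $\pi_0\mathrm{End}(\1)$ of $C^{hW}$ and hence also in its image in $C^{tW}$. A 2-ring whose unit has endomorphism ring with $1 = 0$ is zero, which gives $C^{tW} = 0$. The main care needed here is to make precise that Tate cohomology in $\Cat^\perf$ is $|W|$-torsion. The cleanest route is to use that $\Cat^\perf$ is $\Sp$-linear, so that $C^{tW}$ is a module over $\1^{tW}$ computed in $\Cat^\perf$, and then to apply the usual torsion argument to $\mathrm{End}(\1)$.

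\emph{Condition 3.} Suppose $A := \Phi^K R^{hW} \to \Phi^K R = B$ is a faithful $W$-Galois extension of $\E_\infty$-rings. By \Cref{pr:mod_and_perf_preserve_Galois}, the map $\Perf(A)\to\Perf(B)$ is then a faithful $W$-Galois extension in $\tworing$; in particular $\Perf(B)^{hW}\simeq \Perf(A)$. The Tate construction of the underlying object of a faithful Galois extension vanishes by the natural generalisation of \cite[Pr.6.3.3]{johnrognessdualisinggorups}, which the paper already invokes in \Cref{pr:torsor_to_Galois_inPRL}: faithfulness is equivalent to the norm map $B_{hW}\to B^{hW}$ being an equivalence. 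Applying this in the semiadditive symmetric monoidal category $\Cat^\perf$ to $\Perf(B)$ gives $\Perf(B)^{tW} = 0$.

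The main obstacle is checking that Rognes' criterion (faithful $\Leftrightarrow$ vanishing Tate) holds in $\Cat^\perf$. This requires the equivalence $\Perf(B)\otimes_{\Perf(A)}\Perf(B)\simeq\prod_W\Perf(B)$, which \Cref{pr:mod_and_perf_preserve_Galois} supplies, so we expect the argument to go through.
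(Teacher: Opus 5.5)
Your proof of condition 1 is fine. Your proof of condition 3 is the alternative route the paper records in a footnote: \Cref{pr:mod_and_perf_preserve_Galois} plus a 2-ring version of \cite[Pr.6.3.3]{johnrognessdualisinggorups}. The paper's main argument avoids the obstacle you flag, namely establishing Rognes' criterion in $\Cat^\perf$. It uses \Cref{lm:42_krause_generalised} to identify $\map_{\Perf(\Phi^K R)^{tW}}(\1,\1)\simeq(\Phi^K R)^{tW}$, applies Rognes' purely spectral statement to $\Phi^K R$, and concludes because a 2-ring whose unit has vanishing endomorphism spectrum is zero. The same reduction handles condition 2.

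Condition 2 has a genuine gap: the claim that $|W|\cdot\mathrm{id}_{\1}=0$ in $\pi_0\mathrm{End}(\1_{C^{tW}})$ for formal reasons is false spectrally.
\begin{itemize}
\item The composite $\1\to\mathrm{ind}\,\mathrm{res}\,\1\to\1$ in $C^{hW}$ does die in $C^{tW}$. However, it is not multiplication by the integer $|W|$. It is multiplication by the transfer class $[W]\in\pi_0\Sph^{hW}=\pi^0(BW_+)$, the image of the $W$-set $W$ from the Burnside ring, which agrees with $|W|\cdot 1$ only modulo the augmentation ideal.
\item The classical torsion argument only shows that $|W|$ annihilates the $E_2$-page of the Tate spectral sequence, not its abutment.
\item Counterexample: take $C=\Perf(\Sph)$ with trivial $C_p$-action. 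Then \Cref{lm:42_krause_generalised} and the Segal conjecture for $C_p$ (Lin, Gunawardena) give $\mathrm{End}(\1_{C^{tC_p}})\simeq\Sph^{tC_p}\simeq\Sph^\wedge_p$, and $p\neq 0$ in $\pi_0=\Z_p$.
\item Your ``cleanest route'' through $\1^{tW}$ fails for the same reason, since $\Sph^{tW}$ is not $|W|$-torsion.
\item As an aside, $\Cat^\perf$ is semiadditive but not stable.
\end{itemize}

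The conclusion is still correct, and there are two easy repairs.
\begin{itemize}
\item Keep your argument, but use the hypothesis to identify the transfer class. Since $|W|$ is invertible on $\Phi^K R$, the unit of $(\Phi^K R)^{hW}$ factors through $\Sph[\tfrac{1}{|W|}]^{hW}\simeq\Sph[\tfrac{1}{|W|}]$. There $[W]$ becomes $|W|$, so a unit maps to zero in $C^{tW}$, forcing $1=0$.
\item Alternatively, follow the paper: reduce via \Cref{lm:42_krause_generalised} to showing $(\Phi^K R)^{tW}=0$. This holds because the Tate spectral sequence has $E_2$-page $\hat H^{\ast}(W;\pi_\ast\Phi^K R)=0$ once $|W|$ is invertible on the coefficients. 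This is the correct, $E_2$-level form of the ``Tate cohomology is $|W|$-torsion'' slogan.
\end{itemize}
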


We will use a variant of \cite[Lm.4.2]{krause_equivariantpicard} that we learned from Luca Pol.

\begin{lemma}\label{lm:42_krause_generalised}
    Let $G$ be a finite group, $R \in \CAlg^{BG}$, and $X,Y\in \Mod_R(\Sp^{BG})$ be modules which are dualisable in $\Mod_R$. Then there is a natural identification of $R$-modules
    \[\map_{\Perf(R)^{tG}}(X,Y) \simeq \map_{\Mod(R)}(X,Y)^{tG}.\]
    In particular, there is a natural identification of $\E_\infty$-$R$-algebras
    \[\map_{\Perf(R)^{tG}}(\1,\1) \simeq R^{tG}.\]
\end{lemma}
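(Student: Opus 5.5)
We compute the Tate construction in $\Cat^\perf$ directly from its defining cofibre sequence and then recognise the mapping spectra. Write $\calC = \Perf(R)$ with the $G$-action induced from the action on $R$. By definition, $\calC^{tG}$ is the Verdier quotient $\calC^{hG}/\calC_{hG}$, where $\calC_{hG}\to\calC^{hG}$ is the norm functor in $\Cat^\perf$. Its essential image is generated under finite colimits and retracts by the induced objects $\mathrm{Ind}(Z)=\bigoplus_{g\in G} gZ$ with their canonical $G$-equivariant structure, for $Z\in\calC$. Objects of $\calC^{hG}$ are exactly $R$-modules in $\Sp^{BG}$ whose underlying module is perfect, i.e.\ dualisable in $\Mod_R$. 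So $X,Y$ give objects of $\calC^{hG}$, and
\[\map_{\calC^{hG}}(X,Y)\simeq \map_{\Mod(R)}(X,Y)^{hG},\]
because mapping spectra in a limit of categories are the limit of the mapping spectra.

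\textbf{Step 1: mapping spectra in the quotient.} In a Verdier quotient $\calC^{hG}/\mathcal{N}$, the mapping spectrum is the colimit of $\map(X',Y)$ over the filtered category of maps $X'\to X$ whose cofibre lies in $\mathcal{N}$. Dually, one can use the localising (Bousfield) cofibre description after passing to Ind-categories:
\[\map_{\calC^{tG}}(X,Y)\simeq \mathrm{cofib}\big(\map(X,\Gamma_{\mathcal N}Y)\to \map(X,Y)\big),\]
where $\Gamma_{\mathcal N}$ is the colocalisation onto $\Ind(\mathcal N)$ inside $\Ind(\calC^{hG})=\Mod_R(\Sp^{BG})$. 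So the second step is to identify $\Ind(\mathcal{N})$ with the localising subcategory generated by $R\otimes G_+$, i.e.\ by the induced modules $R\otimes G_+\otimes Z$. Since $X$ is dualisable, the colocalisation is $\Gamma Y = EG_+\otimes Y$. This holds because $EG_+\otimes(-)$ is the colocalisation onto the subcategory generated by free cells, and $\Ind$ of the thick subcategory generated by induced perfect modules is exactly that localising subcategory.

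\textbf{Step 2: identify the cofibre as a Tate construction.} With $\Gamma Y = EG_+\otimes Y$, we get
\[\map(X,EG_+\otimes Y)^{hG}\simeq (\underline{\map}(X,Y)\otimes EG_+)^{hG}\simeq \map_{\Mod(R)}(X,Y)_{hG}.\]
The first equivalence uses that $X$ is dualisable, so $\underline{\map}(X,-)$ commutes with $EG_+\otimes -$. The second uses the Adams isomorphism / norm identification. The cofibre of
\[\map(X,Y)_{hG}\to\map(X,Y)^{hG}\]
is then by definition $\map_{\Mod(R)}(X,Y)^{tG}$, with the map being the norm. Naturality in $X,Y$ is clear from the construction. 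For $X=Y=\1=R$, the equivalence respects composition, so it is an equivalence of endomorphism algebras. We then promote this to an $\E_\infty$-$R$-algebra equivalence using that $\calC^{hG}\to\calC^{tG}$ is symmetric monoidal, so $\mathrm{End}(\1)$ is lax monoidal in the category. The known lax symmetric monoidal structure on $(-)^{tG}$ makes $R^{hG}\to R^{tG}$ the universal map killing norms.

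\textbf{Main obstacle.} The key point is identifying the norm image $\mathcal N$ with the thick subcategory generated by induced objects, together with its Ind-completion being the $EG_+$-colocal objects. This is exactly where dualisability of $X$ (for compactness in $\Ind(\calC^{hG})$) is used. I would follow Krause's argument in \cite[Lm.4.2]{krause_equivariantpicard}, replacing $\Sp$ by $\Mod_R$ throughout.
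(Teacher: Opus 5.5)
Your outline is the route of Krause's argument, which the paper invokes: realise $\Perf(R)^{tG}$ as the Verdier quotient of $\Perf(R)^{hG}$ by the thick subcategory $\mathcal N$ generated by the induced objects $G\circledast Z$, then compute mapping spectra via a colocalisation. However, Steps 1 and 2 fail as written, because you identify $\Ind(\Perf(R)^{hG})$ with $\Mod_R(\Sp^{BG})$. This is false in general. The unit $R$ lies in $\Perf(R)^{hG}$, but $\map_{\Mod_R(\Sp^{BG})}(R,-)$ is homotopy fixed points of the underlying module. That functor does not commute with filtered colimits unless, for instance, $|G|$ is invertible. So dualisable objects are not compact in $\Mod_R(\Sp^{BG})$, and your remark that dualisability gives compactness is exactly what breaks; dualisability is only needed to have $X,Y\in\Perf(R)^{hG}$ at all.

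Inside $\Mod_R(\Sp^{BG})$ your Step 1 degenerates. The object $R\otimes G_+$ corepresents the conservative forgetful functor to $\Mod_R$, so the localising subcategory it generates is everything. Consistently with this, $EG_+\simeq S^0$ in Borel spectra, so $\Gamma Y=EG_+\otimes Y\simeq Y$ and your cofibre is zero. The ``Adams isomorphism'' $(\map(X,Y)\otimes EG_+)^{hG}\simeq\map(X,Y)_{hG}$ is a statement about genuine fixed points; in $\Sp^{BG}$ the left side is just $\map(X,Y)^{hG}$. So the one step that should turn fixed points into orbits is unjustified.

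To repair this, stay in $\Ind(\Perf(R)^{hG})$, where objects of $\Perf(R)^{hG}$ are compact by construction. Let $EG^{(n)}$ be the finite free skeleta of $EG$ and $\widetilde{EG^{(n)}}=\mathrm{cofib}(EG^{(n)}_+\to S^0)$. Each $EG^{(n)}_+\otimes Y$ lies in $\mathcal N$. The Ind-colimit $\mathrm{colim}_n EG^{(n)}_+\otimes Y\to Y$ is then the colocalisation $\Gamma Y$ onto $\Ind(\mathcal N)$. Indeed, its cofibre, the Ind-colimit of the $\widetilde{EG^{(n)}}\otimes Y$, is right orthogonal to every $G\circledast Z$: $\map(G\circledast Z,-)$ is the underlying $\map_{\Mod(R)}(Z,-)$, $Z$ is compact in $\Mod_R$, and $\widetilde{EG}$ is underlying contractible. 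Compactness of $X$ in the Ind-category, together with exactness, then gives
\[\map(X,\Gamma Y)\simeq\mathrm{colim}_n\bigl(EG^{(n)}_+\otimes\map_{\Mod(R)}(X,Y)\bigr)^{hG}\simeq\mathrm{colim}_n\bigl(EG^{(n)}_+\otimes\map_{\Mod(R)}(X,Y)\bigr)_{hG}\simeq\map_{\Mod(R)}(X,Y)_{hG}.\]
The middle equivalence is the norm, which is an equivalence on finite free objects. The induced map to $\map(X,Y)^{hG}$ is the norm map, so the cofibre is $\map_{\Mod(R)}(X,Y)^{tG}$. What produces the homotopy orbits is exactly this commutation of $\map(X,-)$ with the skeletal colimit, which is unavailable in $\Mod_R(\Sp^{BG})$.
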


\begin{proof}
    One can copy Krause's proof of \cite[Lm.4.2]{krause_equivariantpicard}, the only difference being that Krause assumes that $G$ acts trivially on $R$. In that sense, one need only replace occurrences of $R[G]$ with $G \circledast R$ of \cite[Not.6.8]{NPR2024}, both simply being the image of $R$ under the left adjoint to the forgetful functor $\Mod_A^{hG} \to \Mod_A$.
\end{proof}

\begin{proof}[Proof of \Cref{pr:generic_tate_vanishing}]
    Vanishing follows from condition 1 from the definition of Tate fixed points. Conditions 2 and 3\footnote{For the proof of condition 3, one could also use \Cref{pr:mod_and_perf_preserve_Galois} together with the 2-ring generalisation of \cite[Pr.6.3.3]{johnrognessdualisinggorups}.} use \Cref{lm:42_krause_generalised} below, that the endomorphism spectrum of the unit of this Tate category is given by
    \[\map_{\Perf(\Phi^K R)^{tW_H K}}(\1,\1) \simeq \Phi^K R^{tW_H K}.\]
    If condition 2 holds, then $\Phi^K R^{tW_H K}$ dies by computation, and if condition 3 holds, this spectrum dies by Tate vanishing of a faithful Galois extension (\cite[Pr.6.3.3]{johnrognessdualisinggorups}). In both cases, the vanishing of the endomorphism spectrum of the unit implies the vanishing of the category, finishing the proof.
\end{proof}

Combining \Cref{pr:generic_decomposition,pr:generic_tate_vanishing} immediately yields the following:

\begin{cor}\label{cor:generic_decomposition}
    If for each nontrivial subgroup $K\leq H$ in the derived defect base of $R$, at least one of the three conditions of \Cref{pr:generic_tate_vanishing} holds, then we have a pullback of 2-rings
    \[\begin{tikzcd}
        {\Perf_H(R)}\ar[r]\ar[d]    &   {\prod_{(K)\leq H} \Perf(\Phi^K R)^{hW_H C}}\ar[d]  \\
        {\Perf(R)^{hH}}\ar[r]   &   {\Perf(R)^{tH}.}
    \end{tikzcd}\]
\end{cor}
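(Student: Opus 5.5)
The statement is \Cref{cor:generic_decomposition}, and I would obtain it by chaining \Cref{pr:generic_tate_vanishing} into \Cref{pr:generic_decomposition}.

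The hypothesis of \Cref{pr:generic_decomposition} asks for vanishing of the 2-ring $\Perf(\Phi^K R)^{tW_H K}$ for every conjugacy class of nontrivial subgroups $K\leq H$ in the derived defect base of $R$. For each such $K$, our hypothesis supplies one of the three conditions of \Cref{pr:generic_tate_vanishing}, and that proposition gives exactly this vanishing.

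\begin{itemize}
\item Under condition 1 the vanishing is immediate.
\item Under conditions 2 and 3 it goes through \Cref{lm:42_krause_generalised}. That lemma identifies the endomorphism ring of the unit of the Tate 2-ring with $\Phi^K R^{tW_H K}$. This spectrum is killed either because $|W_H K|$ is a unit, or because Tate constructions vanish for faithful Galois extensions by \cite[Pr.6.3.3]{johnrognessdualisinggorups}.
\item Once the unit has zero endomorphisms, the 2-ring is zero.
\end{itemize}

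One point needs checking: the hypothesis must be invariant under conjugation, so that it makes sense on conjugacy classes. Conjugating $K$ by $h\in H$ gives equivalent geometric fixed points with compatible Weyl actions, so each of the three conditions is preserved.

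With all relevant Tate terms vanishing, I then run the induction in the proof of \Cref{pr:generic_decomposition}. Start with the family $\calF=\varnothing$ and adjoin conjugacy classes in order of increasing size, so that all proper subgroups of the next $K$ already lie in $\calF$, as \Cref{thm:npr_decompositions} requires.

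\begin{itemize}
\item The first KNPR square, for $K=e$, has corners $\Perf(R)^{hH}$ and $\Perf(R)^{tH}$ and stays a genuine pullback.
\item Every later square has vanishing lower-right corner, so it splits as a product $\Perf_H(R/\calF)\simeq \Perf(\Phi^K R)^{hW_HK}\times\Perf_H(R/\calF\cup K)$.
\item Subgroups outside the derived defect base have $\Phi^K R=0$, so they contribute trivial factors.
\end{itemize}

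The process terminates with $\Perf_H(R/\text{all})=0$, and assembling the squares gives the displayed pullback.

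The main obstacle is essentially bookkeeping. One must make sure the product over $(K)$ in the top-right corner is ordered compatibly with the inductive family so that each split factor lands there. One must also check that condition 3 is applied to the coherent Weyl action used in \Cref{thm:npr_decompositions}, i.e.\ the canonical $W_HK$-action of \Cref{df:canonical_weylaction}. The second point is only an issue when the corollary is later specialised via \Cref{ssec:action_comparison}; as stated here, the hypothesis is already phrased in terms of that action.
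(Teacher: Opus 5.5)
Your proposal is correct and matches the paper, which states the corollary as an immediate combination of \Cref{pr:generic_tate_vanishing}, giving vanishing of each Tate 2-ring $\Perf(\Phi^K R)^{tW_H K}$, with the inductive splitting of KNPR squares in \Cref{pr:generic_decomposition}. Your induction yields the product over nontrivial conjugacy classes only, with the $K=e$ term appearing as the lower row; that is the intended reading of the top-right corner.
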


Condition 2 of \Cref{pr:generic_tate_vanishing} is satisfied if the group order is invertible in $R$.

\begin{cor}\label{pr:rationality_decomposition}
    Let $H$ be a finite group with $|H| \in \pi_0^H R$ a unit. Then the map of 2-rings induced by geometric fixed points
    \[\Perf_H(R) \xrightarrow{\simeq} \prod_{(K)\leq H} \Perf(\Phi^H R)^{hW_H K},\]
    indexed over conjugacy classes of subgroups of $H$, is an equivalence. In particular, applying $\Ind(-)$ yields the equivalence in $\CAlg(\PrLst)$
    \[\Mod_H(R) \xrightarrow{\simeq} \prod_{(K) \leq H} \Mod(\Phi^H R)^{hW_H K}.\]
\end{cor}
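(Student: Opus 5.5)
The plan is to run the KNPR-decomposition of \Cref{thm:npr_decompositions} inductively over an enumeration of the conjugacy classes of subgroups of $H$ and show that every Tate gluing term vanishes, including the one for $K=e$. This is where the statement differs from \Cref{cor:generic_decomposition}: the bottom row $\Perf(R)^{hH}\to\Perf(R)^{tH}$ must collapse as well.

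First I enumerate the conjugacy classes $(K_1)=(e),(K_2),\dots,(K_r)=(H)$ so that the orders are nondecreasing. Set $\calF_0=\varnothing$ and let $\calF_i$ be the family generated by $K_1,\dots,K_i$. Then every proper subgroup of $K_{i+1}$ lies in $\calF_i$ while $K_{i+1}\notin\calF_i$, which is exactly the hypothesis of \Cref{thm:npr_decompositions}. Hence at each step we have a pullback of 2-rings
\[\Perf_H(R/\calF_i)\simeq \Perf(\Phi^{K_{i+1}}R)^{hW_HK_{i+1}}\times_{\Perf(\Phi^{K_{i+1}}R)^{tW_HK_{i+1}}}\Perf_H(R/\calF_{i+1}).\]
At the last step, $R/\calF_r=0$ because $\tildE\calF$ is contractible when $\calF$ contains all subgroups. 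So $\Perf_H(R/\calF_r)=0$, and the final stage reads $\Perf_H(R/\calF_{r-1})\simeq\Perf(\Phi^HR)^{hW_HH}$ with $W_HH$ trivial.

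The key step is the Tate vanishing, and I would use condition 2 of \Cref{pr:generic_tate_vanishing}. Since $|H|$ is a unit in $\pi_0^HR$, and $\pi_0^HR\to\pi_0^KR\to\pi_0\Phi^KR$ are ring maps (restriction, then the geometric fixed points map $R^K\to\Phi^KR$ of $\E_\infty$-rings), $|H|$ is also a unit in $\pi_0\Phi^KR$. As $|W_HK|$ divides $|H|$, it is a unit too. Then $\Phi^KR^{tW_HK}$ is an $|W_HK|$-torsion $\E_\infty$-ring in which $|W_HK|$ is invertible, so it vanishes. By \Cref{lm:42_krause_generalised} the unit of $\Perf(\Phi^KR)^{tW_HK}$ has zero endomorphisms, so this 2-ring vanishes. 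This argument is uniform in $K$ and in particular covers $K=e$, so each pullback above becomes a product. Inducting gives
\[\Perf_H(R)\simeq\prod_{(K)\le H}\Perf(\Phi^KR)^{hW_HK},\]
with the map given componentwise by $\Phi^K$. (I read the statement's $\Phi^HR$ as a typo for $\Phi^KR$.)

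The main obstacle I anticipate is bookkeeping rather than mathematics. One needs to check that the composite of the projections through the iterated pullbacks really is the geometric fixed point functor $\Phi^K$ on $\Perf_H(R)$. \Cref{thm:npr_decompositions} gives $\Phi^K$ on $\Perf_H(R/\calF_i)$, and $\Phi^K(R/\calF_i)\simeq\Phi^KR$ for $K\notin\calF_i$, so compatibility follows from $\Phi^K(-\otimes\tildE\calF_i)\simeq\Phi^K(-)$. The $\Mod$ statement then follows by applying $\Ind$, which is an equivalence $\Cat^\perf\simeq\PrLstomega$ and preserves finite products. One also uses that $\Ind(\Perf(\Phi^KR)^{hW})\simeq\Mod(\Phi^KR)^{hW}$; this holds because homotopy fixed points for finite groups of compactly generated categories along compact-preserving functors are again compactly generated, with compacts the fixed points of the compacts. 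This is the same argument as in \Cref{lm:compactobjects_pres_galois}.
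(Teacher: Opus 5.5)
Your argument for the equivalence on $\Perf$ is the paper's argument. The paper applies \Cref{cor:generic_decomposition} via condition~2 of \Cref{pr:generic_tate_vanishing}, then kills the remaining bottom Tate term $\Perf(R)^{tH}$ because $R^{tH}=0$. That is exactly your uniform treatment of $K=e$, written out through the KNPR induction. You are also right that $\Phi^HR$ should read $\Phi^KR$.

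\textbf{The gap is in the passage to $\Mod$.} The general principle you invoke is false: it is not true that finite-group homotopy fixed points of a compactly generated category, acted on by compact-preserving functors, are compactly generated with compact objects the fixed points of the compacts. Take the trivial action on $\Sp$ and $G$ nontrivial. The compact objects of $\Sp^{hG}=\Fun(BG,\Sp)$ form the thick subcategory generated by $\Sigma^\infty_+G$. But $\Sph$ with trivial action lies in $\Fun(BG,\Sp^\omega)$ and is not compact, since $\map(\Sph,-)=(-)^{hG}$ does not commute with colimits. In general the difference between $\Ind(\Perf(S)^{hW})$ and $\Mod(S)^{hW}$ is governed by the $\Cat^\perf$-Tate construction $\Perf(S)^{tW}$. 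These are the very gluing terms the KNPR-decomposition is about, so the identification cannot be a formality. Nor is it ``the same argument as \Cref{lm:compactobjects_pres_galois}''. That lemma \emph{assumes} $A\simeq B^{hG}$ is rigidly compactly generated, and that is precisely what has to be checked here.

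\textbf{The repair uses your hypothesis.}
\begin{enumerate}
\item Since $|W_HK|$ acts invertibly on every $\Phi^KR$-module, $(-)^{hW_HK}\simeq(-)_{hW_HK}$ on them. Hence the unit of $\Mod(\Phi^KR)^{hW_HK}$ is compact.
\item This category is generated by the induced objects $W_HK\circledast P$ with $P$ perfect, which are compact and dualisable. So compact and dualisable objects coincide.
\item By \cite[Pr.4.6.1.11]{ha} the dualisable objects are $\Perf(\Phi^KR)^{hW_HK}$, and therefore $\Ind(\Perf(\Phi^KR)^{hW_HK})\simeq\Mod(\Phi^KR)^{hW_HK}$.
\end{enumerate}

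A smaller point of the same kind: Tate constructions of spectra are not $|G|$-torsion in general; the paper's own $\KU_p^{tC_p}$ is rational. The correct reason for $\Phi^KR^{tW_HK}=0$ is that $\Phi^KR$ is a $\Sph[\tfrac{1}{|W_HK|}]$-module, and on such modules the norm map $(-)_{hW_HK}\to(-)^{hW_HK}$ is an equivalence.
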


This is a well-known result. If we invert all primes $p$, this is a consequence of the rational decompositions of equivariant spectra, see \cite[Thm.13.1]{david_magdalena_rationalmodels} for example; the resulting equivalence of upon taking $\E_\infty$-objects on both sides is also the degenerate case of \cite[Th.6.7]{algebraic_model_ucom_BDL}.

\begin{proof}
    Condition 2 of \Cref{pr:generic_tate_vanishing} is satisfied, so we have the pullback of \Cref{cor:generic_decomposition}. Moreover, the endomorphism spectrum $R^{tH}$ of $\Perf(R)^{tH}$ itself vanishes by \Cref{pr:generic_tate_vanishing}, which gives the desired product decomposition.
\end{proof}

\begin{remark}\label{rmk:excisionsquares}
    Each KNPR-decomposition induces a pullback in $\PrLst$ upon taking $\Ind$, and the induced functors
    \[\Ind(\Perf(\Phi^K R)^{hW_H K}) \to \Ind(\Perf(\Phi^K R)^{tW_H K})\]
    are localisations. Indeed, the original pullback appearing in \cite[Thm.7.11]{NPR2024} is for such Ind-objects, and the pullbacks of \Cref{thm:npr_decompositions} are obtained by applying $(-)^\omega$, which in this case agrees with dualisable objects. It is plainly stated in \cite[Thm.7.11]{NPR2024} that the above map is a localisation.
\end{remark}

\begin{remark}\label{rmk:localising_invariants}
Recall that a \emph{localising invariant} is a functor $E\colon \Cat^\perf \to \calC$ into a stable category $\calC$ that sends exact sequences, so diagrams that are simultaneously fibre and cofibre sequences, to fibre sequences in $\calC$. By \cite[Thm.18]{tamme_excision}, localising invariants satisfy a Meyer--Vietoris property with respect to \emph{excision squares}, so those diagrams
\begin{equation}\label{eq:excisionsquare}\begin{tikzcd}
    {A}\ar[r]\ar[d] &   {B}\ar[d]   \\
    {C}\ar[r]       &   {D}
\end{tikzcd}\end{equation}
in $\Cat^\perf$ such that the induced square in $\PrLst$ is a pullback after applying $\Ind$ and $\Ind(C) \to \Ind(D)$ is a localisation, meaning its right adjoint is fully faithful. In particular, by \Cref{rmk:excisionsquares}, we see that every pullback square in a KNPR-decomposition is an excision square. This implies that every pullback of 2-rings found in this section, which we will write as (\ref{eq:excisionsquare}) for a moment, induces a fibre sequence in $\calC$
\[E(A) \to E(B) \oplus E(C) \to E(D)\]
for any localising invariant $E$.
\end{remark}

%%%%%%%%%%%%%%%%%%%%%%%%%%%%%%%%%%%%%%%%%%%%%%%%%%%%%%%%%%%%%%%
\subsection{Decompositions of modules over tempered cohomology theories}\label{ssec:general_decompositions}
One of the motivating examples in writing this article was to use \Cref{main_algebraic} to give us more examples where condition 3 of \Cref{pr:generic_tate_vanishing} is satisfied. 

\subsubsection{The general case for a split group}
The following explicit statement was already given in the introduction.

\begin{cor}\label{pr:categorical_decomposition}
    Let $p,\ell$ be two primes such that $\ell| p-1$ and write $H=C_p \rtimes C_\ell$ via the faithful action $C_\ell \leq C_{p-1} \simeq \Aut(C_p)$. Suppose that base stack $\M$ is $0$-affine. Then there is a Cartesian diagram of 2-rings
    \[\begin{tikzcd}
    {\Perf_{H}(\Ga(\G))}\ar[r]\ar[d]    &   {\Perf(\Phi^{C_\ell}\Ga(\G)) \times \Perf(\Phi^{C_p}\Ga(\G)^{hC_\ell})}\ar[d]  \\
    {\Perf(\Ga(\M))^{hH}}\ar[r] &   {\Perf(\Ga(\M))^{tH}.}
\end{tikzcd}\]
\end{cor}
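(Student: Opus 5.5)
We apply \Cref{cor:generic_decomposition} to the underlying $H$-equivariant $\E_\infty$-ring $R$ of the global $\E_\infty$-ring $\Ga(\G)$, with $H=C_p\rtimes C_\ell$. Since $|H|=p\ell$ and $\ell\neq p$ (as $\ell\mid p-1$), the conjugacy classes of subgroups are $e$, $C_p$ (normal), the $p$ conjugate copies of $C_\ell$, and $H$ itself. We check the conditions of \Cref{pr:generic_tate_vanishing} for each nontrivial class; it is harmless to check all of them rather than only those in the derived defect base.

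For $K=H$, the Weyl group is trivial, so condition 1 holds. Moreover $H$ is nonabelian, so $\Phi^H\Ga(\G)=0$ by the first item of the remark after \Cref{main_algebraic}, and this factor drops out of the product. For $K=C_\ell$, the action of $C_\ell$ on $C_p$ is faithful, so no nontrivial element of $C_p$ normalises $C_\ell$. Hence $N_H C_\ell=C_\ell$ and $W_H C_\ell$ is trivial, so condition 1 holds and the factor is simply $\Perf(\Phi^{C_\ell}\Ga(\G))$. For $K=C_p$, we have $N_HC_p=H$ and $C_HC_p=C_p$, again by faithfulness of the action. Therefore $W_HC_p=W^\gl_HC_p=C_\ell$, and the homomorphism (\ref{eq:weyl_homomorphism}) is the inclusion $C_\ell\leq \Aut(C_p)$, so $C_p\leq H$ is Weyl faithful. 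Because $\M$ is $0$-affine, \Cref{thm:algebraic_intext} shows that the global Weyl action makes $\Phi^{C_p}\Ga(\G)^{hC_\ell}\to\Phi^{C_p}\Ga(\G)$ a faithful $C_\ell$-Galois extension.

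The main obstacle is that condition 3 concerns the \emph{canonical} $W_HC_p$-action used in \Cref{thm:npr_decompositions}, namely the coherent action of \Cref{df:canonical_weylaction} through $\Aut(BC_p)$, whereas \Cref{thm:algebraic_intext} concerns the global Weyl action. These must be identified. Since $H=C_p\rtimes C_\ell$ is split with quotient $C_\ell=W_HC_p$, \Cref{pr:no_action_conflicts_part1} shows that the canonical action factors as $W_HC_p\to\Aut(C_p)\xrightarrow{B}\Aut(BC_p)$. This is exactly the global Weyl action of \Cref{df:global_weyl_action}, so condition 3 holds. Alternatively, for torus-type $\G$ one could invoke \Cref{pr:no_action_conflicts_part2_GM}, but the split argument works for general $\G$.

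\Cref{cor:generic_decomposition} now gives a pullback whose upper-right corner is $\Perf(\Phi^{C_\ell}\Ga(\G))\times\Perf(\Phi^{C_p}\Ga(\G))^{hC_\ell}$. The $2$-ring statement of \Cref{thm:algebraic_intext} provides a $C_\ell$-Galois extension $\Perf(\Phi^{C_p}\Ga(\G))^{hC_\ell}\to\Perf(\Phi^{C_p}\Ga(\G))$. To identify its source with $\Perf(\Phi^{C_p}\Ga(\G)^{hC_\ell})$, we apply Galois descent to the $\E_\infty$-Galois extension: \Cref{pr:mod_and_perf_preserve_Galois} (equivalently \cite[Thm.9.4]{mathew_Galois}) gives $\Perf(\Phi^{C_p}\Ga(\G)^{hC_\ell})\simeq\Perf(\Phi^{C_p}\Ga(\G))^{hC_\ell}$. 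Finally, the lower row involves $\Phi^eR=\Ga(\G)^e=\Ga(\G(\ast))=\Ga(\M)$. Its $H$-action is trivial because $R$ is restricted from a global spectrum, and the trivial subgroup acts trivially on $B e=\ast$. This yields the stated Cartesian square.
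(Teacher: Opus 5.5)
Your proposal is correct and follows the paper's proof essentially step for step. You check that $W_H C_\ell$ is trivial and that $C_p\leq H$ is Weyl faithful with $W_H C_p = W_H^\gl C_p\simeq C_\ell$. You then use the splitting of $H$ via \Cref{pr:no_action_conflicts_part1} to identify the canonical Weyl action with the global one, so that \Cref{main_algebraic} supplies condition 3, apply \Cref{cor:generic_decomposition}, and finish with Galois descent. Your explicit normaliser and centraliser computations, and your justification that the $H$-action on the lower row is trivial, only fill in details the paper leaves to the reader.
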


\begin{proof}
    The only conjugacy classes of nontrivial conjugacy classes of subgroups of $H$ in the derived defect base of $\Ga(\G)$ are $C_p$ and $C_\ell$; by \cite[Pr.5.4.4]{temperedglobal}, we see that $\Phi^{H} \Ga(\G) = 0$. Note that $W_H C_\ell$ is trivial, so condition 1 of \Cref{pr:generic_decomposition} is satisfied. For $C_p \leq H$, we first check by hand that this inclusion is Weyl faithful. Next, we use that our group $H$ is split to apply \Cref{pr:no_action_conflicts_part1}, which shows that the $W_H C_p$-action on $\Phi^{C_p} \Ga(\G)$ factors through the global Weyl group action. The isomorphisms
    \[W_H C_p = H / C_p \xrightarrow{=} H / C_p = W_H^\gl C_p \simeq C_\ell\]
    together with \Cref{main_algebraic} show us that this action is a faithful $C_\ell$-Galois action, giving condition 3 of \Cref{pr:generic_decomposition}. Having verified these conditions for all subgroups, we see that \Cref{cor:generic_decomposition} applies. Moreover, the above faithful $C_\ell$-Galois extension together with Galois descent identifies
    \[\Perf(\Phi^{C_p} \Ga(\G)^{hC_\ell}) \xrightarrow{\simeq} \Perf(\Phi^{C_p} \Ga(\G))^{hC_\ell},\]
    which finishes the proof.
\end{proof}

This seems to be one of the only large families of groups that admit such a simple decomposition for general $\G$. One could continue this pattern of simplifying the KNPR-decompositions. For instance, by writing $\Perf_{Q_8}(\Ga(\G))$ using two pullbacks of nonequivariant 2-rings, or $\Perf_{\SL_2(\F_3)}(\Ga(\G))$ using three pullbacks; we leave these and other examples in this direction to the reader. This suggests a finite-step process for computing invariants of these categories of perfect equivariant modules using long exact sequences, or a spectral sequence with a horizontal vanishing line on the $E_2$-page.

\subsubsection{Equivariant topological \texorpdfstring{$K$}{K}-theories}
These KNPR-decompositions simplify even more dramatically when more Tate constructions vanish. For instance, when $A$ is $\gKU$ or $\gKO$; recall \Cref{sssec:KU,sssec:KO}, respectively. Keep in mind that the derived defect base of these equivariant $\E_\infty$-rings is the collection of all cyclic groups and that the canonical $W_H K$-action on geometric fixed points always factors through $W_H^\gl K$ by \Cref{pr:no_action_conflicts_part2_GM}.

To abbreviate the statements below, recall that the first KNPR-decomposition (\Cref{thm:npr_decompositions}) always takes the form of the pullback of 2-rings
\begin{equation}\label{eq:help_to_translate}\begin{tikzcd}
    {\Perf_H(R)}\ar[r]\ar[d]    &   {\Perf_H(R/e)}\ar[d]    \\
    {\Perf(R)^{hH}}\ar[r]       &   {\Perf(R)^{tH},}
\end{tikzcd}\end{equation}
where $R/e$ is just our notation for $R \otimes \widetilde{E}H$. To recover similar decompositions to \Cref{pr:categorical_decomposition}, it then suffices to show that the functor
\[\Perf_H(R/e) \to \prod_{e\neq (K) \leq H} \Perf(\Phi^K R)^{hW_H K}\]
induced by the geometric fixed points functors is an equivalence. Let us see some more examples now.

Our first observation uses only the rationality of $\Phi^K \gKU$ and $\Phi^K \gKO$; no Galois extensions from \Cref{main_algebraic} are necessary. This starts to paint the picture.

\begin{prop}\label{pr:KU_decomp_abelian}
    For a prime number $p$, a finite $p$-group $H$, and $A=\gKU$ or $\gKO$, geometric fixed points induce an equivalence of 2-rings
    \[\Perf_H(A/e) \xrightarrow{\simeq} \prod_{e\neq (K)\leq H} \Perf(\Phi^{K}A)^{hW_H K}.\]
    If $H$ is abelian, the $W_H K$-actions are all trivial.
\end{prop}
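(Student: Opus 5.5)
We run the inductive KNPR-decomposition of \Cref{thm:npr_decompositions} starting from the family $\calF = \{e\}$ rather than $\varnothing$. The module category $\Perf_H(A/e)$ is $\Perf_H(A/\calF)$ for $\calF=\{e\}$ (closed under subgroups and conjugacy). We enlarge $\calF$ one conjugacy class at a time, always adding a minimal class $(K)$ not yet in $\calF$. Each step gives a pullback
\[\Perf_H(A/\calF) \to \Perf(\Phi^K A)^{hW_H K}\times_{\Perf(\Phi^K A)^{tW_H K}} \Perf_H(A/\calF\cup K).\]
Once $\calF$ contains every subgroup, $\Perf_H(A/\calF)$ vanishes, since $\tildE\calF$ is then contractible. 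It therefore suffices to show that every Tate term $\Perf(\Phi^K A)^{tW_H K}$ vanishes for $K\neq e$. Each pullback then becomes a product, and iterating yields the claimed equivalence induced by the geometric fixed point functors, exactly as in the proof of \Cref{pr:generic_decomposition}.

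The Tate vanishing splits into two cases.

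\emph{Noncyclic $K$.} Here $\Phi^K A=0$: for $\gKU$ by (\ref{eq:geoFP_KU}), and for $\gKO$ by the base change (\ref{eq:base_change_for_geoFPKO}) along the faithful extension $\KO\to\KU$. So $\Perf(\Phi^KA)$ is zero and so is its Tate construction.

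\emph{Nontrivial cyclic $K=C_{p^k}$.} Here $\Phi^K\gKU\simeq \KU[\tfrac1{p^k},\zeta_{p^k}]$ and $\Phi^K\gKO\simeq \KU[\tfrac1{p^k},\zeta+\bar\zeta]$ (or $\KO[\tfrac12]$ when $K=C_2$). In all cases $p$ is a unit in $\pi_0\Phi^K A$. Since $H$ is a $p$-group, $|W_H K|$ is a power of $p$ and hence also a unit. Condition 2 of \Cref{pr:generic_tate_vanishing} therefore applies, and $\Perf(\Phi^KA)^{tW_HK}=0$. This is the "rationality" remark in the text: no Galois input from \Cref{main_algebraic} is needed.

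One subtlety is ordering the induction so that each added class $K$ has all proper subgroups already in $\calF$. Choosing classes in order of increasing size handles this. It is routine, and it also shows the relevant families stay closed under conjugation.

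For abelian $H$, the claim is that the $W_HK$-actions are trivial. For noncyclic $K$ this is vacuous. For cyclic $K$, abelianness gives $N_HK=C_HK=H$, so $W_H^\gl K$ is trivial. By \Cref{pr:no_action_conflicts_part2_GM}, which applies to $\gKU$ and $\gKO$ since their $\P$-divisible groups are fpqc locally $\mu_{\P^\infty}$, the canonical $W_HK$-action factors through $W_H^\gl K$. It is therefore trivial, as a coherent action and not merely on homotopy.

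The main point needing care is this last one. Without \Cref{pr:no_action_conflicts_part2_GM}, the canonical action goes through $\Aut(BK)$ and need not be trivial even when $W_H^\gl K=1$. The proof should invoke that proposition explicitly, rather than argue only on $\pi_0$.
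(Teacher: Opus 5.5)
Your proposal is correct and is essentially the paper's argument. The paper simply cites \Cref{cor:generic_decomposition} with condition 2 of \Cref{pr:generic_tate_vanishing}: a nontrivial cyclic $K$ is a $p$-group, so $p$ and hence the $p$-power $|W_HK|$ are units in $\pi_0\Phi^K A$, while $\Phi^KA=0$ for noncyclic $K$; this is exactly the induction on families that you spell out. As you do, the paper deduces the triviality of the actions for abelian $H$ from the vanishing of $W_H^\gl K$ together with the factorisation of \Cref{pr:no_action_conflicts_part2_GM}.
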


\begin{proof}
    This is an immediate application of \Cref{cor:generic_decomposition} using condition 2 of \Cref{pr:generic_tate_vanishing}, as $|K| \in \pi_0 (\Phi^K A)^\times$ for each cyclic subgroup $K\leq H$. The triviality of the $H$-actions comes from the fact that these Weyl group actions factor through the global Weyl group $W_H^\gl K$, which always vanishes if $H$ is abelian.
    %The KNPR-decomposition associated with $\calF = \varnothing$ and $K=e$ is simply the pullback of 2-rings
    %\[\begin{tikzcd}
    %    {\Perf_{H}(A)}\ar[r]\ar[d]    &   {\Perf_{H}(A/e)}\ar[d]    \\
    %    {\Perf(A)^{hH}}\ar[r]         &   {\Perf(A)^{tH}.}
    %\end{tikzcd}\]
    %We know that $\Phi^K A$ vanishes for noncyclic groups, so to finish our proof, it suffices to show inductively that
    %\begin{equation}\label{eq:splitting_abelian_p-group}\Perf_{H}(A/\calF) \simeq \Perf_{H}(A/\calF\cup K) \times \Perf(\Phi^{K} A)^{hW_HC}\end{equation}
    %for subgroups $K\leq H$ not in $\calF$, but whose subgroups all lie in $\calF$. This is clear, as the KNPR-decomposition states that this holds if and only if the gluing term $\Perf(\Phi^{K} A)^{tW_HK}$ vanishes. Indeed, by \Cref{lm:42_krause_generalised} we compute the endomorphism spectrum of the unit of this Tate construction as
    %\[\map_{\Perf(\Phi^{K} A)^{tW_HK}}(\1,\1) \simeq \Phi^K A^{tW_H K} = 0,\]
    %vanishing coming from the fact that the order of $W_H K$ is invertible in $\Phi^K A$ by assumption. This yields (\ref{eq:splitting_abelian_p-group}) by induction, and hence the desired claim.
\end{proof}

%\begin{prop}
%    For a prime number $p$, there is a pullback of 2-rings
%    \[\begin{tikzcd}
%        {\Perf_{\He_p}(A)}\ar[r]\ar[d]    &   {\Perf(A[\tfrac{1}{p}])^{hC_p\times C_p} \times \prod_{p+1} \Perf(A[\tfrac{1}{2}])^{hC_p}}\ar[d]    \\
%        {\Perf(A)^{h\He_p}}\ar[r]         &   {\Perf(A)^{t\He_p}.}
%    \end{tikzcd}\]
%\end{prop}

%\begin{proof}
%    All cyclic subgroups of $\He_p$ are of order $p$, and there are $p+2$ conjugacy classes of such subgroups. All but one of these conjugacy classes has Weyl group $C_p$ with the trivial action, and the one exception is the center, hence has Weyl group $\He_p/C_p = C_p\times C_p$ acting trivially. In all cases, the gluing data in the associated KNPR-decomposition vanishes, and we obtain the desired pullback.
%\end{proof}

These decompositions can further simplify using condition 3 of \Cref{pr:generic_decomposition}, so if the $W_H K$-action on $\Phi^K A$ is {Galois}. This happens for the quaternion group $Q_8$, for example.

\begin{prop}\label{pr:KU_decomp_Q8}
    For $A=\gKU$ or $\gKO$, geometric fixed points induce an equivalence of 2-rings
    \[\Perf_{Q_8}(A/e) \xrightarrow{\simeq} {\Perf(A[\tfrac{1}{2}])^{hV} \times \prod_{x=i,j,k} \Perf(A[\tfrac{1}{2}])},\]
    where the $V$-actions are trivial.
\end{prop}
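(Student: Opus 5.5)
We apply \Cref{cor:generic_decomposition}, following the pattern of \Cref{pr:KU_decomp_abelian}, and check one of the conditions of \Cref{pr:generic_tate_vanishing} for each nontrivial conjugacy class of cyclic subgroups of $Q_8$. Noncyclic subgroups can be ignored because the derived defect base of $A$ consists of the cyclic groups. We therefore begin by listing the nontrivial cyclic subgroups of $Q_8$ up to conjugacy: the centre $C_2=\{\pm1\}$, and the three subgroups $\langle i\rangle,\langle j\rangle,\langle k\rangle\cong C_4$. All subgroups of $Q_8$ are normal, so each class has exactly one member. The Weyl groups are $W_{Q_8}C_2=Q_8/C_2=V$ and $W_{Q_8}\langle x\rangle=C_2$ for $x=i,j,k$.

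For the centre $C_2$, condition 2 of \Cref{pr:generic_tate_vanishing} holds, since $2$ is a unit in $\Phi^{C_2}A$. Indeed, $\Phi^{C_2}\gKU\simeq\KU[\tfrac12]$ and $\Phi^{C_2}\gKO\simeq\KO[\tfrac12]$ by the computations in \Cref{sssec:KU,sssec:KO}; we interpret $A[\tfrac12]$ in the statement accordingly. The $V$-action on $\Phi^{C_2}A$ is trivial for the following reason. By \Cref{pr:no_action_conflicts_part2_GM}, the canonical $W_{Q_8}C_2$-action factors through the global Weyl group $W^\gl_{Q_8}C_2\subseteq\Aut(C_2)=1$. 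This factorisation happens in $\CAlg_{\Ga(\M)}$, so it also gives a trivial action on $\Perf(\Phi^{C_2}A)$. This produces the factor $\Perf(A[\tfrac12])^{hV}$ with trivial action.

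For $K=\langle x\rangle\cong C_4$, we have $N_{Q_8}K=Q_8$ and $C_{Q_8}K=K$. Hence $W^\gl_{Q_8}K=Q_8/K=C_2$, and conjugation by $j$ (for $x=i$) acts by inversion. So $K\leq Q_8$ is Weyl faithful, as already noted in \Cref{ex:intro_KU_two}. By \Cref{pr:no_action_conflicts_part2_GM}, the canonical action $W_{Q_8}K=C_2$ coincides with the global Weyl action. Then \Cref{main_algebraic} shows that $\Phi^K A^{hC_2}\to\Phi^K A$ is a faithful $C_2$-Galois extension, which is condition 3 of \Cref{pr:generic_tate_vanishing}. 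Galois descent, exactly as in the proof of \Cref{pr:categorical_decomposition}, identifies $\Perf(\Phi^KA)^{hC_2}\simeq\Perf(\Phi^KA^{hC_2})$.

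It remains to compute $\Phi^KA^{hC_2}$. The group $\Aut(C_4)=C_2$ is the whole automorphism group, so $\Phi^{C_4}A^{h\Aut(C_4)}\simeq\Ga(\Sub(\dual{C_4},\G))\simeq A[\tfrac12]$. This uses (\ref{eq:geometric_model_for_fixedpoints}) together with \Cref{ex:subforKO}, which gives $\Sub(\dual{C_4},\G)\simeq\M\times\Spec\Sph[\tfrac12]$; the base $\M$ is $0$-affine in both cases. Each $x=i,j,k$ therefore contributes a factor $\Perf(A[\tfrac12])$. Feeding these three verifications into \Cref{cor:generic_decomposition}, in the form (\ref{eq:help_to_translate}), yields the claimed equivalence. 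The main point requiring care is the identification of the canonical $W_{Q_8}K$-actions with the global Weyl actions, since $Q_8\to Q_8/K$ does not split and \Cref{pr:no_action_conflicts_part1} does not apply. This is exactly where \Cref{pr:no_action_conflicts_part2_GM} is needed, using that $\G$ is fpqc locally $\mu_{\P^\infty}$ for both $\gKU$ and $\gKO$ by \Cref{pr:universalorientedtorus}.
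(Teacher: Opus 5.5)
Your proof is correct and follows essentially the paper's route. You reduce via \Cref{cor:generic_decomposition} to the nontrivial cyclic subgroups of $Q_8$, then use the faithful $C_2$-Galois extension $(\Phi^{C_4}A)^{hC_2}\to\Phi^{C_4}A$ of \Cref{main_algebraic} and Galois descent to identify $\Perf(\Phi^{C_4}A)^{hC_2}\simeq\Perf(A[\tfrac12])$. The one difference is in the Tate vanishing for the three $C_4$'s: the paper already has it from condition 2 via \Cref{pr:KU_decomp_abelian}, since $2$ is a unit in $\Phi^KA$ for every nontrivial cyclic $2$-group $K$, and uses the Galois extension only for the descent step, whereas you invoke condition 3; both work.
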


\begin{proof}
    Courtesy of \Cref{pr:KU_decomp_abelian}, we only have to identify $\Perf(\Phi^{C_4}A)^{hC_2} \simeq \Perf(A[\tfrac{1}{2}])$. By \Cref{main_algebraic}, we see that $(\Phi^{C_4} A)^{hC_2} \to \Phi^{C_4} A$ is a faithful $C_2$-Galois extension, and we compute the domain as $A[\tfrac{1}{2}]$ for both $A=\gKU$ and $\gKO$.
\end{proof}

A similar result holds for $D_8$ with the same proof idea.

\begin{prop}\label{pr:KU_decomp_D8}
    For $A=\gKU$ or $\gKO$, geometric fixed points induce an equivalence of 2-rings
    \[\Perf_{D_8}(A/e) \xrightarrow{\simeq} {\Perf(A[\tfrac{1}{2}])^{hV} \times \left( \Perf(A[\tfrac{1}{2}])^{hC_2} \right)^2 \times \Perf(A[\tfrac{1}{2}])}\]
    where the $C_2$- and $V$-actions are trivial.
\end{prop}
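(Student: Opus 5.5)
Since $A$ is $\gKU$ or $\gKO$, its derived defect base consists of the cyclic subgroups. By \Cref{pr:no_action_conflicts_part2_GM}, each canonical Weyl action $W_{D_8}K$ on $\Phi^K A$ factors through the global Weyl group $W^\gl_{D_8}K$. Write $D_8=\langle r,s \mid r^4=s^2=1,\ srs=r^{-1}\rangle$. The nontrivial cyclic subgroups up to conjugacy are:
\begin{itemize}
    \item the centre $Z=\langle r^2\rangle$;
    \item the two classes of noncentral reflection subgroups $\langle s\rangle$ and $\langle rs\rangle$;
    \item the rotation subgroup $C_4=\langle r\rangle$.
\end{itemize}
I will compute each Weyl group and check that it satisfies a condition of \Cref{pr:generic_tate_vanishing}. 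Then \Cref{cor:generic_decomposition}, in the form (\ref{eq:help_to_translate}), gives that the geometric fixed point functor $\Perf_{D_8}(A/e)\to\prod_{e\neq(K)}\Perf(\Phi^K A)^{hW_{D_8}K}$ is an equivalence. Alternatively, since $D_8$ is a $2$-group, \Cref{pr:KU_decomp_abelian} already gives this equivalence, and only the identification of the individual factors remains.

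The computations are as follows.
\begin{itemize}
    \item For $Z$: $N_{D_8}Z=D_8$ and $C_{D_8}Z=D_8$, so $W_{D_8}Z=D_8/Z\cong V$ and $W^\gl_{D_8}Z=1$. Hence the $V$-action on $\Phi^{C_2}A\simeq A[\tfrac12]$ is trivial. The identification $\Phi^{C_2}\gKO\simeq \KO[\tfrac12]$ follows from the $\gKO$ examples.
    \item For $\langle s\rangle$: $N_{D_8}\langle s\rangle=\langle r^2,s\rangle\cong V$, so $W_{D_8}\langle s\rangle\cong C_2$, while $W^\gl=1$ because $\Aut(C_2)=1$. The action is therefore trivial, giving a factor $\Perf(A[\tfrac12])^{hC_2}$. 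The same holds for $\langle rs\rangle$, which accounts for the squared factor.
    \item For $C_4$: it is normal with $W_{D_8}C_4=D_8/C_4\cong C_2$, and $s$ acts by inversion. So $C_4\le D_8$ is Weyl faithful and $W^\gl=C_2$, as noted in \Cref{ex:intro_KU_two}. The split structure $D_8=C_4\rtimes C_2$, via \Cref{pr:no_action_conflicts_part1} or \Cref{pr:no_action_conflicts_part2_GM}, identifies the canonical action with the global Weyl one. By \Cref{main_algebraic}, $(\Phi^{C_4}A)^{hC_2}\to\Phi^{C_4}A$ is then a faithful $C_2$-Galois extension. Galois descent, as in \Cref{pr:mod_and_perf_preserve_Galois}, gives $\Perf(\Phi^{C_4}A)^{hC_2}\simeq\Perf((\Phi^{C_4}A)^{hC_2})$.
\end{itemize}

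The main remaining step is to compute $(\Phi^{C_4}A)^{hC_2}\simeq A[\tfrac12]$. The plan is to use (\ref{eq:geometric_model_for_fixedpoints}), which identifies this ring with $\Ga(\Sub(\dual{C_4},\G))$. By \Cref{ex:subforKO}, this is $\Ga(\M\times\Spec\Sph[\tfrac12])=\Ga(\M)[\tfrac12]$, namely $\KU[\tfrac12]$ for $\gKU$ and $\KO[\tfrac12]$ for $\gKO$. For $\gKU$ this recovers the cyclotomic extension $\KU[\tfrac12]\to\KU[\tfrac12,i]$. For $\gKO$ it is the twisted extension $\KO[\tfrac12]\to\KU[\tfrac12]$ described in \Cref{sssec:KO}. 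The one point needing care is that $\Aut(\dual{C_4})=C_2$ coincides with the global Weyl group, so no further restriction of the Galois group is required.
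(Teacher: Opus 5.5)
Your proposal is correct and follows the paper's argument. The product decomposition comes from \Cref{pr:KU_decomp_abelian}, i.e.\ condition 2 of \Cref{pr:generic_tate_vanishing}, and the trivial actions on the factors for the centre and the two reflection classes come from their global Weyl groups vanishing; you carry out explicitly the Weyl group bookkeeping that the paper leaves to the reader. The $C_4$-factor is identified as the paper does, via the faithful $C_2$-Galois extension of \Cref{main_algebraic}, Galois descent, and $(\Phi^{C_4}A)^{hC_2}\simeq\Ga(\Sub(\dual{C_4},\G))\simeq A[\tfrac{1}{2}]$ from \Cref{ex:subforKO}.
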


\begin{remark}
    Similar decompositions of $\Perf_H(A)$ into a pullback of similar nonequivariant 2-rings fail for groups of composite order. This already happens for $C_6$ and the dicyclic group of order $12$.
\end{remark}

One can continue to apply \Cref{cor:generic_decomposition} to $R=\gKU$ and $\gKO$ for many nonabelian groups $H$ too. The game is simply checking that one of the three conditions of \Cref{pr:generic_tate_vanishing} holds for all conjugacy classes of cyclic subgroups $H$. In the third case, one needs to check Weyl faithfulness too, which can be done by hand in what follows. The statements below are all proved using the same techniques we have seen above, so we only give the cyclic subgroup lattice up to conjugacy with Weyl group action, see \Cref{table:1}; except for the second-to-last example \Cref{pr:KU_decomp_A6}, which does not quite fit into \Cref{cor:generic_decomposition}, so we give a proof. We remind the reader of the generic pullback of 2-rings (\ref{eq:help_to_translate}) to help contextualise these results.

First, we consider alternating and symmetric groups.\footnote{Other interesting examples that we will not focus on here are subfamilies of the \emph{Frobenius groups} $F_{p^r} = \F_{p^r} \rtimes \F_{p^r}^\times$ for a prime $p$ and $r\geq 1$, as well as \emph{Heisenberg groups} $He_p$ for odd primes p which are the $p$-Sylow subgroups of $\GL_3(\F_p)$.}

\begin{table}
\begin{center}\begin{tabular}{|c|c|c|c|c|c|c|c|}
\hline
$H \diagdown (C)$   & $C_2$         & $C_2$         & $C_3$         & $C_3$ & $C_4$         & $C_5$         & $C_7$         \\ \hline
$A_4$               & $C_2$ (tr)    &               & $C_1$ (tr)    &       &               &               &               \\ 
$S_4$               & $V$ (tr)      & $C_2$ (tr)    & $C_2$ (Gal)   &       & $C_2$ (Gal)   &               &               \\ 
$A_5$               & $C_2$ (tr)    &               & $C_2$ (Gal)   &       &               & $C_2$ (Gal)   &               \\ 
$A_6$               & $V$ (tr)      &               & $S_3$         & $S_3$ &               & $C_2$ (Gal)   &               \\ 
$\GL_3(\F_2)$       & $V$ (tr)      &               & $C_2$ (Gal)   &       &               &               & $C_3$ (Gal)   \\ \hline
\end{tabular}\end{center}
\caption{Table with rows indexed by groups $H$ and columns by conjugacy classes of cyclic subgroups $(C)$ of $H$, and entries the Weyl group $W_H C$ together with (tr) or (Gal) indicating a trivial or Galois $W_H C$-action on $\Phi^C A$, respectively.}
\label{table:1}
\end{table}

\begin{prop}\label{pr:KU_decomp_A4}
    For $A=\gKU$ or $\gKO$, geometric fixed points induce an equivalence of 2-rings
    \[\Perf_{A_4}(A/e) \xrightarrow{\simeq} {\Perf(A[\tfrac{1}{2}])^{hC_2} \times \Perf(\Phi^{C_3} A)},\]
    where the $C_2$-actions are trivial.
\end{prop}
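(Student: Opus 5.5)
We plan to run the same argument as for \Cref{pr:KU_decomp_abelian}, now with $H=A_4$. The first step is to list the relevant subgroups. The derived defect base of $\gKU$ and $\gKO$ consists of the cyclic subgroups, and $\Phi^K A$ vanishes for noncyclic $K$. Up to conjugacy, the nontrivial cyclic subgroups of $A_4$ are a single class of $C_2$ (generated by a double transposition) and a single class of $C_3$. Both $3$-cycle classes of $A_4$ generate conjugate subgroups, since the inverse of a $3$-cycle lies in the other class.

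Next we compute the Weyl groups. For $C_2=\langle(12)(34)\rangle$ the normaliser is $V$, because $V$ is abelian and the $3$-cycles permute the three involutions of $V$ transitively, so none of them normalises $C_2$. Hence $W_{A_4}C_2=V/C_2\simeq C_2$, as recorded in \Cref{table:1}. For $C_3=\langle(123)\rangle$ the normaliser is $C_3$ itself, because $A_4$ has no subgroup of order $6$. Hence $W_{A_4}C_3$ is trivial.

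We then check the conditions of \Cref{pr:generic_tate_vanishing}. For $C_3$, condition 1 holds. For $C_2$, condition 2 holds because $2$ is a unit in $\pi_0\Phi^{C_2}A$: we have $\Phi^{C_2}\gKU\simeq \KU[\tfrac12]$ and $\Phi^{C_2}\gKO\simeq\KO[\tfrac12]=A[\tfrac12]$ by the computations in \Cref{sssec:KU,sssec:KO}. In both cases $\Phi^{C_2}A\simeq A[\tfrac12]$, where $A[\tfrac12]$ denotes the underlying ring with $2$ inverted. Hence all gluing Tate terms vanish. \Cref{cor:generic_decomposition}, or more precisely the inductive splitting in the proof of \Cref{pr:generic_decomposition}, then shows that geometric fixed points give an equivalence
\[\Perf_{A_4}(A/e)\simeq \Perf(\Phi^{C_2}A)^{hC_2}\times\Perf(\Phi^{C_3}A).\]

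The remaining point, and the only one needing care, is that the $C_2$-action is trivial. By \Cref{pr:no_action_conflicts_part2_GM}, the canonical $W_{A_4}C_2$-action factors through the global Weyl group $W^\gl_{A_4}C_2\leq\Aut(C_2)=1$. It therefore factors through the trivial group as a map of group objects in spaces, so the induced action on $\Perf(A[\tfrac12])$ is trivial. Substituting $\Phi^{C_2}A\simeq A[\tfrac12]$ gives the statement.
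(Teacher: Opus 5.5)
Your proposal is correct and is essentially the paper's argument. The paper records the cyclic subgroup lattice of $A_4$ in \Cref{table:1}: $C_2$ with Weyl group $C_2$ acting trivially, and $C_3$ with trivial Weyl group. It then applies \Cref{cor:generic_decomposition} via conditions 1 and 2 of \Cref{pr:generic_tate_vanishing}, and deduces triviality of the action exactly as in \Cref{pr:KU_decomp_abelian}, from the factorisation through $W^\gl_{A_4}C_2\leq\Aut(C_2)=1$ given by \Cref{pr:no_action_conflicts_part2_GM}.
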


\iffalse
\begin{figure}
    \centering
    \[\begin{tikzcd}
	{C_2} & {C_3} \\
	& e
	\arrow["{C_2-\mathrm{trivial}}", from=1-1, to=1-1, loop, in=55, out=125, distance=10mm]
	\arrow[no head, from=1-1, to=2-2]
	\arrow["e", from=1-2, to=1-2, loop, in=55, out=125, distance=10mm]
	\arrow[no head, from=2-2, to=1-2]
\end{tikzcd}
\qquad
\begin{tikzcd}
	{C_4} \\
	{C_2} & {C_3} & {C_2}\\
	& e
	\arrow["{C_2-\mathrm{Galois}}", from=1-1, to=1-1, loop, in=145, out=215, distance=10mm]
	\arrow[no head, from=1-1, to=2-1]
	\arrow["{V-\mathrm{trivial}}", from=2-1, to=2-1, loop, in=145, out=215, distance=10mm]
	\arrow[no head, from=2-1, to=3-2]
	\arrow["{C_2-\mathrm{Galois}}", from=2-2, to=2-2, loop, in=55, out=125, distance=10mm]
	\arrow[no head, from=2-2, to=3-2]
	\arrow["{C_2-\mathrm{trivial}}", from=2-3, to=2-3, loop, in=55, out=125, distance=10mm]
	\arrow[no head, from=3-2, to=2-3]
\end{tikzcd}\]
    \caption{Cyclic subgroup lattices with Weyl group actions for $A_4$ (L) and $S_4$ (R).}
    \label{fig:a4s4}
\end{figure}
\fi

\begin{prop}\label{pr:KU_decomp_S4}
    For $A=\gKU$ or $\gKO$, geometric fixed points induce an equivalence of 2-rings
    \[\Perf_{S_4}(A/e) \xrightarrow{\simeq} {\Perf(A[\tfrac{1}{2}])^{hV} \times \Perf(A[\tfrac{1}{2}])^{hC_2} \times \Perf(A[\tfrac{1}{3}])  \times \Perf(A[\tfrac{1}{2}])},\]
    where the $C_2$- and $V$-actions are all trivial.
\end{prop}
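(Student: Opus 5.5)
The plan is to apply \Cref{cor:generic_decomposition} in its relative form: it suffices to check that for every nontrivial conjugacy class of cyclic subgroups $C\leq S_4$ (the derived defect base of $\gKU$ and $\gKO$ consists of the cyclic groups), one of the three conditions of \Cref{pr:generic_tate_vanishing} holds. The induction of \Cref{pr:generic_decomposition} then splits $\Perf_{S_4}(A/e)$ as the product of the $\Perf(\Phi^C A)^{hW_{S_4}C}$. Finally, each factor is identified with the claimed nonequivariant category.

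The cyclic subgroups of $S_4$ up to conjugacy are the following.
\begin{itemize}
\item $C_2=\langle(12)(34)\rangle$. Here $N=D_8$, so $W=D_8/C_2\cong V$, and the centraliser is all of $D_8$, so $W^\gl$ is trivial.
\item $C_2'=\langle(12)\rangle$. Here $N=C_2\times C_2$, so $W\cong C_2$ and $W^\gl$ is trivial.
\item $C_3$. Here $N=S_3$, so $W\cong C_2$, and $W^\gl\cong C_2$ acts by inversion, so $C_3\leq S_4$ is Weyl faithful.
\item $C_4$. Here $N=D_8$, so $W\cong C_2$, and $W^\gl\cong C_2$ acts by inversion, which is again Weyl faithful.
\end{itemize}
By \Cref{pr:no_action_conflicts_part2_GM}, the canonical $W_{S_4}C$-action on $\Phi^C A$ factors through $W^\gl_{S_4}C$. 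Hence the actions for the two classes of $C_2$ are trivial, and the Tate terms vanish by condition~2, since $2$ is a unit in $\Phi^{C_2}A=A[\tfrac12]$ (for $\gKO$, also $\KO[\tfrac12]$). For $C_3$ and $C_4$ the actions are precisely the global Weyl actions, and \Cref{main_algebraic} shows they are faithful $C_2$-Galois extensions, giving condition~3.

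It remains to identify the factors. For the two $C_2$ classes we obtain $\Perf(A[\tfrac12])^{hV}$ and $\Perf(A[\tfrac12])^{hC_2}$ with trivial actions, using $\Phi^{C_2}\gKU=\KU[\tfrac12]$ and $\Phi^{C_2}\gKO\simeq\KO[\tfrac12]$ from \Cref{sssec:KO}. For $C_3$ and $C_4$, Galois descent (\Cref{pr:mod_and_perf_preserve_Galois}) gives $\Perf(\Phi^C A)^{hC_2}\simeq\Perf((\Phi^C A)^{hC_2})$. The restriction of the $\Aut(C_n)$-Galois extension to the subgroup $\{\pm1\}=\Aut(C_n)$ for $n=3,4$ has fixed points $(\Phi^{C_n}A)^{h\Aut(C_n)}\simeq A[\tfrac1n]$, using \Cref{ex:subforKO} together with \eqref{eq:geometric_model_for_fixedpoints}. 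This gives $A[\tfrac13]$ and $A[\tfrac12]$ respectively, matching the last two factors; $\gKO$ is handled in the same way via the $\KO$ computation $\Phi^{C_n}\gKO^{h\Aut(C_n)}\simeq\KO[\tfrac1n]$.

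I expect the main obstacle to be the bookkeeping of Weyl groups: checking that the classical $W_{S_4}C$-actions really factor through $W^\gl$. For $C_4\leq S_4$, the map $S_4\to W$ is not split over $C_4$, so \Cref{pr:no_action_conflicts_part1} does not apply. It is \Cref{pr:no_action_conflicts_part2_GM} that makes the argument go through, and I would rely on it there rather than on splitting.
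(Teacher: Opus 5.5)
Your proposal is correct and follows essentially the paper's route. The paper's proof applies \Cref{cor:generic_decomposition} using the cyclic-subgroup data of \Cref{table:1}: trivial Weyl actions with $2$ invertible for the two classes of $C_2$, and Galois $C_2$-actions for $C_3$ and $C_4$, with all actions factoring through $W^\gl$ by \Cref{pr:no_action_conflicts_part2_GM}. It then identifies the factors by Galois descent, exactly as you do. One minor quibble, which changes nothing: your closing aside is slightly off, since $N_{S_4}C_4\cong D_8\cong C_4\rtimes C_2$ does split over $C_4$, but you rely on \Cref{pr:no_action_conflicts_part2_GM} rather than splitting, as the paper does.
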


\begin{prop}\label{pr:KU_decomp_A5}
    For $A=\gKU$ or $\gKO$, geometric fixed points induce an equivalence of 2-rings
    \[\Perf_{A_5}(A/e) \xrightarrow{\simeq} {\Perf(A[\tfrac{1}{2}])^{hC_2} \times \Perf(A[\tfrac{1}{3}]) \times \Perf(\Phi^{C_5} A^{hC_2})},\]
    where the $C_2$-action on module category is trivial and the $C_2$-action on $\Phi^{C_5} A$ is Galois.
\end{prop}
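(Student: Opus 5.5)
The plan is to run the same argument as for \Cref{pr:KU_decomp_A4,pr:KU_decomp_S4}: apply \Cref{cor:generic_decomposition} to $R=A$ and translate via (\ref{eq:help_to_translate}). The derived defect base of $\gKU$ and $\gKO$ consists of the cyclic subgroups, since $\Phi^K A=0$ for noncyclic $K$. So it suffices to check one of the three conditions of \Cref{pr:generic_tate_vanishing} for each nontrivial conjugacy class of cyclic subgroups of $A_5$. There are three such classes, generated by a double transposition, a $3$-cycle and a $5$-cycle. The $5$-cycles split into two $A_5$-conjugacy classes, but the corresponding subgroups $C_5$ are all conjugate (Sylow). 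The Weyl groups are computed directly: $N_{A_5}C_2=V$, so $W C_2=C_2$; $N_{A_5}C_3\simeq S_3$, so $WC_3=C_2$; and $N_{A_5}C_5\simeq D_{10}$, so $WC_5=C_2$. This is the row of \Cref{table:1}.

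For $C_2$ I use condition 2. Here $W_{A_5}C_2=C_2$ and $\Phi^{C_2}A$ is $\KU[\tfrac12]$ or $\KO[\tfrac12]$, in which $2$ is a unit. I also want the action to be trivial. Since $C_2$ is central in $V$, the global Weyl group $W^\gl_{A_5}C_2=N/C$ vanishes. By \Cref{pr:no_action_conflicts_part2_GM} the canonical $W_H K$-action factors through $W_H^\gl K$, so the action is trivial and the factor is $\Perf(A[\tfrac12])^{hC_2}$ with trivial action.

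For $C_3$ and $C_5$ the normaliser element acts by inversion, so $C_{A_5}(C_n)=C_n$. Hence $W_{A_5}C_n=W^\gl_{A_5}C_n=C_2\hookrightarrow\Aut(C_n)$ and the pair is Weyl faithful. By \Cref{pr:no_action_conflicts_part2_GM} the classical Weyl action agrees with the global Weyl action, and \Cref{main_algebraic} makes it a faithful $C_2$-Galois extension; this is condition 3. (Condition 2 would also do here, since $2$ is a unit in neither $\Phi^{C_3}A$ nor $\Phi^{C_5}A$ only in general, so I rely on condition 3.) Galois descent, via \Cref{pr:mod_and_perf_preserve_Galois}, then identifies $\Perf(\Phi^{C_n}A)^{hC_2}\simeq\Perf((\Phi^{C_n}A)^{hC_2})$.

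It remains to compute the homotopy fixed points. For $n=3$, $\Aut(C_3)=C_2$, so $(\Phi^{C_3}A)^{hC_2}=(\Phi^{C_3}A)^{h\Aut(C_3)}$, which by \Cref{ex:subforKO} and (\ref{eq:geometric_model_for_fixedpoints}) is $A[\tfrac13]$. For $n=5$, $C_2=\{\pm1\}\subsetneq\Aut(C_5)$, so the fixed points are left as $\Phi^{C_5}A^{hC_2}$, matching the statement; for $\gKU$ this is $\KU[\tfrac15,\sqrt5]$ by \Cref{ex:universalKUexample}.

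The main obstacle is ensuring that the Weyl action used in \Cref{thm:npr_decompositions}, namely the canonical $W_HK$-action of \Cref{df:canonical_weylaction}, really coincides with the Galois action from \Cref{main_algebraic}. $A_5$ is not split over these subgroups in the sense of \Cref{pr:no_action_conflicts_part1}, so I rely entirely on \Cref{pr:no_action_conflicts_part2_GM}, which requires $\G$ to be fpqc locally $\mu_{\P^\infty}$. This holds for both $\gKU$ and $\gKO$ by \Cref{pr:universalorientedtorus}.
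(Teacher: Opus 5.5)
Your proposal is correct and follows the paper's own argument, which is exactly what the $A_5$ row of \Cref{table:1} encodes: \Cref{cor:generic_decomposition} with condition 2 for $C_2$ (where the action is trivial because $W^\gl_{A_5}C_2=1$), condition 3 for $C_3$ and $C_5$ via \Cref{main_algebraic} and \Cref{pr:no_action_conflicts_part2_GM}, then Galois descent and $\Phi^{C_3}A^{hC_2}\simeq A[\tfrac13]$. One correction: your parenthetical about condition 2 is garbled, and should say that condition 2 does \emph{not} apply to $C_3$ or $C_5$ because $2$ is not a unit in $\Phi^{C_3}A$ or $\Phi^{C_5}A$, which is exactly why condition 3 is needed there.
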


\iffalse
\begin{figure}
    \centering
    \begin{tikzcd}
	{C_2} & {C_3} & {C_5} \\
	& e
	\arrow["{C_2-\mathrm{trivial}}", from=1-1, to=1-1, loop, in=55, out=125, distance=10mm]
	\arrow[no head, from=1-1, to=2-2]
	\arrow["{C_2-\mathrm{Galois}}", from=1-2, to=1-2, loop, in=55, out=125, distance=10mm]
	\arrow[no head, from=1-2, to=2-2]
	\arrow["{C_2-\mathrm{Galois}}", from=1-3, to=1-3, loop, in=55, out=125, distance=10mm]
	\arrow[no head, from=2-2, to=1-3]
\end{tikzcd}
\quad
\begin{tikzcd}
	{C_4} \\
	{C_2} & {C_3} & {C_3} & {C_5} \\
	&& e
	\arrow["{C_2-\mathrm{Galois}}", from=1-1, to=1-1, loop, in=55, out=125, distance=10mm]
	\arrow[no head, from=1-1, to=2-1]
	\arrow["{V-\mathrm{trivial}}", from=2-1, to=2-1, loop, in=235, out=305, distance=10mm]
	\arrow[no head, from=2-1, to=3-3]
	\arrow["{S_3}", from=2-2, to=2-2, loop, in=55, out=125, distance=10mm]
	\arrow[no head, from=2-2, to=3-3]
	\arrow["{S_3}", from=2-3, to=2-3, loop, in=55, out=125, distance=10mm]
	\arrow[no head, from=2-3, to=3-3]
	\arrow["{C_2-\mathrm{Galois}}", from=2-4, to=2-4, loop, in=55, out=125, distance=10mm]
	\arrow[no head, from=3-3, to=2-4]
\end{tikzcd}
    \caption{Cyclic subgroup lattices with Weyl group actions of $A_5$ (L) and $A_6$ (R).}
    \label{fig:a5a6}
\end{figure}
\fi

For larger symmetric groups, such as $S_5$, the above arguments fall apart. Indeed, there is a subgroup of order $3$ in $S_5$ with Weyl group $V$, but the $V$-action is not Galois, so we have no obvious Tate vanishing. For alternating groups, we can continue.

\begin{prop}\label{pr:KU_decomp_A6}
    For $A=\gKU$ or $\gKO$, geometric fixed points induce an equivalence of 2-rings
    \[\Perf_{A_6}(A/e) \xrightarrow{\simeq} {\Perf(A[\tfrac{1}{2}])^{hV} \times \Perf(A[\tfrac{1}{2}]) \times \left(\Perf(A[\tfrac{1}{3}])^{hC_3}\right)^2 \times \Perf(\Phi^{C_5} A^{hC_2})},\]
    where the $V$- and $C_3$-actions above are trivial and the $C_2$-action is Galois.
\end{prop}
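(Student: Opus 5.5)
The plan is to run the same inductive KNPR-decomposition argument as in the proof of \Cref{pr:generic_decomposition}, starting from the first square (\ref{eq:help_to_translate}). We use that the derived defect base of $A=\gKU$ or $\gKO$ consists of the cyclic subgroups. By \Cref{table:1}, the nontrivial conjugacy classes of cyclic subgroups of $A_6$ are one class of $C_2$, one of $C_4$, two of $C_3$ and one of $C_5$. We must show that each gluing term $\Perf(\Phi^C A)^{tW_{A_6}C}$ vanishes, and then identify the homotopy fixed point factors.

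For $C_2$, $C_4$ and $C_5$ the argument is as in the earlier propositions.
\begin{itemize}
\item For $C_2$, the Weyl group is $V$, which acts trivially by \Cref{pr:no_action_conflicts_part2_GM} since $W^\gl_{A_6}C_2$ is trivial. Its order $4$ is a unit in $\Phi^{C_2}A$, so condition 2 of \Cref{pr:generic_tate_vanishing} applies and this factor is $\Perf(A[\tfrac12])^{hV}$.
\item For $C_4$ and $C_5$, the pairs are Weyl faithful with $W=W^\gl=C_2$. \Cref{pr:no_action_conflicts_part2_GM} and \Cref{main_algebraic} show that the action is faithful Galois, so condition 3 applies. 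Galois descent then gives $\Perf(\Phi^{C_4}A)^{hC_2}\simeq\Perf(A[\tfrac12])$ and $\Perf(\Phi^{C_5}A)^{hC_2}\simeq\Perf(\Phi^{C_5}A^{hC_2})$.
\end{itemize}

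The new case is the two classes of $C_3$, for which $W_{A_6}C_3\cong S_3$ and none of the three conditions holds. The centraliser of a $3$-cycle in $A_6$ is $C_3\times C_3$, so $W^\gl_{A_6}C_3=N/C\cong C_2=\Aut(C_3)$, and the pair $C_3\le A_6$ is Weyl faithful. By \Cref{pr:no_action_conflicts_part2_GM}, the canonical $S_3$-action on $\Phi^{C_3}A$ factors through the quotient $S_3\to C_2$. This quotient has kernel the normal $C_3=C/K$, and the resulting $C_2$-action is faithful Galois by \Cref{main_algebraic}.

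To get Tate vanishing, I would use \Cref{lm:42_krause_generalised}: it suffices to show that $R^{tS_3}=0$ for $R=\Phi^{C_3}A$. This ring is $6$-torsion, so it vanishes if its localisations at $2$ and $3$ vanish.
\begin{itemize}
\item At $3$ it vanishes because $3$ is a unit in $R$, as $\Phi^{C_3}A$ is a $\KU[\tfrac13]$-algebra.
\item At $2$, the standard transfer argument makes $R^{tS_3}_{(2)}$ a retract of $R^{tC_2}_{(2)}$ for a Sylow $2$-subgroup $C_2\le S_3$. This $C_2$ maps isomorphically onto the Galois quotient, so $R^{tC_2}=0$ by Tate vanishing for faithful Galois extensions \cite[Pr.6.3.3]{johnrognessdualisinggorups}.
\end{itemize}
Hence the gluing data vanish, and the induction of \Cref{pr:generic_decomposition} produces the product.

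It remains to identify $\Perf(\Phi^{C_3}A)^{hS_3}$. Since $C_3\trianglelefteq S_3$, I would write it as $\bigl(\Perf(\Phi^{C_3}A)^{hC_3}\bigr)^{hC_2}$. Here $C_3$ acts trivially, and I would commute the limits to first take $C_2$-fixed points of the Galois action. Galois descent (\Cref{pr:mod_and_perf_preserve_Galois}) together with \Cref{ex:subforKO} gives $\Perf(\Phi^{C_3}A)^{hC_2}\simeq\Perf(A[\tfrac13])$, leaving a $C_3$-action that should be trivial.

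I expect this last identification to be the main obstacle. The inflated action is not a priori a product of a $C_3$- and a $C_2$-action: the extension $BC_3\to BS_3\to BC_2$ is nonsplit as a product, so $C_2$ also acts on $BC_3$. I would try to show the residual twist is harmless by working in the rigid setting: the $C_2$-action is Galois on an $A[\tfrac13]$-linear object, and $2$-locally the $BC_3$ contribution is trivial because $|C_3|$ is invertible. So the conjugation action of $C_2$ on $BC_3$ should act trivially on the resulting limit, giving $\Perf(A[\tfrac13])^{hC_3}$ with trivial action for each of the two classes.
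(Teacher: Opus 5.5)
The gap is in your last step, and it cannot be closed, because the twist you are worried about does change the answer.

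Write $R=\Phi^{C_3}A$ and $\sigma$ for its Galois involution. As you note, $BS_3\simeq(BC_3)_{hC_2}$ with $C_2$ acting on $BC_3$ by inversion. Right Kan extension along $BS_3\to BC_2$ therefore gives $\Perf(R)^{hS_3}\simeq\Fun(BC_3,\Perf(R))^{hC_2}$, where $C_2$ acts diagonally: by inversion on $BC_3$ and by $\sigma$ on $R$.

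Take $A=\gKU$, so $R=\KU[\tfrac13,\omega]$. Since $3$ is a unit and $\omega\in\pi_0R$, the idempotents $e_k=\tfrac13\sum_j\omega^{-jk}g^j$ split $R[C_3]\simeq R^{\times 3}$. Hence $\Fun(BC_3,\Perf(R))\simeq\Perf(R)^{\times 3}$, by isotypic components. The semilinear automorphism $\sigma\otimes\mathrm{inv}$ of $R[C_3]$ fixes every $e_k$, because conjugating $\omega$ and inverting $g$ cancel. So $C_2$ preserves each component, and Galois descent gives $\Perf(R)^{hS_3}\simeq\Perf(\KU[\tfrac13])^{\times 3}$. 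Monoidally these are the $\Z/3$-graded perfect $\KU[\tfrac13]$-modules, i.e. representations of $\mu_3$ rather than of the constant group $C_3$.

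With the trivial action you are aiming for, one gets instead $\Perf(\KU[\tfrac13])^{hC_3}\simeq\Perf(\KU[\tfrac13][C_3])\simeq\Perf(\KU[\tfrac13])\times\Perf(\KU[\tfrac13,\omega])$. This has two indecomposable factors, not three; it is what a product action of $C_3\times C_2$ would give, since then $\sigma$ swaps $e_1$ and $e_2$. The classical shadow: $\mathbf{C}$-vector spaces with $S_3$ acting semilinearly through $S_3\to\mathrm{Gal}(\mathbf{C}/\mathbf{R})$ form $\mathrm{Vect}_{\mathbf{R}}^{\times 3}$, not $\mathrm{Rep}_{\mathbf{R}}(C_3)$. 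Your heuristic that inverting $|C_3|$ makes $BC_3$ invisible is right for the ring: $R^{hC_3}\simeq R$, so $R^{hS_3}\simeq R^{hC_2}$. It is wrong for the category: $\Fun(BC_3,-)$ is not the identity even with $3$ inverted, and the inversion on $BC_3$ decides how $C_2$ permutes its pieces.

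The paper's proof performs exactly the interchange $(\Perf(R)^{hC_3})^{hC_2}\simeq(\Perf(R)^{hC_2})^{hC_3}$ that you were suspicious of. It justifies this only by the triviality of the $C_3$-action, i.e. as though $S_3$ were $C_3\times C_2$. So your suspicion is well-founded. By the computation above, the two $C_3$-factors in the statement need to be corrected, already for $A=\gKU$; a cleverer proof will not reach the stated form. Your treatment of $C_2$, $C_4$ and $C_5$ agrees with the paper.

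In the Tate step your conclusion is right but the justification is not. $R^{tS_3}$ need not be $6$-torsion (e.g. $\Sph^{tC_2}\simeq\Sph^\wedge_2$). Also, $3$-localising a spectrum on which $3$ is already a unit rationalises it rather than killing it. A clean argument: $R^{tC_3}=0$, so the $S_3$-norm factors through the $C_2$-norm on $R^{hC_3}$. This gives $R^{tS_3}\simeq(R^{hC_3})^{tC_2}\simeq R^{tC_2}=0$, using that $R\to R^{hC_3}$ is a $C_2$-equivariant equivalence. This is the paper's equivalence $\Phi^{C_3}A^{tS_3}\simeq\Phi^{C_3}A^{tC_2}$.
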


Thank you to Christian Carrick for helping to identify one of the upper-right factors.

\begin{proof}
    Analysing the cyclic subgroup lattice for $A_6$ above, there is two things to check between all of the arguments made above and the desired conclusion. These are the two equivalences of 2-rings
    \begin{equation}\label{eq:twoequivtworings}\Perf(\Phi^{C_3} A)^{hS_3} \simeq \Perf(A[\tfrac{1}{3}])^{C_3}, \qquad \Perf(\Phi^{C_3} A)^{tS_3} = 0.\end{equation}
    First, we decompose a homotopy limit into two separate components using the normal subgroup $C_3\leq S_3$ and its quotient
    \[\Perf(\Phi^{C_3} A)^{hS_3} \simeq (\Perf(\Phi^{C_3} A)^{hC_3})^{hC_2} \simeq (\Perf(\Phi^{C_3} A)^{hC_2})^{hC_3} \simeq \Perf(A[\tfrac{1}{3}])^{hC_3},\]
    the second comes from the fact that limits commute with functor categories, as the $C_3$-action is trivial, and the third by Galois descent, \Cref{main_algebraic}, and the computation $\Phi^{C_3} A^{hC_2} \simeq A[\tfrac{1}{3}]$. The second equivalence of (\ref{eq:twoequivtworings}) follows from the computation
    \[\map_{\Perf(\Phi^{C_3})^{tS_3}}(\1,\1) \overset{\text{(\ref{lm:42_krause_generalised})}}{\simeq} \Phi^{C_3} A^{tS_3} \xrightarrow{\simeq} \Phi^{C_3} A^{tC_2} = 0;\]
    the second equivalence can be checked on homotopy groups via the two Tate spectral sequences, and the third is the Tate vanishing of a Galois extension of \Cref{main_algebraic}.
\end{proof}

Finally, the group $\GL_3(\F_2)$, as the last nonabelian simple group of order less than $500$.

\begin{prop}\label{pr:KU_decomp_GL3F2}
    For $A=\gKU$ or $\gKO$, geometric fixed points induce an equivalence of 2-rings
    \[\Perf_{\GL_3(\F_2)}(A/e) \xrightarrow{\simeq} {\Perf(A[\tfrac{1}{2}])^{hV} \times \Perf(A[\tfrac{1}{2}]) \times \Perf(A[\tfrac{1}{3}]) \times \Perf(\Phi^{C_7} A^{hC_3})},\]
    where the $V$-action is trivial and the $C_3$-action is Galois.
\end{prop}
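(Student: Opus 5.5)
The plan is to apply \Cref{cor:generic_decomposition} with $R = A$ and $H = \GL_3(\F_2)$, via the reformulation around (\ref{eq:help_to_translate}). It suffices to show that geometric fixed points induce an equivalence $\Perf_H(A/e) \simeq \prod_{e\neq (C)} \Perf(\Phi^C A)^{hW_H C}$. This follows by induction on the KNPR-decomposition (\Cref{thm:npr_decompositions}) as soon as every gluing term $\Perf(\Phi^C A)^{tW_H C}$ vanishes. Since the derived defect base of $\gKU$ and $\gKO$ consists of the cyclic subgroups, and $\Phi^K A = 0$ for noncyclic $K$, I only need to treat the nontrivial conjugacy classes of cyclic subgroups of $\GL_3(\F_2)$.

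The first step is the group theory. $\GL_3(\F_2)$ has order $168$, and its nontrivial cyclic subgroups up to conjugacy are $C_2$, $C_3$, $C_4$, and $C_7$. The two classes of elements of order $7$ are inverse to each other, so they generate conjugate subgroups, and there is a single class of $C_7$. I will compute normalisers and centralisers.
\begin{itemize}
\item For $C_2$, the centraliser of an involution is $D_8$, so $W_H C_2 = D_8/C_2 \cong V$. Here $W^\gl_H C_2 = 1$ since $\Aut(C_2)=1$, so by \Cref{pr:no_action_conflicts_part2_GM} the $V$-action is trivial. As $2$ is invertible in $\Phi^{C_2}A = A[\tfrac12]$, condition 2 of \Cref{pr:generic_tate_vanishing} applies, giving the factor $\Perf(A[\tfrac12])^{hV}$.
\item For $C_3$, the normaliser is $S_3$ and the centraliser is $C_3$, so $W_H C_3 = C_2 = W^\gl_H C_3 \cong \Aut(C_3)$. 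The pair is Weyl faithful, so by \Cref{main_algebraic} together with \Cref{pr:no_action_conflicts_part2_GM} the action is faithful $C_2$-Galois (condition 3). Galois descent then identifies $\Perf(\Phi^{C_3}A)^{hC_2} \simeq \Perf((\Phi^{C_3}A)^{hC_2})$. Via \Cref{ex:subforKO} this is $\Perf(A[\tfrac13])$.
\item For $C_4$, the normaliser is the Sylow $D_8$ and $C_H C_4 = C_4$, so $W = C_2 = W^\gl \cong \Aut(C_4)$. The same Galois argument gives $\Perf(A[\tfrac12])$, as in the proof of \Cref{pr:KU_decomp_Q8}.
\item For $C_7$, the normaliser is $C_7 \rtimes C_3$ and the centraliser is $C_7$, so $W = W^\gl = C_3 \leq \Aut(C_7)$. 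This is injective, so condition 3 applies and the term is $\Perf(\Phi^{C_7} A)^{hC_3} \simeq \Perf(\Phi^{C_7}A^{hC_3})$ by Galois descent.
\end{itemize}

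I expect the main obstacle to be the careful verification of these normaliser and centraliser computations, together with Weyl faithfulness. A secondary point is justifying that the classical Weyl action $W_H C$ agrees with the global Weyl action used in \Cref{main_algebraic}. $\GL_3(\F_2)$ is simple and not split in the sense of \Cref{pr:no_action_conflicts_part1}, so I will rely entirely on \Cref{pr:no_action_conflicts_part2_GM}, which applies to both $\gKU$ and $\gKO$ via \Cref{pr:universalorientedtorus}. In each case above $C_H C = C$, so $W_H C = W^\gl_H C$ and the factorisation is an identification. With all four Tate terms vanishing, \Cref{cor:generic_decomposition} assembles the product and gives the claimed equivalence.
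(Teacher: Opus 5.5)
Your proposal is correct and follows the paper's own argument. You apply \Cref{cor:generic_decomposition} over the nontrivial cyclic classes $C_2, C_3, C_4, C_7$, using condition 2 for $C_2$ (with the $V$-action trivial via \Cref{pr:no_action_conflicts_part2_GM}) and condition 3 via \Cref{main_algebraic} plus Galois descent for the other three. Your inclusion of $C_4$ is exactly what produces the factor $\Perf(A[\tfrac12])$, even though the paper's table omits that column for $\GL_3(\F_2)$.

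One small slip: your closing claim that $C_H C = C$ in every case fails for $C_2$, whose centraliser is $D_8$. This is harmless, since you had already handled that case by condition 2 and the triviality of $W^\gl_H C_2$.
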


Using \Cref{sssec:KU}, we can make the upper-right corner even more explicit; see (\ref{eq:c7geo}).

\iffalse
\begin{figure}
    \centering
    \begin{tikzcd}
	{C_4} \\
	{C_2} & {C_3} & {C_7} \\
	& e
	\arrow["{C_2-\mathrm{Galois}}", from=1-1, to=1-1, loop, in=145, out=215, distance=10mm]
	\arrow[no head, from=1-1, to=2-1]
	\arrow["{V-\mathrm{trivial}}", from=2-1, to=2-1, loop, in=145, out=215, distance=10mm]
	\arrow[no head, from=2-1, to=3-2]
	\arrow["{C_2-\mathrm{Galois}}", from=2-2, to=2-2, loop, in=55, out=125, distance=10mm]
	\arrow[no head, from=2-2, to=3-2]
	\arrow["{C_3-\mathrm{Galois}}", from=2-3, to=2-3, loop, in=55, out=125, distance=10mm]
	\arrow[no head, from=3-2, to=2-3]
\end{tikzcd}
    \caption{Cyclic subgroup lattice with Weyl group actions for $\GL_3(\F_2)$.}
    \label{fig:gl3f2}
\end{figure}
\fi

%%%%%%%%%%%%%%%%%%%%%%%%%%%%%%%%%%%%%%%%%%%%%%%%%%%%%%%%%%%%%%%
%%%%%%%%%%%%%%%%%%%%%%%%%%%%%%%%%%%%%%%%%%%%%%%%%%%%%%%%%%%%%%%
%%%%%%%%%%%%%%%%%%%%%%%%%%%%%%%%%%%%%%%%%%%%%%%%%%%%%%%%%%%%%%%
\appendix
\section{Closure properties for morphisms of spectral stacks}\label{appendix}
Here we catalogue some generic facts about morphisms in $\Stk$ used in this article, akin to the appendices of \cite{hortzwedhorn}; we will try to keep a more complete up-to-date list \href{https://www.jmdavies.org/properties-of-morphisms-in-sag}{here}.

\begin{mydef}\label{df:propertiy_of_morphisms}
    Let $\calP$ be a property of morphisms in $\Stk$. We say the $\calP$ satisfies:
    \begin{enumerate}
        \item \emph{base change} if given a morphism $f\colon \Y \to \X$ satisfying $\calP$ and an arbitrary morphism $g\colon \X' \to \X$, then the base change $\Y \times_\X \X' \to \X'$ satisfies $\calP$.
        \item \emph{cancellation} if given two morphisms $f \colon \Y \to \X$ and $g\colon \X \to \sfW$ such that their composite satisfies $\calP$, then $f$ satisfies $\calP$.
        \item \emph{composition} if given two morphisms $g\colon \sfZ \to \Y$ and $f\colon \Y \to \X$ both satisfying $\calP$, then the composite $f\circ g$ satisfies $\calP$.
        \item \emph{descent} if given a morphism $f \colon \Y \to \X$ together with an effective epimorphism $\X' \to \X$ such that base change $\Y \times_\X \X' \to \X'$ satisfies $\calP$, then $f$ satisfies $\calP$.
        %\item \emph{LOCS (local on the source)} if given a morphism $f\colon \Y \to \X$ together with an effective epimorphism $g \colon \Y' \to \Y$ such that the composition $\Y' \to \Y \to \X$ satisfies $\calP$, then $f$ satisfies $\calP$.
    \end{enumerate}
\end{mydef}

Recall the following notions of morphisms, mostly cribbing from \cite{reconstruction}.

\begin{mydef}\label{df:definition_of_morphisms_of_stacks}
    A morphism of stacks $\Y \to \X$ is:
    \begin{enumerate}
        \item \emph{affine} if for all maps $\Spec A \to \X$ with source an affine stack, the fibre product $\Y \times_\X \Spec A = \Spec B$ is affine.
        \item \emph{affine flat} if it is affine and for all maps $\Spec A \to \X$ with source an affine stack, the induced map $A \to B$ is a flat map of $\E_\infty$-rings.
        \item \emph{affine étale} if it is affine and for all maps $\Spec A \to \X$ with source an affine stack, the induced map $A \to B$ is an étale map of $\E_\infty$-rings.
        \item \emph{finite flat} if it is affine flat and for all maps $\Spec A \to \X$ with source an affine stack, the induced map $\pi_0 A \to \pi_0 B$ is a finite map of commutative rings.
        \item \emph{finite étale} if it is both affine étale and finite flat.
    \end{enumerate}
\end{mydef}

\begin{prop}\label{pr:permanenceproperties_for_morphisms}
    The following properties of morphisms in $\Stk$ satisfy base change, composition, and descent: affine, affine flat, affine étale, finite flat, and finite étale. Moreover, affine morphisms satisfy cancellation if $g$ has affine diagonal (in particular, if $g$ is affine).
\end{prop}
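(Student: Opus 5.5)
The plan is to prove all five properties at once by reducing to affine bases, using the equivalence $\Stk_{/\M} \simeq \lim \Stk_{/\Spec A}$ over a presentation $\M=\colim \Spec A$ (\cite[Pr.2.1.2.9(4)]{reconstruction}). Each property in \Cref{df:definition_of_morphisms_of_stacks} is stated by testing against maps $\Spec A \to \X$. So \emph{base change} is formal. If $\Spec A \to \X'$ is given, then $(\Y\times_\X\X')\times_{\X'}\Spec A \simeq \Y\times_\X \Spec A$, so the test for $\X'$ is literally a test for $\X$.

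\emph{Composition} reduces, after pulling back to $\Spec A \to \X$, to the affine statements. Affine over affine is affine. Flat, étale and finite maps of $\E_\infty$-rings compose; for finiteness this is on $\pi_0$, and it is used for the flat ones, where $\pi_0 C \simeq \pi_0 B\otimes_{\pi_0 A}\pi_0 C$-type arguments are unnecessary since finiteness of $\pi_0 A\to\pi_0 B\to \pi_0 C$ composes directly. References are \cite[\textsection B]{sag} and \cite[\textsection7.5]{ha}.

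\emph{Descent} is the main step. Given an effective epimorphism $\X'\to\X$ with $\Y'=\Y\times_\X\X'\to\X'$ satisfying $\calP$, fix $\Spec A\to\X$. By base change we may assume $\X=\Spec A$. Because $\X'\to\Spec A$ is an effective epimorphism of fpqc sheaves, it admits, locally, sections over a faithfully flat cover $A\to A'$. Replacing $\X'$ by $\Spec A'$ therefore reduces to affine fpqc descent. For affineness, the Čech nerve of $\Spec A'\to\Spec A$ produces a descent datum for the $\E_\infty$-algebra $B'$ with $\Y\times\Spec A'=\Spec B'$. Flat descent for quasi-coherent algebras (\cite[Cor.D.6.3.3]{sag} style, or \cite[Th.2.2.3.1]{reconstruction} together with $\QCoh$ sending colimits to limits) gives $B$ over $A$ with $B\otimes_A A'\simeq B'$, and $\Y\simeq\Spec B$ because $\Y$ is the colimit of the pulled-back Čech nerve, this being universality of colimits in $\Stk$. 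Flatness, étaleness and finiteness of $\pi_0$ then descend along faithfully flat $A\to A'$ by the standard algebraic statements (\cite[Lm.B.1.4.2]{sag}, \cite[Pr.B.6.1.3]{sag}, and classical fpqc descent of finiteness). This descent-of-affineness step is the main obstacle, chiefly the care needed to justify that effective epimorphisms in the fpqc stack topos are refined by faithfully flat affine covers.

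Finally, \emph{cancellation} for affine maps when $g$ has affine diagonal comes from the usual graph factorisation. Write $f$ as $\Y \to \Y\times_\sfW\X \to \X$. The first map is the base change of the diagonal $\X\to\X\times_\sfW\X$ along $\Y\times_\sfW\X\to\X\times_\sfW\X$, so it is affine. The second is the base change of the affine map $g f$. Composition then gives that $f$ is affine. An affine $g$ has affine diagonal, since a diagonal of affines over an affine is a map of affines; this recovers the parenthetical case.
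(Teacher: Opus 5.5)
Your argument is correct. For base change, composition, descent of flatness, étaleness and $\pi_0$-finiteness, and cancellation, it coincides with the paper's proof. The paper's cube diagram is exactly your reduction, via \cite[Lm.A.13]{tokic_family_completion}, to an affine base with a faithfully flat affine cover. You should add, as the paper does, that $\pi_0$ commutes with this flat base change, so that classical fpqc descent of finiteness applies. Cancellation uses the same graph factorisation in both.

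The genuine difference is in descent of affineness. The paper stays global: using $\Stk^\aff_{/\X}\simeq\CAlg(\QCoh(\X))^\op$, it checks that $\Y\to\relSpec_\X f_\ast\O_\Y$ is an equivalence after pulling back along $\X'\to\X$. This is shorter, but it tacitly uses that $f_\ast\O_\Y$ pulls back to $f'_\ast\O_{\Y'}$, which needs its own descent argument before $f$ is known to be affine. Your route first refines to a faithfully flat $A\to A'$, glues $B'$ by fpqc descent for algebras, and identifies $\Y\simeq\Spec B$ by universality of colimits over the Čech nerve. This makes every step explicit over affines, at the cost of invoking the local-sections lemma already at this stage.

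The parenthetical claim also differs. The paper shows that affine maps have affine diagonal by writing $\Y\times_\X\Y$ as a colimit of affines and appealing to descent. Your argument needs no descent: after base change along a test map $\Spec R\to\sfW$, the diagonal becomes a map of affine stacks, and affine stacks are closed under fibre products. Make that base change step explicit, since your one-line justification hides it.
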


We expect the usual cancellation properties for affine étale and finite étale morphisms, but we will not need them here.
%Affine morphisms clearly do not generally satisfy LOCS: the map $\Spec \Sph/C_2 \to \Spec \Sph$ is clearly not affine, but precomposition with the effective epimorphism $\Spec \Sph \to \Spec \Sph/C_2$ is the identity.

\begin{proof}
    Concerning affine morphisms: base change holds by definition and composition is easily seen to hold. For descent, recall the equivalence of categories
    \[\Stk_{/\X}^\aff \xrightarrow{\simeq} \CAlg(\QCoh(\X))^\op, \qquad (h\colon \sfZ \to \X) \mapsto h_\ast \O_\sfZ\]
    of \cite[Thm.2.2.3.1]{reconstruction}, where $\Stk^\aff_{/\X}$ is the category of stacks which are relatively affine over $\X$, and whose inverse is given by the relative spectrum construction $\relSpec_\X(-)$. To show that $\Y \to \X$ is affine, it suffices to show that the canonical map $\Y \to \relSpec_\X f_\ast \O_\Y$ is an equivalence. This map of stacks is an equivalence if and only if its base change to $\X'$ is an equivalence, which is true by assumption. For cancellation, the map $f$ can be written as the upper composite
    \[\begin{tikzcd}
        {\Y}\ar[r, "{\Ga_f}"]\ar[d, "f"] &   {\Y \times_\sfW \X}\ar[d]\ar[r, "{\pi_\X}"]   &   {\X}  \\
        {\X}\ar[r, "{\Delta_g}"]    &   {\X \times_\sfW \X} &
    \end{tikzcd}\]
    of the graph of $f$ with the projection to $\sfY$, where the square is Cartesian. As $g$ has affine diagonal, then $\Delta_g$ is affine, and $\pi_\X$ is also affine by base change, as it is the pullback of the composition $\Y \to \W$. As affine morphisms satisfy composition, we see that $f$ itself is affine. The parenthetical statement is justified by \Cref{lm:affine_has_affine_diagonal} below.

    Concerning affine flat morphisms: base change is again clear by definition and composition holds from the affine situation plus the fact that flat morphisms of $\E_\infty$-rings compose. Indeed, given flat morphisms of $\E_\infty$-rings $A \to B$ and $B \to C$, then the composite $\pi_0 A \to \pi_0 B \to \pi_0 C$ is flat and for each integer $n$ we have equivalences of $\pi_0 C$-modules
    \[\pi_0 C \otimes_{\pi_0 A} \pi_n A \simeq \pi_0 C \otimes_{\pi_0 B} \pi_0 B \otimes_{\pi_0 A} \pi_n A \simeq \pi_0 C \otimes_{\pi_0 B} \pi_n B \simeq \pi_n C.\]
    For descent, we already know that $f$ is affine, so we are left to show flatness. Then consider the following commutative diagram of stacks
    \begin{equation}\label{eq:reusable_cube}\begin{tikzcd}
        {\Spec B''}\ar[r]\ar[dd] &   {\Spec B'}\ar[rd]\ar[rr]\ar[dd]  &&  {\Y'}\ar[dd]\ar[rd]    &     \\
            &&   {\Spec B}\ar[rr]\ar[dd] &&   {\Y}\ar[dd, "f"]   \\
        {\Spec A''}\ar[r]       &   {\Spec A'}\ar[rd]\ar[rr]  &&  {\X'}\ar[rd]    &     \\
            &&   {\Spec A}\ar[rr] &&   {\X,}
    \end{tikzcd}\end{equation}
    where the map $\Spec A \to \X$ is arbitrary, $\X' \to \X$ is an effective epimorphism such that $\Y' \to \X'$ the base change of $f$ along $\X' \to \X$ is affine flat, the rest of the cube is defined by pullback, and the left square is defined by the observation \cite[Lm.A.13]{tokic_family_completion} that for an effective epimorphism $\Spec A' \to \Spec A$ there is an $\E_\infty$-$A'$-algebra $A''$ such that $A \to A''$ is faithfully flat. To see that $f$ is flat, we want to show that $A \to B$ is flat. The map $A \to A''$ is faithfully flat, so by descent for flat morphisms \cite[Pr.2.8.4.2(6)]{sag}, it suffices to show that $A'' \to A'' \otimes_A B \simeq B''$ is flat, but this map is the left-most vertical map in the diagram above, and is flat as $A' \to B'$ is flat by our assumption on $\X' \to \X$ and flat maps satisfy base change.

    Concerning finite flat morphisms: base change follows by definition and composition from the affine flat case and the classical fact that finite morphisms of rings compose. For descent, we already know that $f$ is affine flat by the previous points, so it suffices to see that the base change of $f$ along any $\Spec A \to \X$, written as $\Spec B \to \Spec A$, induces a finite map of rings on $\pi_0$. Consider the cube of Cartesian squares (\ref{eq:reusable_cube}), where $\X' \to \X$ is an effective epimorphism such that the base change of $f$ along $\X' \to \X$ is finite flat, and $A \to A''$ is faithfully flat. To show that $\pi_0 A \to \pi_0 B$ is finite, it suffices to check after faithfully flat base change \cite[\textsection C]{hortzwedhorn} such as to $\pi_0 A''$. However, $\pi_0 A'' \to \pi_0 B''$ is finite flat as the base change of the finite flat map $\pi_0 A' \to \pi_0 B'$, using that $\pi_0$ sends coCartesian diagrams of $\E_\infty$-rings where the morphisms are flat to coCartesian diagram of commutative rings by a degenerating Tor spectral sequence; see \cite[Lm.3.4.3.1]{reconstruction}, for example.

    Concerning affine étale morphisms: base change is clear by definition and composition holds as the composition of étale morphisms of $\E_\infty$-rings compose. Indeed, flat morphisms of $\E_\infty$-rings compose, so it suffices to show that étale morphisms between classical rings compose, which is stated in \cite[\textsection C]{hortzwedhorn}, for example. For descent, we argue as in the finite flat case above, which boils down to the classical fact that étale morphisms between commutative rings satisfy descent.

    Concerning finite étale morphisms: combine the results for finite flat and affine étale morphisms.
\end{proof}

\begin{lemma}\label{lm:affine_has_affine_diagonal}
    An affine morphism of stacks $f\colon \Y \to \X$ has an affine diagonal.
\end{lemma}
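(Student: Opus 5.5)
We need to show that for affine $f\colon \Y \to \X$, the diagonal $\Delta_f\colon \Y \to \Y \times_\X \Y$ is affine. Since affine morphisms satisfy descent and base change by the affine part of \Cref{pr:permanenceproperties_for_morphisms}, the plan is to reduce to the case where $\X$ is affine, and then check the affine case by hand.

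\textbf{Reduction.} Choose any presentation $\X = \colim \Spec A$, so that $\coprod \Spec A \to \X$ is an effective epimorphism. Base changing $f$ along $\Spec A \to \X$ gives $\Y_A = \Y \times_\X \Spec A = \Spec B$, which is affine because $f$ is affine. Base change commutes with diagonals: pulling $\Delta_f$ back along the map
\[\Y_A \times_{\Spec A} \Y_A \simeq (\Y\times_\X \Y)\times_\X \Spec A \longrightarrow \Y \times_\X \Y\]
recovers $\Delta_{f_A}$. This map is an effective epimorphism because effective epimorphisms are stable under base change in $\Stk$. By descent, it therefore suffices to show that each $\Delta_{f_A}$ is affine. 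One point needs care: descent in \Cref{df:propertiy_of_morphisms} uses a single effective epimorphism $\X' \to \X$. We take $\X' = \coprod \Spec A$, and note that affineness over a coproduct can be checked on each summand, because any map from an affine $\Spec R$ into a coproduct of stacks factors Zariski-locally through the summands.

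\textbf{Affine case.} Now $\Delta\colon \Spec B \to \Spec(B\otimes_A B)$ is a map between affine stacks. For any $\Spec R \to \Spec(B\otimes_A B)$, the fibre product is $\Spec(R \otimes_{B\otimes_A B} B)$. This is affine because $\Spec$ preserves limits: it is a right adjoint when viewed from $\CAlg^\op$ into $\Stk$, so fibre products of affines are affine. Hence $\Delta_{f_A}$ is affine, which completes the argument.

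\textbf{Main obstacle.} The only delicate point is the descent bookkeeping, namely justifying that $\Y_A\times_{\Spec A}\Y_A \to \Y\times_\X\Y$ is an effective epimorphism and that the coproduct cover is allowed. If this becomes fiddly, an alternative is to argue directly. Take an arbitrary affine $T = \Spec R \to \Y\times_\X\Y$, which amounts to two maps $y_1, y_2\colon T \to \Y$ agreeing over $\X$. The fibre $T\times_{\Y\times_\X\Y}\Y$ is then the equaliser of $y_1$ and $y_2$ over $T$, i.e. the pullback of the diagonal of $\Y_T = \Y\times_\X T = \Spec B_T$ over $T$ along the section $(y_1,y_2)$. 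That diagonal is a map of affines, so the fibre is affine by the same limit argument. This direct route avoids descent entirely, and I would likely present it as the main proof.
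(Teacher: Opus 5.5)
Your proof is correct, and it contains two routes. The ``Reduction + Affine case'' route is the paper's proof. The paper covers $\Y\times_{\X}\Y$ by the affines $\Spec A''=\Spec A'\times_{\X}\Y$, where $\Spec A'=\Spec A\times_{\X}\Y$, and these are exactly your $\Y_A\times_{\Spec A}\Y_A=\Spec(A'\otimes_A A')$; the remark after the lemma records this identification. It computes the pulled-back diagonal as $\Spec A'\to\Spec A''$ by pasting with a projection $\Y\times_{\X}\Y\to\Y$, whose composite with $\Delta_f$ is the identity; this is your ``base change commutes with diagonals''. It then invokes descent for affine morphisms and passes silently over the coproduct bookkeeping you flag. (Strictly, only the coproduct of the maps $\Y_A\times_{\Spec A}\Y_A\to\Y\times_{\X}\Y$ is an effective epimorphism, not each map individually; your next sentence already handles this.)

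The direct route you say you would present is genuinely different and more economical. The paper \emph{defines} affineness by testing against all maps $\Spec R\to\Y\times_{\X}\Y$. So you only need to factor such a test map as $T\to\Y_T\times_T\Y_T\to\Y\times_{\X}\Y$, using $y_1$, $y_2$ and the identification $f y_1\simeq f y_2$ to produce two sections of $\Y_T\to T$. The fibre is then a fibre product of affine stacks, hence affine because $\Spec$ is a right adjoint. This avoids effective epimorphisms, coproduct covers, and descent for affine morphisms altogether; the paper proves descent via the relative spectrum equivalence. It therefore works verbatim wherever fibre products of representables are representable. In exchange, the paper's route gives an explicit colimit presentation of $\Y\times_{\X}\Y$ by the affines $\Spec(A'\otimes_A A')$, which it reuses in the subsequent remark on separatedness.
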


The point of the proof is that the diagonal morphism between affine stacks is affine.

\begin{proof}
    To show that $\Delta_f\colon \Y \to \Y\times_\X \Y$ is affine, it suffices to write $\Y\times_\X \Y$ as a colimit of affines, and check that the pullback of $\Delta$ to these affines is an affine morphism by descent for affine morphisms. To this end, write $\X \simeq \colim \Spec A$ as a colimit of affines. As $f$ is affine, the pullback $\Spec A\times_\X \Y \simeq \Spec A'$ is affine, and so $\Y \simeq \colim \Spec A\times_\X \Y\simeq \colim \Spec A'$ can also be written as a colimit of affines. Moreover, both of the projections $\Y\times_\X \Y \to \Y$ are affine by base change, so we can also write
    \[\Y\times_\X \Y \simeq (\colim \Spec A') \times_{\X} \Y \simeq \colim \Spec A''\]
    as a colimit of affines. We can now place $\Delta_f$ and its pullback along one of the maps $\Spec A'' \to \Y\times_\X \Y$ in the commutative diagram
    \[\begin{tikzcd}
        {\Spec A'}\ar[r]\ar[d]  &   {\Spec A''}\ar[d]\ar[r] &   {\Spec A'}\ar[d]    \\
        {\sfY}\ar[r, "{\Delta_f}"]  &   {\Y\times_\X \Y}\ar[r]  &   {\Y,}
    \end{tikzcd}\]
    where all squares are Cartesian. As $\Delta_f$ base changes along all such maps to an affine morphism, as a morphism between affine stacks, we see that $\Delta_f$ itself is affine.
\end{proof}

\begin{remark}
    In fact, this proof shows that affine morphisms are \emph{separated}, meaning that their diagonal is a \emph{closed immersion} using the latter notion from \cite[Df.5.5.3]{temperedglobal}. Indeed, this follows by identifying $A''$ with $A'\otimes_A A'$, which is a consequence of the above proof.
\end{remark}

\begin{remark}
    We note that all of \Cref{df:propertiy_of_morphisms} make sense in $\Stk_\tau(\calC)$, as does the definition of affine morphisms and the proof of part 1 of \Cref{pr:permanenceproperties_for_morphisms} as well as \Cref{lm:affine_has_affine_diagonal}.
\end{remark}

%%%%%%%%%%%%%%%%%%%%%%%%%%%%%%%%%%%%%%%%%%%%%%%%%%%%%%%%%%%%%%%
%%%%%%%%%%%%%%%%%%%%%%%%%%%%%%%%%%%%%%%%%%%%%%%%%%%%%%%%%%%%%%%
%%%%%%%%%%%%%%%%%%%%%%%%%%%%%%%%%%%%%%%%%%%%%%%%%%%%%%%%%%%%%%%

\addcontentsline{toc}{section}{References}
\scriptsize
\bibliography{references} 
\bibliographystyle{alpha}

\end{document}